\documentclass[12pt]{amsart}
\usepackage{amsmath,amsfonts,euscript,amscd,amsthm,amssymb,upref,graphics}
\usepackage{mathrsfs}
\usepackage[all]{xy}
\usepackage{color}


\theoremstyle{definition}

\swapnumbers

\theoremstyle{plain}

\newtheorem{theorem}{Theorem}[section]

\newtheorem{proposition}[theorem]{Proposition}
\newtheorem{lemma}[theorem]{Lemma}

\newtheorem{corollary}[theorem]{Corollary}

\newtheorem{sub}{}[theorem] 

\newtheorem{subproposition}	[sub]{Proposition}
\newtheorem{sublemma}	[sub]{Lemma}

\theoremstyle{definition}

\newtheorem{subdefinition}[sub]{Definition}

\newtheorem{definition}[theorem]{Definition}

\newtheorem{parag}[theorem]{}
\newtheorem{example}[theorem]{Example}

\newtheorem{notation}[theorem]{Notation}

\newtheorem{subparag}[sub]{}

\newtheorem{bigremark}[theorem]{Remark}
\newtheorem{subbigremark}[sub]{Remark}

\theoremstyle{remark}

\newtheorem*{remark}{Remark}
\newtheorem*{remarks}{Remarks}

%
{\begin{enumerate}\setlength{\itemsep}{#1}}{\end{enumerate}}

%
{\vspace{2mm}\noindent\ref{#1}. {\bf #2.} \it}{\vspace{2mm}}

\newcommand{\Aut}{	\operatorname{{\rm Aut}}}
\newcommand{\Spec}{	\operatorname{\text{\rm Spec}}}

\newcommand{\haut}{	\operatorname{\text{\rm ht}}}

\newcommand{\image}{	\operatorname{\text{\rm im}}}
\newcommand{\trdeg}{	\operatorname{\text{\rm trdeg}}}
\newcommand{\Frac}{	\operatorname{\text{\rm Frac}}}
\newcommand{\Char}{	\operatorname{\text{\rm char}}}
\newcommand{\Span}{	\operatorname{\text{\rm Span}}}
\newcommand{\id}{	\operatorname{\text{\rm id}}}
\newcommand{\Der}{	\operatorname{\text{\rm Der}}}

\newcommand{\Nil}{	\operatorname{\text{\rm Nil}}}
\newcommand{\lnd}{	\operatorname{\text{\rm LND}}}
\newcommand{\klnd}{	\operatorname{\text{\rm KLND}}}
\newcommand{\ML}{	\operatorname{\text{\rm ML}}}
\newcommand{\FML}{	\operatorname{\text{\rm FML}}}
\newcommand{\lndrank}{	\operatorname{{\rm lndrk}}}
\newcommand{\Gal}{	\operatorname{\text{\rm Gal}}}

\newcommand{\Dan}{	\operatorname{{\Dgoth}}}
\newcommand{\pl}{	\operatorname{{\rm pl}}}

\newcommand{\Ceuls}{\Ceul_{\text{\rm s}}}

\newcommand{\setspec}[2]{\big\{\,#1\, \mid \,#2\, \big\}}

\newcommand{\Integ}{\ensuremath{\mathbb{Z}}}
\newcommand{\Nat}{\ensuremath{\mathbb{N}}}
\newcommand{\Rat}{\ensuremath{\mathbb{Q}}}
\newcommand{\Comp}{\ensuremath{\mathbb{C}}}
\newcommand{\Reals}{\ensuremath{\mathbb{R}}}

\newcommand{\proj}{\ensuremath{\mathbb{P}}}
\newcommand{\bk}{{\ensuremath{\rm \bf k}}}
\newcommand{\ck}{{\ensuremath{\bar\bk}}}
\newcommand{\kk}[1]{\bk^{[#1]}}

\newcommand{\Dgoth}{{\ensuremath{\mathfrak{D}}}}

\newcommand{\pgoth}{{\ensuremath{\mathfrak{p}}}}
\newcommand{\qgoth}{{\ensuremath{\mathfrak{q}}}}

\newcommand{\mgoth}{{\ensuremath{\mathfrak{m}}}}

\newcommand{\Beul}{\EuScript{B}}
\newcommand{\Ceul}{\EuScript{C}}

\newcommand{\Eeul}{\EuScript{E}}

\newcommand{\Neul}{\EuScript{N}}

\newcommand{\Xeul}{\EuScript{X}}

\newcommand{\Ascr}{\mathscr{A}}
\newcommand{\Kscr}{\mathscr{K}}

\renewcommand{\epsilon}{\varepsilon}
\renewcommand{\phi}{\varphi}
\renewcommand{\emptyset}{\varnothing}

\newenvironment{enumerata}%
{\begin{enumerate}

}{\end{enumerate}}

\newcommand{\rien}[1]{}

\addtolength{\topmargin}{-7mm}
\addtolength{\textheight}{1.5cm}
\setlength{\textwidth}{18cm}
\addtolength{\oddsidemargin}{-2.5cm}
\addtolength{\evensidemargin}{-2.5cm}

\raggedbottom

\begin{document}
\renewcommand{\baselinestretch}{1.07}

\title[Locally nilpotent derivations and the structure of rings]{Locally nilpotent derivations\\ and the structure of rings}

\author{Daniel Daigle}

\address{Department of Mathematics and Statistics\\
	University of Ottawa\\
	Ottawa, Canada\ \ K1N 6N5}

\email{ddaigle@uottawa.ca}

\thanks{Research supported by a grant from NSERC Canada.}

\keywords{Locally nilpotent derivation, unirational variety, rational variety, affine variety, affine space, polynomial ring.}

{\renewcommand{\thefootnote}{}
\footnotetext{2010 \textit{Mathematics Subject Classification.}
Primary: 14R10. Secondary: 14R20, 14M20, 14R05.}}

\begin{abstract} 
We investigate the structure of commutative integral domains $B$ of characteristic zero by studying the kernels
of locally nilpotent derivations $D : B \to B$.
\end{abstract}
\maketitle
  
\vfuzz=2pt


\section*{Introduction}

In this article, \textit{ring} means commutative ring with a unity
and \textit{domain} means commutative integral domain.
If $A$ is a domain then $\Frac(A)$ is its field of fractions.
If $\bk$ is a field then a \textit{$\bk$-domain} is a domain that is also a $\bk$-algebra,
and an \textit{affine $\bk$-domain} is a $\bk$-domain that is finitely generated as a $\bk$-algebra.
If $B$ is a ring, a derivation $D: B \to B$ is \textit{locally nilpotent} if for each $x\in B$ there exists $n \in \Nat$ such that $D^n(x)=0$.
The set of locally nilpotent derivations $D : B \to B$ is denoted $\lnd(B)$.
The Makar-Limanov invariant $\ML(B)$ of $B$ and the ``field'' Makar-Limanov invariant $\FML(B)$ of $B$  are defined by:
$$
\ML(B) = \bigcap_{\mbox{\scriptsize $D \in \lnd(B)$}} \ker D 
\qquad \text{and} \qquad
\FML(B) = \bigcap_{\mbox{\scriptsize $D \in \lnd(B)$}} \Frac(\ker D) ,
$$
where in the second case $B$ is assumed to be a domain and the intersection is taken in $\Frac B$.
If $\bk$ is a field of characteristic zero and $B$ is a $\bk$-domain then $\bk \subseteq \ML(B) \subseteq \FML(B)$.

\medskip
When locally nilpotent derivations are used for studying the structure of a domain $B$ of characteristic zero,
one typically pays attention to $\ML(B)$, or to $\FML(B)$, or to an individual $\ker(D)$ for some $D \in \lnd(B)$.
The present article proposes to refine this idea and to consider, in a systematic way, all  rings and fields 
$A_{\Delta} = \bigcap_{D \in \Delta} \ker D$ and $K_{\Delta} = \bigcap_{D \in \Delta} \Frac(\ker D)$, where $\Delta$ can be any subset of $\lnd(B)$.
The lattices $\Ascr(B) = \setspec{ A_\Delta }{ \Delta \subseteq \lnd(B) }$ and $\Kscr(B) = \setspec{ K_\Delta }{ \Delta \subseteq \lnd(B) }$
are defined at the beginning of Section \ref{Section:whatisneededfornextsection} and are used throughout the article.

One of the guiding principles in this area is the idea that if an affine $\bk$-domain $B$ admits many locally nilpotent derivations
then $B$ should be close to being rational over $\bk$.
A brief review of the history of this idea is given in Section \ref{SectionSomeClarifications}, 
together with a clarification of some issues related to the problematic status of certain claims that have been published with invalid proofs.
Some of the results given in Section \ref{SectionSomeClarifications} are stronger than the statements that are being repaired or revisited,
and others are altogether new.
The results numbered \ref{cxvoq9wedowjskdpx}, \ref{ckjv9hrjdulxjkdxlkbrvn}, \ref{0xc9vj2o3w9edno9} and \ref{cknvlwr90fweodpx} are particularly interesting.

Sections \ref{Section:whatisneededfornextsection} and \ref{Somepreliminaries} are mostly devoted to establishing the properties
of $\Ascr(B)$ and $\Kscr(B)$ needed in the rest of the article.
In fact Section \ref{Section:whatisneededfornextsection} presents only the small amount of theory that is needed in Section \ref{SectionSomeClarifications}
($\Ascr(B)$ and $\Kscr(B)$ play only a minor role in  Section \ref{SectionSomeClarifications})
and Section \ref{Somepreliminaries} contains a more extensive study of the two lattices.

Section \ref{Section:applicationsandquestions} applies the ideas developed in the previous sections
to affine $\bk$-domains $B$, where $\bk$ is any field of characteristic zero (most results are still interesting when $\bk$ is assumed to
be algebraically closed).
Having new invariants of rings allows the formulation of new questions, some of which can be answered. 
Thm \ref{doccvobjmExm48wednce} is a general result (for $B$ normal) about 
chains $A_0 \subset A_1 \subset \cdots \subset A_n$ of elements of $\Ascr(B)$ satisfying $\trdeg(A_i : A_{i-1})=1$ for all $i$;
the case where $B$ is a UFD (Cor.\ \ref{ld0b23or0f9vwo4398hbg}) has a particularly pleasant statement.
Under certain assumptions regarding factoriality and units of $B$,
Thm \ref{Z0cjh93wijbrfpa98gfIa} gives information about the elements $A$ of $\Ascr(B)$ satisfying $\trdeg_\bk(A) \le 2$.
Thm \ref{d0qk3bfvo09e8u37} answers the following natural question.
For simplicity, assume that $\bk$ is algebraically closed.
It is known that the condition $\FML(B)=\bk$ implies that $B$ is unirational---but not necessarily rational---over $\bk$.
{\it Can one formulate a condition on the locally nilpotent derivations of $B$ that would imply rationality?}
Thm \ref{d0qk3bfvo09e8u37} implies (in particular) that
if there exists a chain $A_0 \subset A_1 \subset \cdots \subset A_n$ of elements of $\Ascr(B)$ satisfying $n=\dim B$,
then $B$ is rational over $\bk$.

Section \ref{Section:Formsofkknandofkk3} explains how the general results of Section \ref{Section:applicationsandquestions}
apply to certain special classes of algebras that include in particular all forms of $\kk n$,
all stably polynomial algebras over $\bk$, and all exotic $\Comp^n$.


\subsection*{Notations}
To the notations and conventions already introduced in the above text, we add the following.
We write $\subseteq$ for inclusion, $\subset$ for strict inclusion, $\setminus$ for set difference,
and we agree that $0 \in \Nat$.
If $A$ is a ring and $n \in \Nat$, $A^{[n]}$ denotes a polynomial ring in $n$ variables over $A$;
if $\bk$ is a field, $\bk^{(n)}$ denotes the field of fractions of $\kk n$.
We write $\trdeg_K(L)$ or $\trdeg(L:K)$ for the transcendence degree of a field extension $L/K$.
If $A \subseteq B$ are domains, the transcendence degree of $B$ over $A$ is defined to be that of $\Frac B$ over $\Frac A$,
and is denoted $\trdeg_A(B)$ or $\trdeg(B:A)$.
If $A$ is a ring then $A^*$ is its group of units, $\dim A$ is the Krull dimension of $A$,
and if $a \in A$ then $A_a = S^{-1}A$ where $S = \{1,a,a^2,\dots\}$.
If $R \subseteq A$ are domains then $A_R = S^{-1}A$ where $S = R \setminus \{0\}$
(note that $A_R$ is an algebra over the field $R_R = \Frac(R)$).

\section{Preliminaries}
\label {Section:whatisneededfornextsection}

This Section presents the material that is needed in Section \ref{SectionSomeClarifications}.
Gathering this material here will enable us to go through Section \ref{SectionSomeClarifications} without interrupting the flow of the discussion.
We begin by recalling some basic facts about locally nilpotent derivations.
For background on this topic, we refer the reader to any of \cite{VDE:book}, \cite{Freud:Book-new} or \cite{Dai:IntroLNDs2010}.

\begin{parag} \label {pc9293ed0wdjo03}
Let $B$ be a domain of characteristic zero. Let $D \in \lnd(B) \setminus \{0\}$ and $A = \ker D$.
\begin{enumerate}

\item[(i)] The ring $A$ is factorially closed in $B$, i.e.,
the implication $xy \in A \Rightarrow x,y \in A$ is true for all $x,y \in B \setminus \{0\}$.
It follows that $A^* = B^*$ and hence that if $\bk$ is any field included in $B$ then $\bk\subseteq A$.
Moreover, if $B$ is a UFD then so is $A$.

\item[(ii)] The  \textbf{Slice Theorem} asserts that  if $\Rat \subseteq B$ and $s \in B$ is such that $D(s) \in B^*$ then $B = A[s] = A^{[1]}$.
(Refer to \cite[Prop.\ 2.1]{Wright:JacConj} for this result.)

\item[(iii)] Clearly, there exists $s \in B$ satisfying $D(s) \neq 0$ and $D^2(s)=0$. If $\Rat \subseteq B$ then
for any such $s$ we have $B_a = A_a[s] = A_a^{[1]}$, where we set $a = D(s)$.

\item[(iv)] If we define $K = \Frac A$ then $B_A = K^{[1]}$ and $\Frac B = K^{(1)}$; in particular, $K$ is algebraically closed in $\Frac B$.
(The notation $B_A$ is defined at the end of the introduction.)

\item[(v)] If $f \in B$ satisfies $f \mid D(f)$, then $D(f)=0$.

\end{enumerate}
\end{parag}

The following concepts play a major role in this article.

\begin{definition} \label {0cd20FXGICHGgt86cjF5i5rgo909}
Let $B$ be a domain of characteristic zero.
Given a subset $\Delta$ of $\lnd(B)$, define
$A_{\Delta} = \bigcap_{D \in \Delta} \ker D$ and 
$K_{\Delta} = \bigcap_{D \in \Delta} \Frac(\ker D)$,
where the first intersection is taken in $B$ and the second in $\Frac B$
(in particular,  $A_\emptyset = B$ and $K_\emptyset = \Frac B$).
Then define the two sets
$$
\Ascr(B) = \setspec { A_{\Delta} }{ \Delta \subseteq \lnd(B)  } 
\quad \text{and} \quad
\Kscr(B) = \setspec { K_{\Delta} }{ \Delta \subseteq \lnd(B)  } .
$$
We view these as posets: $(\Ascr(B), \subseteq)$ and $(\Kscr(B), \subseteq)$.
Note that $\Ascr(B)$ is a nonempty set of subrings of $B$; its greatest element is $B$ and its least element is $\ML(B)$.
Similarly, $\Kscr(B)$ is a nonempty set of subfields of $\Frac B$ whose greatest element is $\Frac B$ and whose least element is $\FML(B)$.
For each $n \in \Nat$, define
$$
\Ascr_n(B) = \setspec{ A \in \Ascr(B) }{ \trdeg_A( B) = n } \text{\ \ and\ \ } \Kscr_n(B) = \setspec{ K \in \Kscr(B) }{ \trdeg_K( \Frac B) = n } .
$$
Observe that $\Ascr_0(B) = \{B\}$, $\Kscr_0(B) = \{ \Frac B \}$,
$$
\Ascr_1(B) = \setspec{ \ker D }{ D \in \lnd(B)\setminus\{0\} } \text{\ \ and\ \ } \Kscr_1(B) = \setspec{ \Frac(\ker D) }{ D \in \lnd(B)\setminus\{0\} } .
$$
The set $\Ascr_1(B)$ is sometimes denoted $\klnd(B)$ (but not in this article).
Also keep in mind that, when $n>1$, the elements of $\Kscr_n(B)$ are not necessarily the fields of fractions of those of $\Ascr_n(B)$.
\end{definition}

\begin{definition}  
Let $B$ be a domain of caracteristic zero.
One says that $B$ is \textit{rigid} if $\Ascr_1(B) = \emptyset$ (or equivalently $\lnd(B) = \{0\}$),
and that $B$ is \textit{semi-rigid} if $ | \Ascr_1(B) | \le 1$.
\end{definition}  

\begin{remark}
Given a domain $B$ of caracteristic zero, $B$ is \textbf{not semi-rigid}
$\Leftrightarrow$  $ | \Ascr_1(B) | > 1$
$\Leftrightarrow$  $ | \Kscr_1(B) | > 1$
$\Leftrightarrow$  $ \trdeg(B : \ML(B)) > 1$
$\Leftrightarrow$  $ \trdeg(\Frac(B) : \FML(B)) > 1$.
\end{remark}

We now define a set $\Ascr_1^*(B)$ which is closely related to $\Ascr_1(B)$.
Note that $\Ascr_1^*(B)$ is defined for any integral domain $B$, of any characteristic.
The notation $B_A$ is defined at the end of the Introduction.

\begin{definition} \label {0n3it98bclsmbybp}
Given a domain $B$, we define $\Ascr_1^*(B)$ to be the set of subrings $A$ of $B$ such that $A$ is algebraically closed in $B$ and $B_A = (A_A)^{[1]}$.
\end{definition}

\begin{lemma} \label {0cvh2039edvjpqw0}
Let $B$ be a domain.
\begin{enumerata}

\item If $A_1,A_2 \in \Ascr_1^*(B)$ and $A_1 \subseteq A_2$ then $A_1 = A_2$.

\item If $A \in  \Ascr_1^*(B)$ then $B \cap \Frac(A) = A$ and $A$ is factorially closed in $B$.
Consequently, if $\bk$ is a field included in $B$ then $\bk \subseteq A$ for all $A \in  \Ascr_1^*(B)$.

\item If $\Char B = 0$ then $\Ascr_1(B) \subseteq \Ascr_1^*(B)$.
If moreover $B$ is not semi-rigid then $| \Ascr_1^*(B) | > 1$.

\item If $\Char B = 0$,  $A \in \Ascr_1^*(B)$ and $B$ is finitely generated as an $A$-algebra, then  $A \in \Ascr_1(B)$.

\end{enumerata}
\end{lemma}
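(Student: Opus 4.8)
The plan is to exploit the single defining property of $\Ascr_1^*(B)$: for $A \in \Ascr_1^*(B)$, writing $K = \Frac(A)$, the localization $B_A$ equals $K^{[1]}$, say $B_A = K[t]$ with $t$ transcendental over $K$, so that $\trdeg(B:A) = 1$ and $B$ sits inside the polynomial ring $K[t]$. Parts (a) and (b) are then formal consequences of algebraic closedness together with degree counting in $K[t]$. For (a), if $A_1 \subseteq A_2$ with both in $\Ascr_1^*(B)$, then $\trdeg(B:A_1) = \trdeg(B:A_2) = 1$ forces $\trdeg(A_2:A_1) = 0$; every element of $A_2 \subseteq B$ is thus algebraic over $\Frac(A_1)$, and since $A_1$ is algebraically closed in $B$ this gives $A_2 \subseteq A_1$, whence equality.

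For (b), the identity $B \cap \Frac(A) = A$ is immediate: any $x$ in the left-hand side is algebraic over $\Frac(A)$ (being a member of it) and lies in $B$, so algebraic closedness gives $x \in A$. To see that $A$ is factorially closed, I would view $x,y \in B \setminus \{0\}$ with $xy \in A$ as elements of $B_A = K[t]$; then $xy$ is a nonzero element of $A \subseteq K$, hence has $t$-degree $0$, and since $\deg_t(xy) = \deg_t(x) + \deg_t(y)$ in the domain $K[t]$ we get $\deg_t x = \deg_t y = 0$, i.e.\ $x,y \in K = \Frac(A)$; the identity just proved then yields $x,y \in A$. The consequence about a field $\bk \subseteq B$ follows by applying factorial closedness to $c \cdot c^{-1} = 1 \in A$ for each $c \in \bk \setminus \{0\}$.

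For (c), the inclusion $\Ascr_1(B) \subseteq \Ascr_1^*(B)$ reads off from the basic facts collected in \ref{pc9293ed0wdjo03}: for $A = \ker D$ with $D \in \lnd(B)\setminus\{0\}$, item (iv) gives exactly $B_A = K^{[1]}$, while items (i) and (iv) together give algebraic closedness of $A$ in $B$ --- factorial closedness (i) yields $B \cap K = A$, and since $K$ is algebraically closed in $\Frac B$ by (iv), any $x \in B$ algebraic over $K$ lies in $B \cap K = A$. The cardinality statement is then purely formal: ``not semi-rigid'' means $|\Ascr_1(B)| > 1$, so the inclusion just established forces $|\Ascr_1^*(B)| > 1$.

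Part (d) is where the real work lies, and I expect it to be the main obstacle. Here I must manufacture a locally nilpotent derivation of $B$ with kernel exactly $A$, and the only tool available is the partial derivative $\partial = d/dt$ on $B_A = K[t]$, which is locally nilpotent with $\ker\partial = K$ (this is where $\Char B = 0$ is essential) and which kills $K \supseteq A$. The difficulty is that $\partial$ need not preserve $B$, and this is precisely where finite generation enters: writing $B = A[b_1,\dots,b_r]$, each $\partial(b_i)$ lies in $B_A$, so has the form $\beta_i/s_i$ with $\beta_i \in B$ and $s_i \in A \setminus \{0\}$; setting $s = s_1 \cdots s_r \in A \setminus \{0\}$, the derivation $D_0 = s\,\partial$ of $K[t]$ sends every $b_i$ into $B$ and kills $A$, hence by the Leibniz rule restricts to a derivation $D : B \to B$. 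Since $s \in \ker\partial$ one has $D_0^{\,n} = s^n \partial^n$, so local nilpotency of $\partial$ passes to $D$; and $D \neq 0$ because $B \not\subseteq K$ (as $\trdeg(B:A) = 1$). Finally $\ker D = \{x \in B : \partial(x) = 0\} = B \cap K = A$ by part (b), so $A = \ker D \in \Ascr_1(B)$. The delicate points are the choice of the common denominator $s$ \emph{inside} $A$ (not merely inside $B$), which is what makes $D$ annihilate $A$ and remain locally nilpotent, and the final identification of the kernel via (b).
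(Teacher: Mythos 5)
Your proposal is correct and follows essentially the same route as the paper: parts (a)--(c) are the same "clear" degree-counting and algebraic-closedness observations (with (c) read off from \ref{pc9293ed0wdjo03}), and for (d) the paper likewise uses finite generation to find a common denominator $r \in A \setminus \{0\}$ so that $r\,\frac{d}{dt}$ on $B_A = K[t]$ restricts to a nonzero locally nilpotent derivation of $B$ with kernel $A$. Your write-up merely makes explicit the details (the formula $D_0^n = s^n\partial^n$ and the kernel identification via $B \cap K = A$) that the paper leaves to the reader.
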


\begin{proof}
Assertions (a) and (b) are clear and (c) follows from \ref{pc9293ed0wdjo03}(iv).
For (d), write $K = A_A$ and choose $t \in B$ such that $B_A = K[t] = K^{[1]}$.
Since $B$ is finitely generated as an $A$-algebra, there exists $r \in A \setminus \{0\}$ such that
the $K$-derivation $r \frac d{dt}: K[t] \to K[t]$ maps $B$ into itself.
Let $D : B \to B$ be the restriction of  $r \frac d{dt}$, then $D \in \lnd(B)$, $D \neq 0$ and $D(x)=0$ for all $x \in A$.
As $\trdeg_A(B)=1$ and $A$ is algebraically closed in $B$, we have $\ker(D)=A$, so (d) is proved.
\end{proof}

\begin{definition}
The \textit{height} of a poset $(X,\preceq)$ is the supremum of the set of $n \in \Nat$ for which there exists
a sequence $x_0 \prec x_1 \prec \cdots \prec x_n$  with $x_0, \dots, x_n \in X$.
We write $\haut(X)$ for the height of $(X,\preceq)$ and we regard $\haut(X)$ as an element of $\Nat \cup \{ \infty \}$.
\end{definition}

\begin{lemma} \label {8237ted7f983te}
Let $\tilde B$ be the normalization of a noetherian $\Rat$-domain $B$.
\begin{enumerata}

\item Each $D \in \lnd(B)$ has a unique extension to a locally nilpotent derivation $\tilde D : \tilde B \to \tilde B$.

\item Given a subset $\Delta$ of $\lnd(B)$, define $\tilde\Delta = \setspec{ \tilde D }{ D \in \Delta } \subseteq \lnd( \tilde B)$
and consider  $K_{ \tilde\Delta } \in \Kscr(\tilde B)$ and $A_{ \tilde\Delta } \in \Ascr(\tilde B)$.
Then $K_{ \tilde\Delta } = K_\Delta$ and $A_{ \tilde\Delta } \cap B = A_\Delta$ for all $\Delta \subseteq \lnd(B)$.

\item $\Kscr(B) \subseteq \Kscr(\tilde B)$

\item For each $A \in \Ascr(B)$, define $\Delta(A) = \setspec{ D \in \lnd(B) }{ A \subseteq \ker(D) }$.
Then $A \mapsto A_{ \widetilde{\Delta(A)} }$ is an injective order-preserving map $\Ascr(B) \to \Ascr(\tilde B)$.

\item $\haut\Ascr(B) \le \haut\Ascr(\tilde B)$ and $\haut\Kscr(B) \le \haut\Kscr(\tilde B)$.

\end{enumerata}
\end{lemma}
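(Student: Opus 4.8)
The plan is to prove part (a) carefully---this is where all the real content lies---and then to obtain parts (b)--(e) as essentially formal consequences of (a) together with \ref{pc9293ed0wdjo03}. The only genuinely non-formal step, and hence the main obstacle, will be verifying that the extended derivation $\tilde D$ is \emph{locally nilpotent}: extending a derivation to the normalization is routine, but preserving local nilpotence requires exploiting the polynomial structure of a suitable localization of $B$ and the good behaviour of normalization under localization and polynomial extension. Everything after (a) reduces to bookkeeping with fraction fields, factorial closedness, and the two poset maps.

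For (a), the derivation $D$ extends uniquely to $\Frac B$, and since $B$ is a noetherian $\Rat$-domain this extension carries $\tilde B$ into $\tilde B$ (a standard extension result, after Seidenberg); write $\tilde D$ for the resulting derivation of $\tilde B$, whose uniqueness is clear because $\tilde B \subseteq \Frac B$ and the extension to $\Frac B$ is unique. To get local nilpotence I would choose, as in \ref{pc9293ed0wdjo03}(iii), an element $s \in B$ with $D(s) \neq 0$ and $D^2(s)=0$, set $a = D(s)$ (so $a \in \ker D$), and use $B_a = (\ker D)_a[s] = (\ker D)_a^{[1]}$, on which $D$ acts as $a\,\frac{d}{ds}$. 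Since normalization commutes with localization and with the adjunction of a polynomial variable, $(\tilde B)_a$ is the normalization of $(\ker D)_a[s]$, hence a polynomial ring $R[s]$ over the normalization $R$ of $(\ker D)_a$; on it $\tilde D$ is again $a\,\frac{d}{ds}$ (it vanishes on $R \subseteq \Frac(\ker D)$ because $D$ vanishes on $\ker D$), so $\tilde D$ is locally nilpotent on $(\tilde B)_a$. As $\tilde B \hookrightarrow (\tilde B)_a$, vanishing of $\tilde D^n(x)$ in the localization forces it in $\tilde B$, and local nilpotence descends to $\tilde B$.

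For (b), by \ref{pc9293ed0wdjo03}(iv) the field $K := \Frac(\ker D)$ is algebraically closed in $\Frac B$ and $\trdeg(\Frac B : K)=1$, so the kernel of the extension of $D$ to $\Frac B$ is exactly $K$; consequently $\ker \tilde D = \tilde B \cap K$. Since $\ker D \subseteq \tilde B \cap K \subseteq K$ with $\Frac(\ker D)=K$, passing to fraction fields gives $\Frac(\ker \tilde D) = K$, and intersecting over $D \in \Delta$ yields $K_{\tilde\Delta} = K_\Delta$. For the second identity I would intersect with $B$: one has $A_{\tilde\Delta} \cap B = \bigcap_{D \in \Delta}\big(B \cap \Frac(\ker D)\big)$, and factorial closedness of $\ker D$ in $B$ (\ref{pc9293ed0wdjo03}(i)) gives $B \cap \Frac(\ker D) = \ker D$, whence $A_{\tilde\Delta} \cap B = A_\Delta$.

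Parts (c)--(e) are then formal. Part (c) is immediate from (b) together with $\Frac \tilde B = \Frac B$. For (d) one first checks $A_{\Delta(A)} = A$, since $\Delta(A)$ is the largest $\Delta$ with $A_\Delta = A$; thus $A \mapsto A_{\widetilde{\Delta(A)}}$ lands in $\Ascr(\tilde B)$. It is order-preserving because $A \subseteq A'$ forces $\Delta(A') \subseteq \Delta(A)$, hence $A_{\widetilde{\Delta(A)}} \subseteq A_{\widetilde{\Delta(A')}}$; and it is injective because (b) gives $A_{\widetilde{\Delta(A)}} \cap B = A_{\Delta(A)} = A$, so the image recovers $A$. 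Finally (e) is a general observation: an injective order-preserving map sends a strict chain $A_0 \subset \cdots \subset A_n$ to a chain of comparable distinct elements, i.e. a strict chain of the same length, giving $\haut\Ascr(B) \le \haut\Ascr(\tilde B)$, while $\Kscr(B) \subseteq \Kscr(\tilde B)$ from (c) yields $\haut\Kscr(B) \le \haut\Kscr(\tilde B)$ at once.
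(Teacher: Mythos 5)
Your proposal is correct, and parts (b)--(e) track the paper's own proof almost step for step; the genuine divergence is in part (a), which is indeed where the content lies. The paper disposes of (a) by citing two classical results: Seidenberg's Theorem, to extend $D$ to a derivation $\tilde D : \tilde B \to \tilde B$, and Vasconcelos' Theorem (local nilpotence ascends along integral extensions), to conclude that $\tilde D$ is locally nilpotent. You invoke Seidenberg for the extension as well, but you replace Vasconcelos by a self-contained argument: taking $s$ with $D(s) \neq 0$, $D^2(s)=0$ and $a = D(s)$, you use $B_a = (\ker D)_a[s]$ from \ref{pc9293ed0wdjo03}(iii), the fact that normalization commutes with localization and with polynomial extension to identify $(\tilde B)_a$ with $R[s]$ ($R$ the normalization of $(\ker D)_a$), observe that $\tilde D$ acts there as $a\,\frac{d}{ds}$ (hence locally nilpotently), and descend to $\tilde B$ via the injection $\tilde B \hookrightarrow (\tilde B)_a$. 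All of these steps are sound (modulo the trivial case $D=0$, which you should dispatch separately since \ref{pc9293ed0wdjo03}(iii) presumes $D \neq 0$). What your route buys is elementarity: only Seidenberg is taken on faith, and the mechanism forcing local nilpotence is made explicit; what the paper's route buys is brevity and generality, since Vasconcelos applies to any integral extension preserved by the derivation, with no appeal to the polynomial structure of $B_a$. Your part (b) is also routed slightly differently: the paper gets $\Frac(\ker \tilde D) = \Frac(\ker D)$ by comparing $\Frac B = K^{(1)} = \tilde K^{(1)}$, whereas you identify $\ker \tilde D = \tilde B \cap \Frac(\ker D)$ through the kernel of the extension to $\Frac B$ and sandwich fraction fields; and where the paper uses the immediate identity $\ker(\tilde D) \cap B = \ker(D)$ (because $\tilde D$ restricts to $D$ on $B$), you detour through factorial closedness to get $B \cap \Frac(\ker D) = \ker D$. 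Both variants are correct; the paper's is marginally leaner there.
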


\begin{proof}
Part (a) is well known (Seidenberg's Theorem implies that each $D \in \lnd(B)$ extends (uniquely) to 
a derivation $\tilde D : \tilde B \to \tilde B$; then Vasconcelo's Theorem implies that $\tilde D$ is locally nilpotent).
To prove (b), consider $D \in \lnd(B) \setminus \{0\}$;
with $K = \Frac(\ker D)$ and $\tilde K = \Frac(\ker \tilde D)$, we have $K \subseteq \tilde K \subseteq \Frac B$
and (by \ref{pc9293ed0wdjo03}(iv)) $\Frac B = K^{(1)} = \tilde K^{(1)}$, so $K=\tilde K$.
This shows that for all $D \in \lnd(B)$ we have $\Frac( \ker \tilde D ) = \Frac( \ker D )$, and of course we also have $\ker(\tilde D) \cap B = \ker(D)$;
it follows that $K_{ \tilde\Delta } = K_\Delta$ and $A_{ \tilde\Delta } \cap B = A_\Delta$ for all $\Delta \subseteq \lnd(B)$, so (b) is true.
Assertion (c) follows from (b).
It is clear that $A \mapsto A_{ \widetilde{\Delta(A)} }$ is a well defined order-preserving map $\Ascr(B) \to \Ascr(\tilde B)$;
this map is injective because, by (b), we have $A_{ \widetilde{\Delta(A)} } \cap B = A_{ \Delta(A) } = A$ for all $A \in \Ascr(B)$; so (d) is true.
Since there exists an order-preserving injective map $\Ascr(B) \to \Ascr(\tilde B)$, we get $\haut\Ascr(B) \le \haut\Ascr(\tilde B)$;
$\haut\Kscr(B) \le \haut\Kscr(\tilde B)$ follows from (c), so we are done.
\end{proof}

\begin{lemma} \label {pc09fb23dcp90qj}
Let $K$ be a field and $v : K^* \to G$ a valuation of $K$, where $(G,+,\le)$ is a totally ordered abelian group.
Consider the field $K(x_1, \dots, x_n) = K^{(n)}$ and let $\Integ^n \times G$ be endowed with the lexicographic order.
Then there exists a valuation $\hat v : K(x_1, \dots, x_n)^* \to \Integ^n \times G$ such that
$\hat v(x_i) = ( \delta_{i,1}, \dots, \delta_{i,n}, 0)$ for $1 \le i \le n$ (where $\delta_{i,j}$ is the Kronecker delta)
and $\hat v( a ) = (0, \dots, 0, v(a) )$ for all $a \in K^*$.
\end{lemma}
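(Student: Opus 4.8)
The plan is to construct $\hat v$ first on the polynomial ring $K[x_1, \dots, x_n]$ and only afterwards extend it to $K^{(n)} = \Frac\bigl(K[x_1,\dots,x_n]\bigr)$ by the usual rule $\hat v(f/g) = w(f) - w(g)$. Writing a nonzero polynomial as $f = \sum_\alpha c_\alpha x^\alpha$, where $\alpha = (e_1,\dots,e_n) \in \Nat^n$, $x^\alpha = x_1^{e_1}\cdots x_n^{e_n}$, and $c_\alpha \in K$, I would define the monomial (Gauss-type) valuation
$$
w(f) = \min \setspec{ (\alpha, v(c_\alpha)) }{ c_\alpha \neq 0 } \in \Integ^n \times G ,
$$
the minimum being taken in the lexicographic order (it exists because the support of $f$ is finite). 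Here $(\alpha, g)$ abbreviates the tuple $(e_1, \dots, e_n, g) \in \Integ^n \times G$ attached to $\alpha = (e_1,\dots,e_n)$.

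The observation that makes everything go through is that distinct monomials have distinct exponent vectors, and the exponent vector occupies exactly the first $n$ coordinates of $(\alpha, v(c_\alpha))$. Hence, when one compares two different monomials of $f$ in the lexicographic order, the decision is already made in the $\Integ^n$-part and the $G$-component never intervenes. Consequently the minimum defining $w(f)$ is attained at the unique monomial whose exponent $\alpha_0$ is lexicographically smallest in $\supp(f)$, so $w(f) = (\alpha_0, v(c_{\alpha_0}))$. This removes all interference from $G$ and reduces the verification to the ordered-group structure of the lexicographic order on $\Integ^n$ together with the valuation axioms for $v$.

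Next I would check the two axioms for $w$. For the ultrametric inequality, the coefficient of $x^\gamma$ in $f+g$ (with $g = \sum_\beta d_\beta x^\beta$) is $c_\gamma + d_\gamma$, and since the $\Integ^n$-parts agree, $v(c_\gamma + d_\gamma) \ge \min(v(c_\gamma), v(d_\gamma))$ (with the convention $v(0)=\infty$) gives $(\gamma, v(c_\gamma + d_\gamma)) \ge \min\bigl((\gamma,v(c_\gamma)),(\gamma,v(d_\gamma))\bigr) \ge \min(w(f),w(g))$; taking the minimum over $\gamma \in \supp(f+g)$ yields $w(f+g)\ge\min(w(f),w(g))$. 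Multiplicativity $w(fg)=w(f)+w(g)$ is the crux: letting $\alpha_0,\beta_0$ be the lex-minimal exponents of $f,g$, I would show that $\alpha_0+\beta_0$ is the lex-minimal exponent of $fg$ and that its coefficient is exactly $c_{\alpha_0}d_{\beta_0}\neq 0$. This uses that the lexicographic order makes $\Integ^n$ a totally ordered abelian group, so that $\alpha\ge\alpha_0$, $\beta\ge\beta_0$ and $\alpha+\beta=\alpha_0+\beta_0$ force $\alpha=\alpha_0$, $\beta=\beta_0$; thus no other pair of monomials contributes to $x^{\alpha_0+\beta_0}$, there is no cancellation, and
$$
w(fg) = \bigl(\alpha_0 + \beta_0,\ v(c_{\alpha_0}) + v(d_{\beta_0})\bigr) = w(f) + w(g) .
$$

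Finally I would pass to $K^{(n)}$: the rule $\hat v(f/g) = w(f) - w(g)$ is well defined (if $fg' = f'g$ then $w(f)-w(g)=w(f')-w(g')$ by additivity of $w$) and inherits both valuation axioms from $w$ by the standard routine computation. The prescribed values are then immediate: $x_i = x^{\epsilon_i}$ for the $i$-th standard basis vector $\epsilon_i$, with coefficient $1$, so $\hat v(x_i) = (\epsilon_i, v(1)) = (\delta_{i,1},\dots,\delta_{i,n},0)$, while a nonzero constant $a\in K^*$ has support $\{0\}$, giving $\hat v(a) = (0,\dots,0,v(a))$. I expect the only genuinely delicate point to be the no-cancellation argument for multiplicativity; once the reduction to the lex-minimal monomial is in place, everything else is bookkeeping.
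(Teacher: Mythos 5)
Your proof is correct, and it takes a genuinely different route from the paper's. The paper argues by induction on $n$: for $n=1$ it embeds $K(x_1)$ into the Laurent series field $K((x_1))$, writes $f = \sum_{i \ge m} a_i x_1^i$ with $a_m \neq 0$, and sets $\hat v(f) = (m, v(a_m))$ (a composite of the order valuation with $v$, with the axioms left to the reader); the general case then follows from $K(x_1,\dots,x_n) = K(x_2,\dots,x_n)(x_1)$, applying the one-variable construction over the inductively valued base field. You instead build the valuation in one stroke as a Gauss-type monomial valuation on $K[x_1,\dots,x_n]$ and extend to fractions. The two recipes actually produce the same valuation --- unwinding the paper's induction gives exactly your ``lex-minimal exponent, then $v$ of its coefficient'' formula --- but the proofs are organized differently, and the trade-offs are real: the paper's induction reduces all verification to the one-variable case, where leading-term arguments are transparent inside $K((x_1))$, at the cost of invoking Laurent series and deferring details; your argument is self-contained and purely polynomial, and it isolates the one point where something could go wrong, namely the no-cancellation claim for the product. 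That claim is handled correctly: in the totally ordered group $\Integ^n$ (lex), the relations $\alpha \ge \alpha_0$, $\beta \ge \beta_0$, $\alpha+\beta = \alpha_0+\beta_0$ force $\alpha=\alpha_0$ and $\beta=\beta_0$, so the coefficient of $x^{\alpha_0+\beta_0}$ in $fg$ is $c_{\alpha_0}d_{\beta_0} \neq 0$, giving $w(fg)=w(f)+w(g)$; your observation that the $G$-component never influences which monomial realizes the minimum is also the right way to decouple the two factors of $\Integ^n \times G$. The remaining steps (ultrametric inequality with the $v(0)=\infty$ convention, passage to $\Frac(K[x_1,\dots,x_n])$, and evaluation on $x_i$ and on $K^*$) are routine and correctly executed.
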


\begin{proof}
First consider the case $n=1$. Note that $K(x) = K^{(1)}$ is a subfield of $K((x))$.
Given $f \in K(x)^*$, we may write $f = \sum_{i=m}^\infty a_i x^i$ with $a_i \in K$ for all $i$ and $a_m \neq 0$;
then we define $\hat v(f) = (m, v(a_m))$.  Details left to the reader.
For the general case, we note that $K(x_1, \dots, x_n) = K(x_2, \dots, x_n) (x_1)$ and argue by induction.
\end{proof}

\begin{definition}  \label {pcvie82p3sbfyaeop}
Let $\bk$ be a field, $\ck$ its algebraic closure and $R$ a $\bk$-domain.
\begin{enumerata}

\item We say that $R$ is \textit{rational over $\bk$} if $\Frac R$ is a purely transcendental extension of $\bk$.
We say that $R$ is \textit{unirational over $\bk$} if there exists a field $F$ such that  $\bk \subseteq \Frac R \subseteq F$ and
$F$ is a purely transcendental extension of $\bk$.

\item We say that $R$ is \textit{geometrically rational} (resp.\ \textit{geometrically unirational}) over $\bk$
if $\ck \otimes_\bk R$ is a domain and is rational (resp.\ unirational) over $\ck$.

\item We say that $R$ is \textit{absolutely factorial} if both $R$ and $\ck \otimes_\bk R$ are unique factorization domains.

\end{enumerata}
\end{definition}

\begin{remark}
The terms defined in parts (a) and (b) of Def.\ \ref{pcvie82p3sbfyaeop} may be used when $R$ is a field,
but we avoid using ``absolutely factorial'' for a field.
\end{remark}

\section{Locally nilpotent derivations and rationality}
\label {SectionSomeClarifications}

The literature devoted to locally nilpotent derivations and rationality contains a few claims that have been published with invalid proofs.
Because those claims are directly at the center of the subject of the present article,
we feel that it is necessary to provide proofs for them before we can go forward with this investigation.
It is the aim of this section to provide such proofs.
In some cases we strengthen the claim being considered, and in one case
(Prop.\ \ref{cxvoq9wedowjskdpx}\eqref{cxvoq9wedo-a}) we give a simpler proof for a result whose published proof seems to be correct.
We organize this discussion more or less in the form of a historical account, but our goal is not to be exhaustive from the historical point of view;
we simply want to cover the topics that require clarification.


Early work in this area was influenced by the question whether the implication
\begin{equation} \label {pcfh2939edxpX0djf982}
\ML(B) = \bk \implies  \textit{$B$ is rational over $\bk$}
\end{equation}
is true or false, where it is assumed that $B$ is an affine $\bk$-domain and that $\bk$ is an algebraically closed field of characteristic zero.
The implication was known to be true when $\dim B \le 2$ but the general case remained open for several years.
Then Liendo showed that, for every integer $d\ge3$, there exists a counterexample $B$ to implication \eqref{pcfh2939edxpX0djf982} with $\dim B = d$
(cf.\  \cite[Lemma 4.4]{Liendo_Rationality2010}).
In \cite[Ex.\ 1.22]{Popov_Russellfest}, Popov showed that for every integer $d\ge3$ there exist counterexamples with $\dim B \ge d$
and such that the variety $\Spec B$ is smooth.
In both cases (Liendo and Popov), understanding the counterexamples requires familiarity with some sophisticated geometry.
As far as we know, no simple proof has been circulated for such examples.
Our first result gives counterexamples to  \eqref{pcfh2939edxpX0djf982} and the proof of 
part \eqref{cxvoq9wedo-a}  is particularly simple.

\begin{proposition} \label {cxvoq9wedowjskdpx}
Let $K/\bk$ be a finitely generated extension of fields of characteristic zero.
\begin{enumerata}

\item \label {cxvoq9wedo-a}  There exists an affine $\bk$-domain $B$ satisfying
\begin{equation*} 
\text{$\Frac B = K^{(2)}$, $\ML(B) = \bk$ and $K \in \Kscr(B)$.}
\end{equation*}

\item \label {cxvoq9wedo-b}  There exists a \textbf{normal} affine $\bk$-domain $B$ satisfying
\begin{equation*} 
\text{$\Frac B = K^{(2)}$, $\ML(B) = \bk'$ and  $K \in \Kscr(B)$}
\end{equation*}
where $\bk'$ is the algebraic closure of $\bk$ in $K$.

\end{enumerata}
\end{proposition}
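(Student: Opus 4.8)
The plan is to construct $B$ explicitly as a ring of invariants or, more concretely, as a carefully chosen affine $\bk$-subalgebra of a rational function field, so that $B$ carries enough locally nilpotent derivations to kill everything down to $\bk$ (resp.\ $\bk'$) while still retaining $K$ as the fraction field of some kernel. For part \eqref{cxvoq9wedo-a}, write $K = \bk(y_1,\dots,y_m)$ where the $y_i$ are chosen inside an affine domain $R$ with $\Frac R = K$, and consider the polynomial ring $B_0 = R[s,t] = R^{[2]}$, whose fraction field is $K^{(2)}$. The idea is to modify $B_0$ by a localization or by passing to a suitable subalgebra so as to manufacture two ``independent'' locally nilpotent derivations whose combined kernels intersect in $\bk$, while arranging that the partial derivation $\partial/\partial t$ (say) has kernel with fraction field exactly $K$.

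First I would produce on $B_0$ the derivation $D_1 = \partial/\partial s$, which is locally nilpotent with $\ker D_1 = R[t]$ and $\Frac(\ker D_1) = K^{(1)}$. To collapse the $R$-direction I would use Lemma \ref{pc09fb23dcp90qj}: choosing a system of generators $y_1,\dots,y_m$ of $R$ over $\bk$, I would build, for each nontrivial $\bk$-derivation direction on $K$, a locally nilpotent derivation of a localized or enlarged ring whose kernel's fraction field is a proper subfield of $K$. Intersecting the fraction fields of all these kernels should cut $K$ down to $\bk$, giving $\FML(B)=\bk$ and hence $\ML(B)=\bk$. Simultaneously, I would exhibit a single $D\in\lnd(B)$ (essentially $\partial/\partial t$, restricted so that it maps $B$ into $B$ after a suitable localization, as in \ref{pc9293ed0wdjo03}(iii)) whose kernel $A$ satisfies $\Frac A = K^{(1)}$ or, after a second collapsing derivation, $\Frac A = K$, so that $K \in \Kscr(B)$ via $K = \Frac(\ker D)\cap\Frac(\ker D')$ for appropriate $D,D'$.

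For part \eqref{cxvoq9wedo-b} the additional requirement is normality together with $\ML(B)=\bk'$, where $\bk'$ is the algebraic closure of $\bk$ in $K$. The natural move is to take the normalization $\tilde B$ of the ring $B$ built in \eqref{cxvoq9wedo-a} and invoke Lemma \ref{8237ted7f983te}: every $D\in\lnd(B)$ extends uniquely to $\tilde D\in\lnd(\tilde B)$ with the same kernel-fraction-field, so $\Kscr(B)\subseteq\Kscr(\tilde B)$ and in particular $K$ survives as an element of $\Kscr(\tilde B)$. Since $\tilde B$ is normal and by \ref{pc9293ed0wdjo03}(i) each kernel is factorially closed, the intersection $\ML(\tilde B)$ is algebraically closed in $B$ and must contain $\bk'$; the point is then to check it equals $\bk'$, i.e.\ that no further algebraic elements of $K$ over $\bk$ can be removed by locally nilpotent derivations. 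I expect \textbf{this last verification to be the main obstacle}: controlling $\ML$ exactly (rather than merely bounding it) requires showing that every locally nilpotent derivation of $\tilde B$ annihilates all of $\bk'$ but that the $D_1$-type derivation already has kernel-fraction-field meeting $\bk'$ trivially beyond $\bk'$ itself, which forces a careful factoriality/units argument using \ref{pc9293ed0wdjo03}(i) that $\bk'^* = \tilde B^*\cap\Frac(\bk')$ and that $\bk'$ is precisely the algebraic closure of $\bk$ inside $\Frac\tilde B$ fixed by all $\tilde D$.
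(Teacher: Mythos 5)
There are two genuine gaps, one in each part. For part \eqref{cxvoq9wedo-a}, your starting point $B_0 = R[s,t]$, where $R$ is an affine model of $K$, cannot be repaired by ``localization or passing to a suitable subalgebra'' in the manner you describe. The obstruction is concrete: if $K$ is the function field of a smooth affine curve of positive genus and $R$ its coordinate ring, then every $D \in \lnd(R[s,t])$ annihilates $R$ (each orbit of the associated $\mathbb{G}_a$-action is an image of $\aff^1$, which maps constantly to a positive-genus curve), so $\ML(R[s,t]) = R \neq \bk$, and the locally nilpotent derivations you hope would ``collapse the $R$-direction'' simply do not exist. Moreover, Lemma \ref{pc09fb23dcp90qj} is a statement about extending valuations; it produces no derivations, so the mechanism you invoke for cutting $K$ down to $\bk$ is a misuse of that lemma. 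The idea you are missing is to take a subalgebra of $K[x,y]$ that contains the products $r_ix,\,r_iy$ (for generators $r_i$ of $K$ over $\bk$) but \emph{not} the elements $r_i$ themselves: the paper sets $B = \bk[x,y,\,r_1x,r_1y,\dots,r_mx,r_my] \subseteq K[x,y]$, observes that the locally nilpotent derivations $y\,\partial/\partial x$ and $x\,\partial/\partial y$ of $K[x,y]$ preserve this $B$, and that their restrictions $D_1,D_2$ satisfy $\ker(D_1)\cap\ker(D_2) = B\cap K = \bk$ and $K = \Frac(\ker D_1)\cap\Frac(\ker D_2)$, which gives $\ML(B)=\bk$ and $K \in \Kscr(B)$ at one stroke.

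For part \eqref{cxvoq9wedo-b}, your outline (normalize, apply Lemma \ref{8237ted7f983te} to keep $K \in \Kscr(\tilde B)$, get $\bk' \subseteq \ML(\tilde B)$ from normality and factorial closedness) matches the paper up to exactly the point you flag as ``the main obstacle,'' namely the inclusion $\ML(\tilde B) \subseteq \bk'$, and there you have no proof: a ``factoriality/units argument'' does not control which elements of $K$ are annihilated by all of $\lnd(\tilde B)$. This is precisely where Lemma \ref{pc09fb23dcp90qj} is actually used in the paper. Given $\alpha \in \ML(\tilde B)\setminus\{0\}$, one first notes $\alpha \in K$ (since $K \in \Kscr(\tilde B)$ forces $\FML(\tilde B) \subseteq K$); then, for each valuation $v$ of $K$ over $\bk$, one extends it to a valuation $\hat v$ of $K(x,y)$ with $\hat v(x)=(1,0,0)$, $\hat v(y)=(0,1,0)$ and $\hat v(a)=(0,0,v(a))$ for $a \in K^*$; every generator of $B$ has $\hat v \ge 0$, hence $\hat v \ge 0$ on $B$ and therefore on its integral closure $\tilde B$, giving $v(\alpha)\ge 0$. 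Since this holds for every valuation of $K/\bk$, $\alpha$ is algebraic over $\bk$, i.e.\ $\alpha \in \bk'$. Note that this step depends essentially on the explicit generators of $B$ from part \eqref{cxvoq9wedo-a}, so the two gaps are linked: without the paper's construction in (a) there is nothing to feed into the valuation argument in (b).
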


\begin{proof}
\eqref{cxvoq9wedo-a}
Choose $r_1, \dots, r_m$ such that $K = \bk( r_1, \dots, r_m )$ and choose $x,y$ such that $K(x,y) = K^{(2)}$.
Let $B = \bk[x,y, \, r_1x, r_1y, \, r_2x, r_2y, \, \dots, \,  r_m x, r_m y ] \subseteq K[x,y]$.
Then $B$ is an affine $\bk$-domain such that $\Frac B = K(x,y)=K^{(2)}$ and $B \cap K = \bk$.
Consider the $K$-derivations $\delta_1 = y \frac{\partial}{\partial x}$ and $\delta_2 = x \frac{\partial}{\partial y}$ of $K[x,y]$.
Note that $\delta_1$ and $\delta_2$ are locally nilpotent and map $B$ into itself.
Let $D_1, D_2 : B \to B$ be the restrictions of $\delta_1,\delta_2$ respectively, then $D_1,D_2 \in \lnd(B)$ and
$\ker(D_1) \cap \ker(D_2) = B \cap K =\bk$, so $ \ML(B)=\bk $.
It is easy to see that $K = \Frac(\ker D_1) \cap \Frac(\ker D_2)$, so $K \in \Kscr(B)$.

\eqref{cxvoq9wedo-b}
Let $B = \bk[x,y, \, r_1x, r_1y, \, r_2x, r_2y, \, \dots, \,  r_m x, r_m y ]$ be the ring defined in the above paragraph and
let $\tilde B$ be the normalization of $B$.
Then $\tilde B$ is a normal affine $\bk$-domain and $\Frac( \tilde B ) = \Frac(B) = K^{(2)}$.
It is also clear that $K \in \Kscr( \tilde B )$, because $K \in \Kscr(B) \subseteq \Kscr( \tilde B )$ (see Lemma \ref{8237ted7f983te}).
So, to prove the claim, it suffices to show that
\begin{equation} \label {c0vh230efXdfj}
\ML( \tilde B ) = \bk' .
\end{equation}
Since $\tilde B$ is normal, we have $\bk' \subseteq \tilde B$ and consequently $\bk' \subseteq \ML( \tilde B )$ by \ref{pc9293ed0wdjo03}(i).
To prove that $\ML( \tilde B ) \subseteq \bk'$, consider $\alpha \in \ML( \tilde B ) \setminus \{0\}$.
Then $\alpha \in K$ is clear (because $K \in \Kscr( \tilde B )$) and it suffices to show that $\alpha$ is algebraic over $\bk$.
So it's enough to check that $v(\alpha)\ge0$ for every valuation $v$ of $K/\bk$.

Consider an arbitrary valuation $v : K^* \to G$ of $K$ over $\bk$.
Recall that $\Frac \tilde B = K(x,y) = K^{(2)}$.
By Lemma \ref{pc09fb23dcp90qj}, there exists a valuation $\hat v : K(x,y)^* \to \Integ^2 \times G$ satisfying
$\hat v(x) = (1,0,0)$, $\hat v(y) = (0,1,0)$ and $\hat v(a) = (0,0,v(a))$ for all $a \in K^*$.
As $\alpha \in K^*$, we have $\hat v(\alpha) = (0,0,v(\alpha))$.
We have $\hat v(\xi) \ge (0,0,0)$ for each $\xi \in \bk \cup \{x,y, \, r_1x, r_1y, \, r_2x, r_2y, \, \dots, \,  r_m x, r_m y \}$,
so   $\hat v(\xi) \ge (0,0,0)$ for all $\xi \in B$ and hence for all $\xi \in \tilde B$.
As $\alpha \in \tilde B$, $(0,0,0) \le \hat v(\alpha) = (0,0,v(\alpha))$, so $v(\alpha) \ge 0$.
Since this is true for every valuation $v$ of $K/\bk$, $\alpha$ is algebraic over $\bk$.
This proves \eqref{c0vh230efXdfj}, and completes the proof of the Proposition.
\end{proof}

Prop.\ \ref{cxvoq9wedowjskdpx}\eqref{cxvoq9wedo-a}  implies the last assertion of \cite[Thm 4.2]{Liendo_Rationality2010};
however the proof given in \cite{Liendo_Rationality2010} is much more complicated than this one. 
Also note that  Prop.\ \ref{cxvoq9wedowjskdpx}\eqref{cxvoq9wedo-b} gives us normal counterexamples to \eqref{pcfh2939edxpX0djf982} in every dimension $\ge3$.
More precisely:
\begin{enumerate}
\item[$(*)$] \begin{minipage}[t]{.9\linewidth} \it
If $\bk$ is a field of characteristic zero and $d\ge3$ is an integer, then there exists a normal affine $\bk$-domain $B$
such that $\dim B = d$, $\ML(B)=\bk$ and $B$ is not unirational (hence not rational) over $\bk$.
\end{minipage}
\end{enumerate}
Indeed, let $K$ be a finitely generated extension of $\bk$ such that $\trdeg_\bk(K) = d-2$,
$K$ is not unirational over $\bk$ and $\bk$ is algebraically closed in $K$.\footnote{For instance,
let $F$ be the field of fractions of $\bk[u,v] / (v^2 - u(u^2-1))$ (where $\bk[u,v] = \kk2$) and set $K = F^{(d-3)}$.}
Applying Prop.\ \ref{cxvoq9wedowjskdpx}\eqref{cxvoq9wedo-b} to $K/\bk$ gives a domain $B$ satisfying the requirements of $(*)$.

\bigskip

The following implication is also considered in 
\cite{Liendo_Rationality2010}, for a field $\bk$ of characteristic zero and an affine $\bk$-domain $B$:
\begin{equation} \label {p9cv9p23oe0fpdof}
\textit{$\dim B \ge 2$ and $\ML(B) = \bk$} \ \  \implies \ \  \textit{$\Frac B = K^{(2)}$ for some extension field $K$ of $\bk$}.
\end{equation}
In fact the first part of \cite[Thm 4.2]{Liendo_Rationality2010}
asserts that \eqref{p9cv9p23oe0fpdof} is true.
However, the proof given in  \cite{Liendo_Rationality2010} is based on the following false statement:
{\it if $D_1,D_2 \in \lnd(B)\setminus\{0\}$ satisfy $\ker(D_1) \neq \ker(D_2)$, then $\Frac B$ has transcendence degree $2$
over $\Frac(\ker D_1) \cap \Frac(\ker D_2)$} (examples show that this transcendence degree can be larger than $2$).
A different proof of implication \eqref{p9cv9p23oe0fpdof} is given in \cite{Dub_Liendo_2016}, but it is also invalid.
More precisely, implication \eqref{p9cv9p23oe0fpdof} would follow from \cite[Cor.\ 3.2]{Dub_Liendo_2016}, which is itself
a consequence of \cite[Prop.\ 3.1]{Dub_Liendo_2016}.
However, the proof of \cite[Prop.\ 3.1]{Dub_Liendo_2016} is faulty:
one has two subfields  $K_{X \times \proj^1}^{G_a}$ and $K_X$ of a field $K_{X \times \proj^1}$ and 
one has to prove the inclusion $K_{X \times \proj^1}^{G_a} \subseteq K_X$;
it is claimed that the inclusion is proved, but in fact the argument only
proves that $K_{X \times \proj^1}^{G_a}$ is isomorphic to a subfield of $K_X$,
which is not sufficient for the proof to be valid.\footnote{I discussed
these issues with the authors of \cite{Liendo_Rationality2010} and \cite{Dub_Liendo_2016} and
they agree that the problems that I am pointing out make their proofs invalid.
At the time of those discussions, I didn't know how to fix the problem; Cor.\ \ref{0xc9vj2o3w9edno9} occurred to me a year later.
Cor.\ \ref{0xc9vj2o3w9edno9} implies that the statement of \cite[Thm 4.2]{Liendo_Rationality2010} is true,
but I don't know if that of \cite[Prop.\ 3.1]{Dub_Liendo_2016} is true or false.}

Nevertheless, the following result ascertains that implication \eqref{p9cv9p23oe0fpdof} is true
(the implication follows from Cor.\ \ref{0xc9vj2o3w9edno9}).
Recall that a field extension $L/E$ is said to be ruled (one also says that $L$ is ruled over $E$)
if there exists a field $F$ such that $E \subseteq F \subseteq L$ and $L = F^{(1)}$.

\begin{proposition} \label {ckjv9hrjdulxjkdxlkbrvn}
Let $\bk$ be a field and $B$ a $\bk$-domain such that $\trdeg_\bk(B) < \infty$ and $| \Ascr_1^*(B) | > 1$.
Then the following hold.
\begin{enumerata}

\item For each $A \in \Ascr_1^*(B)$, we have $\bk \subseteq A$ and $\Frac(A)$ is ruled over $\bk$.

\item There exists a field $K$ satisfying $\bk \subseteq K \subseteq \Frac B$ and $\Frac B = K^{(2)}$.

\end{enumerata}
\end{proposition}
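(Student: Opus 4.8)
The plan is to reduce both assertions to a single statement about fields and then prove that statement by a L\"uroth-type descent. For part (a), the inclusion $\bk\subseteq A$ is immediate: since $B$ is a $\bk$-domain we have $\bk\subseteq B$, so Lemma \ref{0cvh2039edvjpqw0}(b) gives $\bk\subseteq A$ for every $A\in\Ascr_1^*(B)$. For the ruledness of $\Frac(A)$, write $L=\Frac B$ and, for a given $A\in\Ascr_1^*(B)$, put $K=\Frac(A)$. By Definition \ref{0n3it98bclsmbybp} we have $B_A=(A_A)^{[1]}=K^{[1]}$, hence $L=\Frac(B_A)=K^{(1)}$; in particular $K$ is algebraically closed in $L$. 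Since $|\Ascr_1^*(B)|>1$, I would choose a second element $A'\neq A$ and set $K'=\Frac(A')$; by Lemma \ref{0cvh2039edvjpqw0}(b) we have $A=B\cap K$ and $A'=B\cap K'$, so $K\neq K'$, while $K'$ likewise satisfies $L=(K')^{(1)}$ and is algebraically closed in $L$. Everything is thus reduced to the following field-theoretic claim: \emph{if $L$ has finite transcendence degree over $\bk$ and has two distinct subfields $K,K'$, each containing $\bk$, each algebraically closed in $L$, and each satisfying $L=K^{(1)}=(K')^{(1)}$, then $K$ is ruled over $\bk$.}

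A first clean reduction replaces $L$ by the compositum $N:=KK'$. Since $K\subsetneq N\subseteq L=K^{(1)}$, classical L\"uroth applied to the intermediate field $N$ of $K^{(1)}/K$ gives $N=K^{(1)}$; symmetrically, from $K'\subsetneq N\subseteq L=(K')^{(1)}$ we get $N=(K')^{(1)}$. As $K$ and $K'$ remain algebraically closed in $N$ and distinct, it suffices to prove ruledness of $K$ over $\bk$ under the extra hypothesis $L=KK'$. Working inside $L=K(t)$ with $t$ transcendental over $K$, I would then pick $w\in K'\setminus K$; because $K$ is algebraically closed in $L$, $w$ is transcendental over $K$, and writing $w=P(t)/Q(t)$ with $P,Q\in K[t]$ coprime of maximal degree $d\ge1$ we have $[L:K(w)]=d$. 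The presence of the second, algebraically closed ruling $K'$ passing through $w$ is what should force the $t$-ruling of $L$ to descend to a genuine subfield of $K$.

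The main obstacle is exactly this descent: producing a field $F$ with $\bk\subseteq F\subsetneq K$, $\trdeg(K:F)=1$ and $K=F^{(1)}$. It is tempting to take $F=K\cap K'$, but this fails in general --- the transcendence degree $\trdeg(L:K\cap K')$ can exceed $2$ (precisely the phenomenon that invalidated the earlier published arguments recalled above), so $K\cap K'$ is typically too small. The correct $F$ must instead be cut out as the field of definition of the second ruling relative to the first, and I expect the argument to combine a degree analysis of the rational function $P(t)/Q(t)$ with the valuation-extension Lemma \ref{pc09fb23dcp90qj}: given a valuation of $K/F$ one extends it to $L$ so as to detect which elements of $K$ are algebraic over the candidate $F$, thereby forcing $K=F^{(1)}$. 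This step --- essentially a relative L\"uroth / uniqueness-of-ruling theorem valid in arbitrary characteristic --- is where the real work lies and where the previously circulated proofs broke down.

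Finally, part (b) is a formal consequence of part (a) applied to any single $A\in\Ascr_1^*(B)$. Indeed, (a) furnishes a field $F$ with $\bk\subseteq F\subseteq\Frac(A)$ and $\Frac(A)=F^{(1)}$; combining this with $L=\Frac(A)^{(1)}$ (which holds because $B_A=(A_A)^{[1]}$) yields $L=(F^{(1)})^{(1)}=F^{(2)}$. Thus $K:=F$ satisfies $\bk\subseteq K\subseteq\Frac B$ and $\Frac B=K^{(2)}$, as required.
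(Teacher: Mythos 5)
Your proposal correctly disposes of the easy parts: the inclusion $\bk\subseteq A$ via Lemma \ref{0cvh2039edvjpqw0}, the reduction of the ruledness assertion to a purely field-theoretic statement about two distinct rulings $L=K^{(1)}=(K')^{(1)}$ with $K,K'$ algebraically closed in $L$, and the derivation of (b) from (a). But the core of the proposition --- that $K=\Frac(A)$ is actually ruled over $\bk$ --- is never proved. You say so yourself: after the compositum reduction and the degree bookkeeping for $w=P(t)/Q(t)$, you write that the descent to a field $F$ with $K=F^{(1)}$ is ``where the real work lies'' and that you ``expect'' an argument combining degree analysis with Lemma \ref{pc09fb23dcp90qj}. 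That expectation is not an argument, and Lemma \ref{pc09fb23dcp90qj} points the wrong way: it extends a valuation \emph{upward} from $K$ to $K(x_1,\dots,x_n)$, whereas what is needed is control of the residue field of a valuation \emph{restricted downward} to a subfield. So the proposal, as written, has a genuine gap exactly at the step that defeated the previously published proofs.

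The missing ingredient is the Ruled Residue Theorem \cite[Lemma 2.3]{OhmSurvey}: if $L=F^{(1)}$ and $v$ is a valuation of $L$ restricting to $v_0$ on $F$, then the residue field extension $\kappa_v/\kappa_{v_0}$ is either ruled or algebraic. The paper's proof runs as follows. With $F_i=\Frac(A_i)$ and $B_{A_i}=F_i[s_i]$, take the valuation $v$ of $\Frac B=F_1(s_1)$ over $F_1$ with $v(s_1)=-1$; its residue field $\kappa$ is $\bk$-isomorphic to $F_1$. Restrict $v$ to $F_2$ to get $v_0$ with residue field $\kappa_0$, and apply the Ruled Residue Theorem to the \emph{other} ruling $\Frac B=F_2^{(1)}$: the extension $\kappa/\kappa_0$ is ruled or algebraic. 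To exclude ``algebraic'', pick $b\in A_2\setminus A_1$ (possible by Lemma \ref{0cvh2039edvjpqw0}); its degree $d$ in $s_1$ is positive since $F_1\cap B=A_1$, so $v_0(b)=v(b)=-d<0$, hence $v_0$ is nontrivial on $F_2$ and $\trdeg_\bk(\kappa_0)<\trdeg_\bk(F_2)=\trdeg_\bk(F_1)=\trdeg_\bk(\kappa)$. Therefore $\kappa/\kappa_0$ is ruled, so $\kappa\cong F_1$ is ruled over $\bk$. Note also that your compositum reduction via L\"uroth, while correct, buys nothing here; the paper works directly in $\Frac B$ and never needs $L=KK'$.
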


\begin{proof}
In (a), the fact that $\bk \subseteq A$ follows from Lemma \ref{0cvh2039edvjpqw0}.
To prove the other part of (a), consider distinct elements $A_1, A_2$ of $\Ascr_1^*(B)$, let $F_i = \Frac A_i$ ($i=1,2$) and let us prove 
that $F_1$ is ruled over $\bk$.
Choose $s_1,s_2 \in B$ such that (for each $i$) $B_{A_i} = F_i[ s_i ] = F_i^{[1]}$.
Let $v$ be the valuation of $\Frac B = F_1(s_1)$ over $F_1$ that satisfies $v(s_1) = -1$,
and let $v_0$ be the valuation of $F_2$ obtained by restricting $v$.
If $\kappa$ (resp.\ $\kappa_0$) denotes the residue field of $v$ (resp.\ of $v_0$) then we have the field extensions $\kappa / \kappa_0 / \bk$.
Since $\Frac B = F_2^{(1)}$, the Ruled Residue Theorem \cite[Lemma 2.3]{OhmSurvey} implies that the extension $\kappa/\kappa_0$ is either ruled or algebraic.
Note that $A_2 \nsubseteq A_1$ (see Lemma \ref{0cvh2039edvjpqw0}) and choose $b \in A_2 \setminus A_1$.
Let $d$ be the degree of $b \in B \subseteq F_1[s_1]$ as a polynomial in $s_1$;
then $d>0$, otherwise $b \in F_1 \cap B = A_1$ (by Lemma \ref{0cvh2039edvjpqw0}), which is not the case.
So $v(b) = -d <0$; as $b \in F_2^*$, $v_0(b)$ is defined and $v_0(b) = v(b) <0$.
So $v_0$ is not the trivial valuation on $F_2$; since $\trdeg_\bk(B)<\infty$, it follows that $\trdeg_\bk(\kappa_0) < \trdeg_\bk(F_2)$.
Now $\trdeg_\bk(F_2) = \trdeg_\bk(B)-1 = \trdeg_\bk(F_1) = \trdeg_\bk(\kappa)$, the last equality because $\kappa$ is $\bk$-isomorphic to $F_1$,
so we obtain $\trdeg_\bk(\kappa_0) < \trdeg_\bk(\kappa)$, showing that $\kappa / \kappa_0$ is not algebraic. So $\kappa / \kappa_0$ is ruled,
and consequently $\kappa/\bk$ is ruled.
Since $F_1$ is $\bk$-isomorphic to $\kappa$ it follows that $F_1$ is ruled over $\bk$.
This proves (a).

For (b), choose $A \in \Ascr_1^*(B)$; then (a) implies that there exists $K$ such that $\bk \subseteq K \subset \Frac(A)$ and $\Frac(A) = K^{(1)}$.
Since $\Frac(B) = (\Frac A)^{(1)}$, (b) follows.
\end{proof}

\begin{corollary} \label {0xc9vj2o3w9edno9}
Let $\bk$ be a field of characteristic zero and $B$ a $\bk$-domain.
If  $\trdeg_\bk(B) < \infty$ and $B$ is not semi-rigid then the following hold.
\begin{enumerata}

\item \label {dof239f2p0bfe7} For each $D \in \lnd(B)$, $\Frac(\ker D)$ is ruled over $\bk$.
\item \label {c0kwj4bt9fp23k}   There exists a field $K$ satisfying $\bk \subseteq K \subseteq \Frac B$ and $\Frac B = K^{(2)}$.

\end{enumerata}
\end{corollary}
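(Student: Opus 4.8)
The plan is to obtain this Corollary as a direct consequence of Proposition \ref{ckjv9hrjdulxjkdxlkbrvn}, with Lemma \ref{0cvh2039edvjpqw0}(c) supplying the bridge between the two sets of hypotheses. First I would note that since $\Char B = 0$ and $B$ is not semi-rigid, Lemma \ref{0cvh2039edvjpqw0}(c) yields $|\Ascr_1^*(B)| > 1$. Together with the standing assumption $\trdeg_\bk(B) < \infty$, this is precisely what Proposition \ref{ckjv9hrjdulxjkdxlkbrvn} requires, so both parts of that Proposition become available.

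Part (b) of the Corollary is then literally part (b) of the Proposition, and nothing further needs to be said there.

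For part (a), I would split into two cases according to whether $D$ vanishes. If $D \neq 0$, then $\ker D \in \Ascr_1(B)$, and the inclusion $\Ascr_1(B) \subseteq \Ascr_1^*(B)$ from Lemma \ref{0cvh2039edvjpqw0}(c) places $\ker D$ in $\Ascr_1^*(B)$; Proposition \ref{ckjv9hrjdulxjkdxlkbrvn}(a) then asserts that $\Frac(\ker D)$ is ruled over $\bk$. The only situation not directly covered by the Proposition is $D = 0$, where $\Frac(\ker D) = \Frac B$. Here I would invoke part (b), already established, and write $\Frac B = K^{(2)} = (K^{(1)})^{(1)}$ with $\bk \subseteq K^{(1)} \subseteq \Frac B$; taking $F = K^{(1)}$ exhibits $\Frac B$ as $F^{(1)}$ with $\bk \subseteq F \subseteq \Frac B$, so $\Frac B$ is ruled over $\bk$.

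Since the substantive work---the valuation-theoretic argument invoking the Ruled Residue Theorem---has already been carried out inside Proposition \ref{ckjv9hrjdulxjkdxlkbrvn}, there is no genuine obstacle at this stage. The only point demanding a moment's attention is the degenerate case $D = 0$ in part (a): the element $\ker 0 = B$ is not an instance of $\Ascr_1^*(B)$, so its ruledness cannot be read off from Proposition \ref{ckjv9hrjdulxjkdxlkbrvn}(a) and must instead be deduced from the decomposition $\Frac B = K^{(2)}$ provided by part (b).
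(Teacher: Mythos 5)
Your proposal is correct and follows essentially the same route as the paper: both deduce $|\Ascr_1^*(B)|>1$ from Lemma \ref{0cvh2039edvjpqw0}(c) and then invoke Prop.\ \ref{ckjv9hrjdulxjkdxlkbrvn}, whose valuation-theoretic argument does all the real work. Your explicit treatment of the degenerate case $D=0$ in part (a) --- deducing ruledness of $\Frac B$ from the decomposition $\Frac B = K^{(2)}$ of part (b) --- is a point the paper's terse proof glosses over, and it is handled correctly.
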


\begin{proof}
We have $\Ascr_1(B) \subseteq \Ascr_1^*(B)$ by	Lemma \ref{0cvh2039edvjpqw0},
so  $| \Ascr_1^*(B) | > 1$, 
so the claim follows from Prop.\ \ref{ckjv9hrjdulxjkdxlkbrvn}.
\end{proof}

It follows from Cor.\ \ref{0xc9vj2o3w9edno9} that the statement of \cite[Thm 4.2]{Liendo_Rationality2010} is true,
despite the fact that the proofs of it given in \cite{Liendo_Rationality2010} and \cite{Dub_Liendo_2016} are flawed.
Note that Liendo presented his result in the form of a birational characterization of affine $\bk$-varieties $X$ satisfying $\ML(X)=\bk$.
That is an interesting viewpoint, so let us reformulate our results as follows:

\begin{corollary} \label {co9fp2o93ibrfp0er9yflaFerif}
Let $L/\bk$ be a finitely generated extension of fields of characteristic zero such that $\trdeg_\bk L \ge2$.
Then the following are equivalent:
\begin{enumerata}

\item There exists an affine $\bk$-domain $B$ satisfying  $\ML(B)=\bk$ and $\Frac B = L$;

\item there exists an affine $\bk$-domain $B$ that is not semi-rigid and satisfies $\Frac B = L$;

\item there exists a field $K$ satisfying $\bk \subseteq K \subseteq L$ and $L = K^{(2)}$.

\end{enumerata}
\end{corollary}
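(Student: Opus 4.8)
The plan is to establish the cycle of implications (a) $\Rightarrow$ (b) $\Rightarrow$ (c) $\Rightarrow$ (a). Two of the three links are immediate consequences of results already in hand, and only the last carries any real content, so the overall argument is short.

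First I would prove (a) $\Rightarrow$ (b). Suppose $B$ is an affine $\bk$-domain with $\ML(B) = \bk$ and $\Frac B = L$. Since $B$ is affine we have $\trdeg_\bk(B) = \trdeg_\bk(L) \ge 2$, and because $\ML(B)=\bk$ this gives $\trdeg(B : \ML(B)) = \trdeg_\bk(B) \ge 2 > 1$; by the Remark following the definition of semi-rigid, this says precisely that $B$ is not semi-rigid, which is (b). For (b) $\Rightarrow$ (c), let $B$ be an affine $\bk$-domain that is not semi-rigid and satisfies $\Frac B = L$. Being affine, $B$ has $\trdeg_\bk(B) < \infty$, so Cor.\ \ref{0xc9vj2o3w9edno9}\eqref{c0kwj4bt9fp23k} applies and yields a field $K$ with $\bk \subseteq K \subseteq \Frac B = L$ and $\Frac B = K^{(2)}$, i.e.\ $L = K^{(2)}$; this is (c).

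The remaining implication (c) $\Rightarrow$ (a) is where the only substantive step lies, and it is here that I would invoke Prop.\ \ref{cxvoq9wedowjskdpx}\eqref{cxvoq9wedo-a}. Given a field $K$ with $\bk \subseteq K \subseteq L$ and $L = K^{(2)}$, the point I must check before applying that proposition is that $K/\bk$ is \emph{finitely generated}, since Prop.\ \ref{cxvoq9wedowjskdpx} takes a finitely generated extension as input. This is the one place where the argument is not purely formal, but it is resolved by the classical fact that every intermediate field of a finitely generated field extension is itself finitely generated over the base: as $L/\bk$ is finitely generated and $\bk \subseteq K \subseteq L$, we conclude that $K/\bk$ is a finitely generated extension (of characteristic zero). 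Applying Prop.\ \ref{cxvoq9wedowjskdpx}\eqref{cxvoq9wedo-a} to $K/\bk$ then produces an affine $\bk$-domain $B$ with $\Frac B = K^{(2)} = L$ and $\ML(B) = \bk$, which is exactly (a). This closes the cycle and completes the proof.
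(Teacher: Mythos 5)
Your proof is correct and follows essentially the same route as the paper: (a)$\Rightarrow$(b) is immediate from the characterization of semi-rigidity, (b)$\Rightarrow$(c) is Cor.~\ref{0xc9vj2o3w9edno9}\eqref{c0kwj4bt9fp23k}, and (c)$\Rightarrow$(a) is Prop.~\ref{cxvoq9wedowjskdpx}\eqref{cxvoq9wedo-a}. Your one addition---verifying via the classical intermediate-field theorem that $K/\bk$ is finitely generated before invoking Prop.~\ref{cxvoq9wedowjskdpx}---is a legitimate hypothesis check that the paper's proof leaves implicit, and it is handled correctly.
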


\begin{proof}
Implication (a)$\Rightarrow$(b) is trivial.  Implications (b)$\Rightarrow$(c) and (c)$\Rightarrow$(a)
follow, respectively, from Cor.\ \ref{0xc9vj2o3w9edno9}\eqref{c0kwj4bt9fp23k}   and Prop.\ \ref{cxvoq9wedowjskdpx}\eqref{cxvoq9wedo-a}.
\end{proof}

\medskip

It is in Liendo's thesis that the definition of $\FML(B)$ was first proposed, together with the conjecture
that $\FML(B) = \bk$ implies rationality of $B$ over $\bk$.
Then Arzhantsev, Flenner, Kaliman, Kutzschebauch, Zaidenberg \cite[Prop.\ 5.1]{Arz_Flen_Kaliman_Kutz_Zaid:FlexAut} 
and Popov \cite[Thm 4]{Popov_InfiniteDimAlgGps2014} proved:

\begin{theorem} \label {kfkjep930120wdkoi9}
If $\bk$ is an algebraically closed field of characteristic zero and $B$ is an affine $\bk$-domain
satisfying $\FML(B) = \bk$, then $B$ is unirational over $\bk$.
\end{theorem}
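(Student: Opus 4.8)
The plan is to read the hypothesis $\FML(B)=\bk$ as a statement about the group $G$ generated inside $\Aut(X)$ by the one-parameter subgroups $\exp(tD)$, $D\in\lnd(B)$, acting on $X=\Spec B$, and then to produce a dominant morphism from an affine space to $X$. Set $L=\Frac B$ and $n=\dim B$. First I would reinterpret $\FML(B)$ infinitesimally. Every $D\in\lnd(B)\setminus\{0\}$ extends to a derivation $\bar D$ of $L$, and since $\bar D$ is a nonzero derivation killing $K:=\Frac(\ker D)$ while $L=K^{(1)}$ with $K$ algebraically closed in $L$ (both by \ref{pc9293ed0wdjo03}(iv)), its kernel $\ker\bar D$ is algebraic over $K$ and hence equal to $K$. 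Because $\Char\bk=0$, a rational function is fixed by $\exp(t\bar D)$ exactly when it is killed by $\bar D$, so with $\mathfrak{g}\subseteq\Der_\bk(B)$ denoting the $\bk$-Lie algebra generated by $\lnd(B)$ one gets
$$\FML(B)=\bigcap_{D\in\lnd(B)}\ker\bar D=\bigl\{\,f\in L \;:\; \delta(f)=0\ \text{for all}\ \delta\in\mathfrak{g}\,\bigr\},$$
the last equality because $Df=0$ for all $D\in\lnd(B)$ forces $\delta f=0$ for every iterated bracket $\delta$. Thus $\FML(B)=\bk$ says precisely that the only common first integrals of the vector fields in $\mathfrak{g}$ are constants, equivalently that the invariant field $L^{G}$ is $\bk$.

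Next I would convert ``no nonconstant common first integral'' into ``a general orbit of $G$ is dense''. Let $V=L\cdot\mathfrak{g}\subseteq\Der_\bk(L)$ and $r=\dim_L V$. The subspace $V$ is involutive, since $[a\delta,b\delta']=ab[\delta,\delta']+a\,\delta(b)\,\delta'-b\,\delta'(a)\,\delta\in V$ for $a,b\in L$ and $\delta,\delta'\in\mathfrak{g}$, using $[\delta,\delta']\in\mathfrak{g}$. The algebraic Frobenius theorem in characteristic zero then gives $\trdeg_\bk L^{V}=n-r$, where $L^{V}$ is the field of common first integrals; combined with the previous paragraph, $L^{V}=\FML(B)=\bk$ forces $r=n$. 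Geometrically this means the distribution on $X$ spanned by the vector fields of $\mathfrak{g}$ has generic rank $n$, so the tangent space to the $G$-orbit of a general closed point $p$ is all of $T_pX$ and the orbit $G\cdot p$ is dense in $X$; equivalently, a general $p$ lies in no proper closed $G$-stable subvariety.

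Granting a dense orbit, I would extract unirationality by a dimension-increment argument using only the flows of the $\lnd(B)$. Fix a general $p$ with $\overline{G\cdot p}=X$ and build word maps $\Psi_j:\aff^{k_j}\to X$, $(t_1,\dots,t_{k_j})\mapsto\exp(t_1D_1)\cdots\exp(t_{k_j}D_{k_j})(p)$, as follows. Starting from $\Psi_0=p$, suppose $Y_j=\overline{\operatorname{im}\Psi_j}$ has dimension $<n$. Since $Y_j\ni p$ is a proper closed subvariety it cannot be $G$-stable (otherwise $\overline{G\cdot p}\subseteq Y_j$), so some $D\in\lnd(B)$ satisfies $\exp(tD)\,Y_j\nsubseteq Y_j$ for some $t$; prepending this flow to $\Psi_j$, i.e.\ setting $\Psi_{j+1}(t,t_1,\dots,t_{k_j})=\exp(tD)\Psi_j(t_1,\dots,t_{k_j})$, then yields $Y_{j+1}$ with $\dim Y_{j+1}\ge\dim Y_j+1$. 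After at most $n$ steps we obtain a word map $\Psi:\aff^{k}\to X$ that is dominant, and pulling back function fields gives an embedding $\Frac B=\bk(X)\hookrightarrow\bk(\aff^{k})=\bk^{(k)}$, which is exactly the assertion that $B$ is unirational over $\bk$.

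The main obstacle is the middle step. The equivalence $\FML(B)=\bk\Leftrightarrow L^{G}=\bk$ and the final increment argument are routine, but passing from triviality of the invariant field to the existence of a dense orbit is the substantive point: it requires the algebraic Frobenius theorem (the transcendence degree of the first integrals of an involutive distribution of rank $r$ equals $n-r$) together with the fact that, for the ind-group $G=\langle\exp(tD):D\in\lnd(B)\rangle=\mathrm{SAut}(X)$, the tangent space to an orbit at a smooth point is the evaluation of the Lie algebra $\mathfrak{g}$, so that the generic orbit dimension equals $\dim_L(L\cdot\mathfrak{g})$. These orbit-theoretic facts are exactly where the work of \cite{Arz_Flen_Kaliman_Kutz_Zaid:FlexAut} and \cite{Popov_InfiniteDimAlgGps2014} would be invoked.
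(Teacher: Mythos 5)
A preliminary remark on the comparison you asked for: the paper does not prove this theorem at all; it is imported from the literature, attributed to \cite[Prop.\ 5.1]{Arz_Flen_Kaliman_Kutz_Zaid:FlexAut} and \cite[Thm 4]{Popov_InfiniteDimAlgGps2014}. Your outline does follow the strategy of those cited proofs: identify $\FML(B)$ with the field of rational invariants of the group $G$ generated by all $\exp(tD)$, $D \in \lnd(B)$ (your step 1, which is correct), and convert a dense $G$-orbit into a dominant word map $\aff^k \to X$ (your step 3, whose dimension-increment argument is also correct).

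However, your middle step contains a genuine gap, and it sits exactly where you place the weight of the proof. The ``algebraic Frobenius theorem'' you invoke --- that for an involutive $L$-subspace $V \subseteq \Der_\bk(L)$ of rank $r$ one has $\trdeg_\bk(L^V) = n-r$ --- is false. What is true is only the inequality $\trdeg(L : L^V) \ge \dim_L V$ (because $V \subseteq \Der_{L^V}(L)$, whose $L$-dimension equals $\trdeg(L:L^V)$ in characteristic zero), and that inequality carries no information when $L^V = \bk$. A counterexample to the equality: take $L = \bk(x,y)$ and $V = L\delta$ with $\delta = \partial_x + y\,\partial_y$. Any rank-one distribution is involutive, yet $L^\delta = \bk$: if $f = P/Q$ is in lowest terms and $\delta f = 0$, then $Q\,\delta P = P\,\delta Q$ forces $P \mid \delta P$ and $Q \mid \delta Q$; since $\delta$ does not raise total degree, $\delta P = cP$ and $\delta Q = cQ$ for one and the same $c \in \bk$, and writing $P = \sum_k P_k(x) y^k$ gives $P_k' = (c-k)P_k$, which forces $P$ (and likewise $Q$) to be a scalar multiple of a single power of $y$, whence $f$ is constant. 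So a rank-one involutive distribution on a surface can have no nonconstant rational first integrals, and your deduction ``$L^V = \bk \Rightarrow r = n$'' collapses. Geometrically, involutive algebraic distributions integrate to analytic leaves (here $y = ce^{x}$), not algebraic ones, and a rational first integral must be constant on the Zariski closure of each leaf, which may be all of $X$.

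Note that $\delta$ above is not locally nilpotent, so this does not threaten the theorem itself; it shows that the theorem cannot follow from involutivity alone, and that the locally nilpotent (i.e., algebraic group) structure must enter essentially. The input that actually bridges the gap in \cite{Arz_Flen_Kaliman_Kutz_Zaid:FlexAut} and \cite{Popov_InfiniteDimAlgGps2014} is group-theoretic rather than differential-geometric: for a group generated by connected algebraic (here unipotent one-parameter) subgroups, orbits are constructible, locally closed algebraic subsets, so orbit closures furnish genuine algebraic ``leaves,'' and a Rosenlicht-type argument then shows that triviality of the invariant field forces a dense orbit. Your closing paragraph does point to those references, but for the wrong pair of facts: the Frobenius-type equality you list as needed is false, and the tangent-space description of orbits does not by itself repair it. In effect your proof runs the logic backwards: the full-rank statement $\dim_L(L \cdot \mathfrak{g}) = n$ is a consequence of the dense-orbit theorem, not a means of proving it.
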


\begin{remark}
Suppose that $\bk$ and $B$ satisfy all assumptions of Thm \ref{kfkjep930120wdkoi9} and let $n = \dim B$.
Then one can show that $B \subseteq \bk^{[n]}$, which is stronger than $B$ being unirational over $\bk$.
See Thm \ref{9i3oerXfvdf93p04efeJ}.
\end{remark}

Observe that Thm \ref{kfkjep930120wdkoi9} does not answer the question whether the implication
\begin{equation} \label{8263fnehqwy1jk} 
\textit{$\FML(B) = \bk$ $\implies$ $B$ is rational over $\bk$}
\end{equation}
is true or false, where $\bk$ is an algebraically closed field of characteristic zero and $B$ is an affine $\bk$-domain.
It is well known that \eqref{8263fnehqwy1jk} is true when $\dim B \le 2$, even without assuming that $\bk$ is algebraically closed
(see Cor.\ \ref{cknvlwr90fweodpx}(a)).
On the other hand, Popov \cite[Thm 2]{Popov_RatFML2013} showed that there exist affine $\bk$-domains $B$ satisfying $\FML(B)=\bk$,
$B$ is not stably rational over $\bk$ and $\Spec B$ is a smooth variety, and any such $B$ is a counterexample to \eqref{8263fnehqwy1jk}.
As these examples of Popov all satisfy $\dim B \ge 263168$,\footnote{In the smallest example of Popov,
$\Spec B$ is a quotient of $\mathbf{SL}_{513}$ by a finite subgroup.}
it is interesting to discuss what happens for small values of $\dim B \ge 3$.

Result \cite[Thm 5.6]{Liendo_Rationality2010} asserts
that implication \eqref{8263fnehqwy1jk} is true when $\dim B \le 3$, but the proof given there is invalid for several reasons.
(One of the reasons is that the proof of \cite[Thm 5.6]{Liendo_Rationality2010} uses  \cite[Lemma 5.4]{Liendo_Rationality2010},
whose proof is faulty: on the first line of page 3665, it is not true that $L \cap \Frac(\ker\partial)=\bk$
together with Lemma 1.3(i) imply that $\Frac(\ker\partial)=\bk(x_1,\dots,x_n)$.)

We now prove that implication \eqref{8263fnehqwy1jk} is true at least up to dimension 4:

\begin{corollary} \label {cknvlwr90fweodpx}
Let $\bk$ be a field of characteristic zero and $B$ an affine $\bk$-domain satisfying \mbox{$\FML(B) = \bk$.}
\begin{enumerata}

\item If $\dim B \le 2$ then  $B$ is rational over $\bk$.

\item If $\dim B \le 4$ and $\bk$ is algebraically closed then $B$ is rational over $\bk$.

\end{enumerata}
\end{corollary}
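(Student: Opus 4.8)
The plan is to reduce rationality to the production of a suitable chain in $\Kscr(B)$ and then to invoke the low-dimensional structure of ruled field extensions. For part (a), assume $\dim B \le 2$ and $\FML(B) = \bk$. If $B$ is semi-rigid there is essentially one nonzero locally nilpotent derivation up to kernel, and the $\FML$ condition is the statement $\FML(B)=\bk$; but in dimension $\le 2$ the field $\Frac B$ has transcendence degree $\le 2$ over $\bk$, so I first treat the trivial case $\dim B \le 1$ (where rationality is automatic for a unirational curve by L\"uroth, noting $\FML(B)=\bk$ forces $\lnd(B) \ne \{0\}$) and then the case $\trdeg_\bk B = 2$. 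In the latter case I would argue that $\FML(B)=\bk$ forces $B$ to be \emph{not} semi-rigid: if it were semi-rigid then $\FML(B) = \Frac(\ker D)$ for the unique $D$ up to kernel, which has transcendence degree $1$ over $\bk$ and so cannot equal $\bk$. Hence $B$ is not semi-rigid, and Cor.\ \ref{0xc9vj2o3w9edno9}\eqref{c0kwj4bt9fp23k} gives a field $K$ with $\bk \subseteq K \subseteq \Frac B$ and $\Frac B = K^{(2)}$; since $\trdeg_\bk \Frac B = 2$ this forces $\trdeg_\bk K = 0$, i.e.\ $K$ is algebraic over $\bk$, and then $\Frac(B)$ being $K^{(2)}$ together with $\FML(B)=\bk$ (which controls the algebraic closure of $\bk$ in $\Frac B$) yields $K = \bk$, so $\Frac B = \bk^{(2)}$ and $B$ is rational.

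For part (b), with $\bk$ algebraically closed and $3 \le \dim B \le 4$ (the cases $\dim B \le 2$ being covered by (a)), the strategy is the same at the top level: $\FML(B)=\bk$ again rules out semi-rigidity, so Cor.\ \ref{0xc9vj2o3w9edno9}\eqref{c0kwj4bt9fp23k} produces $K$ with $\Frac B = K^{(2)}$ and $\bk \subseteq K \subseteq \Frac B$. The point is that $\trdeg_\bk K = \dim B - 2 \in \{1,2\}$. To finish I must show $K$ is itself rational over $\bk$, for then $\Frac B = K^{(2)} = \bk^{(\dim B)}$. This is where the low transcendence degree is essential: when $\dim B = 3$, $K$ has transcendence degree $1$ and is unirational over the algebraically closed field $\bk$ (it sits inside the unirational field $\Frac B$ by Theorem \ref{kfkjep930120wdkoi9}), hence rational by L\"uroth's theorem; when $\dim B = 4$, $K$ has transcendence degree $2$ and is unirational over the algebraically closed $\bk$, hence rational by the Castelnuovo--Zariski theorem (unirational implies rational for surfaces in characteristic zero).

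The key input I would need to nail down carefully is the \emph{unirationality} of the intermediate field $K$, not merely of $\Frac B$. Theorem \ref{kfkjep930120wdkoi9} gives unirationality of $\Frac B = K^{(2)}$, i.e.\ $\Frac B$ embeds in a purely transcendental extension of $\bk$; I would then argue that $K$, being algebraically closed in $K^{(2)}=\Frac B$ after noting $K$ is relatively algebraically closed in $K^{(1)}$ and then in $K^{(2)}$, inherits unirationality, since a subfield of a unirational extension that is the field of constants of the two added transcendentals is again unirational. The main obstacle is precisely this descent of unirationality from $\Frac B$ to $K$: one must verify that $K$ is the relative algebraic closure of $\bk$ in the appropriate sense and that $K^{(2)}$ unirational forces $K$ unirational. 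Once unirationality of $K$ is secured, L\"uroth (for $\trdeg_\bk K = 1$) and Castelnuovo (for $\trdeg_\bk K = 2$, using $\Char \bk = 0$ and $\bk = \bar\bk$) close the argument, and the dimension bound $\dim B \le 4$ is exactly the threshold beyond which Castelnuovo fails and Popov's counterexamples appear.
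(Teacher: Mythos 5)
Your proposal is correct and follows the paper's own proof essentially step for step: non-semi-rigidity from $\FML(B)=\bk$ together with $\dim B\ge 2$, then Cor.~\ref{0xc9vj2o3w9edno9} to produce $K$ with $\Frac B = K^{(2)}$, then $K=\bk$ in dimension $2$ (since $\FML(B)=\bk$ is algebraically closed in $\Frac B$), and the Generalized L\"uroth / Castelnuovo theorems in dimensions $3$ and $4$ after descending unirationality from $\Frac B$ to $K$ via Thm~\ref{kfkjep930120wdkoi9}. The one step you flag as a potential obstacle---the descent of unirationality to $K$---is in fact immediate from the paper's definition (Def.~\ref{pcvie82p3sbfyaeop}): if $\bk \subseteq \Frac B \subseteq F$ with $F$ purely transcendental over $\bk$, then any intermediate field $\bk \subseteq K \subseteq \Frac B$ satisfies the same condition, so no relative-algebraic-closure argument is needed.
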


\begin{proof}
Let $n = \dim B$.  We may assume that $n \in \{2, 3, 4\}$, otherwise the result is trivial
(in fact part (a) is well known, but we include a proof of the case $n=2$ because it is short).
By Cor.\ \ref{0xc9vj2o3w9edno9}, there exists a field $K$ such that $\bk \subseteq K \subseteq \Frac B$ and $\Frac B = K^{(2)}$.
If $n=2$ then we must have $K=\bk$ (because $\FML(B) = \bk$ implies that $\bk$ is algebraically closed in $\Frac B$),
so $\Frac B = \bk^{(2)}$.
Assume that $n \in \{3, 4\}$ and that $\bk$ is algebraically closed.
By Thm \ref{kfkjep930120wdkoi9}, $\Frac(B)$ is unirational over $\bk$, so $K/\bk$ is unirational.
If $n=3$ then $K=\bk^{(1)}$ by the Generalized L\"{u}roth Theorem;
if $n=4$ then $K=\bk^{(2)}$ by Castelnuovo's Theorem (for instance Remark 6.2.1, p.\ 422 of \cite{Hartshorne}).
So in all cases we have $K = \bk^{(n-2)}$, so  $\Frac B = K^{(2)} = \bk^{(n)}$.
\end{proof}

\section{Properties of $\Ascr(B)$ and $\Kscr(B)$}
\label {Somepreliminaries}

Most of this section is devoted to establishing the basic properties of the posets $\Ascr(B)$ and $\Kscr(B)$ introduced in
Def.\ \ref{0cd20FXGICHGgt86cjF5i5rgo909}. The results obtained here are used in the subsequent sections.
At the end of the present section we introduce another invariant of rings, $\lndrank(B)$, also defined in terms of locally nilpotent derivations.

\begin{lemma} \label {0ckj238hqn239hnfpawrnHzb}
If $B$ is a domain of characteristic zero then
each element of $\Ascr(B)$ is factorially closed in $B$ and 
each element of $\Kscr(B)$ is algebraically closed in $\Frac B$.
In particular, $\FML(B)$ is algebraically closed in $\Frac B$.
\end{lemma}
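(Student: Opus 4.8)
The plan is to prove Lemma \ref{0ckj238hqn239hnfpawrnHzb} by reducing the two claims to the single-derivation facts recorded in \ref{pc9293ed0wdjo03}. Recall that factorial closure and algebraic closure are properties preserved under arbitrary intersection: if each member of a family of subrings (resp.\ subfields) is factorially closed in $B$ (resp.\ algebraically closed in $\Frac B$), then so is their intersection. This is the structural observation that makes the lemma almost immediate, so the real content is to verify the property for the individual kernels $\ker D$ and fraction fields $\Frac(\ker D)$.

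First I would treat $\Ascr(B)$. Fix $A \in \Ascr(B)$, so $A = A_\Delta = \bigcap_{D \in \Delta} \ker D$ for some $\Delta \subseteq \lnd(B)$. For each $D \in \Delta \setminus \{0\}$, part \ref{pc9293ed0wdjo03}(i) says that $\ker D$ is factorially closed in $B$; the zero derivation contributes $\ker(0) = B$, which is trivially factorially closed. Thus $A$ is an intersection of factorially closed subrings. To finish, I would verify the elementary closure-under-intersection statement: if $xy \in A$ with $x,y \in B \setminus \{0\}$, then $xy \in \ker D$ for every $D \in \Delta$, hence $x,y \in \ker D$ for every such $D$ by factorial closure of each $\ker D$, hence $x,y \in A$. (One should also note the edge case $\Delta = \emptyset$, where $A = B$ is factorially closed in itself vacuously.)

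Next I would treat $\Kscr(B)$ by the same template. Fix $K = K_\Delta = \bigcap_{D \in \Delta} \Frac(\ker D)$. By \ref{pc9293ed0wdjo03}(iv), for each nonzero $D$ the field $\Frac(\ker D)$ is algebraically closed in $\Frac B$, and $\Frac(\ker 0) = \Frac B$ is trivially so. An intersection of subfields that are each algebraically closed in $\Frac B$ is again algebraically closed in $\Frac B$: if $\alpha \in \Frac B$ is algebraic over $K$, then $\alpha$ is a fortiori algebraic over each $\Frac(\ker D) \supseteq K$, hence lies in each $\Frac(\ker D)$, hence lies in their intersection $K$. This gives the claim for every element of $\Kscr(B)$. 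The final sentence is just the special case $\Delta = \lnd(B)$, for which $K_\Delta = \FML(B)$ by Def.\ \ref{0cd20FXGICHGgt86cjF5i5rgo909}.

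There is no genuine obstacle here; the only things to be careful about are the trivial extremes (the empty intersection giving $B$ and $\Frac B$, and the contribution of the zero derivation) and making sure the closure-under-intersection lemmas are stated for the correct notion. Since both properties are preserved under arbitrary intersections and hold for each generating kernel by \ref{pc9293ed0wdjo03}, the proof is a short two-paragraph argument with no hard step.
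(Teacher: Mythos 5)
Your proof is correct and follows the same route as the paper's: the paper simply cites \ref{pc9293ed0wdjo03} for the factorial (resp.\ algebraic) closure of each $\ker D$ (resp.\ $\Frac(\ker D)$) and states that ``the result follows,'' leaving the closure-under-intersection argument and the edge cases implicit. You have merely spelled out those details, which is fine.
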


\begin{proof}
By paragraph \ref{pc9293ed0wdjo03}, if $D \in \lnd(B)$ then $\ker(D)$ is factorially closed in $B$
and $\Frac( \ker D )$ is algebraically closed in $\Frac B$. The result follows.
\end{proof}

\begin{remark}
Throughout the article we make tacit use of the following fact, which is a consequence of Lemma \ref{0ckj238hqn239hnfpawrnHzb}:
if $R, A \in \Ascr(B)$ satisfy $R \subset A$  then $\trdeg_R(A)>0$,
and similarly, if $K, L \in \Kscr(B)$ satisfy $K \subset L$  then $\trdeg_K(L)>0$
(recall that ``$\subset$'' means strict inclusion).
\end{remark}

\begin{lemma} \label {f2i3p0f234hef}
Let $B$ be a domain of characteristic zero.
\begin{enumerata}

\item Let $D \in \lnd(B)$ and let $D' \in \Der( \Frac B )$ be the unique extension of $D$ to a derivation of $\Frac B$.
Then $\Frac(\ker D) = \ker D'$.

\item We have $B \cap K_\Delta = A_\Delta$ for all subsets $\Delta$ of $\lnd(B)$.
In particular, $B \cap \FML(B) = \ML(B)$.

\item Given $A \in \Ascr(B)$, define $\Delta(A) = \setspec{ D \in \lnd(B) }{ A \subseteq \ker D }$.
Then $K_{\Delta(A)} \in \Kscr(B)$ and $B \cap K_{\Delta(A)} = A$.

\item The maps
$f : \Kscr(B) \to \Ascr(B)$, $K \mapsto B \cap K$
and
$g : \Ascr(B) \to \Kscr(B)$, $A \mapsto K_{\Delta(A)}$
are well defined and $f \circ g$ is the identity map of $\Ascr(B)$.
In particular, $f$ is surjective and $g$ is injective.

\end{enumerata}
\end{lemma}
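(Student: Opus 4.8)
The plan is to treat (a) as the substantive statement and to derive (b), (c), (d) formally from it together with the factorial closedness recorded in \ref{pc9293ed0wdjo03}(i). For (a), I would first dispose of the trivial case $D=0$ (where $\ker D = B$, $D'=0$, and both sides equal $\Frac B$), so assume $D \neq 0$ and write $K = \Frac(\ker D)$. The inclusion $K \subseteq \ker D'$ is immediate: $D'$ is a derivation of the field $\Frac B$, so $\ker D'$ is a subfield, and since it contains $\ker D$ it contains $K$. The content is the reverse inclusion $\ker D' \subseteq K$.

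For that, I would invoke \ref{pc9293ed0wdjo03}(iv): $K$ is algebraically closed in $\Frac B$ and $\Frac B = K^{(1)}$, so $\trdeg(\Frac B : K) = 1$. Since $D' \neq 0$, the kernel $\ker D'$ is a proper subfield of $\Frac B$ containing $K$, and I want to show it is algebraic over $K$, for then the algebraic closedness of $K$ in $\Frac B$ forces $\ker D' = K$. The main obstacle is ruling out $\trdeg(\ker D' : K) = 1$. If that held, then $\Frac B$ would be algebraic, hence (since $\Char B = 0$) separably algebraic, over $\ker D'$; but a derivation of $\Frac B$ vanishing on the subfield $\ker D'$ is the unique extension of the zero derivation over a separable algebraic extension, so $D' = 0$, a contradiction. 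Thus $\ker D'$ is algebraic over $K$ and equals $K$, proving (a).

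Granting (a), part (b) follows by noting $B \cap K_\Delta = \bigcap_{D \in \Delta}\big(B \cap \Frac(\ker D)\big)$ and that each $B \cap \Frac(\ker D) = \ker D$: this last equality is exactly the factorial closedness of $\ker D$ in $B$ (an element of $B$ written as $a/b$ with $a,b \in \ker D$, $b \neq 0$, satisfies $a = (a/b)\cdot b \in \ker D$, forcing $a/b \in \ker D$), so $B \cap K_\Delta = A_\Delta$; the case $\Delta = \emptyset$ is $B \cap \Frac B = B$, and the assertions about $\FML(B)$ and $\ML(B)$ are the special case $\Delta = \lnd(B)$.

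For (c), the membership $K_{\Delta(A)} \in \Kscr(B)$ is immediate from the definition of $\Kscr(B)$ (as $\Delta(A) \subseteq \lnd(B)$), and $B \cap K_{\Delta(A)} = A_{\Delta(A)}$ by (b); it remains to check $A_{\Delta(A)} = A$. The inclusion $A \subseteq A_{\Delta(A)}$ is clear, and for the reverse I would use that $A = A_\Delta$ for some $\Delta \subseteq \lnd(B)$, observe that every $D \in \Delta$ kills $A_\Delta = A$ so $\Delta \subseteq \Delta(A)$, whence $A_{\Delta(A)} \subseteq A_\Delta = A$. Finally (d) is formal: $f$ is well defined by (b), $g$ by (c), and $f \circ g = \id_{\Ascr(B)}$ is precisely the equality $B \cap K_{\Delta(A)} = A$ just established; surjectivity of $f$ and injectivity of $g$ then follow from the existence of this one-sided inverse.
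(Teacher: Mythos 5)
Your proposal is correct and follows essentially the same route as the paper's proof: part (a) is established by the identical transcendence-degree argument (a subfield of $\ker D'$ strictly larger than $K=\Frac(\ker D)$ would make $\Frac B = K^{(1)}$ algebraic over $\ker D'$ and hence force $D'=0$, contradicting $D\neq 0$), and parts (c) and (d) are the same formal bookkeeping. The only small divergence is in (b): the paper obtains $B \cap \Frac(\ker D) = \ker D$ as an immediate consequence of (a) (since $\Frac(\ker D)=\ker D'$ and $D'$ restricts to $D$ on $B$), whereas you re-derive it from the factorial closedness of $\ker D$ in $B$ recorded in \ref{pc9293ed0wdjo03}(i); both justifications are valid, the paper's being marginally more economical in that it reuses (a).
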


\begin{proof}
(a) We may assume that $D \neq 0$, otherwise the claim is trivial.
Let $L=\Frac B$ and $K = \Frac(\ker D)$; then $L = K^{(1)}$ by \ref{pc9293ed0wdjo03}.
If $a,a' \in \ker D$ ($a' \neq 0$) then $D'(a/a') = (D(a) a' - a D(a'))/(a')^2 = 0$, showing that $K \subseteq \ker D'$.
If the inclusion $K \subseteq \ker D'$ is strict then $L$ is algebraic over $\ker D'$ (because $L = K^{(1)}$), so $D'=0$, contradicting $D \neq 0$.
So $K=\ker D'$.

(b) By assertion (a),
we have $B \cap \Frac( \ker D ) = \ker D$ for each $D \in \lnd(B)$. So
$$
\textstyle
B \cap K_\Delta
= B \cap \bigcap_{D \in \Delta} \Frac( \ker D )
= \bigcap_{D \in \Delta} (B \cap \Frac( \ker D ))
= \bigcap_{D \in \Delta} \ker(D) = A_\Delta.
$$

(c) If $A \in \Ascr(B)$ then $B \cap K_{\Delta(A)} = A_{\Delta(A)}$ by (b) and  $A_{\Delta(A)} = A$  is clear.

Assertion (d) follows.
\end{proof}

\begin{remark}
$\xymatrix@1{(\Kscr(B), \subseteq) \ar @<.4ex>[r]^{f} &   (\Ascr(B), \subseteq) \ar @<.4ex>[l]^{g}  }$ are order-preserving maps between posets.
\end{remark}

\begin{lemma} \label {0c9vni3w4e8vno}
For any domain $B$ of characteristic zero we have
$\haut \big( \Ascr(B) \big) \le \haut \big( \Kscr(B) \big)$.
\end{lemma}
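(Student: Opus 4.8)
The plan is to exhibit an order-preserving injective map from the poset $\Ascr(B)$ into the poset $\Kscr(B)$, since the existence of such a map immediately forces $\haut(\Ascr(B)) \le \haut(\Kscr(B))$: any strict chain $A_0 \subset A_1 \subset \cdots \subset A_n$ in $\Ascr(B)$ is carried to a strict chain in $\Kscr(B)$ of the same length. The natural candidate is already available in the excerpt: the map $g : \Ascr(B) \to \Kscr(B)$, $A \mapsto K_{\Delta(A)}$, of Lemma \ref{f2i3p0f234hef}(d), which is there shown to be injective and to satisfy $f \circ g = \id_{\Ascr(B)}$ with $f(K) = B \cap K$.

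The only point requiring verification is that $g$ is order-preserving, i.e.\ that $A \subseteq A'$ implies $K_{\Delta(A)} \subseteq K_{\Delta(A')}$. First I would observe that $A \subseteq A'$ forces the reverse inclusion $\Delta(A') \subseteq \Delta(A)$ on the index sets, directly from the definition $\Delta(A) = \setspec{ D \in \lnd(B) }{ A \subseteq \ker D }$: if $A' \subseteq \ker D$ then a fortiori $A \subseteq \ker D$. Then, since $K_\Delta = \bigcap_{D \in \Delta} \Frac(\ker D)$ is an intersection that shrinks as the index set grows, the inclusion $\Delta(A') \subseteq \Delta(A)$ yields $K_{\Delta(A)} \subseteq K_{\Delta(A')}$, which is exactly what order-preservation demands.

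With $g$ shown to be an order-preserving injection, I would conclude as follows. Given any strict chain $A_0 \subset A_1 \subset \cdots \subset A_n$ in $\Ascr(B)$, apply $g$ to obtain $K_{\Delta(A_0)} \subseteq K_{\Delta(A_1)} \subseteq \cdots \subseteq K_{\Delta(A_n)}$ in $\Kscr(B)$; injectivity of $g$ guarantees these are pairwise distinct, so the inclusions are strict and we have a chain of length $n$ in $\Kscr(B)$. Taking the supremum over all $n$ realized by chains in $\Ascr(B)$ gives $\haut(\Ascr(B)) \le \haut(\Kscr(B))$, as claimed.

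I do not expect any genuine obstacle here: the substantive work (well-definedness of $g$, and the identity $f \circ g = \id$) has already been discharged in Lemma \ref{f2i3p0f234hef}, so this lemma is essentially a corollary of that one together with the trivial monotonicity observations above. The only mild subtlety worth stating explicitly is the standard principle that an order-preserving injection between posets cannot decrease height; I would include a one-line justification of it via the chain-transport argument rather than invoke it as a black box, so that the passage from injective-and-monotone to the height inequality is transparent to the reader.
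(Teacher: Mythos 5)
Your proposal is correct and follows exactly the paper's own route: the paper proves this lemma by citing the injective order-preserving map $g : \Ascr(B) \to \Kscr(B)$, $A \mapsto K_{\Delta(A)}$, from Lemma \ref{f2i3p0f234hef} and transporting chains. You merely make explicit two details the paper leaves tacit (the monotonicity check $\Delta(A') \subseteq \Delta(A)$ and the chain-transport principle), both of which are verified correctly.
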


\begin{proof}
By Lemma \ref{f2i3p0f234hef}, there exists an injective order-preserving map 
$(\Ascr(B), \subseteq) \to (\Kscr(B), \subseteq)$.
So $\haut \big( \Ascr(B) \big) \le \haut \big( \Kscr(B) \big)$.
\end{proof}

The following shows that the maps $f,g$ of Lemma \ref{f2i3p0f234hef} are not necessarily bijective, even when $B$ is a normal domain.

\begin{example}
Let $\bk$ be a field of characteristic zero and consider the subalgebra $B = \bk[x,y,tx,ty]$ of  $\bk[x,y,t] = \kk3$.
Then $B$ is normal (define a $\Integ$-grading $\bk[x,y,t] = \bigoplus_{n \in \Integ} R_n$ by declaring that $\bk \subseteq R_0$,
$x,y \in R_1$ and $t \in R_{-1}$; then $B = \bigoplus_{n \ge 0} R_n$, so $B$ is integrally closed in $\bk[x,y,t]$, so $B$ is normal).
Define $D_1,D_2,D_3 \in \lnd(B)$ by $D_1 =y\frac{\partial}{\partial x}$,  $D_2 = x\frac{\partial}{\partial y}$ and $D_3 = \frac{\partial}{\partial t}$.
We have $\bk(y,t), \bk(x,t), \bk(x,y) \in \Kscr(B)$ (these are $\Frac(\ker D_i)$ for $i=1,2,3$),
so $\bk(x), \bk(y), \bk(t) \in \Kscr(B)$ and $\FML(B) = \bk$.
Note that $\bk(t)$ and $\bk$ are distinct elements of $\Kscr(B)$ that have the same image under the map $f$ of Lemma \ref{f2i3p0f234hef},
so $f,g$ are not bijective.
\end{example}

Also note that $f,g$ do not necessarily preserve transcendence degree, even when $B$ is normal and $f,g$ are bijective
(see Ex.\ \ref{pcivn2309efdfXe}).
However, the next result states that if $B$ is a UFD then $f,g$ are isomorphisms of posets
$\xymatrix@1{(\Kscr(B), \subseteq) \ar @<.4ex>[r] &   (\Ascr(B), \subseteq) \ar @<.4ex>[l]  }$
\textit{and} preserve transcendence degree.

\begin{lemma} \label {9vdyf25fjmg752rd}
Let $B$ be a UFD of characteristic zero.  Consider the maps $f$ and $g$ of Lemma \ref{f2i3p0f234hef}.
\begin{enumerata}

\item For any subset $\Delta$ of $\lnd(B)$, we have $\Frac( A_\Delta ) = K_\Delta$.
In particular, $\Frac\big( \ML(B) \big) = \FML(B)$.

\item The map $g : \Ascr(B) \to \Kscr(B)$ satisfies $g(A) = \Frac(A)$ for all $A \in \Ascr(B)$.

\item The maps $f$ and $g$ are bijective and inverse of each other.
Moreover, if $K \in \Kscr(B)$ and $A \in \Ascr(B)$ satisfy $f(K)=A$ and $g(A)=K$ then $\trdeg_K(\Frac B) = \trdeg_A(B)$.

\end{enumerata}
\end{lemma}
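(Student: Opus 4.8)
The plan is to establish part (a) first and then obtain (b) and (c) as essentially formal consequences, since the only place where the factoriality hypothesis does genuine work is the nontrivial inclusion in (a). For (a), the inclusion $\Frac(A_\Delta) \subseteq K_\Delta$ is immediate: from $A_\Delta \subseteq \ker D$ for every $D \in \Delta$ we get $\Frac(A_\Delta) \subseteq \Frac(\ker D)$, and intersecting over $\Delta$ yields the claim (with $\Delta=\emptyset$ handled by $A_\emptyset = B$, $K_\emptyset = \Frac B$ from Def.\ \ref{0cd20FXGICHGgt86cjF5i5rgo909}). The content is the reverse inclusion $K_\Delta \subseteq \Frac(A_\Delta)$.

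To prove it, I would take a nonzero $\xi \in K_\Delta$ (the case $\xi=0$ being trivial) and, using that $B$ is a UFD, fix one representation $\xi = a/b$ with $a,b \in B \setminus \{0\}$ having no common prime factor. I would then fix $D \in \Delta$, set $A' = \ker D$, and show $a,b \in A'$. Since $\xi \in \Frac(A')$, write $\xi = c/d$ with $c,d \in A' \setminus \{0\}$; then $ad = bc$ in $B$, and coprimality of $a,b$ forces $b \mid d$, say $d = be$ with $e \in B\setminus\{0\}$, whence (cancelling $b$) $c = ae$. Now $d = be \in A'$ and $c = ae \in A'$, and because $A'$ is factorially closed in $B$ (paragraph \ref{pc9293ed0wdjo03}(i)), each product lying in $A'$ drags its (nonzero) factors into $A'$; in particular $a,b \in A'$. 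As this holds for every $D \in \Delta$, we obtain $a,b \in A_\Delta$ and hence $\xi \in \Frac(A_\Delta)$. The ``in particular'' statement is the case $\Delta = \lnd(B)$.

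For (b), I would apply (a) with $\Delta = \Delta(A)$: this gives $g(A) = K_{\Delta(A)} = \Frac(A_{\Delta(A)})$, and since $A_{\Delta(A)} = A$ (noted in the proof of Lemma \ref{f2i3p0f234hef}), we conclude $g(A) = \Frac(A)$. For (c), recall from Lemma \ref{f2i3p0f234hef}(d) that $f \circ g = \id_{\Ascr(B)}$; it remains to check $g \circ f = \id_{\Kscr(B)}$. Given $K = K_\Delta \in \Kscr(B)$, Lemma \ref{f2i3p0f234hef}(b) gives $f(K) = B \cap K_\Delta = A_\Delta$, and then (b) combined with (a) gives $g(f(K)) = \Frac(A_\Delta) = K_\Delta = K$. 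Hence $f$ and $g$ are mutually inverse bijections. Finally, if $f(K)=A$ and $g(A)=K$ then $K = \Frac(A)$ by (b), so by the definition of transcendence degree for domains recalled in the introduction, $\trdeg_A(B) = \trdeg(\Frac B : \Frac A) = \trdeg(\Frac B : K) = \trdeg_K(\Frac B)$.

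The main obstacle is the reverse inclusion in (a); everything else is bookkeeping on top of Lemma \ref{f2i3p0f234hef}. The subtlety there is that $K_\Delta$ is a priori only the intersection of the fraction fields $\Frac(\ker D)$, which in general is strictly larger than the fraction field of the intersection of the rings. The divisibility argument in the UFD, applied to a \emph{single} coprime representation $\xi = a/b$ simultaneously for all $D \in \Delta$, is exactly what closes this gap; this also explains why $f$ and $g$ need not be bijective once factoriality is dropped, as in the Example preceding the statement.
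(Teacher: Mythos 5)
Your proof is correct, and parts (b) and (c) coincide with the paper's own bookkeeping, but your argument for the crucial reverse inclusion in (a) is genuinely different from the paper's. The paper differentiates: it extends $D$ to $D' \in \Der(\Frac B)$ (Lemma \ref{f2i3p0f234hef}(a)), applies the quotient rule to $D'(u/v)=0$ to get $vD(u)=uD(v)$, and then uses coprimality together with the fact \ref{pc9293ed0wdjo03}(v) (namely $f \mid D(f) \Rightarrow D(f)=0$) to conclude $D(u)=0=D(v)$. You instead never differentiate at all: you compare the coprime representation $\xi=a/b$ with a second representation $c/d$ having $c,d \in \ker D$, use divisibility in the UFD to write $d=be$, $c=ae$, and then invoke only the factorial closedness of $\ker D$ (fact \ref{pc9293ed0wdjo03}(i)) to pull $a,b$ into $\ker D$. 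One payoff of your route is extra generality: your argument actually proves the purely ring-theoretic statement that for \emph{any} family of factorially closed subrings $(A_i)$ of a UFD $B$ one has $\bigcap_i \Frac(A_i) = \Frac\bigl(\bigcap_i A_i\bigr)$, with locally nilpotent derivations entering only through the fact that their kernels are factorially closed; the paper's computation, by contrast, is intrinsically about derivations but stays entirely within the toolkit of \ref{pc9293ed0wdjo03} without needing to introduce the auxiliary representation $c/d$. Both proofs are of comparable length and both isolate the same point of failure in the non-factorial case, so the difference is one of method rather than economy.
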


\begin{proof}
(a) Consider a subset $\Delta$ of $\lnd(B)$. It is clear that $\Frac( A_\Delta ) \subseteq K_\Delta$, so let us prove the reverse inclusion.
Let $\xi \in K_\Delta$. 
Write $\xi = u/v$ with $u,v \in B$, $v \neq0$, and $\gcd(u,v)=1$.

Let $D \in \Delta$.  The extension $D' \in \Der( \Frac B )$ of $D$ satisfies $\ker(D') = \Frac( \ker D ) \supseteq K_\Delta \ni \xi$, so
$0 = D'(u/v) = \frac{ v D(u) - u D(v) }{ v^2 }$, so $v D(u) = u D(v)$. Since $u \mid v D(u)$ and $\gcd(u,v)=1$ we have $u \mid D(u)$,
so $D(u)=0$ by \ref{pc9293ed0wdjo03}(v); similarly, $v \mid D(v)$, so $D(v)=0$.
This shows that $u,v \in \ker(D)$, and this holds for an arbitrary $D \in \Delta$.
So $u,v \in A_\Delta$ and hence $\xi \in \Frac( A_\Delta )$. This proves that $\Frac( A_\Delta ) = K_\Delta$.
It follows that  $\Frac\big( \ML(B) \big) = \FML(B)$.

(b) If $A \in \Ascr(B)$ then $g(A) = K_{ \Delta(A) } = \Frac( A_{ \Delta(A) } ) = \Frac(A)$, where the middle equality follows from (a).

(c) Let $K \in \Kscr(B)$; then $K = K_{\Delta}$ for some $\Delta$; then $K = K_{\Delta} = \Frac( A_\Delta ) = g(A_\Delta)$ shows
that $g$ is surjective. As $f \circ g = \id$, $f$ and $g$ are bijective and inverse of each other.
The equality $\trdeg_K(\Frac B) = \trdeg_A(B)$ follows from $\Frac(A) = g(A) = K$.
\end{proof}

\begin{notation} \label {cov90vb6h4s}
For a domain $B$ of characteristic zero, 
we define
$$
T_\Ascr(B) = \setspec{ n \in \Nat }{ \Ascr_n(B) \neq \emptyset } \text{\ \ and\ \ } T_\Kscr(B) = \setspec{ n \in \Nat }{ \Kscr_n(B) \neq \emptyset }.
$$
\end{notation}

\begin{bigremark} \label {09bn349efvkmtZr8}
Let $B$ be a domain of characteristic zero. 
Clearly, the order-preserving maps \newline
$\xymatrix@1@C=13pt{(\Kscr(B), \subseteq) \ar @<.4ex>[r] &   (\Ascr(B), \subseteq) \ar @<.4ex>[l]  }$
of Lemma \ref{f2i3p0f234hef} restrict to bijections 
$\xymatrix@1@C=13pt{ \Kscr_n(B) \ar @<.4ex>[r] &   \Ascr_n(B) \ar @<.4ex>[l]  }$ for each $n \in \{0,1\}$
(see the last part of Def.\ \ref{0cd20FXGICHGgt86cjF5i5rgo909}).
By Lemma \ref{9vdyf25fjmg752rd}, if $B$ is a UFD then these maps restrict to bijections
$\xymatrix@1@C=13pt{ \Kscr_n(B) \ar @<.4ex>[r] &   \Ascr_n(B) \ar @<.4ex>[l]  }$ for all $n \in \Nat$, and 
consequently $T_\Ascr(B) = T_\Kscr(B)$.
\end{bigremark}

\begin{lemma} \label {pc0WVEDFWc9vn20wdfc8wgd9s8}
Let $B$ be a domain of characteristic zero.
If $| T_\Kscr(B) | \le 3$ then the following hold.
\begin{enumerata}

\item The order-preserving maps $\xymatrix@1@C=13pt{(\Kscr(B), \subseteq) \ar @<.4ex>[r] &   (\Ascr(B), \subseteq) \ar @<.4ex>[l]  }$
of Lemma \ref{f2i3p0f234hef} are isomorphisms of posets.\footnote{We are not claiming that they preserve transcendence degree.}

\item If $| T_\Kscr(B) | =1$ then $T_\Ascr(B) = \{0\} = T_\Kscr(B)$.

\item If $| T_\Kscr(B) | =2$ then $T_\Ascr(B) = \{0,1\} = T_\Kscr(B)$.

\item If $| T_\Kscr(B) | =3$ then $T_\Ascr(B) = \{0,1,n\}$ and  $T_\Kscr(B) = \{0,1,m\}$ where $1<m\le n$.

\end{enumerata}
\end{lemma}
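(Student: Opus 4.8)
The plan is to reduce everything to the maps $f,g$ of Lemma \ref{f2i3p0f234hef} together with the behaviour of transcendence degree on the two posets, and then to split on the value of $|T_\Kscr(B)|$. First I would record the monotonicity input: by the Remark following Lemma \ref{0ckj238hqn239hnfpawrnHzb}, a strict inclusion in $\Ascr(B)$ or in $\Kscr(B)$ strictly raises transcendence degree, so $A\mapsto\trdeg_A(B)$ and $K\mapsto\trdeg_K(\Frac B)$ are strictly order-reversing. Hence $B$ (resp.\ $\Frac B$) is the unique element of $\Ascr(B)$ (resp.\ $\Kscr(B)$) of transcendence degree $0$, the least elements $\ML(B)$ and $\FML(B)$ realize $\max T_\Ascr(B)$ and $\max T_\Kscr(B)$, and for each value $t=\max T_\Kscr(B)$ the level $\Kscr_t(B)=\{\FML(B)\}$ is a singleton. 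Since $\Ascr_1(B)=\{\ker D:D\neq0\}$ and $\Kscr_1(B)=\{\Frac(\ker D):D\neq0\}$, both are nonempty exactly when $\lnd(B)\neq\{0\}$, and by Remark \ref{09bn349efvkmtZr8} the maps $f,g$ restrict to mutually inverse bijections on the levels $n=0$ and $n=1$. Finally $f$ is surjective and $f\circ g=\id$ by Lemma \ref{f2i3p0f234hef}.

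The one genuinely non-formal ingredient is that $f$ need not preserve transcendence degree, but the \emph{useful} half of this is the inequality $\trdeg_{f(K)}(B)\ge\trdeg_K(\Frac B)$, valid because $\Frac(f(K))=\Frac(B\cap K)\subseteq K$. Applying it to $K=\FML(B)$, together with $f(\FML(B))=\ML(B)$ from Lemma \ref{f2i3p0f234hef}, yields
$$
\trdeg_{\ML(B)}(B)\ \ge\ \trdeg_{\FML(B)}(\Frac B)\ =\ \max T_\Kscr(B).
$$
This single estimate is the crux of part (d): it forces the least element of $\Ascr(B)$ to sit at a level at least as deep as that of the least element of $\Kscr(B)$.

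With these in hand I would split on $|T_\Kscr(B)|$, using that $0\in T_\Kscr(B)$ always and that, when $|T_\Kscr(B)|\ge2$, some $K_\Delta\neq\Frac B$ forces a nonzero $D\in\Delta$, hence $\Kscr_1(B)\neq\emptyset$ and $1\in T_\Kscr(B)$. If $|T_\Kscr(B)|=1$ then $\Kscr(B)=\{\Frac B\}$, so $\lnd(B)=\{0\}$, $\Ascr(B)=\{B\}$, and (a),(b) are immediate. If $|T_\Kscr(B)|=2$ then $T_\Kscr(B)=\{0,1\}$; since $f$ is surjective and carries $\Kscr_1(B)$ onto $\Ascr_1(B)$, we get $\Ascr(B)=\{B\}\cup\Ascr_1(B)$ and $T_\Ascr(B)=\{0,1\}$, giving (c). If $|T_\Kscr(B)|=3$ then $T_\Kscr(B)=\{0,1,m\}$ with $m\ge2$, the singletons $\Kscr_0(B)=\{\Frac B\}$ and $\Kscr_m(B)=\{\FML(B)\}$ give $\Kscr(B)=\{\Frac B\}\cup\Kscr_1(B)\cup\{\FML(B)\}$, and surjectivity of $f$ gives $\Ascr(B)=\{B\}\cup\Ascr_1(B)\cup\{\ML(B)\}$; the displayed estimate makes $n:=\trdeg_{\ML(B)}(B)\ge m\ge2$, so $T_\Ascr(B)=\{0,1,n\}$ with $1<m\le n$, which is the $T$-statement of (d).

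For part (a) in each case I would deduce from $f\circ g=\id$ that it suffices to prove $f$ injective, since an order-preserving bijection whose inverse $g$ is also order-preserving is automatically a poset isomorphism. Injectivity follows by a level-separation argument: $f$ sends $\Frac B$ to the trdeg-$0$ element $B$, sends $\Kscr_1(B)$ bijectively onto the trdeg-$1$ set $\Ascr_1(B)$ (Remark \ref{09bn349efvkmtZr8}), and in the case $|T_\Kscr(B)|=3$ sends $\{\FML(B)\}$ to $\ML(B)$ at the distinct level $n\ge2$; as the image levels $0,1,n$ are pairwise distinct and $f$ is injective on each, $f$ is injective. The step I expect to require the most care is exactly this last one when $|T_\Kscr(B)|=3$: because $f$ can strictly drop transcendence degree—witness $B=\bk[x,y,tx,ty]$, where $f(\bk(t))=f(\bk)$ but $|T_\Kscr(B)|=4$—injectivity is genuinely not automatic, and it is the hypothesis $|T_\Kscr(B)|\le3$, through the inequality $n\ge m\ge2>1$, that keeps the three relevant levels of $\Ascr(B)$ apart and rules out such a collapse.
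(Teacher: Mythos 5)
Your proof is correct, and its skeleton --- reduction to the case $|T_\Kscr(B)|=3$, the level decomposition with singleton extreme levels, and the level-wise bijections coming from Remark \ref{09bn349efvkmtZr8} and Lemma \ref{f2i3p0f234hef} --- is the same as the paper's. However, you handle the central step by a genuinely different mechanism. The paper proves $T_\Ascr(B)=\{0,1,n\}$ by contradiction: it invokes Lemma \ref{0c9vni3w4e8vno} to get $\haut(\Ascr(B))\le\haut(\Kscr(B))=2$ and then shows that any $A_i\in\Ascr_i(B)$ with $1<i<n$ would sit inside some $A_1\in\Ascr_1(B)$ and produce a forbidden chain $\ML(B)\subset A_i\subset A_1\subset B$. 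You instead use surjectivity of $f$ (from $f\circ g=\id$ in Lemma \ref{f2i3p0f234hef}) to compute the image directly, $\Ascr(B)=f(\Kscr(B))=\{B\}\cup\Ascr_1(B)\cup\{\ML(B)\}$, which pins down $T_\Ascr(B)$ with no height considerations at all. Your route also makes explicit a point the paper merely asserts, namely $n=\trdeg(B:\ML(B))\ge m$, which you derive from $\Frac(B\cap K)\subseteq K$ applied to $K=\FML(B)$. What the paper's argument buys is continuity with its systematic use of heights elsewhere (Lemma \ref{0c9vni3w4e8vno}, Prop.\ \ref{0c9vkj2w9Hjghi93u98eid}); what yours buys is economy, since the single image computation yields both the level structure of $\Ascr(B)$ and the level-separation needed for injectivity in part (a), and your closing remark correctly identifies why the hypothesis $|T_\Kscr(B)|\le 3$ is exactly what blocks the collapse exhibited by $\bk[x,y,tx,ty]$. (One shared caveat: like the paper, you tacitly assume the relevant transcendence degrees are finite, e.g.\ when asserting that $\FML(B)$ realizes $\max T_\Kscr(B)$; this is a convention of the paper's setup, not a gap particular to your argument.)
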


\begin{proof}
We may assume that $| T_\Kscr(B) | =3$, otherwise all claims are trivial.
Then $T_\Kscr(B) = \{0,1,m\}$ where $m = \trdeg( \Frac(B) : \FML(B) ) > 1$.
It follows that $\haut( \Kscr(B) ) =2$, so Lemma \ref{0c9vni3w4e8vno} gives  $\haut( \Ascr(B) ) \le 2$.
Let $n = \trdeg( B : \ML(B) ) \ge m$, then $T_\Ascr(B) \supseteq \{0,1,n\}$.
If $T_\Ascr(B) \neq \{0,1,n\}$ then choose $i \in  T_\Ascr(B) \setminus \{0,1,n\}$
and $A_i \in \Ascr_i(B)$.  Note that $1<i<n$ and that there exists $A_1 \in \Ascr_1(B)$ such that $A_i \subset A_1$.
Then $\ML(B) \subset A_i \subset A_1 \subset B$ is a chain in $\Ascr(B)$, which contradicts $\haut( \Ascr(B) ) \le 2$.
Thus $T_\Ascr(B) = \{0,1,n\}$.
It follows that  we have the disjoint unions
$$
\Kscr(B) = \Kscr_0(B) \cup   \Kscr_1(B) \cup  \Kscr_m(B) \quad \text{and} \quad \Ascr(B) = \Ascr_0(B) \cup   \Ascr_1(B) \cup \Ascr_n(B)  
$$
where  $\Kscr_m(B) = \{ \FML(B) \}$ and $\Ascr_n(B) = \{ \ML(B) \}$ are singletons.
We noted in Rem.\ \ref{09bn349efvkmtZr8} that the maps $\xymatrix@1@C=13pt{\Kscr(B) \ar @<.4ex>[r] & \Ascr(B) \ar @<.4ex>[l]  }$ restrict to bijections
$\xymatrix@1@C=13pt{\Kscr_0(B) \ar @<.4ex>[r] & \Ascr_0(B) \ar @<.4ex>[l]  }$ and
$\xymatrix@1@C=13pt{\Kscr_1(B) \ar @<.4ex>[r] & \Ascr_1(B) \ar @<.4ex>[l]  }$,
and clearly they also restrict to bijections
$\Kscr_m(B) = \xymatrix@1@C=13pt{ \{ \FML(B) \} \ar @<.4ex>[r] & \{ \ML(B) \} = \Ascr_n(B) \ar @<.4ex>[l]  }$,
so  the maps $\xymatrix@1@C=13pt{\Kscr(B) \ar @<.4ex>[r] & \Ascr(B) \ar @<.4ex>[l]  }$ are bijective and we are done.
\end{proof}

The following shows that the maps $f,g$ of Lemma \ref{f2i3p0f234hef} 
do not necessarily preserve transcendence degree, even when $B$ is normal and $f,g$ are bijective.

\begin{example} \label {pcivn2309efdfXe}
Let $K / \Comp$ be the function field of a non-rational complex algebraic curve.
By  Prop.\ \ref{cxvoq9wedowjskdpx}\eqref{cxvoq9wedo-b}, there exists a normal affine $\Comp$-domain $B$ satisfying
$\Frac(B) = K^{(2)}$ (so $\dim B = 3$), $\ML(B) = \Comp$ and $K \in \Kscr(B)$.
Since $K/\Comp$ is not unirational, it follows that $B$ is not unirational over $\Comp$, so $\FML(B) \neq \Comp$ by Thm \ref{kfkjep930120wdkoi9}.
Consequently, $\FML(B) = K$ and hence $T_\Kscr(B) = \{0,1,2\}$.
By Lemma \ref{pc0WVEDFWc9vn20wdfc8wgd9s8}, $f$ and $g$ are bijective.
Since $\FML(B) \cap B = \ML(B)$, we get $f(K) = K \cap B = \Comp$, so the map $f$ does not preserve transcendence degree.
\end{example}

From here to the end of this section, we study how $\Ascr(B)$ and $\Kscr(B)$ behave under various operations.
The first operation that we consider is localization, and we restrict ourselves to a special type of localization.
For the notation $A_R$, see the end of the Introduction.

\begin{lemma}  \label {9vbfnaow3GgvVpfhbVWYTE}
Let $B$ be a domain of characteristic zero and $R \in \Ascr(B)$.
\begin{enumerata}

\item For each $A \in \Ascr( B )$ satisfying $R \subseteq A$, we have $A_R \in \Ascr( B_R )$.
Moreover, the map
$$
\setspec{ A \in \Ascr( B ) }{ R \subseteq A } \to \Ascr( B_R ), \quad A \mapsto A_R
$$
is injective and preserves transcendence degree.

\item $\ML(B_R) = R_R$

\item $\setspec{ K \in \Kscr( B ) }{ R \subseteq K } \subseteq \Kscr( B_R )$

\item If $R_R \in \Kscr(B)$ then $\FML(B_R) = R_R$.

\end{enumerata}
\end{lemma}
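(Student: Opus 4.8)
The plan is to reduce everything to a single construction: if $D \in \lnd(B)$ satisfies $R \subseteq \ker D$, then the elements of $S = R \setminus \{0\}$ are annihilated by $D$, so $D$ extends by the quotient rule to a derivation $D_R$ of $B_R = S^{-1}B$, namely $D_R(b/s) = D(b)/s$ for $b \in B$, $s \in S$. Since $D_R^n(b/s) = D^n(b)/s$, this $D_R$ is locally nilpotent, so $D_R \in \lnd(B_R)$; and because $B_R$ is a domain, $D_R(b/s) = 0 \iff D(b) = 0$, whence $\ker(D_R) = S^{-1}(\ker D)$ and $\Frac(\ker D_R) = \Frac(\ker D)$. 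I would record these facts first, as they drive all four parts.

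For (a) I would write $A = A_{\Delta(A)}$ with $\Delta(A) = \setspec{ D \in \lnd(B) }{ A \subseteq \ker D }$ as in Lemma \ref{f2i3p0f234hef}; since $R \subseteq A$, every $D \in \Delta(A)$ satisfies $R \subseteq \ker D$ and thus has an extension $D_R$. Writing an arbitrary element of $B_R$ with a common denominator $s \in S$ as $b/s$ and using $D_R(b/s) = 0 \iff D(b) = 0$, I would check the common-denominator identity $\bigcap_{D \in \Delta(A)} \ker(D_R) = S^{-1}\big(\bigcap_{D \in \Delta(A)} \ker D\big) = S^{-1}A = A_R$; this exhibits $A_R$ as $A_{\Delta}$ for $\Delta = \setspec{ D_R }{ D \in \Delta(A) } \subseteq \lnd(B_R)$, proving $A_R \in \Ascr(B_R)$. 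Injectivity follows by recovering $A$ from $A_R$: since $A$ is factorially closed in $B$ (Lemma \ref{0ckj238hqn239hnfpawrnHzb}) and $S \subseteq R \subseteq A$, any $x = a/s \in B \cap A_R$ satisfies $sx = a \in A$ with $s,x \in B$, forcing $x \in A$; thus $A = B \cap A_R$. Transcendence degree is preserved because localization does not change fraction fields, so $\trdeg_{A_R}(B_R) = \trdeg(\Frac B : \Frac A) = \trdeg_A(B)$.

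Part (b) splits into two inclusions. For $R_R \subseteq \ML(B_R)$, note that $R_R = \Frac(R)$ is a field contained in $B_R$, so by \ref{pc9293ed0wdjo03}(i) it lies in $\ker D'$ for every $D' \in \lnd(B_R)$. For the reverse, write $R = A_\Delta$ for some $\Delta \subseteq \lnd(B)$; each $D \in \Delta$ satisfies $R \subseteq \ker D$ and extends to $D_R$, and the same common-denominator computation as in (a) gives $\ML(B_R) \subseteq \bigcap_{D \in \Delta} \ker(D_R) = S^{-1}R = R_R$. Part (c) is parallel: for $K = K_\Delta \in \Kscr(B)$ with $R \subseteq K$, intersecting with $B$ and using $B \cap \Frac(\ker D) = \ker D$ (Lemma \ref{f2i3p0f234hef}) shows $R \subseteq \ker D$ for each $D \in \Delta$, so each $D_R$ is defined; since $\Frac(\ker D_R) = \Frac(\ker D)$, we get $K = \bigcap_{D \in \Delta}\Frac(\ker D_R) \in \Kscr(B_R)$. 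Finally, for (d) the inclusion $R_R \subseteq \FML(B_R)$ is again \ref{pc9293ed0wdjo03}(i), while $R \subseteq R_R$ together with (c) yields $R_R \in \Kscr(B_R)$, whence $\FML(B_R) \subseteq R_R$ because $\FML(B_R)$ is the least element of $\Kscr(B_R)$.

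The one point requiring care, rather than a genuine obstacle, is the commutation of localization with the kernel-intersections in (a) and (b). Once the ambient ring is fixed as $B_R$, so that every element has the form $b/s$ with a \emph{single} $s \in S$, this reduces entirely to the domain identity $D(b)/s = 0 \iff D(b) = 0$, and no delicate argument about infinite intersections of localized modules is needed.
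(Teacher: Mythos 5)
Your proof is correct. There is nothing in the paper to compare it against: the paper's own ``proof'' of this lemma reads, in full, ``Straightforward, and probably well known,'' so your proposal simply supplies the routine verification the author chose to omit. You also handle the one genuinely delicate point properly: localization does not in general commute with \emph{infinite} intersections of submodules, but since every element of $B_R$ can be written as $b/s$ with a single denominator $s \in S = R\setminus\{0\}$, and $B_R$ is a domain, the equivalence $D_R(b/s)=0 \iff D(b)=0$ reduces the identity $\bigcap_{D}\ker(D_R)=S^{-1}\bigl(\bigcap_{D} \ker D\bigr)$ to an elementwise check, exactly as you say. The remaining ingredients --- the extension $D_R$ with $\ker(D_R) = S^{-1}(\ker D)$ and $\Frac(\ker D_R)=\Frac(\ker D)$, the use of factorial closedness (Lemma \ref{0ckj238hqn239hnfpawrnHzb}) to get $A = B \cap A_R$ and hence injectivity in (a), the representation $A = A_{\Delta(A)}$ from Lemma \ref{f2i3p0f234hef}, and the minimality of $\FML(B_R)$ in $\Kscr(B_R)$ for (d) --- are all invoked correctly.
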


\begin{proof}
Straightforward, and probably well known.
\end{proof}

Next, we study how $\Ascr(B)$ and $\Kscr(B)$ behave under an algebraic extension of the base field.
We first recall some well-known facts (\ref{0c9v902j3we0fcwpdjhf2873}, \ref{0cv9n2w0dZw0dI28efydldc0} and \ref{Wjd9f23u8n42ndhje8}).

\begin{lemma} \label {0c9v902j3we0fcwpdjhf2873} 
Let $\bk$ be a field of characteristic zero and $B$ a $\bk$-domain.
The following are equivalent:
\begin{enumerata}

\item $\bk$ is algebraically closed in $\Frac(B)$
\item $K \otimes_\bk B$ is a domain for every extension field $K$ of $\bk$
\item $\ck \otimes_\bk B$ is a domain, where $\ck$ is the algebraic closure of $\bk$.

\end{enumerata}
\end{lemma}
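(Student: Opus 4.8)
The plan is to reduce the whole statement to the case where $B$ is a field, and then appeal to standard facts about field extensions. Set $L = \Frac(B)$ and $S_0 = B \setminus \{0\}$, so $L = S_0^{-1}B$. For any extension field $K$ of $\bk$, flatness of $K$ over $\bk$ gives an inclusion $K \otimes_\bk B \subseteq K \otimes_\bk L$, and since localization commutes with base change we have $K \otimes_\bk L = S^{-1}(K \otimes_\bk B)$, where $S$ is the image of $S_0$ in $K \otimes_\bk B$. By flatness each $1 \otimes b$ with $b \neq 0$ is nonzero, so $0 \notin S$. Hence $K \otimes_\bk B$ is a subring of $K \otimes_\bk L$, while $K \otimes_\bk L$ is a localization of $K \otimes_\bk B$; it follows that $K \otimes_\bk B$ is a domain if and only if $K \otimes_\bk L$ is a domain. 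Since condition (a) depends only on $\Frac(B) = L$, it suffices to prove the equivalence when $B$ is replaced by the field $L$, so I would assume from now on that $B = L$ is a field.

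The implication (b)$\Rightarrow$(c) is trivial, as $\ck$ is one of the admissible fields $K$. For (c)$\Rightarrow$(a) I would argue by contraposition: assuming $\bk$ is not algebraically closed in $L$, fix $\alpha \in L \setminus \bk$ algebraic over $\bk$, with minimal polynomial $m(t) \in \bk[t]$ of degree $n \ge 2$, and write $m(t) = \prod_{i=1}^{n}(t - \alpha_i)$ over $\ck$. Since $m(\alpha) = 0$, evaluating the factored form at $1 \otimes \alpha$ gives $\prod_{i=1}^{n}\bigl(1 \otimes \alpha - \alpha_i \otimes 1\bigr) = 1 \otimes m(\alpha) = 0$ in $\ck \otimes_\bk L$. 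If $\ck \otimes_\bk L$ were a domain, one factor would vanish, forcing $1 \otimes \alpha = \alpha_i \otimes 1$ for some $i$. But $\bk[\alpha] \cong \bk[t]/(m)$ is a subfield of $L$, so by flatness $\ck \otimes_\bk \bk[\alpha] \cong \ck[t]/(m)$ embeds into $\ck \otimes_\bk L$; under this embedding the relation reads $\bar t = \alpha_i$ in $\ck[t]/(m)$, which is impossible since $\deg m \ge 2$. Hence $\ck \otimes_\bk L$ is not a domain, so (c) fails.

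The substantive direction is (a)$\Rightarrow$(b), which is precisely the assertion that a \emph{regular} field extension remains an integral domain under arbitrary base change. Here I would use that $\bk$ has characteristic zero, hence is perfect, so $L/\bk$ is separable; separability gives that $K \otimes_\bk L$ is reduced for every extension $K/\bk$. Combined with the hypothesis that $\bk$ is algebraically closed in $L$, this says exactly that $L/\bk$ is regular, and the classical theorem on regular extensions then yields that $K \otimes_\bk L$ is an integral domain for every $K/\bk$ (see, e.g., Bourbaki, \emph{Alg\`ebre}, Ch.\ V, or Zariski--Samuel). The main obstacle is the irreducibility half of this theorem---showing that $\Spec(K \otimes_\bk L)$ has a single minimal prime, which is where $\bk$ being algebraically closed in $L$ is really used, typically after reducing to algebraically closed $K$---and I would cite it rather than reproduce it, since the reducedness half is automatic in characteristic zero.
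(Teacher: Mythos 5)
Your proposal is correct. One thing you could not have known: the paper supplies \emph{no} proof of this lemma at all --- it is introduced with the sentence ``We first recall some well-known facts'' and stated bare, so there is no argument of the author's to compare yours against step by step. Measured against that, your write-up does strictly more than the paper: the reduction from the domain $B$ to the field $L=\Frac(B)$ (via $K\otimes_\bk B\subseteq K\otimes_\bk L$ and $K\otimes_\bk L=S^{-1}(K\otimes_\bk B)$, with $0\notin S$ by flatness) is carried out correctly and is genuinely needed before the classical field-theoretic results can be invoked; the contrapositive argument for (c)$\Rightarrow$(a) via the factorization of the minimal polynomial in $\ck\otimes_\bk\bk[\alpha]\cong\ck[t]/(m)$ is complete and elementary; and for (a)$\Rightarrow$(b) you correctly identify the statement as the classical theorem on regular extensions, using characteristic zero (perfectness of $\bk$) to get separability of $L/\bk$ for free, so that (a) is exactly regularity. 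Citing Bourbaki or Zariski--Samuel for that last direction, rather than reproducing the linear-disjointness argument, is entirely consistent with the paper's own treatment, which cites the whole equivalence as folklore; the added value of your version is that it isolates precisely which classical input is irreducible (the integrality of $K\otimes_\bk L$ for regular $L/\bk$) and shows that everything else in the lemma is elementary.
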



\begin{lemma} \label {0cv9n2w0dZw0dI28efydldc0}
Consider a tensor product of rings
$$
\xymatrix@R=12pt{
S \ar[r] &   S \otimes_R T \\
R \ar[u] \ar[r]  &   T \ar[u]
}
$$
where we assume that all homomorphisms are injective.
\begin{enumerata}

\item Suppose that $S$ is a free $R$-module and that there exists a basis $\Eeul$ of $S$ over $R$ such that $1 \in \Eeul$.
Then $S \cap T = R$.

\item If $R,S,T$ and $S \otimes_R T$ are domains, and if
$(s_j)_{j \in J}$ is a family of elements of $S$ which is a transcendence basis of $\Frac S$ over $\Frac R$,
then $(s_j \otimes 1)_{j \in J}$ is a transcendence basis of $\Frac(S \otimes_R T)$ over $\Frac T$.

\item If $R,S,T$ and $S \otimes_R T$ are domains then $\trdeg_T(S \otimes_R T) = \trdeg_R S$.

\end{enumerata}
\end{lemma}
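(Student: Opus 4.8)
The plan is to prove the three assertions in order, using (a) to feed the argument for (b), and (b) to obtain (c) as an immediate count of transcendence degrees.

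For (a), the plan is a direct computation inside $S \otimes_R T$. Since $S$ is free over $R$ with a basis $\Eeul$ containing $1$, I would write $\Eeul = \{1\} \cup \Eeul'$ and note that $S \otimes_R T$ is a free $T$-module with basis $\{\,e \otimes 1 : e \in \Eeul\,\}$; in particular the images of $S$ and of $T$ meet exactly in the $T$-span of $1 \otimes 1$ intersected with the $R$-span of $\Eeul$. More concretely, an element of $S \cap T$ lies in $T = T \cdot (1\otimes 1)$, so its expansion in the basis $\{e \otimes 1\}$ has support in $\{1 \otimes 1\}$; but it also lies in the image of $S$, whose elements expand with coefficients in $R$. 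Comparing the two expansions in the free $T$-module forces the element into $R \cdot (1 \otimes 1) = R$. This gives $S \cap T = R$ (the inclusion $R \subseteq S \cap T$ being clear from the injectivity of the maps). No serious obstacle here; it is bookkeeping with a free-module basis.

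For (b), I would first use Lemma \ref{0c9v902j3we0fcwpdjhf2873}-type reasoning only insofar as the hypotheses already guarantee all four rings are domains, so $\Frac(S \otimes_R T)$ makes sense. Let $n = \#J$ (the argument is the same if $J$ is infinite, replacing equalities of cardinalities by the usual transcendence-basis characterization). Algebraic independence of $(s_j \otimes 1)_{j \in J}$ over $\Frac T$: any polynomial relation with coefficients in $\Frac T$ can be cleared to one with coefficients in $T$, giving an element of the kernel of the evaluation map; since $(s_j)$ are algebraically independent over $\Frac R$ and $S \otimes_R T$ is a domain containing $T$ and the $s_j \otimes 1$, such a relation pulls back to a relation over $R$ among the $s_j$, which must be trivial. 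For spanning, I would show $\Frac(S \otimes_R T)$ is algebraic over $\Frac(T)(s_j \otimes 1 : j \in J)$: every generator $s \otimes 1$ with $s \in S$ is algebraic over $\Frac(R)(s_j : j)$ by the transcendence-basis hypothesis, hence $s \otimes 1$ is algebraic over $\Frac(T)(s_j \otimes 1 : j)$ after tensoring; and every generator $1 \otimes t$ already lies in $\Frac T$. Thus $(s_j \otimes 1)_{j \in J}$ is a transcendence basis of $\Frac(S \otimes_R T)$ over $\Frac T$.

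Part (c) then follows at once: by (b) a transcendence basis of $\Frac S / \Frac R$ of cardinality $\trdeg_R S$ produces, via $s_j \mapsto s_j \otimes 1$, a transcendence basis of $\Frac(S \otimes_R T) / \Frac T$ of the same cardinality, whence $\trdeg_T(S \otimes_R T) = \trdeg_R S$. The step I expect to require the most care is the algebraic-independence half of (b): one must be careful that clearing denominators and reducing a would-be dependence relation over $\Frac T$ back to one over $\Frac R$ genuinely uses only that $S \otimes_R T$ is a domain and that the four maps are injective, rather than smuggling in flatness or freeness of $T$ over $R$. Everything else is routine field theory.
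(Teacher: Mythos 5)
The paper states this lemma among its ``well-known facts'' and gives no proof of its own, so there is no proof of record to compare your route against; I judge the proposal on its own terms. Part (a) is correct (the free-basis bookkeeping, plus injectivity of $R \to T$ to conclude the stray coefficients vanish in $R$), part (c) does follow immediately from (b), and the spanning half of (b) is fine: clearing denominators in an algebraic equation for $s$ over $\Frac(R)(s_j : j \in J)$ and applying the injective map $S \to S \otimes_R T$ shows each $s \otimes 1$ is algebraic over $\Frac(T)(s_j \otimes 1 : j \in J)$, and these elements together with $\Frac T$ generate $\Frac(S \otimes_R T)$.

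The algebraic-independence half of (b), however, has a genuine gap, and it is exactly the step you flagged but did not resolve. After clearing denominators you have $\sum_\alpha s^\alpha \otimes t_\alpha = 0$ in $S \otimes_R T$, where the $s^\alpha$ are pairwise distinct monomials in the $s_j$ and $t_\alpha \in T$. Your next assertion, that this ``pulls back to a relation over $R$ among the $s_j$,'' is not an argument: the coefficients lie in $T$, there is no map from $T$ to $R$, and what you actually need is the implication ``the $s^\alpha$ are linearly independent over $R$ in $S$ $\Longrightarrow$ the $s^\alpha \otimes 1$ are linearly independent over $T$ in $S \otimes_R T$.'' That implication amounts to injectivity of $M \otimes_R T \to S \otimes_R T$ for the free submodule $M = \bigoplus_\alpha R\, s^\alpha \subseteq S$, which is a flatness-type statement and does not follow formally from injectivity of the four structure maps; you invoke the hypothesis that $S \otimes_R T$ is a domain, but without any identifiable mechanism. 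The standard repair is a localization trick, and it is precisely where that hypothesis does its work: since $S \otimes_R T$ is a domain and $R \setminus \{0\}$ maps to nonzero elements of it, the localization map $S \otimes_R T \to (R \setminus \{0\})^{-1}(S \otimes_R T) \cong S_R \otimes_{\Frac R} T_R$ is injective (here $S_R$, $T_R$ are as in the paper's notation). Now the base ring is the field $\Frac R$, over which every module is flat; the monomials $s^\alpha$ are $\Frac R$-linearly independent in $S_R$ (this is where algebraic independence of the $s_j$ over $\Frac R$ enters), so extending them to a $\Frac R$-basis of $S_R$ exhibits $S_R \otimes_{\Frac R} T_R$ as a free $T_R$-module having the $s^\alpha \otimes 1$ as part of a basis. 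Hence all $t_\alpha = 0$ in $T_R$, and therefore in $T$. With this step inserted, your proof is complete.
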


\begin{parag} \label {Wjd9f23u8n42ndhje8}
Let $B$ be an algebra over a field $\bk$ of characteristic zero and let $D \in \lnd(B)$. 
Let $\ck$ be any field extension of $\bk$ and define $\bar B = \ck \otimes_\bk B$. 
Applying the functor $\ck \otimes_\bk (\underline{\ \ }) : \text{\rm $\bk$-Mod} \to \text{\rm $\ck$-Mod}$ to $D$
gives a $\ck$-linear map $\bar D : \bar B \to \bar B$, given by $\bar D( \lambda \otimes b ) = \lambda \otimes D(b)$
for all $\lambda \in \ck$ and $b \in B$. It is easily verified that $\bar D \in \lnd( \bar B )$, so we have a well-defined set map
$D \mapsto \bar D$ from $\lnd(B)$ to $\lnd(\bar B)$.
If $D \in \lnd(B)$ and $A = \ker D$, then $\ker(\bar D) = \ck \otimes_\bk A$ because
$\ck \otimes_\bk (\underline{\ \ })$ is an exact functor.
\end{parag}

\begin{lemma} \label {evdhd838eh65433ru0}  
Let $\bk$ be a field of characteristic zero, $B$ be a $\bk$-domain and 
$\ck$ an algebraic extension of $\bk$ such that $\bar B = \ck \otimes_\bk B$ is a domain.
\begin{enumerata}

\item \label {ckiohJGdJhUt823w9eu92-a}  $\trdeg_\ck(\bar B) = \trdeg_\bk B$

\item \label {ckiohJGdJhUt823w9eu92-b}   If $\ML(B)=\bk$ then $\ML(\bar B) = \ck$, and if $\FML(B)=\bk$ then $\FML(\bar B) = \ck$.

\item \label {ckiohJGdJhUt823w9eu92-c}   Each $D \in \lnd(B)$ has a unique extension $\bar D \in \lnd(\bar B)$.
Every subset $\Delta$ of $\lnd(B)$ determines a subset $\bar\Delta$ of $\lnd(\bar B)$ defined by $\bar \Delta = \setspec{ \bar D }{ D \in \Delta }$.
We have
$$
\ck \otimes_\bk A_\Delta = A_{\bar\Delta} \in \Ascr(\bar B) \quad \text{and} \quad \ck \otimes_\bk K_\Delta = K_{\bar\Delta} \in \Kscr(\bar B)
$$
for every subset $\Delta$ of $\lnd(B)$.

\item \label {ckiohJGdJhUt823w9eu92-d}   The maps
$\Ascr(B) \to \Ascr( \bar B )$ ($A \mapsto \ck \otimes_\bk A$) and $\Kscr(B) \to \Kscr( \bar B )$ ($K \mapsto \ck \otimes_\bk K$) 
are injective and preserve transcendence degree:
\begin{gather*}
\trdeg( B : A ) = \trdeg( \bar B : \ck \otimes_\bk A ) \quad \text{for all $A \in \Ascr(B)$,} \\
\trdeg( \Frac(B) : K ) = \trdeg( \Frac(\bar B) : \ck \otimes_\bk K ) \quad \text{for all $K \in \Kscr(B)$.}
\end{gather*}

\item \label {ckiohJGdJhUt823w9eu92-e}   The following diagram is commutative:
$$
\xymatrix@R=15pt{
\Kscr( \bar B ) \ar[r]^{\bar f}  & \Ascr( \bar B )   \\
\Kscr( B ) \ar[r]_f \ar[u]^{k}  & \Ascr( B ) \ar[u]_{a} 
}
$$
where $f(K) = B \cap K$, $\bar f(L) = \bar B \cap L$, $k(K) = \ck \otimes_\bk K$ and $a(A) = \ck \otimes_\bk A$.

\end{enumerata}
\end{lemma}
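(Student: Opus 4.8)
The plan is to prove Lemma \ref{evdhd838eh65433ru0} by leveraging paragraph \ref{Wjd9f23u8n42ndhje8} as the foundation, since it already establishes the extension map $D \mapsto \bar D$ on locally nilpotent derivations together with the key identity $\ker(\bar D) = \ck \otimes_\bk \ker(D)$. Throughout I would use that $\ck/\bk$ is algebraic together with Lemma \ref{0c9v902j3we0fcwpdjhf2873} to ensure all the tensor products in sight are domains, and I would repeatedly invoke Lemma \ref{0cv9n2w0dZw0dI28efydldc0} for the transcendence-degree and intersection computations.

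For part \eqref{ckiohJGdJhUt823w9eu92-a}, the equality $\trdeg_\ck(\bar B) = \trdeg_\bk B$ is immediate from Lemma \ref{0cv9n2w0dZw0dI28efydldc0}(c) applied to the square with $R = \bk$, $S = B$, $T = \ck$ (so $S \otimes_R T = \bar B$), giving $\trdeg_\ck(\bar B) = \trdeg_\bk B$. Part \eqref{ckiohJGdJhUt823w9eu92-c} is really the heart of the lemma: using \ref{Wjd9f23u8n42ndhje8} I get $\ck \otimes_\bk \ker(D) = \ker(\bar D)$ for each $D$, and since $\ck \otimes_\bk (\underline{\ \ })$ commutes with the relevant intersections (exactness, plus flatness of $\ck$ over $\bk$), I can pass from individual kernels to $A_\Delta = \bigcap_{D \in \Delta}\ker D$ and conclude $\ck \otimes_\bk A_\Delta = A_{\bar\Delta}$. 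The corresponding statement for $K_\Delta$ requires knowing that $\ck \otimes_\bk \Frac(\ker D) = \Frac(\ker \bar D)$; here I would argue that $\Frac(\ker \bar D)$ is algebraic over the subfield generated by $\ck$ and $\Frac(\ker D)$ inside $\Frac(\bar B)$, using that $\ck/\bk$ is algebraic, so that tensoring the field $K_\Delta$ with $\ck$ and taking fraction fields agree. This gives both displayed identities, and in particular shows $A_{\bar\Delta} \in \Ascr(\bar B)$ and $K_{\bar\Delta} \in \Kscr(\bar B)$.

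Given \eqref{ckiohJGdJhUt823w9eu92-c}, the remaining parts follow fairly mechanically. For the transcendence-degree preservation in \eqref{ckiohJGdJhUt823w9eu92-d}, I would apply Lemma \ref{0cv9n2w0dZw0dI28efydldc0}(c) to the square with $R = A$, $S = B$, $T = \ck \otimes_\bk A$, noting $S \otimes_R T = B \otimes_A (\ck \otimes_\bk A) = \ck \otimes_\bk B = \bar B$, which yields $\trdeg(\bar B : \ck \otimes_\bk A) = \trdeg(B : A)$; the field version is analogous. Injectivity of the maps $A \mapsto \ck \otimes_\bk A$ and $K \mapsto \ck \otimes_\bk K$ follows from faithful flatness of $\ck$ over $\bk$ (distinct subobjects have distinct tensor products), or more concretely by recovering $A$ from $\ck \otimes_\bk A$ via intersection with $B$. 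Part \eqref{ckiohJGdJhUt823w9eu92-b} is the special case of \eqref{ckiohJGdJhUt823w9eu92-c} taking $\Delta = \lnd(B)$: then $A_\Delta = \ML(B)$, so $\ML(B) = \bk$ gives $\ck \otimes_\bk \bk = \ck = A_{\bar\Delta}$, and $A_{\bar\Delta} \supseteq \ML(\bar B) \supseteq \ck$ forces equality (the reverse containment using $\bar\Delta \subseteq \lnd(\bar B)$ and $\ck \subseteq \ML(\bar B)$ since $\ck$ is a field in $\bar B$); the $\FML$ statement is identical with $K_\Delta$.

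Finally, for the commutativity of the diagram in \eqref{ckiohJGdJhUt823w9eu92-e}, I need $\bar f(k(K)) = a(f(K))$, i.e. $\bar B \cap (\ck \otimes_\bk K) = \ck \otimes_\bk (B \cap K)$. The inclusion $\supseteq$ is clear. For $\subseteq$ I would use Lemma \ref{0cv9n2w0dZw0dI28efydldc0}(a): choosing a $\bk$-basis $\Eeul$ of $\ck$ containing $1$, an element of $\bar B \cap (\ck \otimes_\bk K)$ has unique expansions in $\Eeul$ with coefficients in $B$ (from lying in $\bar B$) and in $K$ (from lying in $\ck \otimes_\bk K$), hence coefficients in $B \cap K$. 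The main obstacle I anticipate is the field-theoretic identity $\ck \otimes_\bk \Frac(\ker D) = \Frac(\ker \bar D)$ in part \eqref{ckiohJGdJhUt823w9eu92-c}: one must check that tensoring by the algebraic extension $\ck$ does not enlarge the fraction field beyond what is predicted, which is exactly where algebraicity of $\ck/\bk$ (as opposed to an arbitrary extension) is essential, and where I would be most careful to verify that $\ck \otimes_\bk K_\Delta$ is itself a field (equivalently, that no new transcendence is introduced) rather than merely a domain.
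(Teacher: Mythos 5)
Your overall plan tracks the paper's proof quite closely: part (a) via Lemma \ref{0cv9n2w0dZw0dI28efydldc0}(c); the ring half of (c) by expanding elements in a $\bk$-basis of $\ck$ (the paper does exactly this basis computation); (d) by Lemma \ref{0cv9n2w0dZw0dI28efydldc0}(a) for injectivity and \ref{0cv9n2w0dZw0dI28efydldc0}(c) for transcendence degrees, with the same choice $R=A$, $S=B$, $T=\ck\otimes_\bk A$; and (b) as the special case $\Delta=\lnd(B)$ of (c). For (e) you prove the identity $\bar B \cap (\ck\otimes_\bk K)=\ck\otimes_\bk(B\cap K)$ directly by basis expansion, whereas the paper derives commutativity from (c) together with Lemma \ref{f2i3p0f234hef}(b) applied to $\bar B$ (both sides of the square send $K_\Delta$ to $A_{\bar\Delta}$); both routes are fine.

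The one place where you genuinely diverge --- and which you rightly identify as the crux --- is the field half of (c), and there your sketch has a gap. You propose to show that $\Frac(\ker\bar D)$ is \emph{algebraic} over the compositum $F=\ck\cdot\Frac(\ker D)$ inside $\Frac(\bar B)$ and to conclude equality from that; but algebraicity of an extension $E/F$ does not give $E=F$, and you have no statement available asserting that $F$ is algebraically closed in $\Frac(\bar B)$. The repair is immediate from facts you already cite: $F$ is a field (it is the image of $\ck\otimes_\bk\Frac(\ker D)$, which is a domain inside $\Frac(\bar B)$ and is integral over the field $\Frac(\ker D)$), and $\ker\bar D=\ck\otimes_\bk\ker D\subseteq F$ by \ref{Wjd9f23u8n42ndhje8}, whence $\Frac(\ker\bar D)\subseteq F$; the reverse inclusion is clear, so $\ck\otimes_\bk\Frac(\ker D)=\Frac(\ker\bar D)$, and your intersection argument then finishes (c). Note also that your parenthetical ``equivalently, that no new transcendence is introduced'' is not the right criterion: what makes $\ck\otimes_\bk K_\Delta$ a field is that it is a domain (because it embeds in $\Frac(\bar B)$, using that $\bar B$ is a domain) integral over a field; transcendence is not the issue. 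The paper sidesteps this entire discussion by a different device: it extends each $D$ to $D'\in\Der_\bk(\Frac B)$ and $\bar D$ to $\bar D'\in\Der_\ck(\Frac\bar B)$, invokes $\Frac(\ker\bar D)=\ker\bar D'$ (Lemma \ref{f2i3p0f234hef}(a) applied to $\bar B$), and then runs the same basis-expansion computation at the level of fraction fields, so that the field half of (c) becomes formally identical to the ring half. That route is cleaner than the compositum argument and is worth adopting if you write this up.
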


\begin{proof}
Given any $A \in \Ascr(B)$ and $K \in \Kscr(B)$, we may consider the commutative diagrams:
\begin{equation} \label {Dg192y65eu8}
\begin{array}{ccc}
\xymatrix@R=15pt{
\ck \ar[r]  & \ck \otimes_\bk A \ar[r] & \ck \otimes_\bk B \ar[r] & \ck \otimes_\bk \Frac B \\
\bk \ar[u] \ar[r]  & A  \ar[u]\ar[r] & B  \ar[u]\ar[r] & \Frac B  \ar[u]
} & \quad &
\xymatrix@R=15pt{
\ck \ar[r] & \ck \otimes_\bk K \ar[r] & \ck \otimes_\bk \Frac B \\
\bk \ar[u] \ar[r] & K  \ar[u]\ar[r] & \Frac B  \ar[u]
} \\
\mbox{\rm (a)} \rule{0mm}{5mm} & & \mbox{\rm (b)}
\end{array}
\end{equation}
Since all $\bk$-modules are flat, all homomorphisms in diagrams (\ref{Dg192y65eu8}a) and (\ref{Dg192y65eu8}b) are injective.
Since $\ck \otimes_\bk \Frac B = (\ck \otimes_\bk B) \otimes_B \Frac B$ is a localization of the domain $\ck \otimes_\bk B$, 
$\ck \otimes_\bk \Frac B$ is a domain and consequently all rings in the above diagrams are domains.
Since $\ck$ is integral over $\bk$, all vertical arrows in  (\ref{Dg192y65eu8}a) and (\ref{Dg192y65eu8}b) are integral homomorphisms and
in particular $\ck \otimes_\bk \Frac B$ and  $\ck \otimes_\bk K$ are fields.
Since $\ck \otimes_\bk \Frac B$ is a localization of $\ck \otimes_\bk B$, we have
\begin{equation*} 
\ck \otimes_\bk \Frac B = \Frac(\ck \otimes_\bk B).
\end{equation*}
Lemma \ref{0cv9n2w0dZw0dI28efydldc0}(c) gives $\trdeg_\ck(\ck \otimes_\bk B) = \trdeg_\bk(B)$, so assertion \eqref{ckiohJGdJhUt823w9eu92-a} is proved.

Each $D \in \lnd(B)$ extends uniquely to three derivations:
$$
\bar D \in \lnd( \ck \otimes_\bk B ), \quad
D' \in \Der_\bk( \Frac B ), \text{\quad and\quad}
\bar D' \in \Der_\ck( \ck \otimes_\bk \Frac B ).
$$
The assignment $D \mapsto (\bar D, D', \bar D')$ is well defined, and we shall use these notations throughout the proof below.
Note that $\Frac(\ker D) = \ker D'$ and $\Frac( \ker \bar D ) = \ker \bar D'$ for all $D \in \lnd(B)$, by Lemma \ref{f2i3p0f234hef}.
We also point out that $\bar D'( a \otimes x ) = a \otimes D'( x )$ for all $a \in \ck$ and $x \in \Frac B$.

Choose any subset $\Delta \subseteq \lnd(B)$.
Define the subset $\bar\Delta$ of $\lnd( \ck \otimes_\bk B )$ as in the statement of assertion \eqref{ckiohJGdJhUt823w9eu92-c}.
We claim that the subring $\ck \otimes_\bk A_\Delta$ of $\ck \otimes_\bk B$  is equal to $A_{\bar\Delta}$ and that 
the subfield $\ck \otimes_\bk K_\Delta$ of $\ck \otimes_\bk \Frac B = \Frac(\ck \otimes_\bk B)$ is equal to $K_{\bar\Delta}$. 
To see this, we choose a basis $( \lambda_i )_{i \in I}$ of $\ck$ over $\bk$. 
Recall that if $R$ is a ring and $\bk \subseteq R \subseteq \Frac B$ then $( \lambda_i )_{i \in I}$ is
a basis of $\ck \otimes_\bk R$ over $R$.

Consider $\beta \in \ck \otimes_\bk B$ and write $\beta = \sum_{i \in I_0} \lambda_i \otimes b_i$ with $I_0$ a finite subset of $I$
and $b_i \in B$ for all $i \in I_0$.
Then  $\beta \in A_{\bar \Delta}$
if and only if for each $D \in \Delta$ we have $0 = \bar D(\beta) = \sum_i \lambda_i \otimes D(b_i)$,
if and only if for each $D \in \Delta$ and $i \in I_0$ we have $D(b_i) = 0$,
if and only if all $b_i$ belong to $A_\Delta$,
if and only if $\beta \in \ck \otimes_\bk A_\Delta$.
This shows that $\ck \otimes_\bk A_\Delta = A_{\bar \Delta}$.  

Consider $\xi \in \ck \otimes_\bk \Frac B$ and write
$\xi = \sum_{i \in I_0} \lambda_i \otimes x_i$ with $I_0$ a finite subset of $I$ and $x_i \in \Frac B$ for all $i\in I_0$.
Then  $\xi \in K_{\bar \Delta}$
if and only if for each $D \in \Delta$ we have $\xi \in \Frac(\ker \bar D) = \ker( \bar D')$,
if and only if for each $D \in \Delta$ we have $0 = \bar D'(\xi) = \sum_i \lambda_i \otimes D'(x_i)$,
if and only if for each $D \in \Delta$ and $i \in I_0$ we have $D'(x_i) = 0$,
if and only if all $x_i$ belong to $\bigcap_{D \in \Delta} \ker(D') = \bigcap_{D \in \Delta} \Frac( \ker D ) = K_\Delta$,
if and only if $\xi \in \ck \otimes_\bk K_\Delta$.
This shows that $\ck\otimes_\bk K_\Delta = K_{\bar \Delta}$.   
This proves \eqref{ckiohJGdJhUt823w9eu92-c}.

In particular the two set maps of part (d) are well defined.
The fact that $\Ascr(B) \to \Ascr(\bar B)$ (resp.\ $\Kscr(B) \to \Kscr( \bar B )$) is injective follows from the fact that  
$(\ck \otimes_\bk A) \cap B = A$ in diagram (\ref{Dg192y65eu8}a)
(resp.\ $(\ck \otimes_\bk K) \cap \Frac B = K$ in diagram (\ref{Dg192y65eu8}b)),
which itself follows from Lemma \ref{0cv9n2w0dZw0dI28efydldc0}(a).
Lemma \ref{0cv9n2w0dZw0dI28efydldc0}(c) gives $\trdeg(B:A) = \trdeg(\ck \otimes_\bk B: \ck \otimes_\bk A)$
and  $\trdeg( \Frac(B) : K ) = \trdeg( \ck \otimes_\bk \Frac(B) : \ck \otimes_\bk K )$,
so  \eqref{ckiohJGdJhUt823w9eu92-d} is proved.

If $\ML(B)=\bk$ then $\bk \in \Ascr(B)$, so 
\eqref{ckiohJGdJhUt823w9eu92-c} implies that $\ck \otimes_\bk \bk \in \Ascr( \bar B )$,
i.e., $\ck \in \Ascr(\bar B)$ and hence $\ML(\bar B) = \ck$.
Similarly, if $\FML(B)=\bk$ then $\bk \in \Kscr(B)$, so $\ck \otimes_\bk \bk \in \Kscr( \bar B )$,
so $\FML( \ck \otimes_\bk B) = \ck$. This proves \eqref{ckiohJGdJhUt823w9eu92-b}.

\eqref{ckiohJGdJhUt823w9eu92-e}
Let $\Delta$ be a subset of $\lnd(B)$. Then
$\bar f ( k ( K_\Delta ) ) = \bar f( \ck \otimes_\bk K_\Delta )  = \bar f( K_{ \bar \Delta } )  = K_{ \bar \Delta } \cap \bar B = A_{ \bar \Delta }$
and
$a( f ( K_\Delta ) ) = a( K_\Delta \cap B ) = a( A_\Delta ) =  \ck \otimes_\bk A_\Delta = A_{ \bar \Delta }$,
so $\bar f ( k ( K_\Delta ) ) = a( f( K_\Delta ) )$.
\end{proof}

Our next goal is to describe the relation between $\Kscr(B)$ and $\Kscr( K[B] )$,
where $K$ is any element of $\Kscr(B)$ and $K[B]$ is the $K$-subalgebra of $\Frac(B)$ generated by $B$.
Here, the reader should keep in mind that replacing $B$ by $K[B]$ is neither a localization nor a tensor product.

\begin{lemma}  \label {9dfh923809fjf}
Let $\bk$ be a field of characteristic zero, $B$ an affine $\bk$-domain, and $K \in \Kscr(B)$.
Consider the subring  $\Beul = K[B]$ of $\Frac B$.
Then $\Beul$ is an affine $K$-domain, $\dim\Beul = \trdeg_K( \Frac B )$, $\FML(\Beul)=K$, and
$\setspec{ L \in \Kscr(B) }{ K \subseteq L } \subseteq \Kscr( \Beul )$.
\end{lemma}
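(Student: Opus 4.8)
The plan is to dispatch the two easy claims by a direct computation, and then to build, for a well-chosen family of locally nilpotent derivations of $B$, corresponding derivations of $\Beul$ that carry exactly the same kernel-field; the last two assertions then follow by bookkeeping.

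First I would record the trivial observations. Writing $B = \bk[b_1, \dots, b_r]$, we have $\Beul = K[b_1, \dots, b_r]$, so $\Beul$ is an affine $K$-domain, and $\Frac \Beul = \Frac B$ because $B \subseteq \Beul \subseteq \Frac B$. As $\Beul$ is finitely generated over the field $K$, its Krull dimension equals $\trdeg_K(\Frac \Beul) = \trdeg_K(\Frac B)$, which settles the first two assertions.

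The crux is the following construction. For $D \in \lnd(B)$ let $D' \in \Der(\Frac B)$ be its unique extension; by Lemma \ref{f2i3p0f234hef} we have $\ker D' = \Frac(\ker D)$. I would single out the set $\Delta(K) = \setspec{ D \in \lnd(B) }{ K \subseteq \Frac(\ker D) }$. For $D \in \Delta(K)$ the derivation $D'$ kills $K$ (since $K \subseteq \ker D'$) and maps $B$ into $B$, so by the Leibniz rule it maps $\Beul = K[B]$ into itself, giving a $K$-derivation $\hat D$ of $\Beul$. I expect $\hat D$ to be locally nilpotent: the set of $\hat D$-nilpotent elements is a $K$-subalgebra containing $K$ and every $b_i$ (the latter being $D$-nilpotent), hence all of $\Beul$. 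Finally, since $D'$ is also the unique extension of $\hat D$ to $\Der(\Frac \Beul) = \Der(\Frac B)$, Lemma \ref{f2i3p0f234hef} applied to $\Beul$ yields $\Frac(\ker_\Beul \hat D) = \ker D' = \Frac(\ker D)$. Thus $D \mapsto \hat D$ sends $\Delta(K)$ into $\lnd(\Beul)$ while preserving kernel-fields.

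With this in hand, the remaining claims rest on the identity $K = K_{\Delta(K)}$, which holds because $K = K_\Delta \in \Kscr(B)$ forces $\Delta \subseteq \Delta(K)$, whence $K_{\Delta(K)} \subseteq K_\Delta = K \subseteq K_{\Delta(K)}$. To get $\FML(\Beul) = K$, I would intersect over the subfamily $\setspec{ \hat D }{ D \in \Delta(K) } \subseteq \lnd(\Beul)$, obtaining $\FML(\Beul) \subseteq \bigcap_{D \in \Delta(K)} \Frac(\ker_\Beul \hat D) = \bigcap_{D \in \Delta(K)} \Frac(\ker D) = K$; the reverse inclusion is immediate from \ref{pc9293ed0wdjo03}(i), since $K$ is a field contained in $\Beul$ and hence in $\Frac(\ker E)$ for every $E \in \lnd(\Beul)$. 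For $\setspec{ L \in \Kscr(B) }{ K \subseteq L } \subseteq \Kscr(\Beul)$: given such $L$, every $D \in \Delta(L)$ satisfies $K \subseteq L \subseteq \Frac(\ker D)$, so $\Delta(L) \subseteq \Delta(K)$ and each $\hat D$ is defined; then $\bigcap_{D \in \Delta(L)} \Frac(\ker_\Beul \hat D) = \bigcap_{D \in \Delta(L)} \Frac(\ker D) = K_{\Delta(L)} = L$ exhibits $L$ in $\Kscr(\Beul)$. The step I expect to require the most care is verifying that $\hat D$ is genuinely locally nilpotent and has the \emph{same} kernel-field as $D$; once that is secured, everything else is the identity $K = K_{\Delta(K)}$ together with factorial closedness of kernels.
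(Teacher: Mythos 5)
Your proposal is correct and follows essentially the same route as the paper's proof: the paper also introduces $\Delta = \setspec{D \in \lnd(B)}{K \subseteq \Frac(\ker D)}$, restricts the extension $D' \in \Der_K(\Frac B)$ to $\Beul$, proves local nilpotency via the subring of nilpotent elements containing $K$ and $B$, and concludes via $K_{\Delta} = K$ together with factorial closedness of kernels. The only cosmetic differences are that the paper establishes $\Frac(\ker D'') = \Frac(\ker D)$ by the sandwich $\ker D \subseteq \ker D'' \subseteq \Frac(\ker D)$ rather than by reapplying Lemma \ref{f2i3p0f234hef} to $\Beul$, and it exhibits $L \in \Kscr(\Beul)$ using an arbitrary $\Delta_1$ with $L = K_{\Delta_1}$ instead of the maximal family $\Delta(L)$; both are equivalent.
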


\begin{proof}
It is clear that $\Beul$ is an affine $K$-domain and that, consequently,
$\dim\Beul$ is equal to the transcendence degree of $\Frac(\Beul)=\Frac(B)$ over $K$.

Let $\Delta$ be the largest possible subset of $\lnd(B)$ satisfying $K = K_\Delta$;
namely, 
\begin{equation} \label {mhbciiwsjhq9d8y}
\Delta = \setspec{ D \in \lnd(B) }{ K \subseteq \Frac(\ker D) } .
\end{equation}

Let $D \in \Delta$.
Then $D$ has a unique extension $D'\in \Der( \Frac B )$;
as $\ker D' = \Frac(\ker D) \supseteq K$ by Lemma \ref{f2i3p0f234hef}, we have in fact  $D' \in \Der_K( \Frac B )$.
As $D'(K) = \{0\} \subseteq \Beul$ and $D'(B) = D(B) \subseteq B \subseteq \Beul$, it follows that $D'(\Beul) \subseteq \Beul$. 
Let $D'' : \Beul \to \Beul$ be the restriction of $D'$.
Consider the subring
$$
\Nil( D'' ) = \setspec{ x \in \Beul }{ \exists_{ r > 0 }\ {D''}^r(x)=0 }
$$
of $\Beul$ and note that
$K \subseteq \ker(D'') \subseteq \Nil(D'')$ and $B \subseteq \Nil(D) \subseteq \Nil(D'')$; it follows that $\Nil(D'') = \Beul$
and hence that $ D'' \in \lnd(\Beul)$.
We also have
$\ker(D) \subseteq \ker(D'') \subseteq \ker(D') = \Frac( \ker D )$ by Lemma \ref{f2i3p0f234hef},
so $\Frac(\ker D'') = \Frac(\ker D)$.

We showed that each $D \in \Delta$ extends to some (necessarily unique) $D'' \in \lnd(\Beul)$ satisfying
$\Frac(\ker D'') = \Frac(\ker D)$.

Consider $L \in \Kscr(B)$ such that $K \subseteq L$, and choose
$\Delta_1 \subseteq \lnd(B)$ such that $L=K_{\Delta_1}$.
Because of \eqref{mhbciiwsjhq9d8y}, we have $\Delta_1 \subseteq \Delta$.
Consequently, each $D \in \Delta_1$ has a unique extension 
$D'' \in \lnd(\Beul)$ satisfying $\Frac(\ker D'') = \Frac(\ker D)$. Now
$$
L
= \bigcap_{D \in \Delta_1} \Frac(\ker D)
= \bigcap_{D \in \Delta_1} \Frac(\ker D'')
\in \Kscr( \Beul ) ,
$$
showing that $\setspec{ L \in \Kscr(B) }{ K \subseteq L } \subseteq \Kscr( \Beul )$.
It follows that $K \in \Kscr( \Beul )$ and hence that $K \supseteq \FML(\Beul)$.
As  $\Beul^* \subseteq\FML(\Beul)$, we have $K \subseteq \FML(\Beul)$ and hence $\FML(\Beul)=K$.
\end{proof}

\section*{The LND-rank}

Given a domain $B$ of characteristic zero, we proceed to define an element $\lndrank(B)$ of $\Nat \cup \{ \infty \}$
that we call the \textit{LND-rank\/} of $B$.
Paragraph \ref{9238hf3jwrdm49} and Lemma \ref{dkjf;asldkjfa} are preliminaries to the definition of $\lndrank(B)$.
The reader should keep in mind that all quantities considered below
(namely $\sup S_r$, $\sup S_f$, $\dim_L \Span_L( \lnd B )$, $\lndrank(B)$, $\haut(\Ascr(B))$ and $\haut(\Kscr(B))$)
are regarded as elements of $\Nat \cup \{ \infty \}$.
In other words, all infinite cardinals are denoted $\infty$ and we do not distinguish between them.

\begin{parag}  \label {9238hf3jwrdm49}
Let $B$ be a domain of characteristic zero.
Let $L= \Frac B$ and $K=\FML(B)$,
and recall that $\Der_K(L)$ is a vector space over $L$ of dimension $\trdeg_K(L)$.
Each element of $\lnd(B)$ has a unique extension to an element of $\Der_K(L)$, so we may regard $\lnd(B)$ as a subset of $\Der_K(L)$.
Let $\Span_L( \lnd B )$ denote the subspace of $\Der_K(L)$ spanned (over $L$) by the set $\lnd(B)$.
Let us also consider 
the set $S_r$ of all $n \in \Nat$ satisfying:
\begin{enumerate}
\item[$(*)$]  \it there exist $D_1, \dots, D_n \in \lnd(B)$ and $b_1, \dots, b_n \in B$
such that the $n \times n$ matrix $( D_i(b_j) )$ has nonzero determinant in $B$,
\end{enumerate}
and the set $S_f$ of all $n \in \Nat$ satisfying:
\begin{enumerate}
\item[$(**)$] \it there exist $D_1, \dots, D_n \in \lnd(B)$ and $b_1, \dots, b_n \in \Frac B$
such that the $n \times n$ matrix $( D_i(b_j) )$ has nonzero determinant in $\Frac B$
\end{enumerate}
where, in $(**)$, the same notation is used for the element $D_i$ of $\lnd(B)$ and its unique extension to an element of $\Der_K(L)$.
The subscripts `$r$' and `$f$' in the notations $S_r$ and $S_f$ stand for the words `ring' and `field' respectively.
Then:
\end{parag}

\begin{sublemma}  \label {dkjf;asldkjfa}
$\sup S_r = \sup S_f = \dim_L \Span_L( \lnd B )$.
\end{sublemma}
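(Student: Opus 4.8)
The plan is to prove the two equalities $\sup S_r = \sup S_f$ and $\sup S_f = \dim_L \Span_L(\lnd B)$ separately, since the statement asserts a chain of equalities in $\Nat \cup \{\infty\}$. Throughout, write $V = \Span_L(\lnd B) \subseteq \Der_K(L)$ and $d = \dim_L V$, and recall that we regard each $D \in \lnd(B)$ as an element of $\Der_K(L)$ via its unique extension (see Lemma \ref{f2i3p0f234hef}(a)). The key observation linking both sides is that for derivations $D_1,\dots,D_n \in \Der_K(L)$ and elements $b_1,\dots,b_n \in L$, the matrix $(D_i(b_j))$ is exactly the matrix of the pairing between the $D_i$ and the differentials $db_j$; its nonvanishing determinant detects simultaneous linear independence of the $D_i$ over $L$.

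First I would show $\sup S_f = d$. For the inequality $\sup S_f \le d$: if $n \in S_f$, there exist $D_1,\dots,D_n \in \lnd(B)$ and $b_1,\dots,b_n \in \Frac B$ with $\det(D_i(b_j)) \neq 0$; a nonzero determinant forces the rows to be $L$-linearly independent, hence the $D_i$ are linearly independent as elements of $\Der_K(L)$ (any nontrivial $L$-relation $\sum_i c_i D_i = 0$ would give a nontrivial $L$-relation among the rows $(D_i(b_1),\dots,D_i(b_n))$), so $n \le d$. For the reverse inequality $\sup S_f \ge d$: I would pick $D_1,\dots,D_m \in \lnd(B)$ that are $L$-linearly independent in $V$ for any $m \le d$ (possible by definition of $d$ as the dimension of the span), and then produce $b_1,\dots,b_m \in L$ making $\det(D_i(b_j)) \neq 0$. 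This is where the main work lies: linear independence of the $D_i$ over $L$ says the map $L^m \to \Der_K(L)$ is injective, and I must translate this into the existence of test elements $b_j$ separating them. The clean way is to use that the $D_i$, being $K$-derivations of $L$ with $K$ algebraically closed in $L$ (here $K = \FML(B)$, algebraically closed in $L$ by Lemma \ref{0ckj238hqn239hnfpawrnHzb}), span a subspace of the $\trdeg_K(L)$-dimensional space $\Der_K(L) = \operatorname{Hom}_L(\Omega_{L/K}, L)$; by the perfectness of the pairing $\Der_K(L) \times \Omega_{L/K} \to L$, linearly independent $D_i$ admit differentials $db_{j}$ (for suitable $b_j \in L$, since differentials of elements of $L$ span $\Omega_{L/K}$) whose pairing matrix is nonsingular. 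Concretely, I would choose a separating transcendence basis and express each $D_i$ in terms of partial derivatives, then select coordinates to witness independence.

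Next I would prove $\sup S_r = \sup S_f$. The inequality $\sup S_r \le \sup S_f$ is immediate, since condition $(*)$ (with $b_j \in B$ and determinant nonzero in $B$) is a special case of $(**)$ (with $b_j \in \Frac B$ and determinant nonzero in $\Frac B$), so $S_r \subseteq S_f$. For $\sup S_f \le \sup S_r$, the point is a clearing-denominators argument: given $n \in S_f$ witnessed by $D_1,\dots,D_n \in \lnd(B)$ and $b_1,\dots,b_n \in \Frac B$ with $\det(D_i(b_j)) \neq 0$ in $\Frac B$, I would write each $b_j = u_j/v$ over a common denominator $v \in B \setminus\{0\}$ and show that replacing the $b_j$ by suitable elements of $B$ preserves a nonzero determinant. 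The natural move is to observe that $D_i(b_j) = D_i(u_j/v) = (v\,D_i(u_j) - u_j\,D_i(v))/v^2$, so $\det(D_i(b_j)) = v^{-2n}\det(v D_i(u_j) - u_j D_i(v))$; since the determinant is nonzero, the matrix $(v D_i(u_j) - u_j D_i(v))$ with entries in $B$ has nonzero determinant, and by column operations (the terms $u_j D_i(v)$ form a rank-one correction) one reduces to a matrix $(D_i(c_j))$ with $c_j \in B$ having nonzero determinant, giving $n \in S_r$.

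The main obstacle I anticipate is the reverse inequality $\sup S_f \ge d$, i.e.\ realizing the abstract $L$-linear independence of extended derivations by an explicit nonsingular pairing matrix against differentials of ring (or field) elements; this is the step that genuinely uses the structure of $\Der_K(L)$ as the dual of the module of Kähler differentials $\Omega_{L/K}$ and the fact that $\{db : b \in L\}$ spans $\Omega_{L/K}$ over $L$. Once that duality is invoked cleanly, the three quantities are identified. A secondary technical point is the clearing-denominators reduction in $\sup S_f \le \sup S_r$, where one must verify that the rank-one correction from $u_j D_i(v)$ can be absorbed without lowering the rank; this follows from multilinearity of the determinant in its columns, so it is routine once set up correctly.
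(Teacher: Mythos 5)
Your proof is correct, but it is organized differently from the paper's and ends up doing more work. The paper proves the cyclic chain $\sup S_r \le \sup S_f \le \dim_L \Span_L(\lnd B) \le \sup S_r$: the first inequality is the trivial inclusion $S_r \subseteq S_f$, the second is the same determinant-forces-independence argument you give, and the third --- the only substantial step --- goes directly from linear independence of $D_1,\dots,D_n$ over $L$ to witnesses $x_1,\dots,x_n \in B$ (not merely in $\Frac B$). It does so with bare-handed linear algebra: for $x \in B$ set $\delta_x = (D_1x,\dots,D_nx) \in L^n$ and let $U = \Span_L\setspec{\delta_x}{x \in B}$; if $(a_1,\dots,a_n)$ is orthogonal to every $\delta_x$, then $D = \sum_i a_iD_i$ vanishes on $B$, hence on $L = \Frac B$, hence $(a_1,\dots,a_n)=0$ by independence; so $U^\perp = \{0\}$, $U = L^n$, and one extracts $x_1,\dots,x_n \in B$ with $\det(D_i(x_j)) \neq 0$, i.e.\ $n \in S_r$. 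This is in effect your duality argument, but run with test elements taken from $B$ from the outset, which makes both of your auxiliary devices unnecessary: the K\"ahler-differential pairing $\Der_K(L) = \operatorname{Hom}_L(\Omega_{L/K},L)$ (your surjectivity argument for the map into $L^m$ works verbatim for the span of the $\delta_x$, so $\Omega_{L/K}$ and separating transcendence bases never enter), and the clearing-of-denominators step proving $\sup S_f \le \sup S_r$ (which is simply not needed once the hard direction lands in $S_r$ rather than $S_f$). Your two extra steps are themselves sound --- in particular the multilinearity expansion absorbing the rank-one term $u_jD_i(v)$ does work, and that explicit reduction from field witnesses to ring witnesses has some independent interest --- but the paper's cyclic organization reaches the same conclusion with one substantive argument instead of two, entirely within elementary linear algebra over $L$.
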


\begin{proof}
It is clear that $S_r \subseteq S_f$, so $\sup S_r \le \sup S_f$.

Let $n \in S_f$, and let us prove that $n \le \dim_L\Span_L( \lnd B )$. We may assume that $n\ge1$.
Pick $D_1, \dots, D_n \in \lnd(B)$ and $x_1, \dots, x_n \in \Frac B$ such that $\det( D_i x_j ) \neq 0$.
Now consider $a_1, \dots, a_n \in L$ such that $D= \sum_{i=1}^n a_i D_i \in \Der_K(L)$ is the zero derivation; then
$$
\left( \begin{array}{ccc} a_1 & \cdots & a_n \end{array} \right)
\left(
\begin{smallmatrix}
D_1x_1 & \cdots & D_1x_n \\
\vdots && \vdots \\
D_nx_1 & \cdots & D_nx_n
\end{smallmatrix}
\right)
= \left( \begin{array}{ccc} D x_1 & \cdots & D x_n \end{array} \right)
= \left( \begin{array}{ccc} 0 & \cdots & 0 \end{array} \right),
$$
so $(a_1 , \dots , a_n) = ( 0 , \dots , 0 )$,
since $\det( D_i x_j ) \neq 0$.
This shows that $D_1, \dots, D_n$ are linearly independent over $L$,
so $n \le \dim_L \Span_L(\lnd B)$. It follows that $\sup S_f \le \dim_L \Span_L(\lnd B)$.

Suppose that $n \in \Nat$ satisfies $n \le \dim_L\Span_L(\lnd B)$, and let us prove that $n \in S_r$.
We may assume that $n\ge1$.
Pick $D_1, \dots, D_n \in \lnd(B)$ linearly independent over $L$.
For each $x \in B$, let $\delta_x = ( D_1 x, \dots, D_n x )
\in B^n \subseteq L^n$;
then define  
$$
U = \Span_L \setspec{ \delta_x }{ x \in B }
\text{\ \ and\ \ }
U^\perp = \setspec{ v \in L^n }
{ \langle v, \delta_x \rangle = 0 \text{ for all } x \in B },
$$
where for $v = (a_1, \dots, a_n), v' = (a_1', \dots, a_n') \in L^n$
we define $\langle v,v' \rangle = \sum_{i=1}^n a_i a_i'$.
We claim that $U^\perp = \{0\}$. 
Indeed, consider $(a_1, \dots, a_n) \in U^\perp$.
Define $D = \sum_{i=1}^n a_i D_i \in \Der_K(L)$.  Then for each $x \in B$
we have
$ D(x) = \sum_{i=1}^n a_i D_i x = \langle (a_1, \dots, a_n), \delta_x \rangle
=0 $,
so $D|_B = 0$ and hence $D=0$.
Since $D_1, \dots, D_n$ are linearly independent over $L$, we obtain $(a_1, \dots, a_n) = (0, \dots, 0)$.
Thus $U^\perp=\{0\}$ and consequently $U=L^n$.
So we can choose $x_1, \dots, x_n \in B$ such that 
$\delta_{x_1}, \dots, \delta_{x_n}$ is a basis of $L^n$.
Then $\det( D_i(x_j) ) \neq 0$, showing that $n \in S_r$.
It follows that $\dim_L\Span_L(\lnd B) \le \sup S_r$, so the Lemma is proved.
\end{proof}

\begin{definition}
Let $B$ be a domain of characteristic zero.
By the \textit{LND-rank\/} of $B$,
denoted $\lndrank(B)$,
we mean the element $\sup S_r = \sup S_f = \dim_L \Span_L( \lnd B )$ of $\Nat \cup \{\infty\}$
(see Lemma \ref{dkjf;asldkjfa}).
\end{definition}

\begin{proposition} \label {0c9vkj2w9Hjghi93u98eid}
For any domain $B$ of characteristic zero, we have
$$
\haut \big( \Ascr(B) \big) \le \haut \big( \Kscr(B) \big) \le \lndrank(B) \le \trdeg\big( \Frac(B) : \FML(B) \big) .
$$
\end{proposition}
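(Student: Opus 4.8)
The plan is to prove the three inequalities one at a time, the middle one being the crux. The leftmost inequality $\haut(\Ascr(B)) \le \haut(\Kscr(B))$ is exactly Lemma \ref{0c9vni3w4e8vno} and needs no further argument. The rightmost inequality is immediate from the definitions: with $L = \Frac B$ and $K = \FML(B)$ as in paragraph \ref{9238hf3jwrdm49}, the space $\Span_L(\lnd B)$ is by construction an $L$-subspace of $\Der_K(L)$, and the latter has dimension $\trdeg_K(L) = \trdeg(\Frac(B):\FML(B))$; hence $\lndrank(B) = \dim_L\Span_L(\lnd B) \le \dim_L\Der_K(L) = \trdeg(\Frac(B):\FML(B))$.

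So the real content is $\haut(\Kscr(B)) \le \lndrank(B)$. Keep the notation $L, K$ above and write $W = \Span_L(\lnd B)$, so that $\dim_L W = \lndrank(B)$; recall from paragraph \ref{9238hf3jwrdm49} that each $D \in \lnd(B)$ is identified with its unique extension $D' \in \Der_K(L)$, for which $\ker D' = \Frac(\ker D)$ by Lemma \ref{f2i3p0f234hef}(a). For an $L$-subspace $V \subseteq W$ set $\operatorname{Fix}(V) = \bigcap_{\delta \in V}\ker\delta$, which is an intermediate field $K \subseteq \operatorname{Fix}(V) \subseteq L$. The key observation is that for any $\Delta \subseteq \lnd(B)$ one has $K_\Delta = \bigcap_{D\in\Delta}\ker D' = \operatorname{Fix}(\Span_L\Delta)$: indeed an element is killed by every $D'$ with $D\in\Delta$ if and only if it is killed by every $L$-linear combination of the $D'$. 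Thus each element of $\Kscr(B)$ is of the form $\operatorname{Fix}(V)$ for a suitable subspace $V \subseteq W$.

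Now define, for $F \in \Kscr(B)$, the index set $\Delta(F) = \setspec{ D \in \lnd(B) }{ F \subseteq \Frac(\ker D) }$ and the subspace $V_F = \Span_L\setspec{ D' }{ D \in \Delta(F) } \subseteq W$. Writing $F = K_\Delta$ for some $\Delta$, one has $\Delta \subseteq \Delta(F)$ and hence $F \subseteq K_{\Delta(F)} \subseteq K_\Delta = F$, so $F = K_{\Delta(F)} = \operatorname{Fix}(V_F)$. Consequently $F \mapsto V_F$ is order-reversing and injective: if $F_1 \subseteq F_2$ then $\Delta(F_2) \subseteq \Delta(F_1)$, so $V_{F_2} \subseteq V_{F_1}$, and the inclusion is strict when $F_1 \subset F_2$, because $V_{F_2} = V_{F_1}$ would force $F_2 = \operatorname{Fix}(V_{F_2}) = \operatorname{Fix}(V_{F_1}) = F_1$. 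Therefore a chain $K_0 \subset K_1 \subset \cdots \subset K_n$ in $\Kscr(B)$ yields a strictly decreasing chain $V_{K_0} \supset V_{K_1} \supset \cdots \supset V_{K_n}$ of $L$-subspaces of $W$, whose dimensions strictly decrease; this forces $n \le \dim_L W = \lndrank(B)$, giving $\haut(\Kscr(B)) \le \lndrank(B)$ and completing the proof. The only delicate point, and the step I expect to require the most care, is the identification $K_\Delta = \operatorname{Fix}(\Span_L\Delta)$ together with the resulting injectivity $F = \operatorname{Fix}(V_F)$; once these are in hand, the rest is bookkeeping with order-reversing maps and the height of the subspace lattice of $W$.
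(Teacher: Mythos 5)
Your proof is correct, and while the outer two inequalities are handled exactly as in the paper (Lemma \ref{0c9vni3w4e8vno} on the left, and $\Span_L(\lnd B)\subseteq\Der_K(L)$ with $\dim_L\Der_K(L)=\trdeg_K(L)$ on the right), your treatment of the crucial middle inequality $\haut(\Kscr(B))\le\lndrank(B)$ takes a genuinely different route. The paper also starts from a chain $K_0\subset\cdots\subset K_n$ and the sets $\Delta_i=\setspec{D\in\lnd(B)}{K_i\subseteq\Frac(\ker D)}$, which satisfy $K_{\Delta_i}=K_i$ and $\Delta_0\supset\cdots\supset\Delta_n$; but instead of passing to spans, it picks $D_i\in\Delta_{i-1}\setminus\Delta_i$ and elements $b_i\in K_i$ with $D_i'(b_i)\neq 0$, notes that $D_i'(b_j)=0$ for $j<i$, and concludes that the matrix $\bigl(D_i'(b_j)\bigr)$ is triangular with nonzero diagonal, hence $n\in S_f$ and $n\le\sup S_f=\lndrank(B)$. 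In other words, the paper exploits the $S_f$-characterization of $\lndrank(B)$ from Lemma \ref{dkjf;asldkjfa} by exhibiting an explicit matrix certificate, whereas you use the characterization $\lndrank(B)=\dim_L\Span_L(\lnd B)$ directly, through the Galois-type correspondence $F\mapsto V_F$, $V\mapsto\operatorname{Fix}(V)$ (whose key identity $K_\Delta=\operatorname{Fix}(\Span_L\Delta)$, resting on $L$-linearity of evaluation and Lemma \ref{f2i3p0f234hef}(a), you verify correctly) and a dimension count on a strictly decreasing chain of subspaces. Your argument buys conceptual cleanliness: no choice of the witnesses $b_i$, no matrix, and a reusable fixed-field formalism; the paper's buys concreteness and stays closer to the definition of $S_f$, which is also what Lemma \ref{dkjf;asldkjfa} itself manipulates. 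Two minor points to make explicit in your write-up: when $\dim_L W=\infty$ the inequality is trivial (all quantities live in $\Nat\cup\{\infty\}$), so the strictly-decreasing-dimensions argument need only be invoked in the finite-dimensional case; and the strictness of $V_{K_{i+1}}\subset V_{K_i}$ is exactly where the identity $F=\operatorname{Fix}(V_F)$ is used, so it deserves the emphasis you give it.
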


\begin{proof}
We have $\haut \big( \Ascr(B) \big) \le \haut \big( \Kscr(B) \big)$ by Lemma \ref{0c9vni3w4e8vno}.

Suppose that $n \in \Nat$ and $K_0, \dots, K_n \in \Kscr(B)$ are such that $K_0 \subset \cdots \subset K_n$ 
(where `$\subset$' is strict inclusion).
For each $i = 0, \dots, n$, define $\Delta_i = \setspec{ D \in \lnd(B) }{ K_i \subseteq \Frac(\ker D) }$.
Then  $K_{\Delta_i} = K_i$ for all $i$. We also have $\Delta_0 \supset \cdots \supset \Delta_n$, so we may choose
$D_i \in \Delta_{i-1} \setminus \Delta_{i}$ for each $i \in \{1, \dots, n\}$. 
Let $D_i' \in \Der( \Frac B )$ be the unique extension of $D_i$ and note that $\ker D_i' = \Frac( \ker D_i )$ by Lemma \ref{f2i3p0f234hef}.
Since $D_i \notin \Delta_{i}$, we have $K_{i} \nsubseteq \Frac(\ker D_i) = \ker D_i'$ and hence we may choose $b_i \in K_{i}$ such that 
$D_i'(b_i) \neq 0$. For each $j$ such that $0 \le j<i$ we have $D_i \in \Delta_{i-1} \subseteq \Delta_j$ so $D_i'(b_j) = 0$.
This shows that the $n \times n$ matrix $( D_i'(b_j) )$ is upper triangular with nonzero entries on the diagonal,
so $n \in S_f$ and hence $n \le \sup S_f = \lndrank(B)$ (notation as in \ref{9238hf3jwrdm49}). So $\haut\big(\Kscr(B)\big) \le \lndrank(B)$.

As in \ref{9238hf3jwrdm49}, let $L = \Frac B$ and $K = \FML(B)$.
Since $\lndrank(B)$ is the dimension of the subspace $\Span_L( \lnd(B) )$ of the $L$-vector space $\Der_K(L)$,
we have $\lndrank(B) \le  \dim_L \Der_K(L) = \trdeg_K(L) =  \trdeg\big( \Frac(B) : \FML(B) \big)$, as desired.
\end{proof}

\begin{remark}
There exist affine domains $B$ for which $\haut \big( \Ascr(B) \big) < \trdeg\big( \Frac(B) : \FML(B) \big)$,
i.e., at least one of the inequalities of Prop.\ \ref{0c9vkj2w9Hjghi93u98eid} is strict.  See Rem.\ \ref{c09vb3489ertgbp0anjs3d}.
\end{remark}

\section{Applications}
\label {Section:applicationsandquestions}

We apply the theory developed in Sections \ref{Section:whatisneededfornextsection}--\ref{Somepreliminaries} to study domains of characteristic zero.
This section is subdivided into unnumbered subsections, each one beginning with a title.

\medskip

\subsection*{Preliminaries}

\begin{parag}  \label {934of0vfvo5rje4i}
The following are some of the known facts that we use in this section.
\begin{enumerata}

\item  \label {hghwhegwhegwheg} {\it Let $A$ be a domain containing a field $\bk$ and such that $\trdeg_\bk(A)=1$.
If $A$ is contained in some affine $\bk$-domain, then $A$ is finitely generated as a $\bk$-algebra.}

\item  \label {dijfjfksdf;kjsz;dk}
{\it Suppose that  $\bk \subseteq A \subseteq B$, where $\bk$ is a field, $B$ is a normal affine $\bk$-domain,
and $A$ is a factorially closed subring of $B$ such that $\trdeg_\bk(A)\le2$.
Then $A$ is finitely generated as a $\bk$-algebra.}

\item  \label {qhwgqhopcxoxpcop}
{\it Let $A \subset B$ be integral domains, where $B$ is finitely generated as an $A$-algebra.
Suppose that $S^{-1}B = ( S^{-1}A )^{[1]}$ where $S$ is a multiplicative set
of $A$ satisfying the following condition: each element of $S$ is a product
of units of $A$ and of prime elements $p$ of $A$ such that
\begin{enumerate}

\item[(i)] $p$ is a prime element of $B$
\item[(ii)] $A \cap pB = pA$
\item[(iii)] $A/pA$ is algebraically closed in $B/pB$.

\end{enumerate}
Then $B = A^{[1]}$.  }

\item \label {d;kfjawherj;aekj}
{\it Let $\bk$ be a field of characteristic zero and $B$ 
a normal affine $\bk$-domain such that $\ML(B) = \bk$ and $\trdeg_\bk(B) = 2$.
Then $\Frac B = \bk^{(2)}$ and each element of $\Ascr_1(B)$ is a $\kk1$.}

\item \label {NEW-nonexistenceforms}
{\it Let $\bk$ be a field, $R$ a $\bk$-algebra and $n \le 2$ a natural number.
If there exists a separable field extension $K/\bk$ such that $K \otimes_\bk R = K^{[n]}$, then $R = \kk n$.}

\end{enumerata}
\end{parag}

\begin{proof}
Refer to \cite[Lemma 1.39]{Miy:Book94} for \eqref{hghwhegwhegwheg},
to \cite{Kamb:absence} and \cite{Rus:formal} for \eqref{NEW-nonexistenceforms},
and (for instance) to \cite[5.3.6, p.~82]{Kol:thesis} for \eqref{d;kfjawherj;aekj}.
Statement \eqref{qhwgqhopcxoxpcop} can be derived from the proof of Theorem~2.3.1 of \cite{R-S:xyz}. 
For \eqref{dijfjfksdf;kjsz;dk}, consider $K = \Frac A$ and note that \cite{Zariski:H14}
implies that $K \cap B$ is finitely generated as a $\bk$-algebra;
since $A$ is factorially closed in $B$ we have $K \cap B = A$, so \eqref{dijfjfksdf;kjsz;dk} follows.
\end{proof}

The following simple observation is also needed:

\begin{lemma} \label {Adc8f23beiv6583ruhf9837wehn0qZ}
Let $\bk$ be a field of characteristic zero and $B$ an absolutely factorial $\bk$-domain.
Then each element of $\Ascr(B)$ is absolutely factorial.
\end{lemma}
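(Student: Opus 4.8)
The plan is to reduce the whole statement to a single elementary principle about unique factorization domains, which I will call $(\ast)$:
\emph{if $C$ is a UFD and $A$ is a factorially closed subring of $C$, then $A$ is a UFD.}
Granting $(\ast)$, the Lemma falls out in two moves. Let $A \in \Ascr(B)$. By Lemma \ref{0ckj238hqn239hnfpawrnHzb}, $A$ is factorially closed in $B$; since $B$ is a UFD, $(\ast)$ shows that $A$ is a UFD. For the absolute half, I would write $A = A_\Delta$ for some $\Delta \subseteq \lnd(B)$ and set $\bar B = \ck \otimes_\bk B$. Because $B$ is absolutely factorial (Def.\ \ref{pcvie82p3sbfyaeop}), $\bar B$ is a UFD, hence a domain, so the hypotheses of Lemma \ref{evdhd838eh65433ru0} are met and part (c) of that lemma gives $\ck \otimes_\bk A = A_{\bar\Delta} \in \Ascr(\bar B)$. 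Applying Lemma \ref{0ckj238hqn239hnfpawrnHzb} to the characteristic-zero domain $\bar B$ shows that $\ck \otimes_\bk A$ is factorially closed in $\bar B$, so $(\ast)$ applies once more and $\ck \otimes_\bk A$ is a UFD. Thus both $A$ and $\ck \otimes_\bk A$ are UFDs, i.e.\ $A$ is absolutely factorial.

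It remains to prove $(\ast)$, which is where the actual work sits. First I would record that $A^* = C^*$: any $u \in C^*$ satisfies $u\cdot u^{-1} = 1 \in A$, so factorial closedness forces $u, u^{-1} \in A$, giving $C^* \subseteq A^*$, and the reverse inclusion is trivial. Next, take a nonzero non-unit $a \in A$; since $A^* = C^*$, $a$ is also a non-unit of $C$, so in the UFD $C$ I may write $a = p_1 \cdots p_r$ with each $p_i$ prime in $C$. Factorial closedness, applied repeatedly to the splittings $p_i \cdot (\text{rest})$, places every $p_i$ in $A$. The crux is then to check that each $p_i$ is prime \emph{in $A$}: if $p_i \mid bc$ in $A$, then $p_i \mid bc$ in $C$, so $p_i$ divides one factor in $C$, say $b = p_i e$ with $e \in C$ (the case $b = 0$ being trivial); since $p_i e = b \in A$ with $p_i, e \in C \setminus \{0\}$, factorial closedness gives $e \in A$, so $p_i \mid b$ in $A$. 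Hence every nonzero non-unit of $A$ is a product of primes of $A$, and $A$ is a UFD.

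The hard part will be precisely the primality step just sketched; the existence of the factorization in $A$ and the identity $A^* = C^*$ are routine consequences of factorial closedness, and the two applications of Lemma \ref{0ckj238hqn239hnfpawrnHzb} together with Lemma \ref{evdhd838eh65433ru0}(c) are pure bookkeeping. The one point I would flag for care is verifying that Lemma \ref{evdhd838eh65433ru0} is legitimately invoked, namely that $\bar B$ is a domain; this is guaranteed here exactly because absolute factoriality of $B$ builds in that $\ck \otimes_\bk B$ is a UFD. With that checked, the proof is complete.
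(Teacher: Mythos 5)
Your proof is correct and follows essentially the same route as the paper's: both pass to $\bar B = \ck \otimes_\bk B$ via Lemma \ref{evdhd838eh65433ru0}, invoke Lemma \ref{0ckj238hqn239hnfpawrnHzb} to get that $A$ is factorially closed in $B$ and $\ck \otimes_\bk A$ is factorially closed in $\bar B$, and then conclude by the principle that a factorially closed subring of a UFD is a UFD. The only difference is that the paper cites this last principle as known, whereas you supply a correct self-contained proof of it (units, descent of prime factors into $A$, and primality in $A$ via factorial closedness).
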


\begin{proof}
Let $\ck$ be the algebraic closure of $\bk$.
Let $A \in \Ascr(B)$.
Both $B$ and  $\bar B = \ck \otimes_\bk B$ are UFDs, and we have 
$A \in \Ascr(B)$ and (by Lemma \ref{evdhd838eh65433ru0}) $\ck \otimes_\bk A \in \Ascr( \bar B )$;
thus, by Lemma \ref{0ckj238hqn239hnfpawrnHzb}, $A$ (resp.\ $\ck \otimes_\bk A$) is a factorially closed subring of $B$ (resp.\ of $\bar B$).
As a factorially closed subring of a UFD is a UFD, it follows that $A$ and  $\ck \otimes_\bk A$ are UFDs.
Consequently, $A$ is absolutely factorial.
\end{proof}

We begin with a straightforward consequence of Thm \ref{kfkjep930120wdkoi9} and Cor.\ \ref{cknvlwr90fweodpx}:

\begin{corollary} \label {c9JkGUHtrSwqd73yrh437}
Let $\bk$ be a field of characteristic zero and $B$ an affine $\bk$-domain satisfying \mbox{$\FML(B) = \bk$.}
\begin{enumerata}

\item $B$ is geometrically unirational over $\bk$.

\item If $\dim B \le 4$ then $B$ is geometrically rational over $\bk$.

\item If $\dim B \le 2$ then  $B$ is rational over $\bk$.

\end{enumerata}
\end{corollary}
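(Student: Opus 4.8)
The plan is to prove Corollary \ref{c9JkGUHtrSwqd73yrh437} by reducing each of the three assertions to results already established in the excerpt, chiefly Theorem \ref{kfkjep930120wdkoi9} and Corollary \ref{cknvlwr90fweodpx}. Throughout, write $\ck$ for the algebraic closure of $\bk$. The governing idea is that the hypothesis $\FML(B)=\bk$ is preserved under passage to $\ck\otimes_\bk B$, so that the algebraically closed results can be applied to the base-changed domain; the invariance of $\FML$ under algebraic base extension is exactly Lemma \ref{evdhd838eh65433ru0}\eqref{ckiohJGdJhUt823w9eu92-b}.

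First I would treat part (a). Since $\FML(B)=\bk$, Lemma \ref{0ckj238hqn239hnfpawrnHzb} gives that $\FML(B)$ is algebraically closed in $\Frac B$, so $\bk$ is algebraically closed in $\Frac B$; by Lemma \ref{0c9v902j3we0fcwpdjhf2873} this guarantees that $\bar B=\ck\otimes_\bk B$ is a domain. Now $\bar B$ is an affine $\ck$-domain and, by Lemma \ref{evdhd838eh65433ru0}\eqref{ckiohJGdJhUt823w9eu92-b}, $\FML(\bar B)=\ck$. Applying Theorem \ref{kfkjep930120wdkoi9} over the algebraically closed field $\ck$ yields that $\bar B$ is unirational over $\ck$, which is precisely the statement that $B$ is geometrically unirational over $\bk$ in the sense of Def.\ \ref{pcvie82p3sbfyaeop}(b). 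This settles (a).

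For part (b), the setup is identical: $\bar B=\ck\otimes_\bk B$ is a domain with $\FML(\bar B)=\ck$, and $\dim\bar B=\dim B\le 4$ by Lemma \ref{evdhd838eh65433ru0}\eqref{ckiohJGdJhUt823w9eu92-a} (transcendence degree is preserved under the algebraic extension). Because $\ck$ is algebraically closed, Corollary \ref{cknvlwr90fweodpx}(b) applies directly to $\bar B$ and gives that $\bar B$ is rational over $\ck$. By definition this says $B$ is geometrically rational over $\bk$, proving (b). Part (c) is the most direct: when $\dim B\le 2$, Corollary \ref{cknvlwr90fweodpx}(a) applies to $B$ itself over the (not necessarily algebraically closed) field $\bk$ and immediately yields rationality of $B$ over $\bk$.

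The proof is essentially a bookkeeping exercise in invoking the right prior result, so there is no serious obstacle; the only point requiring care is verifying that $\bar B$ really is a domain of the same dimension with $\FML(\bar B)=\ck$, so that the algebraically closed theorems are legitimately available. That verification is exactly what Lemmas \ref{0c9v902j3we0fcwpdjhf2873} and \ref{evdhd838eh65433ru0} supply, so the argument goes through cleanly once those are cited.
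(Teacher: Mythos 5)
Your proposal is correct and follows essentially the same route as the paper's own proof: establish that $\bar B = \ck \otimes_\bk B$ is a domain via Lemmas \ref{0ckj238hqn239hnfpawrnHzb} and \ref{0c9v902j3we0fcwpdjhf2873}, transfer $\dim$ and $\FML$ via Lemma \ref{evdhd838eh65433ru0}, then apply Thm \ref{kfkjep930120wdkoi9} for (a), Cor.\ \ref{cknvlwr90fweodpx}(b) to $\bar B$ for (b), and Cor.\ \ref{cknvlwr90fweodpx}(a) to $B$ itself for (c). No gaps; nothing further is needed.
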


\begin{proof}
Let $\ck$ be the algebraic closure of $\bk$ and $\bar B = \ck \otimes_\bk B$.
The condition $\FML(B)=\bk$ implies that $\bk$ is algebraically closed in $\Frac B$ (Lemma \ref{0ckj238hqn239hnfpawrnHzb}),
so $\bar B$ is a domain by Lemma \ref{0c9v902j3we0fcwpdjhf2873}.
By Lemma \ref{evdhd838eh65433ru0}, $\dim\bar B = \dim B$ and $\FML(\bar B) = \ck$.
It follows from Thm \ref{kfkjep930120wdkoi9} that $\bar B$ is unirational over $\ck$, i.e., (a) is true.
Assertion (b) follows by applying  Cor.\ \ref{cknvlwr90fweodpx}(b) to $\bar B$, and (c) is a reiteration of Cor.\ \ref{cknvlwr90fweodpx}(a).
\end{proof}

\medskip

\subsection*{A generalization of Thm \ref{kfkjep930120wdkoi9}}
In this subsection we state a result from \cite{Daigle:TrivialFML} that describes
what becomes of Thm \ref{kfkjep930120wdkoi9} when $\bk$ is not assumed to be algebraically closed.  
We also give some immediate consequences of that result.
We begin by introducing some notations.

\begin{definition}
Let $\bk$ be an arbitrary field and $B$ an  affine $\bk$-domain.
Write $\kappa(\pgoth) = B_\pgoth / \pgoth B_\pgoth$ for each $\pgoth \in \Spec B$
and let $n = \dim B$.
Define
$$
\Xeul_\bk(B)=
\text{ set of all prime ideals $\pgoth$ of $B$ satisfying $\kappa(\pgoth) \otimes_\bk B \subseteq \kappa(\pgoth)^{[n]}$}
$$
where the notation  $\kappa(\pgoth) \otimes_\bk B \subseteq \kappa(\pgoth)^{[n]}$ is an abbreviation for the sentence:
{\it there exists an injective homomorphism of $\kappa(\pgoth)$-algebras from $\kappa(\pgoth) \otimes_\bk B$
to a polynomial ring in $n$ variables over $\kappa(\pgoth)$}.
We say that $\Xeul_\bk(B)$ has nonempty interior if some nonempty open subset of $\Spec B$ is included in $\Xeul_\bk(B)$.
\end{definition}

The following is a consequence of Thm 3.8, Cor.\ 1.13 and Cor.\ 3.10 of \cite{Daigle:TrivialFML}.

\begin{theorem} \label {9i3oerXfvdf93p04efeJ}
Let $\bk$ be a field of characteristic $0$ and $B$ an affine $\bk$-domain.
Let $n = \dim B$.
If $\FML(B) = \bk$ then the following are true.
\begin{enumerata}

\item  $\Xeul_\bk(B)$ has nonempty interior.

\item \label {9283rbf9v8gejs3436yhu} $\Frac(B) \otimes_\bk B \subseteq (\Frac B)^{[ n ]}$

\item The following conditions are equivalent:
\begin{enumerata}
\item  $B \subseteq \bk^{[ n ]}$ 
\item  $\bk$-rational points are dense in $\Spec B$
\item  $B$ is unirational over $\bk$.
\end{enumerata}

\item If $\bk$ is algebraically closed or $n \le 2$ then $B \subseteq \bk^{[ n ]}$.

\end{enumerata}
\end{theorem}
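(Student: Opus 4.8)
The plan is to regard parts (a) and (b), which carry the geometric substance, as imported from \cite{Daigle:TrivialFML}, and to spend the effort on assembling (c) and (d) from them together with the results already established in this article. Let me first indicate why (a) and (b) should hold. The hypothesis $\FML(B)=\bk$ forces $\lndrank(B)=n$: the inequality $\lndrank(B)\le\trdeg(\Frac B:\FML(B))=n$ is Prop.\ \ref{0c9vkj2w9Hjghi93u98eid}, while the reverse inequality follows from submodularity of transcendence degree, since if $D_1,\dots,D_r\in\lnd(B)$ span $\Span_L(\lnd B)$ (with $L=\Frac B$) then $\FML(B)=\bigcap_{i=1}^r\Frac(\ker D_i)$ is an intersection of $r$ subfields each of transcendence degree $n-1$ over $\bk$ (by \ref{pc9293ed0wdjo03}(iv)), whence $0=\trdeg_\bk\FML(B)\ge n-r$ and so $r\ge n$. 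Thus there are $D_1,\dots,D_n\in\lnd(B)$ and $b_1,\dots,b_n\in B$ with $\det(D_i b_j)\neq0$, and iterating the localized Slice Theorem \ref{pc9293ed0wdjo03}(iii) produces, after inverting a single nonzero element, a polynomial presentation of $B$; base-changing to $L$ yields the generic embedding (b). Part (a) is then the spreading-out of (b): writing the fibres of the first projection $\Spec(B\otimes_\bk B)\to\Spec B$ as $\kappa(\pgoth)\otimes_\bk B$, the generic fibre is governed by (b), and a standard finite-presentation argument propagates the embedding into $\kappa(\pgoth)^{[n]}$ over a nonempty open subset of $\Spec B$.

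Granting (a) and (b), I would prove (c) by the cycle (i)$\Rightarrow$(iii)$\Rightarrow$(ii)$\Rightarrow$(i). The implication (i)$\Rightarrow$(iii) is immediate: $B\subseteq\bk^{[n]}$ gives $\Frac B\subseteq\bk^{(n)}$, so $B$ is unirational over $\bk$. For (iii)$\Rightarrow$(ii), unirationality provides a dominant rational $\bk$-map $\aff^N_\bk\dashrightarrow\Spec B$; since $\bk$ is infinite its $\bk$-points are dense in $\aff^N$, and for every nonempty open $U\subseteq\Spec B$ the preimage of $U$ is a nonempty open subset of the domain of definition, hence contains a $\bk$-point, whose image is a $\bk$-point of $\Spec B$ lying in $U$. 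The key implication (ii)$\Rightarrow$(i) is where (a) enters: choose a nonempty open $U\subseteq\Spec B$ with $U\subseteq\Xeul_\bk(B)$; by (ii) some $\bk$-rational point $\pgoth$ lies in $U$, and for such a point $\kappa(\pgoth)=\bk$, so the condition $\pgoth\in\Xeul_\bk(B)$ reads $B=\bk\otimes_\bk B\subseteq\bk^{[n]}$, which is exactly (i).

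Finally (d) follows by producing condition (iii) and invoking (c). If $\bk$ is algebraically closed, Thm \ref{kfkjep930120wdkoi9} gives that $B$ is unirational over $\bk$, i.e.\ (iii) holds, so (i) holds and $B\subseteq\bk^{[n]}$. If $n\le2$, Cor.\ \ref{cknvlwr90fweodpx}(a) gives that $B$ is rational over $\bk$, a fortiori unirational, so again (iii) holds and (c) yields $B\subseteq\bk^{[n]}$.

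The main obstacle is the input (b)/(a): turning the bare existence of $n$ independent locally nilpotent derivations into an actual embedding of $B$ into a polynomial ring requires the delicate iterated-slice and spreading-out arguments of \cite{Daigle:TrivialFML}, and these are precisely what I would cite rather than reconstruct. Within the assembly itself, the one implication that genuinely couples the algebra to the geometry---and hence deserves the most care---is (ii)$\Rightarrow$(i), which converts density of $\bk$-rational points into the ring-theoretic embedding via the open locus supplied by (a).
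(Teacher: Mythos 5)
Your assembly of (c) and (d) is correct, and it is a genuinely different route from the paper's: the paper offers no internal proof at all, importing the entire theorem (all of (a)--(d)) as a consequence of Thm 3.8, Cor.\ 1.13 and Cor.\ 3.10 of \cite{Daigle:TrivialFML}, whereas you import only (a) and (b) and rebuild (c), (d) from results of the present article. Your cycle (i)$\Rightarrow$(iii)$\Rightarrow$(ii)$\Rightarrow$(i) is sound: a $\bk$-embedding $B \hookrightarrow \kk n$ induces $\Frac B \hookrightarrow \bk^{(n)}$; since $\Char \bk = 0$ makes $\bk$ infinite, $\bk$-points are dense in $\aff^N$ and a dominant $\bk$-rational map $\aff^N \dashrightarrow \Spec B$ carries a $\bk$-point into any prescribed nonempty open set; and a $\bk$-rational point $\pgoth$ lying in the open set furnished by (a) has $\kappa(\pgoth)=\bk$, so $\pgoth \in \Xeul_\bk(B)$ literally reads $B \subseteq \kk n$. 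Part (d) then follows from (c) via Thm \ref{kfkjep930120wdkoi9} and Cor.\ \ref{cknvlwr90fweodpx}(a), neither of which depends on Thm \ref{9i3oerXfvdf93p04efeJ}, so there is no circularity. What your route buys is a clean separation between the genuinely external geometric input ((a) and (b)) and the parts that are formal consequences of it; what the paper's route buys is brevity, since the cited corollaries of \cite{Daigle:TrivialFML} already package (c) and (d).

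One warning about your motivational sketch for (a)/(b). The step $0 = \trdeg_\bk \FML(B) \ge n - r$, extracted from writing $\FML(B) = \bigcap_{i=1}^r \Frac(\ker D_i)$, rests on the principle that intersecting subfields of codimension $1$ can drop transcendence degree by at most $1$ each time. That is precisely the false statement which Section \ref{SectionSomeClarifications} identifies as the flaw in the published proof of \cite[Thm 4.2]{Liendo_Rationality2010}: already for two derivations, $\trdeg$ of $\Frac B$ over $\Frac(\ker D_1) \cap \Frac(\ker D_2)$ can exceed $2$. So your claim that $\FML(B)=\bk$ forces $\lndrank(B)=n$ is unproved by this argument, and the subsequent ``iterate the localized slice theorem'' step is also delicate, since a slice adapted to one derivation is not respected by the next. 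None of this damages your actual proof, because you explicitly cite (a) and (b) from \cite{Daigle:TrivialFML} rather than rely on the sketch --- but the sketch should not be mistaken for an outline of a proof, as it trips over exactly the pitfall this paper was written to flag.
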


The next result gives information about the field extensions $\Frac(B) / K$ with $K \in \Kscr(B)$.
It is a simple application of Thm \ref{9i3oerXfvdf93p04efeJ} in conjunction with Lemma \ref{9dfh923809fjf}.

\begin{corollary} \label {92ed8hdh192h0}
Let $\bk$ be a field of characteristic zero, $B$ an affine $\bk$-domain and $K \in \Kscr(B)$.
Let $n = \trdeg_K( \Frac B )$ and let $\Beul = K[B]$ be the $K$-subalgebra of $\Frac B$ generated by $B$.
\begin{enumerata}

\item \label {9cujHDj39rf} $\FML(\Beul)=K$ and $\Xeul_K( \Beul )$ has nonempty interior.

\item \label {p09Cvn2309Ed} There exists a finite extension $K'/K$ such that $B \subseteq {K'}^{[n]}$ and $K' \otimes_K \Frac(B) \subseteq {K'}^{(n)}$.
In particular, $\Frac(B)/K$ is geometrically unirational.

\item \label {0bbc89rf6qi2h}   If $\Frac(B) / K$ is unirational then $B \subseteq K^{[n]}$.

\item \label {0wihUfUY755vhB}  If $n \le 4$ then $\Frac(B)/K$ is geometrically rational.

\item \label {i8chF239edfh}  If $n \le 2$ then $\Frac(B) = K^{(n)}$ and $B \subseteq K^{[n]}$.

\end{enumerata}
\end{corollary}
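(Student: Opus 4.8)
The plan is to reduce every assertion to a statement about the affine $K$-domain $\Beul = K[B]$ and then invoke the results already established for domains with trivial $\FML$. By Lemma \ref{9dfh923809fjf}, $\Beul$ is an affine $K$-domain with $\dim\Beul = \trdeg_K(\Frac B) = n$, $\FML(\Beul) = K$, and $\Frac\Beul = \Frac B$. Moreover $\bk \subseteq K$ (Lemma \ref{0ckj238hqn239hnfpawrnHzb}), so $K$ has characteristic zero and the pair $(K,\Beul)$ satisfies the hypotheses of Thm \ref{9i3oerXfvdf93p04efeJ}, Cor.\ \ref{cknvlwr90fweodpx} and Thm \ref{kfkjep930120wdkoi9} with $K$ playing the role of the base field.

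With this reduction, assertion (a) is immediate: $\FML(\Beul)=K$ is part of Lemma \ref{9dfh923809fjf}, and Thm \ref{9i3oerXfvdf93p04efeJ}(a) applied to $(K,\Beul)$ shows $\Xeul_K(\Beul)$ has nonempty interior. For (c), since $\Frac\Beul=\Frac B$, the hypothesis that $\Frac B/K$ is unirational means $\Beul$ is unirational over $K$; the equivalence in Thm \ref{9i3oerXfvdf93p04efeJ}(c) then yields $\Beul \subseteq K^{[n]}$, whence $B \subseteq \Beul \subseteq K^{[n]}$. For (e) with $n\le 2$, Cor.\ \ref{cknvlwr90fweodpx}(a) applied to $(K,\Beul)$ shows $\Beul$ is rational over $K$; as $\trdeg_K(\Frac\Beul)=n$ this forces $\Frac B = K^{(n)}$, while Thm \ref{9i3oerXfvdf93p04efeJ}(d) gives $B \subseteq \Beul \subseteq K^{[n]}$.

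The substantive part is (b). Since $\Xeul_K(\Beul)$ has nonempty interior, some nonempty open $U \subseteq \Spec\Beul$ lies in $\Xeul_K(\Beul)$; because $\Beul$ is affine over a field its closed points are dense, so I can pick a maximal ideal $\pgoth \in U$ and set $K' = \kappa(\pgoth)$. By Zariski's lemma $K'/K$ is finite, and $\pgoth \in \Xeul_K(\Beul)$ supplies an injective $K'$-algebra homomorphism $K' \otimes_K \Beul \hookrightarrow {K'}^{[n]}$. Composing $B \hookrightarrow \Beul \hookrightarrow K' \otimes_K \Beul$ (the second map $\beta \mapsto 1\otimes\beta$, injective as $1$ extends to a $K$-basis of $K'$) with this embedding gives $B \subseteq {K'}^{[n]}$. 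For the field statement I would observe that $K' \otimes_K \Beul$, embedding in the domain ${K'}^{[n]}$, is a domain, and that $K' \otimes_K \Frac B$ is a field: it is a finite-dimensional $\Frac B$-algebra that is a domain, since $K$ is algebraically closed in $\Frac\Beul=\Frac B$ by Lemma \ref{0ckj238hqn239hnfpawrnHzb} so that Lemma \ref{0c9v902j3we0fcwpdjhf2873} makes the tensor product a domain. Being a localization of $K' \otimes_K \Beul$ that is a field, $K' \otimes_K \Frac B = \Frac(K' \otimes_K \Beul)$, which embeds in $\Frac({K'}^{[n]}) = {K'}^{(n)}$. Geometric unirationality then follows by base-changing this embedding along $K' \to \ck$ (with ${K'}^{(n)}$ purely transcendental over $K'$, so $\Frac(\ck \otimes_{K'} {K'}^{(n)}) = \ck^{(n)}$); alternatively one applies Thm \ref{kfkjep930120wdkoi9} directly to the $\ck$-domain $\ck \otimes_K \Beul$.

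For (d) I would again base-change to the algebraic closure. As $K$ is algebraically closed in $\Frac\Beul$, Lemma \ref{0c9v902j3we0fcwpdjhf2873} makes $\ck \otimes_K \Beul$ a domain, and Lemma \ref{evdhd838eh65433ru0} gives $\dim(\ck \otimes_K \Beul)=n$ and $\FML(\ck \otimes_K \Beul)=\ck$, where $\ck$ denotes the algebraic closure of $K$. When $n\le 4$, Cor.\ \ref{cknvlwr90fweodpx}(b), which requires an algebraically closed base field, applies to $\ck \otimes_K \Beul$ and shows it is rational over $\ck$; identifying $\Frac(\ck \otimes_K \Beul)$ with $\ck \otimes_K \Frac B$ (a field, hence equal to the localization) shows $\ck \otimes_K \Frac B$ is purely transcendental over $\ck$, i.e.\ $\Frac B/K$ is geometrically rational. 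The main difficulty throughout is not conceptual but the tensor-product bookkeeping: checking that each scalar extension remains a domain (or a field) and that fraction fields commute with the base change, all of which is controlled by Lemmas \ref{0c9v902j3we0fcwpdjhf2873} and \ref{evdhd838eh65433ru0} together with the finite-dimensionality argument.
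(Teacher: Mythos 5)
Your proof is correct and follows essentially the same route as the paper: reduce everything to the affine $K$-domain $\Beul = K[B]$ via Lemma \ref{9dfh923809fjf}, then apply Thm \ref{9i3oerXfvdf93p04efeJ} and Cor.\ \ref{cknvlwr90fweodpx}, with part (b) proved by picking a maximal ideal in the dense open subset of $\Xeul_K(\Beul)$ exactly as the paper does. The only deviations are cosmetic: in (d) you inline the proof of Cor.\ \ref{c9JkGUHtrSwqd73yrh437} (which the paper cites directly), in (e) you invoke Thm \ref{9i3oerXfvdf93p04efeJ}(d) where the paper uses part (c), and in (b) you spell out domain/field bookkeeping that the paper leaves implicit.
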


\begin{proof}
By Lemma \ref{9dfh923809fjf}, $\Beul$ is an affine $K$-domain satisfying $\dim\Beul = n$ and  $\FML( \Beul ) = K$.
So Thm~\ref{9i3oerXfvdf93p04efeJ} implies that \eqref{9cujHDj39rf}  is true.

\eqref{p09Cvn2309Ed}  By \eqref{9cujHDj39rf},  there exists a dense open subset $U$ of $\Spec \Beul$ such that 
$$
\kappa(\pgoth) \otimes_K \Beul \subseteq \kappa(\pgoth)^{[n]} \ \ \text{for every $\pgoth \in U$}
\qquad \text{(where $\kappa(\pgoth) = \Beul_\pgoth / \pgoth \Beul_\pgoth$).}
$$
Pick any maximal ideal $\pgoth$ of $\Beul$ such that $\pgoth \in U$, and define $K' = \kappa(\pgoth) = \Beul/\pgoth$.
Then $K'$ is a finite extension of $K$ and $B \subseteq \Beul \subseteq K' \otimes_K \Beul \subseteq {K'}^{[n]}$.
As  $K' \otimes_K \Frac(B) = K' \otimes_K \Frac(\Beul)$ is a localization of $K' \otimes_K \Beul$,
we have  $K' \otimes_K \Frac(B) \subseteq {K'}^{(n)}$.
As this implies that $\Frac(B)/K$ is geometrically unirational, this proves \eqref{p09Cvn2309Ed}.

\eqref{0bbc89rf6qi2h}  By Thm \ref{9i3oerXfvdf93p04efeJ}, $\Frac(\Beul) / K$ is unirational $\Leftrightarrow$ $\Beul \subseteq K^{[n]}$.
As $\Frac(\Beul) = \Frac(B)$ and $B \subseteq \Beul$, assertion \eqref{0bbc89rf6qi2h}  follows.

\eqref{0wihUfUY755vhB}  If $n \le 4$ then Cor.\ \ref{c9JkGUHtrSwqd73yrh437} implies that $\Beul$ is geometrically rational over $K$,
i.e.,  that $\Frac(B)/K$ is geometrically rational.

\eqref{i8chF239edfh}  Suppose that $n \le 2$. Since $\dim\Beul = n$,
Cor.\ \ref{cknvlwr90fweodpx}(a) gives $\Frac(\Beul) = K^{(n)}$ and
Thm \ref{9i3oerXfvdf93p04efeJ}(c) gives $\Beul \subseteq K^{[n]}$.
Since $\Frac(B) = \Frac(\Beul)$ and $B \subseteq \Beul$, \eqref{i8chF239edfh} follows.
\end{proof}

\begin{bigremark}
Let $\bk$ be a field of characteristic zero, $B$ an affine $\bk$-domain and $K \in \Kscr_2(B)$.
Then Cor.\ \ref{92ed8hdh192h0}\eqref{i8chF239edfh}  implies that $\Frac(B) = K^{(2)}$ and $B \subseteq K^{[2]}$.
However, there does not necessarily exist an embedding $B \to K^{[2]}$ which extends to an isomorphism $\Frac(B) \to K^{(2)}$.
For instance, let $B = \Reals[x,y,v] / (xy-v^2-1)$, where $\Reals[x,y,v] = \Reals^{[3]}$.
Then $\FML(B) = \Reals \in \Kscr_2(B)$,  so $\Frac(B) = \Reals^{(2)}$ and $B \subseteq \Reals^{[2]}$.
However, by paragraph 4.1 of \cite{Bhat-Rus:GeomFac}, $B$ cannot be birationally embedded in $\Reals^{[2]}$.
\end{bigremark}

\medskip

\subsection*{Extensions of rings belonging to $\Ascr(B)$}
Assuming that $B$ is normal, we give some results on ring extensions $R \subset A$ such that $R,A \in \Ascr(B)$ and $\trdeg_R(A)=1$.
The main result is:

\begin{theorem} \label {doccvobjmExm48wednce}
Let $\bk$ be a field of characteristic zero and $B$ a normal affine $\bk$-domain.
Consider a chain $A_0 \subset \cdots \subset A_n$ $(n\ge1)$ of elements of $\Ascr(B)$ satisfying 
$\trdeg(A_i : A_{i-1}) = 1$ for all $i \in \{1, \dots, n\}$.  Assume that
\begin{equation}
\tag{$*$} \Frac(A_n) \in \Kscr(B) \quad \text{or} \quad \Frac(A_{n-1}) \in \Kscr(B) .
\end{equation}
Then $\Frac(A_{i-1}) \in \Kscr(B)$ and $A_{i-1} \in \Ascr_1^*( A_i )$  for all $i \in \{1, \dots, n\}$,
and in particular
$$
\Frac A_n = (\Frac A_0)^{(n)}.
$$
\end{theorem}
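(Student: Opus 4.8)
The plan is to prove the two displayed conclusions ($\Frac(A_{i-1}) \in \Kscr(B)$ and $A_{i-1} \in \Ascr_1^*(A_i)$) by descending induction on $i$, and to read off $\Frac A_n = (\Frac A_0)^{(n)}$ at the very end. The last equality is the easy part: once $A_{i-1} \in \Ascr_1^*(A_i)$ is known for every $i$, the defining relation $(A_i)_{A_{i-1}} = (\Frac A_{i-1})^{[1]}$ gives $\Frac(A_i) = (\Frac A_{i-1})^{(1)}$, and composing these $n$ rational one-step extensions yields $\Frac A_n = (\Frac A_0)^{(n)}$. So the entire content lies in the two membership claims.

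Before the induction I would record a transcendence-degree squeeze linking the two claims. Put $K = K_{\Delta(A_{i-1})} \in \Kscr(B)$, so $B \cap K = A_{i-1}$ and $\Frac(A_{i-1}) \subseteq K$ by Lemma \ref{f2i3p0f234hef}. Assuming $\Frac(A_i) \in \Kscr(B)$ (which the induction always supplies, and which forces $\Frac(A_i) = K_{\Delta(A_i)}$, hence $K \subseteq \Frac(A_i)$ since $\Delta(A_i) \subseteq \Delta(A_{i-1})$), one has $\Frac(A_{i-1}) \subseteq K \subseteq \Frac(A_i)$ with $\trdeg(\Frac A_i : \Frac A_{i-1}) = 1$. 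The case $\trdeg(K : \Frac A_{i-1}) = 1$ is impossible: as $K$ is algebraically closed in $\Frac B$ (Lemma \ref{0ckj238hqn239hnfpawrnHzb}) it would force $K = \Frac(A_i)$, whence $A_{i-1} = B \cap K = B \cap \Frac(A_i) = A_i$, a contradiction. Thus $K$ is algebraic over $\Frac(A_{i-1})$, and $\Frac(A_{i-1}) \in \Kscr(B)$ is \emph{equivalent} to $\Frac(A_{i-1})$ being algebraically closed in $\Frac(A_i)$. The upshot is that once $A_{i-1} \in \Ascr_1^*(A_i)$ is proved — whence $\Frac(A_i) = (\Frac A_{i-1})^{(1)}$ and $\Frac(A_{i-1})$ is automatically algebraically closed in it — the membership $\Frac(A_{i-1}) \in \Kscr(B)$ comes for free. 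So the crux is the single assertion $A_{i-1} \in \Ascr_1^*(A_i)$.

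For the inductive step with $i \le n-1$ I would localize at $R = A_{i-1}$ and work on the surface $S = (A_{i+1})_R$. Writing $\bk' = \Frac(R)$, the ring $B_R$ is a normal affine $\bk'$-domain with $\ML(B_R) = \bk'$ (Lemma \ref{9vbfnaow3GgvVpfhbVWYTE}), and $S$ is factorially closed in $B_R$; since $\trdeg_{\bk'}(S) = \trdeg(A_{i+1} : A_{i-1}) = 2$, fact \ref{934of0vfvo5rje4i}\eqref{dijfjfksdf;kjsz;dk} shows that $S$ is itself a normal affine $\bk'$-surface. The already-proved relation $A_i \in \Ascr_1^*(A_{i+1})$ localizes to $(A_i)_R \in \Ascr_1^*(S)$ (localizing $S$ at $(A_i)_R$ inverts $A_i$, giving $(A_{i+1})_{A_i} = (\Frac A_i)^{[1]}$), and because $S$ is affine over $\bk' \subseteq (A_i)_R$, Lemma \ref{0cvh2039edvjpqw0}(d) upgrades this to $(A_i)_R \in \Ascr_1(S)$: there is a nonzero LND of $S$ with kernel $(A_i)_R$. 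Granting $\ML(S) = \bk'$, fact \ref{934of0vfvo5rje4i}\eqref{d;kfjawherj;aekj} then says every element of $\Ascr_1(S)$ is a $\kk1$ over $\bk'$; in particular $(A_i)_R = (A_i)_{A_{i-1}} = (\Frac A_{i-1})^{[1]}$, i.e. $A_{i-1} \in \Ascr_1^*(A_i)$ (algebraic closedness being inherited from factorial closedness of the $A_j$ in $B$).

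The main obstacle is precisely the hypothesis $\ML(S) = \bk'$, that is, producing a second, transversal LND of $S$: the one LND coming from the level above only shows that $\Frac(S) = (\Frac A_i)^{(1)}$ is ruled over $\Frac(A_i)$, not that $S$ is rational over $\bk'$. This is where normality and the membership $\Frac(A_i) \in \Kscr(B)$ must enter. I expect to show that $S$ cannot be semi-rigid, arguing in the spirit of the proof of Prop.\ \ref{ckjv9hrjdulxjkdxlkbrvn}: non-semi-rigidity of $S$ makes $(A_i)_R \in \Ascr_1^*(S)$ one of at least two such subrings, so Prop.\ \ref{ckjv9hrjdulxjkdxlkbrvn} forces $\Frac(A_i)$ to be ruled over $\bk'$, and normality then removes the residual algebraic subextension, giving $\Frac(A_i) = (\bk')^{(1)}$ as wanted; conversely, semi-rigidity would have to be excluded by a Ruled-Residue/valuation estimate fed by $\Frac(A_i) \in \Kscr(B)$, contradicting $A_{i-1} \subset A_i$. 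Finally, the base case $i = n$ (the top pair $A_{n-1} \subset A_n$) has no surface above it and must be handled directly from hypothesis $(*)$: in either sub-case one localizes at $A_{n-1}$ and runs the same valuation-theoretic argument on $B_{A_{n-1}}$ — using $\Frac(A_n) \in \Kscr(B)$ or $\Frac(A_{n-1}) \in \Kscr(B)$ as available — to obtain $A_{n-1} \in \Ascr_1^*(A_n)$, after which the descending induction through the surfaces $S$ takes over.
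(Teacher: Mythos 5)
Your overall scaffolding (descending induction on the chain, the squeeze argument via $K=K_{\Delta(A_{i-1})}$, and reading off $\Frac A_n=(\Frac A_0)^{(n)}$ at the end) is sound and close to the paper's, but the central step is not proved, and you say so yourself: everything rests on ``Granting $\ML(S)=\bk'$'' for the surface $S=(A_{i+1})_{A_{i-1}}$, which you call the main obstacle and only ``expect to show.'' That gap is real and your proposed repair does not close it. The chain supplies exactly one ring between $A_{i-1}$ and $A_{i+1}$, hence exactly one element of $\Ascr_1(S)$; nothing in the data produces a second, transversal locally nilpotent derivation of $S$, so non-semi-rigidity of $S$ (the input Prop.~\ref{ckjv9hrjdulxjkdxlkbrvn} would need) is simply unavailable, and the fallback ``Ruled-Residue/valuation estimate'' is not an argument. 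Moreover, even if you obtained the field-level conclusion $\Frac(A_i)=(\bk')^{(1)}$, membership in $\Ascr_1^*(A_i)$ requires the \emph{ring} equality $(A_i)_{A_{i-1}}=(\bk')^{[1]}$, and passing from a rational function field to a polynomial ring is precisely where heavy machinery (Thm~\ref{9i3oerXfvdf93p04efeJ}, Zaks' theorem, descent of forms, packaged in the paper as Lemma~\ref{pc09vb239vg10qefh}) must enter; your tools at that point give only rationality. The same vagueness afflicts your base case $i=n$ (``runs the same valuation-theoretic argument'').

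The paper's proof shows your difficulty is self-inflicted by the order in which you prove the two conclusions. You defer $\Frac(A_{i-1})\in\Kscr(B)$ until after $A_{i-1}\in\Ascr_1^*(A_i)$, but it is the $\Kscr$-membership that is the easy half and the essential \emph{input} to the hard half. Indeed, your own squeeze already shows $K=K_{\Delta(A_{i-1})}$ is algebraic over $\Frac(A_{i-1})$; now invoke normality of $B$ through Lemma~\ref{lckjvZ0293ee0diCs} ($\Frac(R)$ is algebraically closed in $\Frac(B)$ for every $R\in\Ascr(B)$, proved via $\ML(B_R)=R_R$ being factorially closed in the normal ring $B_R$), and you get $K=\Frac(A_{i-1})\in\Kscr(B)$ immediately — no detour through $\Ascr_1^*$ needed. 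With that in hand, Lemma~\ref{9vbfnaow3GgvVpfhbVWYTE}(d) gives $\FML(B_{A_{i-1}})=\Frac(A_{i-1})$ — note: the \emph{field} Makar-Limanov invariant of the full localized ring $B_{A_{i-1}}$, not the $\ML$ of any surface — and then Lemma~\ref{pc09vb239vg10qefh} applied to the transcendence-degree-one element $(A_i)_{A_{i-1}}$ of $\Ascr(B_{A_{i-1}})$ yields $(A_i)_{A_{i-1}}=(\Frac A_{i-1})^{[1]}$, i.e.\ $A_{i-1}\in\Ascr_1^*(A_i)$. This is exactly Lemma~\ref{lcv0239ed0sdqwo9}: $\Frac(A_i)\in\Kscr(B)\Rightarrow\Frac(A_{i-1})\in\Kscr(B)\Rightarrow A_{i-1}\in\Ascr_1^*(A_i)$, applied to consecutive pairs of the chain, which handles your base case and inductive step uniformly (hypothesis $(*)$ feeds in either at the first or the second implication for the top pair). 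So: keep your squeeze, add Lemma~\ref{lckjvZ0293ee0diCs}, reverse the logical order of your two conclusions, and discard the surface $S$ entirely.
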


\begin{remarks}
\begin{enumerate}

\item Assumption $(*)$ is satisfied whenever $\trdeg(B : A_n) \le 2$.
(Indeed, this is clear if  $\trdeg(B : A_n) < 2$, so let us assume that $A_n \in \Ascr_2(B)$.
There exists $A \in \Ascr_1(B)$ such that $A_n \subset A$. Then $\Frac(A) \in \Kscr(B)$,
so the sequence $A_n \subset A$ satisfies $(*)$; applying the Theorem to $A_n \subset A$ shows that $\Frac(A_n) \in \Kscr(B)$.)

\item For each $i$ such that $A_i$ is finitely generated as an $A_{i-1}$-algebra, we get $A_{i-1} \in \Ascr_1(A_i)$ by Lemma \ref{0cvh2039edvjpqw0}.

\end{enumerate}
\end{remarks}

For the proof of the Theorem, we need the following facts:

\begin{sublemma}  \label {lckjvZ0293ee0diCs}
Let $\bk$ be a field of characteristic zero and $B$ a normal affine $\bk$-domain.
For each $R \in \Ascr(B)$, $\Frac(R)$ is algebraically closed in $\Frac(B)$.
\end{sublemma}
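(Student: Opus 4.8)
The plan is to reduce the statement to two facts already in hand: that every element of $\Kscr(B)$ is algebraically closed in $\Frac B$ (Lemma \ref{0ckj238hqn239hnfpawrnHzb}), and that each $R \in \Ascr(B)$ is recovered as $B \cap K$ for a suitable $K \in \Kscr(B)$ (Lemma \ref{f2i3p0f234hef}(c)). Normality of $B$ will enter at exactly one point, to pass from ``integral over $R$'' to ``lies in $R$''.

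Concretely, given $R \in \Ascr(B)$, I would set $\Delta(R) = \setspec{ D \in \lnd(B) }{ R \subseteq \ker D }$ and $K = K_{\Delta(R)}$. By Lemma \ref{f2i3p0f234hef}(c) we then have $K \in \Kscr(B)$ and $B \cap K = R$; in particular $\Frac(R) \subseteq K \subseteq \Frac B$, and by Lemma \ref{0ckj238hqn239hnfpawrnHzb} the field $K$ is algebraically closed in $\Frac B$. Since algebraic closedness is transitive along the tower $\Frac(R) \subseteq K \subseteq \Frac B$, it suffices to prove that $\Frac(R)$ is algebraically closed in $K$: indeed, any $\alpha \in \Frac B$ algebraic over $\Frac(R)$ is then algebraic over $K$, hence lies in $K$, and then lies in $\Frac(R)$.

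To show $\Frac(R)$ is algebraically closed in $K$, I would take $\alpha \in K$ algebraic over $\Frac(R)$ and clear denominators to obtain a relation $\sum_{i=0}^d r_i \alpha^i = 0$ with $r_i \in R$ and $r_d \neq 0$. The standard device of setting $\beta = r_d \alpha$ yields a monic equation for $\beta$ over $R$, so $\beta$ is integral over $R$, hence over $B$. But $\beta = r_d \alpha \in K \subseteq \Frac B$, and $B$ is normal, so $\beta \in B$; therefore $\beta \in B \cap K = R$ and $\alpha = \beta / r_d \in \Frac(R)$.

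The argument is essentially routine once the correct $K$ is introduced; the only subtlety — and the single place where normality is indispensable — is the final step concluding $\beta \in B$ from the facts that $\beta$ is integral over $B$ and lies in $\Frac B$. Without normality this can fail, which is precisely why the hypothesis that $B$ is a normal affine $\bk$-domain is imposed (affineness, by contrast, plays no role in this lemma).
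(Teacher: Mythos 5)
Your proof is correct, but it follows a genuinely different route from the paper. The paper's argument is a two-line localization argument: it passes to $B_R$ and invokes Lemma \ref{9vbfnaow3GgvVpfhbVWYTE}(b) to get $\ML(B_R) = R_R$, so $R_R$ is factorially closed, hence integrally closed, in $B_R$; since $B_R$ is normal (being a localization of the normal domain $B$), transitivity of integral closure shows $R_R$ is integrally closed --- hence, being a field, algebraically closed --- in $\Frac B$. You instead stay inside $\Frac B$: you use Lemma \ref{f2i3p0f234hef}(c) to produce $K = K_{\Delta(R)} \in \Kscr(B)$ with $B \cap K = R$, use Lemma \ref{0ckj238hqn239hnfpawrnHzb} for the algebraic closedness of $K$ in $\Frac B$, and then run a hands-on denominator-clearing integrality argument in which normality of $B$ enters exactly once. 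Both proofs ultimately use normality for the same purpose (integral over $B$ plus lying in $\Frac B$ implies lying in $B$, which in the paper is packaged as ``$B_R$ is normal''), and both are valid for any normal domain of characteristic zero, so your closing remark that affineness is irrelevant is accurate --- it is equally irrelevant in the paper's proof. What the paper's route buys is brevity, at the cost of resting on Lemma \ref{9vbfnaow3GgvVpfhbVWYTE}, whose proof is omitted as ``straightforward''; your route is longer but fully self-contained modulo the two lemmas the paper proves in detail. One small streamlining of your argument: the detour through $K$ is not strictly needed at the final step, since $R \in \Ascr(B)$ is factorially closed in $B$ (Lemma \ref{0ckj238hqn239hnfpawrnHzb}) and a factorially closed subring is algebraically closed in the ambient domain; so once normality gives $\beta \in B$, the relation $\beta = r_d\alpha$ algebraic over $R$ already forces $\beta \in R$ without invoking $B \cap K = R$.
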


\begin{proof}
We have $\ML( B_R ) = R_R$ by  Lemma \ref{9vbfnaow3GgvVpfhbVWYTE} so $R_R$ is factorially closed (hence integrally closed) in $B_R$.
Since $B_R$ is normal,  $R_R$ is algebraically closed in $\Frac(B)$.
\end{proof}

\begin{sublemma}  \label {pc09vb239vg10qefh}
Let $\bk$ be a field of characteristic zero and $B$ a normal affine $\bk$-domain satisfying $\FML(B) = \bk$. 
If $R \in \Ascr(B)$ and $\trdeg_\bk(R)=1$ then $R=\kk1$.
\end{sublemma}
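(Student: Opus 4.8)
The plan is to reduce to the case where the base field is algebraically closed and then apply the non-existence-of-forms fact \ref{934of0vfvo5rje4i}\eqref{NEW-nonexistenceforms}. Let $\ck$ be the algebraic closure of $\bk$. Since $\FML(B)=\bk$, Lemma \ref{0ckj238hqn239hnfpawrnHzb} shows that $\bk$ is algebraically closed in $\Frac B$, so by Lemma \ref{0c9v902j3we0fcwpdjhf2873} the ring $\bar B=\ck\otimes_\bk B$ is a domain; it is affine over $\ck$, and Lemma \ref{evdhd838eh65433ru0}\eqref{ckiohJGdJhUt823w9eu92-b} gives $\FML(\bar B)=\ck$, hence $\ML(\bar B)=\ck$. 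By Lemma \ref{evdhd838eh65433ru0}\eqref{ckiohJGdJhUt823w9eu92-c}--\eqref{ckiohJGdJhUt823w9eu92-d}, the ring $\bar R=\ck\otimes_\bk R$ lies in $\Ascr(\bar B)$ and $\trdeg_\ck\bar R=\trdeg_\bk R=1$. As $\ck/\bk$ is separable (we are in characteristic zero), fact \ref{934of0vfvo5rje4i}\eqref{NEW-nonexistenceforms} with $n=1$ reduces the problem to proving $\bar R=\ck^{[1]}$. So I may assume $\bk=\ck$ is algebraically closed and must show that $\bar R\in\Ascr(\bar B)$ with $\trdeg_\ck\bar R=1$ and $\FML(\bar B)=\ck$ forces $\bar R=\ck^{[1]}$.

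Next I would pin down the function field of $\bar R$. By Theorem \ref{kfkjep930120wdkoi9}, the affine $\ck$-domain $\bar B$ is unirational over $\ck$, so $\Frac\bar B$ embeds over $\ck$ into a purely transcendental extension of $\ck$. Since $\ck\subseteq\Frac\bar R\subseteq\Frac\bar B$, the field $\Frac\bar R$ embeds into that same purely transcendental extension, so $\Frac\bar R$ is unirational over $\ck$ in the sense of Definition \ref{pcvie82p3sbfyaeop}. Because $\trdeg_\ck\Frac\bar R=1$, L\"uroth's theorem yields $\Frac\bar R=\ck^{(1)}$.

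Finally I would identify $\bar R$ itself. The ring $\bar R$ is a factorially closed subring of $\bar B$ (Lemma \ref{0ckj238hqn239hnfpawrnHzb}); since every unit lies in every kernel, $\bar B^*\subseteq\ML(\bar B)=\ck$, so $\bar R^*=\bar B^*=\ck^*$, and $\bar R$ is finitely generated over $\ck$ by \ref{934of0vfvo5rje4i}\eqref{hghwhegwhegwheg}. Moreover $\bar R$ is normal: $B$ is normal and $\ck$ is algebraically closed in $\Frac B$, so in characteristic zero $\bar B$ is normal (geometric normality), and a factorially closed subring of a normal domain is normal. Thus $\Spec\bar R$ is a smooth affine curve over the algebraically closed field $\ck$ with function field $\ck^{(1)}$, i.e.\ $\Spec\bar R=\proj^1_\ck\setminus S$ for a nonempty finite set $S$ of (necessarily $\ck$-rational) points. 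The unit computation on $\proj^1$ gives $\bar R^*/\ck^*\cong\Integ^{\,|S|-1}$, and $\bar R^*=\ck^*$ forces $|S|=1$; removing one rational point from $\proj^1_\ck$ gives $\bar R\cong\ck[t]=\ck^{[1]}$. Descending through \ref{934of0vfvo5rje4i}\eqref{NEW-nonexistenceforms} as arranged above, $R=\kk1$.

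The step I expect to require the most care is this last one: transferring normality across the base change $B\rightsquigarrow\bar B$ (geometric normality in characteristic zero, which is what rules out singular rational curves such as $\ck[t^2,t^3]$), together with the elementary but essential divisor computation on $\proj^1_\ck$ that converts ``trivial units'' into ``one rational puncture''. Everything else is formal bookkeeping with the base-change lemma \ref{evdhd838eh65433ru0}, Theorem \ref{kfkjep930120wdkoi9}, L\"uroth's theorem, and the descent fact \ref{934of0vfvo5rje4i}\eqref{NEW-nonexistenceforms}.
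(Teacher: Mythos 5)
Your proof is correct, but it takes a genuinely different route from the paper's in the decisive step. Both arguments share the same frame: base change to $\ck$ (the domain property via Lemmas \ref{0ckj238hqn239hnfpawrnHzb} and \ref{0c9v902j3we0fcwpdjhf2873}, then $\FML(\bar B)=\ck$ and $\bar R\in\Ascr(\bar B)$ with $\trdeg_\ck\bar R=1$ via Lemma \ref{evdhd838eh65433ru0}), the same normality transfer (normal $\Rightarrow$ geometrically normal in characteristic zero, combined with factorial closedness --- the paper applies this to $R$ and then base-changes, you apply it to $B$ and then pass to the factorially closed subring $\bar R$; the two orders are equivalent), and the same descent via \ref{934of0vfvo5rje4i}\eqref{NEW-nonexistenceforms}. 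Where you diverge is in proving $\bar R=\ck^{[1]}$: the paper invokes Thm \ref{9i3oerXfvdf93p04efeJ} to embed $\bar B$ --- hence $\bar R$ --- into some $\ck^{[n]}$, and then concludes at once by Zaks' theorem (Dedekind subrings of polynomial rings are polynomial rings). You instead use only the weaker unirationality statement (Thm \ref{kfkjep930120wdkoi9}), get $\Frac\bar R=\ck^{(1)}$ by the generalized L\"uroth theorem, and then identify $\bar R$ by hand as a one-dimensional normal affine $\ck$-domain with rational function field and $\bar R^*=\ck^*$, i.e.\ $\proj^1_\ck$ minus a single point. Your route costs more bookkeeping (finite generation of $\bar R$ via \ref{934of0vfvo5rje4i}\eqref{hghwhegwhegwheg}, the unit computation $\bar B^*\subseteq\ML(\bar B)=\ck$, the divisor argument on $\proj^1_\ck$), but it buys independence from both Zaks' theorem and Thm \ref{9i3oerXfvdf93p04efeJ}, the latter being a strong embedding result quoted from a preprint; you rely only on the published unirationality theorem and classical curve theory. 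One typo to fix: in your normality step, ``$\ck$ is algebraically closed in $\Frac B$'' should read ``$\bk$ is algebraically closed in $\Frac B$''; the argument itself is unaffected.
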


\begin{proof}
Let $\ck$ be the algebraic closure of $\bk$, $\bar R = \ck \otimes_\bk R$ and  $\bar B = \ck \otimes_\bk B$.
Arguing as in the proof of Cor.\ \ref{c9JkGUHtrSwqd73yrh437}, we find that 
$\bar B$ is a domain and that $\FML(\bar B) = \ck$,
so Thm \ref{9i3oerXfvdf93p04efeJ} implies that $\bar B \subseteq \ck^{[n]}$ for some $n$; thus  $\bar R \subseteq \ck^{[n]}$. 
We have $\trdeg_{\ck}( \bar R ) = 1$ by Lemma \ref{0cv9n2w0dZw0dI28efydldc0}(c).
Since $R$ is a factorially closed subring of the normal domain $B$, $R$ is normal;
as $\Char\bk=0$, it follows that $R$ is  geometrically normal and hence that $\bar R$ is normal (cf.\  \cite[Tag 037Y]{stacks-project}).
Then Zaks' Theorem \cite{Zaks} implies that $\bar R = \ck^{[1]}$.
By \ref{934of0vfvo5rje4i}\eqref{NEW-nonexistenceforms}, it follows that $R = \kk1$.
\end{proof}

\begin{sublemma} \label {lcv0239ed0sdqwo9}
Let $\bk$ be a field of characteristic zero and $B$ a normal affine $\bk$-domain.
Consider a ring extension $R \subset A$ where $R,A \in \Ascr(B)$ and $\trdeg_R(A)=1$.  
The following implications are true:
$$
A_A \in \Kscr(B)
\implies
R_R \in \Kscr(B)
\implies
R \in \Ascr_1^*(A).
$$
\end{sublemma}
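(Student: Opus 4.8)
The plan is to run the whole argument through the fields $K_{\Delta(R)}$ and $K_{\Delta(A)}$ attached to $R$ and $A$ by Lemma \ref{f2i3p0f234hef}(c), exploiting the constraint $\trdeg(\Frac A : \Frac R)=1$ together with the algebraic‑closedness inputs of Lemma \ref{0ckj238hqn239hnfpawrnHzb} and Sublemma \ref{lckjvZ0293ee0diCs}. Throughout I would use that $R\subseteq A$ forces $\Delta(A)\subseteq\Delta(R)$, that $B\cap K_{\Delta(R)}=R$ and $B\cap K_{\Delta(A)}=A$ (Lemma \ref{f2i3p0f234hef}(c)), and that $B\cap\Frac A=A$ since $A$ is factorially closed in $B$. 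Recall also that $A_A=\Frac A$ and $R_R=\Frac R$.

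\textbf{First implication.} Assume $A_A=\Frac A\in\Kscr(B)$ and write $\Frac A=K_{\Delta_0}$. Each $D\in\Delta_0$ satisfies $A\subseteq\Frac A\subseteq\Frac(\ker D)$, hence $A\subseteq B\cap\Frac(\ker D)=\ker D$, so $\Delta_0\subseteq\Delta(A)\subseteq\Delta(R)$ and therefore $K_{\Delta(R)}\subseteq K_{\Delta_0}=\Frac A$. Thus $\Frac R\subseteq K_{\Delta(R)}\subseteq\Frac A$ with $\trdeg(\Frac A:\Frac R)=1$, so $K_{\Delta(R)}$ is either algebraic over $\Frac R$ or has $\Frac A$ algebraic over it. In the latter case $K_{\Delta(R)}$ is algebraically closed in $\Frac B$ (Lemma \ref{0ckj238hqn239hnfpawrnHzb}), giving $\Frac A\subseteq K_{\Delta(R)}$ and hence $R=B\cap K_{\Delta(R)}=B\cap\Frac A=A$, contradicting $R\subset A$. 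So $K_{\Delta(R)}$ is algebraic over $\Frac R$; since $\Frac R$ is algebraically closed in $\Frac B$ (Sublemma \ref{lckjvZ0293ee0diCs}), $K_{\Delta(R)}=\Frac R$, i.e.\ $R_R=K_{\Delta(R)}\in\Kscr(B)$.

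\textbf{Second implication.} Assume $R_R=\Frac R\in\Kscr(B)$ and set $K=\Frac R$. Algebraic closedness of $R$ in $A$ is routine: $R$ is factorially closed in $A$, and any $x\in A$ algebraic over $K$ lies in $K$ (as $K$ is algebraically closed in $\Frac B$), hence in $A\cap K=R$. The substantive point is $A_R=(R_R)^{[1]}=K^{[1]}$, which I would obtain by passing to $\Beul=K[B]=B_R$. By Lemma \ref{9dfh923809fjf}, $\Beul$ is an affine $K$-domain with $\FML(\Beul)=K$, and it is normal, being a localization of the normal domain $B$. By Lemma \ref{9vbfnaow3GgvVpfhbVWYTE}(a), $A_R\in\Ascr(\Beul)$, and $\trdeg_K(A_R)=\trdeg_R(A)=1$. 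Sublemma \ref{pc09vb239vg10qefh}, applied to the normal affine $K$-domain $\Beul$ with $\FML(\Beul)=K$ and to the element $A_R\in\Ascr(\Beul)$ of transcendence degree $1$ over $K$, then yields $A_R=K^{[1]}=(R_R)^{[1]}$. Together with the algebraic closedness of $R$ in $A$, this is exactly $R\in\Ascr_1^*(A)$.

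\textbf{Main obstacle.} The first implication is a clean trichotomy once the two algebraic‑closedness facts are in hand. The real content is the second implication, and specifically the recognition that $B_R=K[B]$ is the correct ambient domain: it is this identification---a normal affine $K$-domain with $\FML=K$ inside which $A_R$ sits as an element of $\Ascr$---that reduces the polynomiality $A_R=K^{[1]}$ to Sublemma \ref{pc09vb239vg10qefh} (and thereby to Zaks' theorem together with the nonexistence of nontrivial forms of the affine line). I expect the only delicate bookkeeping to be verifying the hypotheses of Sublemma \ref{pc09vb239vg10qefh} for the pair $(\Beul,A_R)$, in particular that localizing $B$ at $R\setminus\{0\}$ preserves both normality and finite generation over $K$.
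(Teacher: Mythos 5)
Your proof is correct and follows essentially the same route as the paper's: for the first implication, squeeze $K_{\Delta(R)}$ between $\Frac R$ and $\Frac A$ and use the algebraic-closedness facts (Lemma \ref{0ckj238hqn239hnfpawrnHzb}, Sublemma \ref{lckjvZ0293ee0diCs}) to force $K_{\Delta(R)}=\Frac R$; for the second, pass to $B_R$ and apply Sublemma \ref{pc09vb239vg10qefh}. The only cosmetic difference is that you obtain $\FML(B_R)=K$ and affineness via Lemma \ref{9dfh923809fjf} through the identification $K[B]=B_R$, where the paper cites Lemma \ref{9vbfnaow3GgvVpfhbVWYTE} directly.
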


\begin{proof}
%
Assume that $A_A \in \Kscr(B)$
and let $\Delta(R) = \setspec{ D \in \lnd(B) }{ R \subseteq \ker D }$ and $K = K_{ \Delta(R) } \in \Kscr(B)$.
Then $K \cap B = R$ and $R_R \subseteq K \subseteq A_A$. Since $A \nsubseteq K \cap B$, we have $K \neq A_A$, so $K$ is algebraic over $R_R$. 
As $R_R$ is algebraically closed in $\Frac B$ by Lemma \ref{lckjvZ0293ee0diCs}, we get $R_R = K$. This shows that $A_A \in \Kscr(B)$ implies $R_R \in \Kscr(B)$.

Now suppose that $R_R \in \Kscr(B)$ and write $K = R_R$. 
Since $R$ is algebraically closed in $A$, in order to show that $R \in \Ascr_1^*(A)$ it suffices to show that $A_R = K^{[1]}$.
Lemma \ref{9vbfnaow3GgvVpfhbVWYTE} gives $\FML(B_R) = K$ and $A_R \in \Ascr( B_R )$.
As $B_R$ is normal and $\trdeg_K( A_R ) = 1$, we obtain $A_R = K^{[1]}$ by Lemma \ref{pc09vb239vg10qefh}.
\end{proof}

\begin{proof}[Proof of Thm \ref{doccvobjmExm48wednce}]
The result follows from Lemma \ref{lcv0239ed0sdqwo9} by induction on $n$.
\end{proof}

We derive some consequences of Thm \ref{doccvobjmExm48wednce}.
The first one is particularly satisfactory:

\begin{corollary} \label {ld0b23or0f9vwo4398hbg}
Let $\bk$ be a field of characteristic zero and $B$ a factorial affine $\bk$-domain.
Suppose that $A_0 \subset \cdots \subset A_n$ is a chain of elements of $\Ascr(B)$ satisfying $n\ge1$ and
$\trdeg(A_i : A_{i-1}) = 1$ for all $i \in \{1, \dots, n\}$.
Then $A_{i-1} \in \Ascr_1^*( A_i )$  for all $i \in \{1, \dots, n\}$.
In particular, $\Frac A_n = (\Frac A_0)^{(n)}$.
\end{corollary}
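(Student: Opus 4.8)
The plan is to obtain this as an immediate corollary of Thm~\ref{doccvobjmExm48wednce}, the only real task being to check that the factoriality of $B$ renders the auxiliary hypothesis $(*)$ of that theorem automatic. First I would observe that a factorial domain is integrally closed, so $B$ is a normal affine $\bk$-domain; hence the standing hypotheses of Thm~\ref{doccvobjmExm48wednce} on $B$ hold, and the given chain $A_0\subset\cdots\subset A_n$ with $\trdeg(A_i:A_{i-1})=1$ is precisely the kind of chain to which that theorem applies.

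The decisive input is Lemma~\ref{9vdyf25fjmg752rd}. Since $B$ is a UFD of characteristic zero, part~(b) of that lemma gives $g(A)=\Frac(A)$ for every $A\in\Ascr(B)$, where $g:\Ascr(B)\to\Kscr(B)$ is the map of Lemma~\ref{f2i3p0f234hef}. Because $g$ takes values in $\Kscr(B)$, this says exactly that $\Frac(A)\in\Kscr(B)$ for every $A\in\Ascr(B)$. Applying this to $A=A_n$ yields $\Frac(A_n)\in\Kscr(B)$, so the first alternative of the disjunction $(*)$ is satisfied and all hypotheses of Thm~\ref{doccvobjmExm48wednce} are now in force.

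I would then invoke Thm~\ref{doccvobjmExm48wednce} directly for this chain: it returns $A_{i-1}\in\Ascr_1^*(A_i)$ for all $i\in\{1,\dots,n\}$, which is the asserted conclusion, together with $\Frac A_n=(\Frac A_0)^{(n)}$, which is the ``in particular'' clause. No fresh induction or valuation argument is needed, as all of that is already packaged inside Thm~\ref{doccvobjmExm48wednce} (ultimately in Lemma~\ref{lcv0239ed0sdqwo9}). I do not expect any genuine obstacle: the whole content is the implication ``$B$ factorial $\Rightarrow$ $\Frac(A)\in\Kscr(B)$ for all $A\in\Ascr(B)$'', supplied verbatim by Lemma~\ref{9vdyf25fjmg752rd}(b); the only points needing a moment's care are that ``factorial'' entails the theorem's ``normal'' hypothesis and that the image of $g$ really lies in $\Kscr(B)$.
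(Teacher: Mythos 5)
Your proposal is correct and is essentially the paper's own proof: the paper likewise notes $\Frac(A_n)\in\Kscr(B)$ via Lemma~\ref{9vdyf25fjmg752rd} (since $B$ is a UFD) and then invokes Thm~\ref{doccvobjmExm48wednce}. Your additional remarks—that factoriality gives the normality hypothesis and that $g$ lands in $\Kscr(B)$—are exactly the details the paper leaves implicit.
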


\begin{proof}
We have $\Frac(A_n) \in \Kscr(B)$ by Lemma \ref{9vdyf25fjmg752rd}, so this follows from Thm \ref{doccvobjmExm48wednce}.
\end{proof}

\begin{corollary} \label {iIuIuhv9e2s9gGiu92uw}
Let $B$ be a normal affine domain over a field $\bk$ of characteristic zero.
Suppose that $A,R,R' \in \Ascr(B)$ satisfy
\begin{itemize}

\item $R \neq R'$, $R \cup R' \subseteq A$ and $\trdeg(A:R) = 1 = \trdeg(A:R')$;

\item $\{ R_R, R'_{R'} \} \subseteq \Kscr(B)$ or $A_A \in \Kscr(B)$.

\end{itemize}
Then $\Frac A = (\Frac R)^{(1)}$ and  $\Frac R$ is ruled over $\bk$.
\end{corollary}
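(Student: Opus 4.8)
The plan is to reduce the statement to Lemma \ref{lcv0239ed0sdqwo9} and Proposition \ref{ckjv9hrjdulxjkdxlkbrvn}, applied not to $B$ but to the ring $A$ itself. The two bulleted hypotheses on $R,R',A$ are arranged precisely so as to feed Lemma \ref{lcv0239ed0sdqwo9} on each of the two chains $R \subset A$ and $R' \subset A$. Since $\trdeg(A:R) = 1 = \trdeg(A:R')$, both inclusions are strict, and $R,R',A \in \Ascr(B)$ with $B$ a normal affine $\bk$-domain, so the hypotheses of Lemma \ref{lcv0239ed0sdqwo9} are met by each chain.

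First I would dispose of the disjunctive hypothesis. In the case $A_A \in \Kscr(B)$, the first implication of Lemma \ref{lcv0239ed0sdqwo9}, applied to each chain separately, gives $R_R \in \Kscr(B)$ and $R'_{R'} \in \Kscr(B)$; in the remaining case these two memberships are assumed outright. Either way, the second implication of Lemma \ref{lcv0239ed0sdqwo9} then yields
$$
R \in \Ascr_1^*(A) \quad \text{and} \quad R' \in \Ascr_1^*(A).
$$
Because $R \neq R'$, this shows $|\Ascr_1^*(A)| > 1$.

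Next I would check that $A$ qualifies for Proposition \ref{ckjv9hrjdulxjkdxlkbrvn} with $A$ in the role of ``$B$''. Since $A \in \Ascr(B)$ is factorially closed in $B$ (Lemma \ref{0ckj238hqn239hnfpawrnHzb}) and $\bk \subseteq B$, we have $\bk \subseteq A$, so $A$ is a $\bk$-domain; moreover $\trdeg_\bk(A) \le \trdeg_\bk(B) < \infty$ as $B$ is affine. Thus Proposition \ref{ckjv9hrjdulxjkdxlkbrvn}(a) applies, and taking the element $R$ of $\Ascr_1^*(A)$ gives that $\Frac(R)$ is ruled over $\bk$, which is the second conclusion. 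For the first conclusion, I would unwind $R \in \Ascr_1^*(A)$ via Definition \ref{0n3it98bclsmbybp}: it says $A_R = (R_R)^{[1]} = (\Frac R)^{[1]}$, and since $A_R$ is a localization of $A$ we get $\Frac A = \Frac(A_R) = (\Frac R)^{(1)}$.

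I do not expect a genuine obstacle: the mathematical content is already concentrated in Lemma \ref{lcv0239ed0sdqwo9} and Proposition \ref{ckjv9hrjdulxjkdxlkbrvn}, and the corollary is essentially their combination. The only points needing mild care are the bookkeeping across the two cases of the disjunctive second hypothesis, and the observation that $A$ itself is a finite-transcendence-degree $\bk$-domain, which is what legitimizes invoking Proposition \ref{ckjv9hrjdulxjkdxlkbrvn} with $A$ substituted for $B$.
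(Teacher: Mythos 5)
Your proof is correct and takes essentially the same route as the paper's own proof: Lemma \ref{lcv0239ed0sdqwo9} applied to each of the chains $R \subset A$ and $R' \subset A$ yields $R, R' \in \Ascr_1^*(A)$, hence $|\Ascr_1^*(A)| > 1$, and Prop.\ \ref{ckjv9hrjdulxjkdxlkbrvn} applied with $A$ in the role of $B$ gives the conclusion. The paper states this in one line, leaving implicit exactly the bookkeeping you spell out (the case split on the disjunctive hypothesis, the verification that $A$ is a $\bk$-domain of finite transcendence degree, and the unwinding of $R \in \Ascr_1^*(A)$ to get $\Frac A = (\Frac R)^{(1)}$).
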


\begin{proof}
We have $R, R' \in \Ascr_1^*(A)$ by Lemma \ref{lcv0239ed0sdqwo9} (or by Thm \ref{doccvobjmExm48wednce}), so $| \Ascr_1^*(A) | > 1$;
the desired conclusion follows from Prop.\ \ref{ckjv9hrjdulxjkdxlkbrvn}.
\end{proof}

\medskip

\subsection*{Maximal height}
Thm \ref{doccvobjmExm48wednce} can be used to study the situation where the height of $\Ascr(B)$ is maximal, i.e., $\haut\Ascr(B) = \dim B$.
The main result is:

\begin{theorem} \label {d0qk3bfvo09e8u37}
Let $\bk$ be a field of characteristic zero, $B$ an affine $\bk$-domain and
$\bk'$ the algebraic closure of $\bk$ in $\Frac(B)$.
Then $\bk'$ is a finite extension of $\bk$ and the following hold.
\begin{enumerata}

\item $\haut \Ascr(B) \le \haut \Kscr(B) \le n$, where $n= \dim B$.

\item If $\haut \Kscr(B) = n$ then $\FML(B) = \bk'$.

\item If $\haut \Ascr(B) = n$ then $\FML(B) = \bk'$, $\Frac(B) = {\bk'}^{(n)}$ and $B \subseteq {\bk'}^{[n]}$.

\end{enumerata}
\end{theorem}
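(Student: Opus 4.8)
The plan is to handle the three parts in turn, with part (a) essentially a repackaging of earlier inequalities, part (b) a transcendence-degree squeeze, and part (c) a reduction to the normal case followed by an application of Thm~\ref{doccvobjmExm48wednce}. First, the finiteness of $\bk'/\bk$ is the standard fact that the relative algebraic closure of $\bk$ inside a finitely generated field extension (here $\Frac(B)$) is finite over $\bk$. For part (a), the inequality $\haut\Ascr(B) \le \haut\Kscr(B)$ is Lemma~\ref{0c9vni3w4e8vno}, and Prop.~\ref{0c9vkj2w9Hjghi93u98eid} gives $\haut\Kscr(B) \le \trdeg(\Frac(B):\FML(B))$; since $\bk \subseteq \FML(B)$, this last quantity is at most $\trdeg_\bk(\Frac B) = \dim B = n$.

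For part (b), I would feed the hypothesis $\haut\Kscr(B) = n$ into the chain of inequalities of Prop.~\ref{0c9vkj2w9Hjghi93u98eid}, which then forces $\trdeg(\Frac(B):\FML(B)) = n$. As $\bk \subseteq \FML(B) \subseteq \Frac B$ and $\trdeg_\bk(\Frac B) = n$, additivity of transcendence degree makes $\FML(B)$ algebraic over $\bk$, hence $\FML(B) \subseteq \bk'$. The reverse inclusion uses Lemma~\ref{0ckj238hqn239hnfpawrnHzb}: $\FML(B)$ is algebraically closed in $\Frac B$, so it absorbs every element of $\bk'$ (each being algebraic over $\bk \subseteq \FML(B)$ and lying in $\Frac B$). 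Thus $\FML(B) = \bk'$.

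For part (c) I first treat the case where $B$ is normal. Here $\haut\Ascr(B) = n$ forces $\haut\Kscr(B) = n$ by part (a), so $\FML(B) = \bk'$ by part (b); normality gives $\bk' \subseteq B$ and hence $\ML(B) = B \cap \FML(B) = \bk'$. I would then choose a chain realizing the height, which—being maximal—runs from the least element to the greatest, i.e. $\bk' = A_0 \subset \cdots \subset A_n = B$. Since each strict inclusion contributes positive transcendence degree (the Remark following Lemma~\ref{0ckj238hqn239hnfpawrnHzb}) and the total $\trdeg(B:\bk')$ equals $n$ spread over exactly $n$ steps, every step satisfies $\trdeg(A_i:A_{i-1}) = 1$. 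Condition $(*)$ of Thm~\ref{doccvobjmExm48wednce} holds because $\Frac(A_n) = \Frac B \in \Kscr(B)$, so the theorem yields $\Frac B = (\Frac A_0)^{(n)} = {\bk'}^{(n)}$. In particular $B$ is rational, hence unirational, over $\bk'$, and Thm~\ref{9i3oerXfvdf93p04efeJ}(c) (applied with base field $\bk'$, using $\FML(B) = \bk'$) gives $B \subseteq {\bk'}^{[n]}$.

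To remove the normality assumption, I would pass to the normalization $\tilde B$, a normal affine $\bk$-domain with $\Frac(\tilde B) = \Frac(B)$, $\dim \tilde B = n$, and the same relative algebraic closure $\bk'$. Lemma~\ref{8237ted7f983te} gives $\haut\Ascr(\tilde B) \ge \haut\Ascr(B) = n$, while part (a) applied to $\tilde B$ gives $\haut\Ascr(\tilde B) \le n$; hence the normal case applies to $\tilde B$ and produces $\Frac(\tilde B) = {\bk'}^{(n)}$ and $\tilde B \subseteq {\bk'}^{[n]}$. Since $\Frac B = \Frac \tilde B$ and $B \subseteq \tilde B$, the conclusions transfer to $B$, while $\FML(B) = \bk'$ comes directly from part (b). The main obstacle is ensuring the maximal chain has all steps of transcendence degree exactly $1$, which rests on the identity $\ML(B) = \bk'$, equivalently $\trdeg(B:\ML(B)) = n$; this is precisely where normality (forcing $\bk' \subseteq B$) is indispensable, and it is the reason for routing the general case through $\tilde B$.
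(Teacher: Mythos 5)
Your proposal is correct and follows essentially the same route as the paper: (a) from Prop.~\ref{0c9vkj2w9Hjghi93u98eid}, (b) by the transcendence-degree squeeze combined with $\FML(B)$ being algebraically closed in $\Frac B$, and (c) by extracting a maximal chain $\bk' = \ML(B) = A_0 \subset \cdots \subset A_n = B$ in the normal case, feeding it into Thm~\ref{doccvobjmExm48wednce} and then Thm~\ref{9i3oerXfvdf93p04efeJ}, with the general case reduced to the normal one via the normalization and Lemma~\ref{8237ted7f983te}. The only cosmetic difference is that you run the normal case directly over $\bk'$ (and spell out why the maximal chain has unit steps), whereas the paper states its special case with $\bk$ algebraically closed in $\Frac B$ and shifts to $\bk'$ only when passing to $\tilde B$; the content is identical.
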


\begin{proof}
It is clear that $\bk'/\bk$ is finite.
We have $\haut \Ascr(B) \le \haut \Kscr(B) \le n$ by Prop.\ \ref{0c9vkj2w9Hjghi93u98eid}, so (a) is true.
Prop.\ \ref{0c9vkj2w9Hjghi93u98eid} also implies that $\haut \Kscr(B) \le \trdeg( \Frac B : \FML B ) \le n$,
so if $\haut\Kscr(B) = n$ then $\FML(B)$ is an algebraic extension of $\bk$, so $\FML(B) = \bk'$ since $\FML(B)$ is algebraically closed in $\Frac(B)$.
So (b) is true.

To prove (c), assume that $\haut \Ascr(B) = n$. Note that $\FML(B) = \bk'$ by (b), so what has to be shown is:
\begin{equation} \label {vc98w3dvc9vrsj57ia}
 \Frac(B) = {\bk'}^{(n)} \text{\ \ and\ \ } B \subseteq {\bk'}^{[n]}.
\end{equation}
We first consider the case where $B$ is normal and $\bk$ is algebraically closed in $\Frac B$.
Since $\haut \Ascr(B) = n$, we may consider a chain $A_0 \subset \cdots \subset A_n$ of elements of $\Ascr(B)$.
Then
$$  
\text{$A_n = B$, $A_0=\bk$  and $\trdeg(A_i : A_{i-1}) = 1$ for all $i \in \{1, \dots, n\}$.}
$$
Note that $\Frac(A_n) \in \Kscr(B)$, so Thm \ref{doccvobjmExm48wednce} implies that $\Frac A_n = (\Frac A_0)^{(n)}$, i.e., $\Frac B = \bk^{(n)}$.
We have already noted that $\FML(B) = \bk' = \bk$; this together with  $\Frac B = \bk^{(n)}$ implies that $B \subseteq \kk n$
by  Thm \ref{9i3oerXfvdf93p04efeJ}.
This proves \eqref{vc98w3dvc9vrsj57ia} in the special case.

For the general case,
consider the normalization $\tilde B$ of $B$ and observe that $\dim \tilde B = \dim B = n$.
Note that $\bk' \subseteq \tilde B$; so $\tilde B$ is a normal affine $\bk'$-domain and $\bk'$ is algebraically closed in $\Frac(\tilde B)$.
We have  $\haut \Ascr(B) \le \haut \Ascr(\tilde B)$ by Lemma \ref{8237ted7f983te} and $\haut \Ascr(\tilde B) \le n$ by  Prop.\ \ref{0c9vkj2w9Hjghi93u98eid},
so $\haut \Ascr(\tilde B) = n$.
By the special case, it follows that 
$\Frac(\tilde B) = {\bk'}^{(n)}$ and $\tilde B \subseteq {\bk'}^{[n]}$;
so $\Frac(B) = {\bk'}^{(n)}$ and $B \subseteq {\bk'}^{[n]}$.
This proves \eqref{vc98w3dvc9vrsj57ia}, so we are done.
\end{proof}

\begin{bigremark}
We saw in Section \ref{SectionSomeClarifications} that the condition $\FML(B)=\bk$ does not imply that $B$ is rational over $\bk$,
even when $\bk$ is algebraically closed.
So it is natural to ask whether one can find a condition {\it on the locally nilpotent derivations of $B$} that implies rationality.
Thm \ref{d0qk3bfvo09e8u37} gives an affirmative answer to this question.
Indeed, 
if we assume that $\bk$ is algebraically closed in $\Frac(B)$ (which is a necessary condition for $B$ to be rational over $\bk$) then the implication
$$
\haut\Ascr(B) = \dim B \ \implies \textit{$B$ is rational over $\bk$ and $B \subseteq \kk n$}
$$
is true by Thm \ref{d0qk3bfvo09e8u37}(c).
\end{bigremark}

\begin{bigremark} \label {c09vb3489ertgbp0anjs3d}
Let $\bk$ be a field of characteristic zero and $B$ an affine $\bk$-domain.
If $\FML(B) = \bk$ and $B$ is not rational over $\bk$ then
Thm \ref{d0qk3bfvo09e8u37} implies that 
$$
\haut \big( \Ascr(B) \big) < \trdeg\big( \Frac(B) : \FML(B) \big),
$$
i.e., at least one of the inequalities of Prop.\ \ref{0c9vkj2w9Hjghi93u98eid} is strict.
Such rings $B$ exist:
see the discussion about implication \eqref{8263fnehqwy1jk}, in Sec.\ \ref{SectionSomeClarifications}.
\end{bigremark}

\medskip

\subsection*{Absolutely factorial domains}
As another application of Thm \ref{doccvobjmExm48wednce} (or more precisely, of Cor.\ \ref{ld0b23or0f9vwo4398hbg}),
we shall now prove the following:

\begin{theorem} \label {Z0cjh93wijbrfpa98gfIa}
Let $B$ be an affine domain over a field $\bk$ of characteristic zero and suppose that $B$ is absolutely factorial.
Let $n = \dim B$.
\begin{enumerata}

\item \label {0c9vnbgtu6u5j}  If $A \in \Ascr_{n-2}(B)$ and $R \in \Ascr_{n-1}(B)$ satisfy $R \subset A$, then $A = R^{[1]}$.

\item \label {udfhr3yhre3jihefob}  Suppose that  $(\ck \otimes_\bk B)^*=\ck^*$, where $\ck$ is the algebraic closure of $\bk$.
Then $R = \bk^{[1]}$ for all $R \in \Ascr_{n-1}(B)$.

\end{enumerata}
\end{theorem}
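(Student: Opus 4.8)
The plan is to handle (a) and (b) separately, as each rests on a different descent principle. Throughout, recall that $\trdeg_\bk B=\dim B=n$ and that, by Lemma \ref{Adc8f23beiv6583ruhf9837wehn0qZ}, every element of $\Ascr(B)$ is absolutely factorial — in particular a UFD whose base change by $\ck$ (the algebraic closure of $\bk$) is again a UFD.

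For (a), first I would record transcendence degrees: from $A\in\Ascr_{n-2}(B)$ and $R\in\Ascr_{n-1}(B)$ we get $\trdeg_\bk A=2$, $\trdeg_\bk R=1$, hence $\trdeg_R A=1$. Applying Cor.\ \ref{ld0b23or0f9vwo4398hbg} to the length-one chain $R\subset A$ of elements of $\Ascr(B)$ gives $R\in\Ascr_1^*(A)$ (Def.\ \ref{0n3it98bclsmbybp}), i.e.\ $R$ is algebraically closed in $A$ and $A_R=(R_R)^{[1]}$. Since $A$ is factorially closed in $B$ (Lemma \ref{0ckj238hqn239hnfpawrnHzb}) with $\trdeg_\bk A\le2$, it is an affine $\bk$-domain by \ref{934of0vfvo5rje4i}\eqref{dijfjfksdf;kjsz;dk}, hence finitely generated as an $R$-algebra. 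This sets up \ref{934of0vfvo5rje4i}\eqref{qhwgqhopcxoxpcop} for $R\subset A$ with $S=R\setminus\{0\}$: indeed $S^{-1}A=A_R=(R_R)^{[1]}=(S^{-1}R)^{[1]}$, and since $R$ is a UFD every element of $S$ is a product of units and primes. It then remains to verify, for each prime $p$ of $R$, conditions (i)--(iii) of \eqref{qhwgqhopcxoxpcop}, after which the lemma yields $A=R^{[1]}$.

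Conditions (i) and (ii) are formal consequences of $R$ being factorially closed in $A$: if $p=fg$ in $A$ then $f,g\in R$, so $p$ stays irreducible, hence prime, in the UFD $A$; and $pa\in R$ with $a\in A$ forces $a\in R$, giving $R\cap pA=pR$. The hard part will be (iii), that $R/pR$ is algebraically closed in $A/pA$, and here absolute factoriality does the real work. Since $\dim R=1$, the field $k':=R/pR$ is finite over $\bk$, so by Lemma \ref{0c9v902j3we0fcwpdjhf2873} it suffices to prove that $\ck\otimes_{k'}(A/pA)$ is a domain. The key identification is $\ck\otimes_{k'}(A/pA)=A\otimes_R\ck\cong\bar A/q\bar A$, where $\bar A=\ck\otimes_\bk A$ and $q$ is the prime of the dimension-one UFD $\ck\otimes_\bk R$ corresponding to the chosen embedding $k'\hookrightarrow\ck$ (one uses $\ck\otimes_\bk(R/pR)\cong\prod\ck$, that $\ck\otimes_\bk R$ is a PID so each maximal ideal is principal, and $A\otimes_R(\ck\otimes_\bk R)=\bar A$). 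Now $\ck\otimes_\bk R$ lies in $\Ascr(\ck\otimes_\bk B)$ by Lemma \ref{evdhd838eh65433ru0}, so it is factorially closed in $\ck\otimes_\bk B$ and hence in $\bar A$; exactly as for (i), the prime $q$ stays prime in the UFD $\bar A$, so $\bar A/q\bar A$ is a domain. This establishes (iii) and finishes (a).

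For (b), let $R\in\Ascr_{n-1}(B)$, so $\trdeg_\bk R=1$ and $R$ is a finitely generated (by \ref{934of0vfvo5rje4i}\eqref{hghwhegwhegwheg}) absolutely factorial $\bk$-domain; the plan is to descend from $\ck$. The ring $\bar R:=\ck\otimes_\bk R$ is a one-dimensional UFD, and since $R$ (resp.\ $\bar R$) is factorially closed in $B$ (resp.\ in $\ck\otimes_\bk B$) we obtain $\bar R^*=(\ck\otimes_\bk B)^*=\ck^*$ by hypothesis. Thus $\Spec\bar R$ is a smooth affine curve over the algebraically closed field $\ck$ with trivial units and trivial class group; by the classical theory of affine curves the triviality of units forces a single point at infinity and then triviality of the Picard group forces genus zero, so $\bar R=\ck^{[1]}$. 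Finally \ref{934of0vfvo5rje4i}\eqref{NEW-nonexistenceforms}, applied to the (separable, as $\Char\bk=0$) extension $\ck/\bk$ with $\ck\otimes_\bk R=\ck^{[1]}$, yields $R=\bk^{[1]}$. The main obstacle is condition (iii) of part (a), and the computation above makes clear that it is precisely the \emph{absolute} factoriality — the UFD property persisting after base change to $\ck$ — that keeps the fibres $A/pA$ geometrically irreducible over their constant fields.
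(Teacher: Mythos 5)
Your proof is correct, and for part (a) it takes a genuinely different route from the paper's in the crucial case where $\bk$ is not algebraically closed. Both arguments begin identically: Cor.\ \ref{ld0b23or0f9vwo4398hbg} applied to $R\subset A$ gives $R\in\Ascr_1^*(A)$, and \ref{934of0vfvo5rje4i}\eqref{dijfjfksdf;kjsz;dk} makes $A$ an affine $\bk$-domain. But the paper then upgrades to $R\in\Ascr_1(A)$ via Lemma \ref{0cvh2039edvjpqw0} and invokes a sublemma (Lemma \ref{0cf1o2q9wsciq8wgxsa9}) on two-dimensional absolutely factorial domains, whose proof applies the Russell--Sathaye criterion \ref{934of0vfvo5rje4i}\eqref{qhwgqhopcxoxpcop} only over an algebraically closed field (where condition (iii) is vacuous because the residue fields are $\ck$ itself), and then handles general $\bk$ by base-changing the whole problem to $\ck$ and descending the conclusion through derivation-theoretic machinery: an irreducible derivation with kernel $R$, plinth ideals, tightness and its descent (Lemma \ref{0c9f203efd0fiQ23e}), and finally the Slice Theorem. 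You instead apply Russell--Sathaye directly over $\bk$ and base-change only to verify the fibre condition (iii): the identification $\ck\otimes_{k'}(A/pA)\cong\bar A/q\bar A$, together with the fact that the prime generator of $q$ in the PID $\ck\otimes_\bk R$ stays prime in the UFD $\bar A$ (factorial closedness plus absolute factoriality), gives geometric integrality of the fibre, and Lemma \ref{0c9v902j3we0fcwpdjhf2873} converts that into condition (iii). Your route is leaner: it needs no irreducible derivation, no plinth/tightness apparatus, and no Slice Theorem, at the cost of the routine bookkeeping that identifies $\bar A/q\bar A$ with $\ck\otimes_{k'}(A/pA)$; the paper's route, by contrast, isolates reusable lemmas about tightness under base change.

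Your part (b) is essentially the paper's proof: $\ck\otimes_\bk R$ is a one-dimensional affine UFD over $\ck$ with unit group $\ck^*$, hence equals $\ck^{[1]}$, and \ref{934of0vfvo5rje4i}\eqref{NEW-nonexistenceforms} descends this to $R=\kk1$. One caveat: your parenthetical reasoning there is miswired. Triviality of units does \emph{not} by itself force a single point at infinity (an elliptic curve minus two points whose difference is non-torsion in the Jacobian has only constant units). The correct order is the reverse: triviality of the class group forces genus zero, and \emph{then} triviality of units forces a single puncture, so $\Spec(\ck\otimes_\bk R)=\aff^1_\ck$. Since both hypotheses are in hand, this is a cosmetic slip in the stated logical order, not a gap.
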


Some preparation is needed for the proof.

\begin{subdefinition}
Let $B$ be a ring, $D \in \lnd(B)$ and $A = \ker D$.
\begin{enumerata}

\item The set $\pl(D) = D(B) \cap A$ is an ideal of $A$, called the \textit{plinth ideal} of $D$.

\item We say that $D$ is \textit{tight} if $D(B) \subseteq \pl(D) B$.

\end{enumerata}
\end{subdefinition}

\begin{subbigremark} \label {c09fh20Xq0wof8ery293}
Let $B$ be a domain of characteristic zero, $D \in \lnd(B)$ and $A = \ker D$.  If $B = A^{[1]}$, then $D$ is tight.
(Indeed, write $B = A[t]$; then $D = a \frac{d}{dt}$ for some $a \in A$, so $D(B) = aB$ and $\pl(D) = aA$.)
\end{subbigremark}

\begin{sublemma}  \label {0c9f203efd0fiQ23e}
Let $B$ be an algebra over a field $\bk$ of characteristic zero, let $D \in \lnd(B)$ and $A = \ker D$.
Let $\ck$ be any field extension of $\bk$ and define $\bar B = \ck \otimes_\bk B$ and $\bar A = \ck \otimes_\bk A$.
Define $\bar D \in \lnd(\bar B)$ as in \ref{Wjd9f23u8n42ndhje8}.
\begin{enumerata}

\item $\pl(\bar D) = \pl(D) \bar A$

\item Given any ideal $J$ of $B$, we have  $D(B) \subseteq J$ if and only if $\bar D( \bar B ) \subseteq J \bar B$.

\item $D$ is tight if and only if $\bar D$ is tight.

\end{enumerata}
\end{sublemma}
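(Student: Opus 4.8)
The whole proof runs on the coordinate description of $\bar B = \ck \otimes_\bk B$ already exploited in the proof of Lemma \ref{evdhd838eh65433ru0}. First I would fix a $\bk$-basis $(\lambda_i)_{i \in I}$ of $\ck$ with $1 \in \{\lambda_i \mid i \in I\}$; then every $\beta \in \bar B$ has a unique expansion $\beta = \sum_i \lambda_i \otimes b_i$ with $b_i \in B$ almost all zero, and by \ref{Wjd9f23u8n42ndhje8} the extended derivation acts coordinatewise, $\bar D(\beta) = \sum_i \lambda_i \otimes D(b_i)$. The one structural fact I need is that for any ideal $J$ of $B$ the extended ideal $J\bar B$ equals $\{\sum_i \lambda_i \otimes b_i \mid b_i \in J \text{ for all } i\}$; this follows from the flatness of $\ck$ over $\bk$ applied to $0 \to J \to B \to B/J \to 0$, which identifies $\ck \otimes_\bk J$ with its image $J\bar B$ in $\bar B$. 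The same remark identifies $\bar A = \ck \otimes_\bk A$ with the set of elements of $\bar B$ all of whose coordinates lie in $A$.

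With this in hand part (b) is immediate. If $D(B) \subseteq J$ then every coordinate $D(b_i)$ of $\bar D(\beta)$ lies in $J$, so $\bar D(\bar B) \subseteq J\bar B$; conversely, evaluating $\bar D$ on $1 \otimes b$ gives $1 \otimes D(b) \in J\bar B$, and since $1$ is one of the basis vectors $\lambda_i$ this forces $D(b) \in J$, i.e. $D(B) \subseteq J$.

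For part (a) the extra ingredient is that $\bar D(\bar B)$ is an $\bar A$-submodule of $\bar B$ (because $\bar A = \ker \bar D$ and $\bar D$ is $\bar A$-linear, just as $D(B)$ is an $A$-module). The inclusion $\pl(D)\bar A \subseteq \pl(\bar D)$ is then formal: $\pl(D) \subseteq D(B) \subseteq \bar D(\bar B)$ and $\pl(D) \subseteq A \subseteq \bar A$, and $\bar D(\bar B)$ is stable under multiplication by $\bar A$, so $\pl(D)\bar A \subseteq \bar D(\bar B) \cap \bar A = \pl(\bar D)$. For the reverse inclusion I take $\beta = \bar D(\gamma) \in \bar A$, expand $\gamma = \sum_i \lambda_i \otimes b_i$ so that $\beta = \sum_i \lambda_i \otimes D(b_i)$; the coordinates $D(b_i)$ automatically lie in $D(B)$, and since $\beta \in \bar A$ they also lie in $A$, hence in $D(B) \cap A = \pl(D)$, giving $\beta \in \pl(D)\bar A$.

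Finally part (c) is a corollary of (a) and (b). Because $1 \in \bar A$ one has $\pl(\bar D)\bar B = (\pl(D)\bar A)\bar B = \pl(D)\bar B = (\pl(D)B)\bar B$, using $\bar A \subseteq \bar B$ and $B\bar B = \bar B$; applying (b) to the ideal $J = \pl(D)B$ of $B$ then yields $D(B) \subseteq \pl(D)B \iff \bar D(\bar B) \subseteq \pl(\bar D)\bar B$, which is exactly the equivalence ``$D$ tight $\iff \bar D$ tight''. I do not expect any serious obstacle here; the only point that needs care is the flatness identification of $J\bar B$ with the coordinatewise membership condition (and the dual statement for $\bar A$), since everything else is bookkeeping with the fixed basis together with the $\bar A$-linearity of $\bar D$.
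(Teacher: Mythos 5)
Your proof is correct, and for parts (a) and (c) it is essentially the paper's own argument: the paper also works with a $\bk$-basis $(\lambda_i)_{i\in I}$ of $\ck$, proves the inclusion $\pl(D)\bar A\subseteq\pl(\bar D)$ by checking the generators $1\otimes a$ (your appeal to the $\bar A$-module structure of $\bar D(\bar B)$ is an equivalent packaging), proves the reverse inclusion by expanding $\beta=\sum_i\lambda_i\otimes b_i$ and concluding $D(b_i)\in\pl(D)$ coordinatewise (the paper gets the coordinates into $A$ from $0=\bar D^2(\beta)=\sum_i\lambda_i\otimes D^2(b_i)$, you get it from the flatness identification of $\bar A$ with the coordinatewise condition --- the same fact in different clothing), and derives (c) exactly as you do, from $\pl(\bar D)\bar B=(\pl(D)B)\bar B$ together with (b). The one place where you genuinely diverge is (b): the paper avoids coordinates there and argues categorically, using that $T=\ck\otimes_\bk(\underline{\ \ })$ is faithfully exact, so that $J\bar B=\ker T(\pi)$, $\bar D(\bar B)=\image T(D)$, and $\bar D(\bar B)\subseteq J\bar B\Leftrightarrow T(\pi\circ D)=0\Leftrightarrow\pi\circ D=0$. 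Your version instead reads membership in $J\bar B$ off the coordinates, using the normalization $1\in\{\lambda_i\mid i\in I\}$ to deduce $D(b)\in J$ from $1\otimes D(b)\in J\bar B$. Both deductions rest on the same underlying flatness of $\ck$ over $\bk$; yours keeps the entire sublemma within a single elementary coordinate formalism, while the paper's faithful-exactness argument is shorter and does not require choosing a basis adapted to $1$. The difference is one of packaging rather than substance, and your treatment of the key identification $J\bar B=\{\sum_i\lambda_i\otimes b_i\mid b_i\in J\text{ for all }i\}$ is exactly the point that needed care.
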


\begin{proof}
(a) Recall from \ref{Wjd9f23u8n42ndhje8} that $\ker(\bar D) = \bar A$.
To prove that $\pl(D) \bar A \subseteq \pl(\bar D)$, we have to show that $1 \otimes a \in \pl( \bar D )$ for all $a \in \pl(D)$.
Let $a \in \pl(D)$; then $a = D(b)$ for some $b \in B$, so $\bar D(1 \otimes b) = 1 \otimes D(b) = 1 \otimes a$, so
$1 \otimes a \in \bar A \cap \bar D( \bar B ) = \pl(\bar D)$.

For the reverse inclusion, consider $\alpha \in \pl(\bar D)$ and let us prove that $\alpha \in \pl(D) \bar A$.
Let $(\lambda_i)_{i \in I}$ be a basis of $\ck$ over $\bk$.
We have $\alpha = \bar D(\beta)$ for some $\beta \in \bar B$.
Write $\beta = \sum_i \lambda_i \otimes b_i$ ($b_i \in B$), then
$0 =  \bar D^2(\beta) =  \sum_i \lambda_i \otimes D^2(b_i)$, so $D^2(b_i) = 0$ for all $i$,
so $D(b_i) \in \pl(D)$ for all $i$.
It follows that $\alpha =  \bar D(\beta) =  \sum_i \lambda_i \otimes a_i$ where $a_i = D(b_i) \in \pl(D)$ for all $i$.
Then  $\alpha =  \sum_i (1 \otimes a_i)(\lambda_i \otimes 1) \in \pl(D)\bar A$.

(b) Let $T : \text{\rm $\bk$-Mod} \to \text{\rm $\ck$-Mod}$ denote the functor $\ck \otimes_\bk (\underline{\ \ })$, and note that $T$
is faithfully exact.
Applying $T$ to  $0 \to J \xrightarrow{\ j\ } B \xrightarrow{\ \pi\ } B/J \to 0$ shows that 
$\ker T(\pi) = \image T(j)$, and since $\image T(j) = J\bar B$, we obtain $\ker T(\pi) = J\bar B$.
On the other hand we have $\bar D( \bar B ) = \image T(D)$, so 
$\bar D( \bar B ) \subseteq J \bar B$
$\Leftrightarrow$
$\image T(D) \subseteq \ker T(\pi)$
$\Leftrightarrow$
$T(\pi \circ D) = T(\pi) \circ T(D) = 0$
$\Leftrightarrow$
$\pi \circ D = 0$,
the last step by faithful exactness of $T$.
So $\bar D( \bar B ) \subseteq J \bar B \Leftrightarrow D(B) \subseteq J$.

(c) Let $J = \pl(D) B$.
By (a) we have $\pl(\bar D) \bar B = \big( \pl(D) \bar A \big) \bar B = \big( \pl(D) B \big) \bar B = J \bar B$,
so
$\bar D$ is tight
$\Leftrightarrow$ $\bar D(\bar B) \subseteq \pl(\bar D)\bar B = J \bar B$
$\Leftrightarrow$ $D(B) \subseteq J$
$\Leftrightarrow$ $D$ is tight,
where we used (b) for the equivalence in the middle.
\end{proof}

\begin{sublemma}  \label {0cf1o2q9wsciq8wgxsa9}
Let $B$ be an affine domain over a field $\bk$ of characteristic zero.
Assume that $\dim B = 2$ and that $B$ is absolutely factorial.
Then the following hold.
\begin{enumerata}

\item \label {pc9vnWN230e9Zckae} $B = A^{[1]}$ for all $A \in \Ascr_1(B)$.

\item \label {p0cvh2p0rfpch4h} If $| \Ascr_1(B) | > 1$ then $B = \kk2$.

\end{enumerata}
\end{sublemma}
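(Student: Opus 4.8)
The plan is to derive (a) from the polynomial-ring criterion \ref{934of0vfvo5rje4i}\eqref{qhwgqhopcxoxpcop}, and then to obtain (b) from (a) together with \ref{934of0vfvo5rje4i}\eqref{d;kfjawherj;aekj}. For (a), fix $A = \ker D \in \Ascr_1(B)$ with $D \in \lnd(B)\setminus\{0\}$. By \ref{pc9293ed0wdjo03} the ring $A$ is factorially closed in the UFD $B$ (hence itself a UFD) and $A^* = B^*$. Choose a local slice: \ref{pc9293ed0wdjo03}(iii) gives $s \in B$ with $a := D(s) \neq 0$, $D^2(s) = 0$ and $B_a = A_a[s] = A_a^{[1]}$. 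With $S = \{1,a,a^2,\dots\}$ we have $S^{-1}B = (S^{-1}A)^{[1]}$, and $B$ is finitely generated as an $A$-algebra since it is affine over $\bk \subseteq A$. To invoke \ref{934of0vfvo5rje4i}\eqref{qhwgqhopcxoxpcop} it remains to verify, for each prime factor $p$ of $a$ in the UFD $A$, the conditions (i)--(iii). Conditions (i) ($p$ prime in $B$) and (ii) ($A \cap pB = pA$) are immediate from factorial closedness of $A$ in $B$ and from $A^* = B^*$.

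The heart of the matter is (iii): that $\kappa := A/pA$ is algebraically closed in $B/pB$. Here $\kappa$ is a finite field extension of $\bk$ (as $A/pA$ is a $0$-dimensional affine $\bk$-domain) and $B/pB$ is a domain because $p$ is prime in $B$. I would prove (iii) by base change to the algebraic closure $\ck$ of $\bk$. Put $\bar A = \ck\otimes_\bk A$ and $\bar B = \ck\otimes_\bk B$; both are UFDs since $B$ is absolutely factorial, and $D$ extends to $\hat D \in \lnd(\bar B)$ with $\ker\hat D = \bar A$ (see \ref{Wjd9f23u8n42ndhje8}), so $\bar A$ is factorially closed in $\bar B$. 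As $\Char\bk = 0$, the extension $\kappa/\bk$ is separable, whence $\bar A/p\bar A = \ck\otimes_\bk\kappa$ is a product of $[\kappa:\bk]$ copies of $\ck$; correspondingly $p$ factors in $\bar A$ as a product of height-one primes $P_1,\dots,P_r$ with $\bar A/P_j \cong \ck$. Each $P_j$ is a prime element of the UFD $\bar A$, hence a prime element of the UFD $\bar B$, so each $\bar B/P_j\bar B$ is a domain. Comparing the two decompositions
$$
\ck \otimes_\bk (B/pB) \;=\; \prod_{j=1}^r \bar B/P_j\bar B \;=\; \prod_{\sigma} \big(\ck \otimes_{\kappa,\sigma} (B/pB)\big),
$$
where $\sigma$ runs over the $[\kappa:\bk]$ embeddings $\kappa\hookrightarrow\ck$, the left-hand factors are domains and there are exactly $r = [\kappa:\bk]$ of them, so each right-hand factor must be a domain as well. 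Taking $\sigma$ to be the inclusion $\kappa\subseteq\ck$ and noting that the algebraic closure of $\kappa$ is $\ck$, Lemma \ref{0c9v902j3we0fcwpdjhf2873} yields that $\kappa$ is algebraically closed in $B/pB$. This establishes (iii), and \ref{934of0vfvo5rje4i}\eqref{qhwgqhopcxoxpcop} gives $B = A^{[1]}$.

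For (b), assume $|\Ascr_1(B)| > 1$. Since $\bar B$ is a domain, Lemma \ref{0c9v902j3we0fcwpdjhf2873} shows $\bk$ is algebraically closed in $\Frac B$. Pick distinct $A_1, A_2 \in \Ascr_1(B)$; both are factorially (hence algebraically) closed in $B$ of transcendence degree one. If $\trdeg_\bk(A_1\cap A_2)$ were $1$ then, $A_1\cap A_2$ being algebraically closed in $B$ with $\Frac A_1$ algebraic over $\Frac(A_1\cap A_2)$, we would get $A_1 \subseteq A_2$ and hence $A_1 = A_2$ by Lemma \ref{0cvh2039edvjpqw0}; so $\trdeg_\bk(A_1\cap A_2) = 0$, and since $A_1 \cap A_2$ is factorially closed with $\bk$ algebraically closed in $\Frac B$ we conclude $A_1\cap A_2 = \bk$. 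As $\ML(B) \subseteq A_1\cap A_2$ and $\bk \subseteq \ML(B)$, this gives $\ML(B) = \bk$. Now $B$ is a normal affine $\bk$-domain with $\ML(B)=\bk$ and $\trdeg_\bk B = 2$, so \ref{934of0vfvo5rje4i}\eqref{d;kfjawherj;aekj} shows each $A \in \Ascr_1(B)$ is a $\bk^{[1]}$; combining with part (a) gives $B = A^{[1]} = \kk2$.

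The step I expect to be the main obstacle is condition (iii), namely the geometric integrality of the special fibre $B/pB$ over the residue field $\kappa$. The factor-counting argument above is the cleanest route I see, and it is precisely where the absolute factoriality of $B$ is used, through the UFD property of $\bar B$ and the resulting control on the primes of $\bar A$ lying over $p$.
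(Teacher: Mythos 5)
Your proof is correct, and on both parts it takes a genuinely different route from the paper's. For (a), the paper first settles the case where $\bk$ is algebraically closed, in which condition (iii) of \ref{934of0vfvo5rje4i}\eqref{qhwgqhopcxoxpcop} is trivial (the $\bk$-affine PID $A$ has $A/pA=\bk$), and then descends to general $\bk$ through the tightness machinery it develops just before the sublemma (plinth ideals, Remark \ref{c09fh20Xq0wof8ery293}, Sublemma \ref{0c9f203efd0fiQ23e}): one chooses an \emph{irreducible} $D$ with $\ker D = A$, deduces from $\bar B = \bar A^{[1]}$ that $\bar D$ is tight, descends tightness to $D$, concludes that the plinth generator is a unit, and finishes with the Slice Theorem \ref{pc9293ed0wdjo03}(ii). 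You instead apply \ref{934of0vfvo5rje4i}\eqref{qhwgqhopcxoxpcop} directly over $\bk$ and supply the one nontrivial hypothesis (iii) by your factor-counting argument after base change to $\ck$. This uses absolute factoriality in essentially the same place as the paper (through $\bar B$ being a UFD and $\bar A = \ker\bar D$ being factorially closed in it), but it bypasses plinth ideals, tightness, irreducible derivations and the Slice Theorem entirely, and it treats all ground fields in one pass; the cost is the somewhat delicate geometric-integrality check, which the paper's two-step structure is designed to avoid. For (b), the paper applies \cite[Thm 3.3]{AEH72} to $B = A_1^{[1]} = A_2^{[1]}$, whereas you derive $\ML(B)=\bk$ from the existence of two distinct kernels and then quote \ref{934of0vfvo5rje4i}\eqref{d;kfjawherj;aekj}; both are legitimate uses of facts recorded in the paper, and your intersection argument for $\ML(B)=\bk$ is sound since elements of $\Ascr(B)$ are factorially, hence algebraically, closed in $B$.

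Three small points in your write-up should be made explicit, all routine to fill. First, the assertion that $A/pA$ is a $0$-dimensional affine $\bk$-domain presupposes that $A$ itself is $\bk$-affine; this is \ref{934of0vfvo5rje4i}\eqref{hghwhegwhegwheg} (cited by the paper at exactly this point), and it is what guarantees that $\kappa/\bk$ is finite. Second, the identification $\ck \otimes_\bk (B/pB) \cong \prod_{j=1}^r \bar B/P_j\bar B$ needs a word of justification: the $P_j$ are the (maximal) primes of the one-dimensional ring $\bar A$ lying over $p$, hence pairwise comaximal in $\bar A$, hence their extensions to $\bar B$ are pairwise comaximal, and the Chinese Remainder Theorem applies. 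Third, the final counting step uses that each $\ck \otimes_{\kappa,\sigma}(B/pB)$ is nonzero (faithful flatness) and that a finite product decomposition into domains exhibits the indecomposable factors of the ring, so any other decomposition into the same number of nonzero factors must match it factor by factor.
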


\begin{proof}
(a) Let  $A \in \Ascr_1(B)$. As $A$ is absolutely factorial by Lemma \ref{Adc8f23beiv6583ruhf9837wehn0qZ}, it is a UFD.
It is also an affine $\bk$-domain by \ref{934of0vfvo5rje4i}\eqref{hghwhegwhegwheg}, and is one-dimensional,
so $A$ is a PID.

First consider the case where $\bk$ is algebraically closed.
We have $S^{-1}B = ( S^{-1}A )^{[1]}$ with $S = A \setminus \{0\}$.
We know that $A$ is a $\bk$-affine PID, 
so if $p$ is an irreducible element of $A$ then $A/pA = \bk$, so $A/pA$ is algebraically closed in $B/pB$;
since $A$ is factorially closed in $B$, $p$ is prime in $B$ and $A \cap pB = pA$;
thus $B = A^{[1]}$ follows from \ref{934of0vfvo5rje4i}\eqref{qhwgqhopcxoxpcop}.

Now consider the general case.
Choose an irreducible\footnote{A derivation $D: B \to B$ is \textit{irreducible} if $B$ is the only principal ideal $I$ of $B$ such that $D(B) \subseteq I$.}
$D \in \lnd(B)$ such that $\ker D = A$. Since $A$ is a PID,
we have $\pl(D) = aA$ for some $a \in A \setminus \{0\}$.
Let $\ck$ be the algebraic closure of $\bk$ and consider
$\bar B = \ck \otimes_\bk B$, $\bar A = \ck \otimes_\bk A$ and $\bar D \in \lnd(\bar B)$ as in \ref{Wjd9f23u8n42ndhje8}.
By Lemma \ref{evdhd838eh65433ru0}, we have $\bar A = \ker \bar D \in \Ascr_1( \bar B )$ where $\bar B$ is a $\ck$-affine UFD of dimension $2$,
so $\bar B = \bar A ^{[1]}$ by the preceding paragraph. Then $\bar D$ is tight by Rem.\ \ref{c09fh20Xq0wof8ery293},
so Lemma \ref{0c9f203efd0fiQ23e} implies that $D$ is tight, so $D(B) \subseteq aB$.
Since $D$ is irreducible it follows that $a \in B^*$, so $D(B) \cap B^* \neq \emptyset$ and hence $B = A^{[1]}$ by
the Slice Theorem (\ref{pc9293ed0wdjo03}).  This proves (a).

(b) Suppose that $A_1,A_2$ are distinct elements of $\Ascr_1(B)$. Then $B = A_1^{[1]} = A_2^{[1]}$, so by \cite[Thm 3.3]{AEH72}
we have that $A_1$ is a polynomial ring in one variable over the algebraic closure $\bk'$ of $\bk$ in $A_1$.
Since $\ck \otimes_\bk B$ is a domain, it follows from \ref{0c9v902j3we0fcwpdjhf2873} that $\bk$ is algebraically closed in $\Frac B$, so $\bk' = \bk$.
Thus $A_1 = \kk1$ and hence $B = \kk2$.
\end{proof}

\begin{proof}[Proof of Thm \ref{Z0cjh93wijbrfpa98gfIa}]
\eqref{0c9vnbgtu6u5j}  
Suppose that $A \in \Ascr_{n-2}(B)$ and $R \in \Ascr_{n-1}(B)$ satisfy $R \subset A$.
Since $B$ is normal,  $A$ is $\bk$-affine by \ref{934of0vfvo5rje4i}\eqref{dijfjfksdf;kjsz;dk}.
Since $B$ is a UFD, Cor.\ \ref{ld0b23or0f9vwo4398hbg} implies that $R \in \Ascr_1^*(A)$,
so $R \in \Ascr_1(A)$ by Lemma \ref{0cvh2039edvjpqw0}.
By Lemma \ref{Adc8f23beiv6583ruhf9837wehn0qZ}, $A$ is absolutely factorial.
So  Lemma \ref{0cf1o2q9wsciq8wgxsa9}\eqref{pc9vnWN230e9Zckae}  gives $A = R^{[1]}$.

\eqref{udfhr3yhre3jihefob}  
Consider $R \in \Ascr_{n-1}(B)$.
Then $R$ is $\bk$-affine by \ref{934of0vfvo5rje4i}\eqref{hghwhegwhegwheg} and absolutely factorial by Lemma \ref{Adc8f23beiv6583ruhf9837wehn0qZ}.
Thus $\bar R = \ck \otimes_\bk R$ is a $1$-dimensional $\ck$-affine UFD and hence a localization of $\ck^{[1]}$.
The fact that $\bar B^* = \ck^*$ implies that  $\bar R^* = \ck^*$, so $\bar R = \ck^{[1]}$, so
\ref{934of0vfvo5rje4i}\eqref{NEW-nonexistenceforms} gives $R = \kk1$.
\end{proof}

\medskip

\subsection*{A sample of applications}
The results of the above paragraphs can be applied in a variety of situations.
To demonstrate some of the techniques, we give a sample of three propositions.

\begin{proposition} \label {ojHkjgVryul83gcg7g4}
Let $\bk$ be an algebraically closed field of characteristic zero and $B$ a factorial $\bk$-domain satisfying $\FML(B)=\bk$.
Let $n = \dim B$ and suppose that $n \ge5$.
Then $B$ is rational over $\bk$ in each of the following cases:
\begin{enumerata}

\item \label {c9vh304edj0} There exists a chain $A_{n-2} \subset \cdots \subset A_3 \subset A_2$ with $A_i \in \Ascr_i(B)$ for all $i \in \{2, \dots, n-2\}$.
\item \label {oc9v8mf5evng89r} There exist
distinct $A_{n-3}, A_{n-3}' \in \Ascr_{n-3}(B)$
and a chain $A_{n-4} \subset \cdots \subset A_2 \subset A_1$ satisfying $A_i \in \Ascr_i(B)$ for all $i \in \{1, \dots, n-4\}$
and $A_{n-3} \cup A_{n-3}' \subseteq A_{n-4}$.

%
%

\end{enumerata}
\end{proposition}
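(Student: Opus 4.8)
The plan is to fill in the transcendence that is \emph{missing} at the top and bottom of the given chain, handling the middle by Cor.\ \ref{ld0b23or0f9vwo4398hbg}. Three facts are used in both cases. Since $B$ is a UFD, Lemma \ref{9vdyf25fjmg752rd} gives $\Frac(A)\in\Kscr(B)$ for every $A\in\Ascr(B)$, with $\trdeg_{\Frac A}(\Frac B)=\trdeg_A(B)$; in particular $\Frac(A_i)\in\Kscr_i(B)$. Since $\bk$ is algebraically closed and $\FML(B)=\bk$, Thm \ref{kfkjep930120wdkoi9} shows that $\Frac B$ is unirational over $\bk$, and hence so is every field $L$ with $\bk\subseteq L\subseteq\Frac B$. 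Finally, by Castelnuovo's Theorem (cf.\ the reference in Cor.\ \ref{cknvlwr90fweodpx}), a unirational extension of $\bk$ of transcendence degree $2$ is purely transcendental.

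\emph{Case \eqref{c9vh304edj0}.} The chain $A_{n-2}\subset\cdots\subset A_2$ consists of elements of $\Ascr(B)$ with transcendence degree $1$ between consecutive terms (there are $n-4\ge1$ steps, using $n\ge5$), so Cor.\ \ref{ld0b23or0f9vwo4398hbg} yields $\Frac A_2=(\Frac A_{n-2})^{(n-4)}$. For the top, $\Frac A_2\in\Kscr_2(B)$, so Cor.\ \ref{92ed8hdh192h0}\eqref{i8chF239edfh}, applied with $K=\Frac A_2$ (for which $\trdeg_K(\Frac B)=2$), gives $\Frac B=(\Frac A_2)^{(2)}$; combining, $\Frac B=(\Frac A_{n-2})^{(n-2)}$. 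For the bottom, $\trdeg_\bk(\Frac A_{n-2})=2$ and $\Frac A_{n-2}\subseteq\Frac B$ is unirational over $\bk$, so Castelnuovo's Theorem gives $\Frac A_{n-2}=\bk^{(2)}$. Hence $\Frac B=\bk^{(n)}$ and $B$ is rational over $\bk$.

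\emph{Case \eqref{oc9v8mf5evng89r}.} For the top, $A_1=\ker D\in\Ascr_1(B)$ gives $\Frac B=(\Frac A_1)^{(1)}$ by \ref{pc9293ed0wdjo03}(iv). The chain $A_{n-4}\subset\cdots\subset A_1$ gives $\Frac A_1=(\Frac A_{n-4})^{(n-5)}$ by Cor.\ \ref{ld0b23or0f9vwo4398hbg} (this identity is trivial when $n=5$, where $A_{n-4}=A_1$). For the bottom I would apply Cor.\ \ref{iIuIuhv9e2s9gGiu92uw} with $A=A_{n-4}$, $R=A_{n-3}$, $R'=A_{n-3}'$: these are distinct elements of $\Ascr(B)$, satisfy $R\cup R'\subseteq A$ and $\trdeg(A:R)=\trdeg(A:R')=1$, and $A_A=\Frac A_{n-4}\in\Kscr(B)$ because $B$ is a UFD. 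The conclusion is $\Frac A_{n-4}=(\Frac A_{n-3})^{(1)}$ with $\Frac A_{n-3}$ ruled over $\bk$, say $\Frac A_{n-3}=F^{(1)}$ where $\trdeg_\bk F=2$. As $F\subseteq\Frac B$ is unirational over $\bk$, Castelnuovo's Theorem gives $F=\bk^{(2)}$, so $\Frac A_{n-3}=\bk^{(3)}$ and $\Frac A_{n-4}=\bk^{(4)}$. Threading the three identities gives $\Frac A_1=\bk^{(n-1)}$ and $\Frac B=\bk^{(n)}$, so $B$ is rational over $\bk$.

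The main obstacle is the bottom of each chain. In case \eqref{c9vh304edj0} the delicate point is that $A_{n-2}$ is a priori only unirational of transcendence degree $2$, and it is Castelnuovo's Theorem that upgrades this to rationality. In case \eqref{oc9v8mf5evng89r} the bottom piece $A_{n-3}$ has transcendence degree $3$, where unirationality does \emph{not} imply rationality; the whole role of the two distinct subrings $A_{n-3},A_{n-3}'$ under $A_{n-4}$ is to let Cor.\ \ref{iIuIuhv9e2s9gGiu92uw} force $\Frac A_{n-3}$ to be ruled, thereby splitting off one transcendental and reducing to a transcendence-degree-$2$ base to which Castelnuovo applies. (The hypothesis $n\ge5$ is exactly what makes the middle chains nonempty and keeps the indices meaningful.)
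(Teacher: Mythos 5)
Your proof is correct and follows essentially the same route as the paper's: both cases are reduced to producing a transcendence-degree-$2$ subfield $K$ with $\bk \subset K \subset \Frac B$ and $\Frac B = K^{(n-2)}$, using Cor.\ \ref{ld0b23or0f9vwo4398hbg} for the chain (and, in case \eqref{oc9v8mf5evng89r}, Cor.\ \ref{iIuIuhv9e2s9gGiu92uw} to split off one more transcendental via ruledness), after which Thm \ref{kfkjep930120wdkoi9} and Castelnuovo's Theorem finish the argument exactly as in the paper. The only minor variation is at the top of the chain in case \eqref{c9vh304edj0}: the paper chooses $A_1 \in \Ascr_1(B)$ with $A_2 \subset A_1$ and applies Cor.\ \ref{ld0b23or0f9vwo4398hbg} to the extended chain $A_{n-2} \subset \cdots \subset A_1 \subset B$, whereas you invoke Cor.\ \ref{92ed8hdh192h0}\eqref{i8chF239edfh} with $K = \Frac A_2$; both yield $\Frac B = (\Frac A_{n-2})^{(n-2)}$.
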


\begin{proof}
In each of cases  \eqref{c9vh304edj0} and \eqref{oc9v8mf5evng89r}, it suffices to show that there  exists a field $K$ satisfying
\begin{equation} \label {pcihvb76r732}
\bk \subset K \subset \Frac B \quad \text{and} \quad \Frac B = K^{(n-2)} .
\end{equation}
Indeed, we know from Thm \ref{kfkjep930120wdkoi9} that $B$ is unirational over $\bk$, so if \eqref{pcihvb76r732} is true then $K/\bk$ is unirational,
so $K = \bk^{(2)}$ by Castelnuovo's Theorem (Remark 6.2.1, p.\ 422 of \cite{Hartshorne}).
So it is clear that \eqref{pcihvb76r732} implies that $B$ is rational.
Let us prove \eqref{pcihvb76r732}.

In case \eqref{c9vh304edj0}, there exists $A_1 \in \Ascr_1(B)$ satisfying $A_2 \subset A_1$;
applying Cor.\ \ref{ld0b23or0f9vwo4398hbg} to the chain $A_{n-2} \subset \cdots \subset A_1 \subset B$
shows that $\Frac B = (\Frac A_{n-2})^{(n-2)}$, so  $K = \Frac A_{n-2}$ satisfies  \eqref{pcihvb76r732}.

In case \eqref{oc9v8mf5evng89r}, 
applying Cor.\ \ref{ld0b23or0f9vwo4398hbg} to the chain $A_{n-3} \subset A_{n-4} \subset \cdots \subset A_1 \subset B$
shows that $\Frac B = (\Frac A_{n-3})^{(n-3)}$,
and applying Cor.\ \ref{iIuIuhv9e2s9gGiu92uw} to $A_{n-3} \cup A_{n-3}' \subseteq A_{n-4}$ 
implies that there exists a field $K$ such that $\bk \subset K \subset \Frac A_{n-3}$ and $\Frac A_{n-3} = K^{(1)}$;
then $K$ satisfies \eqref{pcihvb76r732} and we are done.
\end{proof}

\begin{proposition} \label {ugHctgJG77856rugXg}
Let $\bk$ be a field of characteristic zero and $B$ an affine $\bk$-domain.
Suppose that $B$ is absolutely factorial, $\dim B = 3$ and $| \Ascr_1(B) | > 1$.
Then $B$ is geometrically rational over $\bk$.
\end{proposition}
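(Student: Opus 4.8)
The plan is to reduce immediately to an algebraically closed ground field and then to split according to the transcendence degree of the Makar-Limanov invariant. Let $\ck$ be the algebraic closure of $\bk$ and set $C = \ck \otimes_\bk B$. Since $B$ is absolutely factorial, $C$ is a UFD and in particular a domain, so by Definition~\ref{pcvie82p3sbfyaeop} the assertion ``$B$ is geometrically rational over $\bk$'' is exactly ``$C$ is rational over $\ck$'', which is what I would prove. First I would check that $C$ inherits the hypotheses in the needed form: $C$ is an affine $\ck$-domain with $\dim C = \dim B = 3$ by Lemma~\ref{evdhd838eh65433ru0}\eqref{ckiohJGdJhUt823w9eu92-a}, it is a UFD, and the injective, transcendence-degree-preserving map $A \mapsto \ck \otimes_\bk A$ of Lemma~\ref{evdhd838eh65433ru0}\eqref{ckiohJGdJhUt823w9eu92-d} sends $\Ascr_1(B)$ into $\Ascr_1(C)$ (each $\ck \otimes_\bk \ker D$ equals $\ker \bar D$ by \eqref{ckiohJGdJhUt823w9eu92-c}), whence $|\Ascr_1(C)| > 1$.

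Since $C$ is not semi-rigid we have $\trdeg(C : \ML C) > 1$, and because $\trdeg_\ck C = 3$ this forces $\trdeg_\ck \ML(C) \le 1$. I would treat the two possibilities separately. If $\trdeg_\ck \ML(C) = 0$, then $\ML(C)$ is factorially closed in $C$ by Lemma~\ref{0ckj238hqn239hnfpawrnHzb} and contains $\ck$, so it is a field algebraic over the algebraically closed field $\ck$, i.e.\ $\ML(C) = \ck$; hence $\FML(C) = \Frac(\ML C) = \ck$ by Lemma~\ref{9vdyf25fjmg752rd}(a). As $\dim C = 3 \le 4$ and $\ck$ is algebraically closed, Cor.~\ref{cknvlwr90fweodpx}(b) then gives that $C$ is rational over $\ck$, disposing of this case.

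There remains the case $\trdeg_\ck \ML(C) = 1$. Here $\ML(C)$ is a UFD (being factorially closed in the UFD $C$) that is finitely generated over $\ck$ by \ref{934of0vfvo5rje4i}\eqref{hghwhegwhegwheg}, so it is a one-dimensional affine UFD over the algebraically closed field $\ck$. The hard part, and the step I expect to be the main obstacle, is to see that any such ring is necessarily rational, i.e.\ $\Frac(\ML C) = \ck^{(1)}$. This is exactly where factoriality does its real work, and it cannot be dodged by the surface arguments used elsewhere in this subsection: $\Spec \ML(C)$ is a smooth affine curve, the complement of finitely many points in a smooth projective curve $Y$, and triviality of its class group means the classes of those finitely many points generate $\operatorname{Pic}(Y)$; since $\operatorname{Pic}^0(Y) = \operatorname{Jac}(Y)(\ck)$ fails to be finitely generated once $Y$ has positive genus, $Y$ must be rational and $\Frac(\ML C) = \ck^{(1)}$.

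Granting this, I would finish by a chain argument. Choose any $A \in \Ascr_1(C)$ (possible since $|\Ascr_1(C)| > 1$); then, $\ML(C)$ being the least element of $\Ascr(C)$, the inclusions $\ML(C) \subset A \subset C$ form a chain in $\Ascr(C)$ with $\trdeg(A : \ML C) = \trdeg(C : A) = 1$. Applying Cor.~\ref{ld0b23or0f9vwo4398hbg} (valid because $C$ is factorial) yields $\Frac C = (\Frac \ML(C))^{(2)} = (\ck^{(1)})^{(2)} = \ck^{(3)}$, so $C$ is rational over $\ck$ in this case too. This completes the reduction and hence the proof; the only ingredient external to the preceding sections is the rationality of a one-dimensional affine UFD over an algebraically closed field of characteristic zero.
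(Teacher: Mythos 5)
Your proof is correct and follows essentially the same route as the paper's: the same reduction to the algebraically closed case via Lemma \ref{evdhd838eh65433ru0}, the same dichotomy on $\trdeg_\ck(\ML(C))\in\{0,1\}$ (the paper phrases it as $\ML(B)=\bk$ or $\ML(B)\in\Ascr_2(B)$), the same use of Lemma \ref{9vdyf25fjmg752rd}(a) and Cor.\ \ref{cknvlwr90fweodpx} in the first case, and the same key fact that a one-dimensional affine UFD over $\ck$ is rational in the second. There are two minor divergences worth noting. First, in the $\trdeg_\ck(\ML(C))=1$ case the paper concludes by observing that $\FML(C)=\Frac(\ML(C))\in\Kscr_2(C)$ and invoking Cor.\ \ref{92ed8hdh192h0}\eqref{i8chF239edfh}, whereas you run the chain $\ML(C)\subset A\subset C$ through Cor.\ \ref{ld0b23or0f9vwo4398hbg}; both are legitimate, and your variant stays within the chain machinery of Thm \ref{doccvobjmExm48wednce} instead of relying on Cor.\ \ref{92ed8hdh192h0}, which rests on Thm \ref{9i3oerXfvdf93p04efeJ} (imported from \cite{Daigle:TrivialFML}). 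Second, you supply the Picard-group/Jacobian argument for the rationality of a factorial affine curve over $\ck$, a fact the paper merely asserts (``a factorial curve and hence is rational''); your justification is sound, since the curve is normal (hence smooth) and a finitely generated $\operatorname{Pic}(Y)$ would force $\operatorname{Pic}^0(Y)$, a divisible group, to be finitely generated, which fails in positive genus.
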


\begin{proof}
We first prove the case where $\bk$ is algebraically closed.
Since $B$ is a UFD, Lemma \ref{9vdyf25fjmg752rd} implies that $\FML(B) = \Frac(A)$ where we define $A = \ML(B)$.
Since  $| \Ascr_1(B) | > 1$, we have $A=\bk$ or $A \in \Ascr_2(B)$.
If $A=\bk$ then $\FML(B) = \Frac(A)=\bk$, so $\Frac(B)=\bk^{(3)}$ follows from Cor.\ \ref{cknvlwr90fweodpx}.
If $A \in \Ascr_2(B)$ then (by \ref{934of0vfvo5rje4i}\eqref{hghwhegwhegwheg} and Lemma \ref{Adc8f23beiv6583ruhf9837wehn0qZ})
$A$ is a $1$-dimensional $\bk$-affine UFD, so $\Spec A$ is a factorial curve and hence is rational.
Thus $\FML(B) = \Frac A = \bk^{(1)}$.
As $\FML(B) \in \Kscr_2(B)$, we have $\Frac(B) = (\FML(B))^{(2)}$ by Cor.\ \ref{92ed8hdh192h0}\eqref{i8chF239edfh}, so $\Frac(B) = \bk^{(3)}$.
The special case is proved.

For the general case, let $\ck$ be the algebraic closure of $\bk$ and $\bar B = \ck \otimes_\bk B$.
Note that $\bar B$ is a UFD and an affine $\ck$-domain, and that  $\dim \bar B=3$ and $| \Ascr_1( \bar B ) | > 1$ by Lemma \ref{evdhd838eh65433ru0}.
Since the Proposition is true when $\bk$ is algebraically closed, $\bar B$ is rational over $\ck$; so $B$ is geometrically rational.
\end{proof}

\begin{bigremark} \label {pd9cb93edpd}
{\it Let $L/K$ be a function field of one variable, where $K$ is a field of characteristic zero and is algebraically closed in $L$.
Let $K'$ be an algebraic extension of $K$.  Then $L' = K' \otimes_K L$ is a field, $L'/K'$ is a function field of one variable,
$K'$ is algebraically closed in $L'$ and $L/K$ and $L'/K'$ have the same genus.}
(Indeed, $L'$ is a domain by Lemma \ref{0c9v902j3we0fcwpdjhf2873}, and since it is integral over $L$,
it must be a field; so $L'/K'$ is  a function field of one variable.
We have $L' = K'L$, so Prop.\ III.6.1 and Thm III.6.3 of \cite{StichtenothBook} give the last two claims.)
\end{bigremark}

\begin{proposition}  \label {fb3edf9023e98fyd}
Let $\bk$ be a field of characteristic zero and $B$ an affine $\bk$-domain satisfying
$\dim B = 3$ and $\ML(B)=\bk$.
Also assume that $\ck \otimes_\bk B$ is a domain which is not rational over $\ck$, where $\ck$ is the algebraic closure of $\bk$.
Then the following hold, where we set $F = \FML(B)$:
\begin{enumerata}

\item $\Frac(B)=F^{(2)}$, $B \subseteq F^{[2]}$, and $F$ is the function field of a curve of positive genus over $\bk$.

\item $T_\Ascr(B) = \{0,1,3\}$, $T_\Kscr(B) = \{0,1,2\}$, and the maps
$\xymatrix@1@C=13pt{(\Kscr(B), \subseteq) \ar @<.4ex>[r] &   (\Ascr(B), \subseteq) \ar @<.4ex>[l]  }$ 
of Lemma \ref{f2i3p0f234hef} are isomorphisms of posets.

\item $B$ is not a UFD.

\end{enumerata}
\end{proposition}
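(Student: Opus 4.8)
The plan is to extract everything from the single fact $\FML(B)\neq\bk$, which I would establish first, and then read off (a), (b), (c) from the machinery of Sections \ref{Section:whatisneededfornextsection}--\ref{Somepreliminaries}. Set $F=\FML(B)$ and $\bar B=\ck\otimes_\bk B$. First I would record the cheap structural facts: since $\bar B$ is a domain, Lemma \ref{0c9v902j3we0fcwpdjhf2873} gives that $\bk$ is algebraically closed in $\Frac(B)$; and since $\ML(B)=\bk$ with $\trdeg_\bk B=3>1$, the ring $B$ is not semi-rigid, so $|\Kscr_1(B)|>1$. I would then argue by contradiction that $F\neq\bk$: if $\FML(B)=\bk$, then (as $\dim B=3\le4$) Cor.\ \ref{c9JkGUHtrSwqd73yrh437}(b) would force $B$ to be geometrically rational over $\bk$, i.e.\ $\bar B$ rational over $\ck$, contradicting the hypothesis. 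Since $F$ is algebraically closed in $\Frac(B)$ (Lemma \ref{0ckj238hqn239hnfpawrnHzb}) and $\bk$ is too, $F\neq\bk$ forces $\trdeg_\bk F\ge1$.

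Next I would pin down $\trdeg_\bk F=1$. Picking two distinct $K_1,K_2\in\Kscr_1(B)$ (each of transcendence degree $2$ over $\bk$), the inclusions $F\subseteq K_1\cap K_2$ give $\trdeg_\bk F\le 2$; and $\trdeg_\bk F=2$ is impossible, since then $K_1$ and $K_2$ would be algebraic over $F$ and, $F$ being algebraically closed in $\Frac(B)$, we would get $K_1=F=K_2$, against $K_1\neq K_2$. Hence $\trdeg_\bk F=1$, so $F\in\Kscr_2(B)$. The value $3$ is excluded from $T_\Kscr(B)$ because any $K\in\Kscr_3(B)$ would be algebraic over $\bk$ and algebraically closed in $\Frac(B)$, hence equal to $\bk$, whereas $\bk\notin\Kscr(B)$ (its least element $F$ strictly contains $\bk$); so $T_\Kscr(B)=\{0,1,2\}$. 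Applying Cor.\ \ref{92ed8hdh192h0}\eqref{i8chF239edfh} with $K=F$ and $n=2$ then gives $\Frac(B)=F^{(2)}$ and $B\subseteq F^{[2]}$.

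For the positive-genus claim I would pass to $\ck$. Being an intermediate field of the finitely generated extension $\Frac(B)/\bk$ of transcendence degree $1$, $F/\bk$ is a function field of one variable, and $\bk$ is algebraically closed in it; so $F$ is the function field of a curve over $\bk$. Using $\Frac(\bar B)=\ck\otimes_\bk\Frac(B)=(\ck\otimes_\bk F)^{(2)}$, the assumption that $\bar B$ is not rational over $\ck$ forces $\ck\otimes_\bk F$ not to be rational over $\ck$; since a genus-zero function field of one variable over the algebraically closed field $\ck$ is rational, $(\ck\otimes_\bk F)/\ck$ has positive genus, and Remark \ref{pd9cb93edpd} identifies this with the genus of $F/\bk$. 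Thus $F$ is the function field of a curve of positive genus, completing (a). I expect this transfer of non-rationality into positive genus to be the main obstacle, the delicate points being the identity $\ck\otimes_\bk\Frac(B)=(\ck\otimes_\bk F)^{(2)}$ (which rests on $\ck\otimes_\bk F$ being a field, i.e.\ on $\bk$ being algebraically closed in $F$) and the invariance of genus under the algebraic base change supplied by Remark \ref{pd9cb93edpd}.

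Finally, (b) and (c) are formal. Since $|T_\Kscr(B)|=3$, Lemma \ref{pc0WVEDFWc9vn20wdfc8wgd9s8}(a) shows that $f$ and $g$ are isomorphisms of posets, and part (d) of that lemma gives $T_\Ascr(B)=\{0,1,n\}$ with $n=\trdeg(B:\ML(B))=\trdeg_\bk B=3$, so $T_\Ascr(B)=\{0,1,3\}$ while $T_\Kscr(B)=\{0,1,2\}$. For (c), if $B$ were a UFD then Lemma \ref{9vdyf25fjmg752rd}(a) would give $\FML(B)=\Frac(\ML(B))=\Frac(\bk)=\bk$, contradicting $F\neq\bk$; hence $B$ is not a UFD.
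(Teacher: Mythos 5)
Your proof is correct and takes essentially the same route as the paper's: forcing $F \neq \bk$ via the ``$\FML$ trivial and $\dim \le 4$ implies geometric rationality'' result, getting $\Frac(B)=F^{(2)}$ and $B \subseteq F^{[2]}$ from Cor.~\ref{92ed8hdh192h0}\eqref{i8chF239edfh}, the genus claim from Remark~\ref{pd9cb93edpd} together with rationality of genus-zero function fields over $\ck$, part (b) from Lemma~\ref{pc0WVEDFWc9vn20wdfc8wgd9s8}, and part (c) from Lemma~\ref{9vdyf25fjmg752rd}. The only cosmetic differences are that the paper deduces $\trdeg_F(\Frac B)>1$ directly from the non-semi-rigidity remark rather than from a pair of distinct elements of $\Kscr_1(B)$, and obtains (c) via Remark~\ref{09bn349efvkmtZr8} ($T_\Ascr(B)\neq T_\Kscr(B)$) rather than from Lemma~\ref{9vdyf25fjmg752rd}(a) directly.
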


\begin{proof}
Cor.\ \ref{92ed8hdh192h0}\eqref{0wihUfUY755vhB} implies that $\Frac(B)/F$ is geometrically rational,
and the hypothesis that $\ck \otimes_\bk B$ is not rational over $\ck$ implies that $\Frac(B)/\bk$ is not geometrically rational; so $F \neq \bk$.
Since $\ck \otimes_\bk B$ is a domain, $\bk$ is algebraically closed in $\Frac(B)$ by Lemma \ref{0c9v902j3we0fcwpdjhf2873}.
Since $F \neq \bk$, this implies that $\trdeg_\bk(F)\ge1$.
On the other hand, $\ML(B)=\bk$ implies that $\trdeg_F( \Frac B ) > 1$, so $\trdeg_F( \Frac B ) = 2$ and hence $T_\Kscr(B) = \{0,1,2\}$.
Thus assertion (b) follows from Lemma \ref{pc0WVEDFWc9vn20wdfc8wgd9s8}.
Since $T_\Ascr(B) \neq T_\Kscr(B)$, $B$ is not a UFD by Rem.\ \ref{09bn349efvkmtZr8}.
We have $\Frac B = F^{(2)}$ and $B \subseteq F^{[2]}$ by Cor.\ \ref{92ed8hdh192h0}\eqref{i8chF239edfh}.
It is clear that $F/\bk$ is the function field of a curve $C$ over $\bk$, and we note that $\bk$ is algebraically closed in $F$.
If $C$ has genus $0$ then (by Rem.\ \ref{pd9cb93edpd}) $\ck \otimes_\bk F = \ck^{(1)}$,
and since $\Frac B = F^{(2)}$ this implies that $\Frac(\ck \otimes_\bk B) = \ck^{(3)}$, contradicting the hypothesis.
So $C$ has positive genus.
\end{proof}

\section{Some interesting classes of algebras}
\label {Section:Formsofkknandofkk3}

This section is an elaborate remark whose aim is to explain how our results apply to certain interesting classes of algebras.
We define two classes $\Ceuls(\bk) \subset \Ceul(\bk)$ of $\bk$-algebras (for any field $\bk$ of characteristic zero)
and then go on to develop two themes:

\noindent $\bullet$ \textit{The class $\Ceuls(\bk)$ is large enough to contain many interesting algebras.}
Paragraph \ref{p0cnd1230dcb9WINFZI2j8s765cdhgh} recalls the definitions of three classes of algebras
that attract much attention from researchers, and shows that those three classes are included in $\Ceuls(\bk)$.
Lemma \ref{c98vqbeg8vsCb8ecnrfgqw7} shows that $\Ceul(\bk)$ and $\Ceuls(\bk)$ are closed under certain operations, which also supports the
claim that those classes are large.

\noindent $\bullet$ \textit{Some strong results about $\Ascr(B)$ are valid for all members $B$ of $\Ceul(\bk)$ or $\Ceuls(\bk)$.}
The assumptions contained in the definition of $\Ceul(\bk)$ are suitable for applying the results of Section \ref{Section:applicationsandquestions},
and doing so gives Cor.\ \ref{d092n3ro9fqp3r0fck}.
The nonsingularity requirement in the definition of $\Ceuls(\bk)$ allows us to obtain Thm \ref{oicbf2938bvcoq76pjma}.

\begin{definition} \label {9ocb23oe9fsdpo0}
Given a field $\bk$ of characteristic zero, let $\Ceul(\bk)$ be the class of $\bk$-algebras $B$ satisfying
$$
\text{$\bar B$ is an affine $\ck$-domain, is a UFD and satisfies $\bar B^* = \ck^*$}
$$
where $\ck$ denotes the algebraic closure of $\bk$ and $\bar B = \ck \otimes_\bk B$.
Let $\Ceuls(\bk)$ be the class of $\bk$-algebras $B$ that satisfy 
$$
\text{$B$ belongs to $\Ceul(\bk)$ and $\Spec(\bar B)$ is a nonsingular variety over $\ck$.}%
$$
\end{definition}

\begin{lemma} \label {c98vqbeg8vsCb8ecnrfgqw7}
Let $\bk$ be a field of characteristic zero, $\ck$ its algebraic closure, and $B$ a $\bk$-algebra.
\begin{enumerata}

\item $\kk n \in \Ceuls(\bk)$ for all $n \in \Nat$.

\item $\ck \otimes_\bk B \in \Ceul(\ck) \iff B \in \Ceul(\bk)$ \quad and \quad $\ck \otimes_\bk B \in \Ceuls(\ck) \iff B \in \Ceuls(\bk)$

\item For each $m \in \Nat$,\ \ $B^{[m]} \in \Ceul(\bk) \iff  B \in \Ceul(\bk)$ \quad and \quad $B^{[m]} \in \Ceuls(\bk) \iff  B \in \Ceuls(\bk)$.

\end{enumerata}
\end{lemma}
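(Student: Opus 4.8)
The unifying observation is the identity $\overline{B^{[m]}}=\bar B^{[m]}$, i.e.\ $\ck\otimes_\bk B^{[m]}=(\ck\otimes_\bk B)^{[m]}$, together with the remark that, since $\ck$ is algebraically closed, the ``bar'' operation applied to a $\ck$-algebra $C$ returns $C$ itself (because $\ck\otimes_\ck C=C$). The plan is to dispose of (a) and (b), which are essentially formal, and then concentrate the work on (c). For (a), taking $B=\kk n$ gives $\bar B=\ck^{[n]}$, which is an affine $\ck$-domain, is a UFD by Gauss, satisfies $(\ck^{[n]})^*=\ck^*$ since the units of a polynomial ring over a field are the nonzero constants, and has $\Spec(\ck^{[n]})=\aff^n$ nonsingular; hence $\kk n\in\Ceuls(\bk)$. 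For (b), I would apply the definition of $\Ceul(\ck)$ to the $\ck$-algebra $C=\ck\otimes_\bk B$: using $\ck\otimes_\ck C=C$, the condition ``$C\in\Ceul(\ck)$'' unwinds to exactly ``$\ck\otimes_\bk B$ is an affine $\ck$-domain, a UFD, with units $\ck^*$'', which is the definition of $B\in\Ceul(\bk)$. The $\Ceuls$ version follows identically upon appending the (literally the same) nonsingularity condition on $\Spec(\ck\otimes_\bk B)$. Thus (b) is a tautology.

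The substantive part is (c). Since $\overline{B^{[m]}}=\bar B^{[m]}$, writing $C=\bar B$ reduces everything to four assertions comparing a $\ck$-algebra $C$ with its polynomial extension $C^{[m]}$, each of which I would prove by induction on $m$, so that only the case $m=1$ need be treated. \emph{Domain and finite generation:} $C^{[1]}$ is a domain iff $C$ is (a polynomial ring over a domain is a domain, and conversely $C$ embeds in $C^{[1]}$); for finite generation, note that $C\cong C^{[1]}/(t)$, so if $C^{[1]}$ is a finitely generated $\ck$-algebra then so is the quotient $C$ (and $C$ is then Noetherian), the converse being clear. \emph{Factoriality:} $C^{[1]}$ is a UFD iff $C$ is; one direction is Gauss, and for the converse I would factor a nonzero nonunit $c\in C$ into primes of the UFD $C^{[1]}$, observe that each factor has degree $0$ hence lies in $C$, and verify directly that such a factor is prime in $C$. \emph{Units:} if $C$ is a domain then $(C^{[1]})^*=C^*$ by the usual degree argument, whence $(C^{[m]})^*=\ck^*$ iff $C^*=\ck^*$. \emph{Regularity:} $\Spec(C^{[1]})$ is nonsingular iff $\Spec(C)$ is, where for a prime $\pgoth$ of $C$ one sets $\qgoth=\pgoth C^{[1]}+(t)$ and uses that $C_\pgoth\cong (C^{[1]})_{\qgoth}/(t)$ with $t$ a nonzerodivisor cutting out the regular parameter, while the converse is the standard fact that a polynomial ring over a regular ring is regular. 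Combining the first three assertions yields the $\Ceul$ equivalence, and adjoining the fourth yields the $\Ceuls$ equivalence.

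I expect the descent of regularity to be the step requiring the most care: it is the only place where the geometric hypothesis distinguishing $\Ceuls$ from $\Ceul$ interacts with the polynomial extension, and it must be argued at the level of local rings (using that $t_1,\dots,t_m$ form a regular sequence presenting $\Spec C$ as a smooth section of $\Spec C^{[m]}$). The descent of finite generation is the other mildly non-formal point, but the presentation $C\cong C^{[m]}/(t_1,\dots,t_m)$ reduces it to ``a quotient of a finitely generated $\ck$-algebra is finitely generated,'' which is immediate. Everything else is classical, so the proof should assemble cleanly from these ingredients.
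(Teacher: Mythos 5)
Your proposal is correct and follows essentially the same route as the paper: parts (a) and (b) are dispatched as formal consequences of the definitions, and part (c) rests on the identity $\ck\otimes_\bk B^{[m]}=\bar B^{[m]}$ followed by comparing $\bar B$ with $\bar B^{[m]}$. The only difference is one of detail: where you prove the standard facts about polynomial extensions yourself (Gauss plus the degree-zero factorization for the UFD converse, the degree argument for units, and descent of regularity via the local rings at $\qgoth=\pgoth C^{[m]}+(t_1,\dots,t_m)$), the paper simply cites them as well known, checking the nonsingularity step by the Jacobian criterion rather than by your regular-local-ring argument.
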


\begin{proof}
Assertion (a) is trivial.
Since $\Ceul(\bk)$ and $\Ceuls(\bk)$ are defined in terms of the properties of $\bar B = \ck \otimes_\bk B$, assertion (b) is trivial.
For (c), we first note that $\ck \otimes_\bk B^{[m]} = \bar B^{[m]}$; so $B^{[m]} \in \Ceul(\bk)$ is equivalent to
$\bar B^{[m]}$ being a UFD and $( \bar B^{[m]} )^* = \ck^*$,
which is equivalent to $\bar B$ being a UFD and $\bar B^* = \ck^*$,
which is equivalent to $B \in \Ceul(\bk)$.
To prove  $B^{[m]} \in \Ceuls(\bk) \Leftrightarrow B \in \Ceuls(\bk)$,
there only remains to verify that $\Spec( \bar B^{[m]} )$ is nonsingular if and only if $\Spec( \bar B )$ is nonsingular;
this is well known (and can be checked via the Jacobian criterion \cite[p.\ 31]{Hartshorne}).
So (c) is proved.
\end{proof}


\begin{parag} \label {p0cnd1230dcb9WINFZI2j8s765cdhgh}
Recall the definitions of the following three interesting classes of algebras.
\end{parag}


\begin{subparag} \label {lcvpq2093cBARopd}
Let $\bk$ be a field of characteristic zero and $n$ a positive integer.
A $\bk$-algebra $B$ is called \textit{a form of $\kk n$} if there exists a field extension $K/\bk$ such that $K \otimes_\bk B = K^{[n]}$
(or equivalently, if $\ck \otimes_\bk B = \ck^{[n]}$ where $\ck$ is the algebraic closure of $\bk$).
It is known (see \ref{934of0vfvo5rje4i}\eqref{NEW-nonexistenceforms}) that the implication
\begin{equation} \label {cjhyfitlVubycmxdlnUn} 
\textit{if $B$ is a form of $\kk n$  then $B = \kk n$}
\end{equation}
is true when $n\le 2$ but it is an open question to determine the truth value of \eqref{cjhyfitlVubycmxdlnUn} when $n \ge 3$.
\end{subparag}


\begin{subparag} \label {cp9b2p39efbv287fasek}
Let $\bk$ be a field of characteristic zero.
A $\bk$-algebra $B$ is called a \textit{stably polynomial algebra over $\bk$}  if there exist $m,n \in \Nat$ such that $B^{[m]} = \kk{m+n}$.
It is known that the implication
\begin{equation} \label {lpdd9c8vn230wd}
\textit{if $B$ is a stably polynomial algebra over $\bk$ then $B = \kk n$ for some $n \in \Nat$} 
\end{equation}
is true when $\dim B\le 2$ (see \cite{Fujita:ZariskiProblem}, \cite{Rus:AffineRuled} for the case $\dim B = 2$),
but it is an open question to determine the truth value of \eqref{lpdd9c8vn230wd} when $\dim B \ge 3$.
\end{subparag}


\begin{subparag} \label {vc89fb34gukuy6isbeuyfg}
An \textit{exotic $\Comp^n$} is a nonsingular affine $\Comp$-variety that is 
diffeomorphic to $\Reals^{2n}$ as a real manifold but is not isomorphic to $\Comp^n$ as an algebraic $\Comp$-variety
(refer to \cite{Zaid:Exotic_1995} for background on this topic).
It follows from \cite{Ramanujam} that there are no exotic $\Comp^2$,
but examples are known of exotic $\Comp^n$ for all $n\ge3$.
Let us now adopt the following definition:
an \textit{exotic $\Comp^{[n]}$} is an affine $\Comp$-domain $B$ such that the complex affine variety $X=\Spec B$ is an exotic $\Comp^n$.
That is, $B$ is an exotic $\Comp^{[n]}$ if and only if it is an affine $\Comp$-domain, $B \neq \Comp^{[n]}$, 
and $\Spec B$ is a nonsingular $\Comp$-variety which, when viewed as a real manifold, is diffeomorphic to $\Reals^{2n}$.
\end{subparag}


\begin{subproposition}  \label{OoO98nf4tgw3TWUZVw02wGgkjsr}
If $\bk$ is a field of characteristic zero then $\Ceuls(\bk)$ contains all stably polynomial algebras over $\bk$ and all forms of $\kk n$ for all $n$.
Moreover, $\Ceuls(\Comp)$ contains all exotic $\Comp^{[n]}$ for all $n$.
\end{subproposition}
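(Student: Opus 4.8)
The plan is to treat the three classes in turn, reducing the first two directly to the definition (with help from Lemma \ref{c98vqbeg8vsCb8ecnrfgqw7}) and reserving the real work for the exotic case.

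First, suppose $B$ is a form of $\kk n$. By the reformulation recorded in \ref{lcvpq2093cBARopd}, $\bar B = \ck \otimes_\bk B = \ck^{[n]}$, where $\ck$ is the algebraic closure of $\bk$. But $\ck^{[n]}$ manifestly satisfies every condition in Def.\ \ref{9ocb23oe9fsdpo0}: it is an affine $\ck$-domain and a UFD with $(\ck^{[n]})^* = \ck^*$, and $\Spec \ck^{[n]} = \aff^n_\ck$ is nonsingular. Hence $B \in \Ceuls(\bk)$. Next, suppose $B$ is a stably polynomial algebra, say $B^{[m]} = \kk{m+n}$. Lemma \ref{c98vqbeg8vsCb8ecnrfgqw7}(a) gives $\kk{m+n} \in \Ceuls(\bk)$, i.e.\ $B^{[m]} \in \Ceuls(\bk)$, and then Lemma \ref{c98vqbeg8vsCb8ecnrfgqw7}(c) yields $B \in \Ceuls(\bk)$.

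The substantive case is the exotic one. Let $B$ be an exotic $\Comp^{[n]}$ and set $X = \Spec B$. Since $\bk = \Comp$ is algebraically closed, $\bar B = B$, so membership in $\Ceuls(\Comp)$ reduces to four conditions: $B$ is an affine $\Comp$-domain and $\Spec B$ is nonsingular, both true by the definition of exotic $\Comp^{[n]}$; and $B^* = \Comp^*$ together with $B$ being a UFD, which is where the topology enters. I would argue as follows. By hypothesis $X$ is diffeomorphic to $\Reals^{2n}$, hence contractible, so $H^1(X;\Integ) = H^2(X;\Integ) = 0$. For a connected smooth complex affine variety the group of units modulo constants embeds into $H^1(X;\Integ)$ and the Picard group embeds into $H^2(X;\Integ)$ (these embeddings come from the exponential sequence on the Stein space $X^{\mathrm{an}}$ together with Cartan's Theorem~B). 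Contractibility therefore forces $B^* = \Comp^*$ and $\operatorname{Pic}(X) = 0$. As $X$ is nonsingular (hence normal), $\operatorname{Pic}(X) = \operatorname{Cl}(X)$, and the vanishing of the divisor class group of a normal affine variety is equivalent to factoriality of its coordinate ring; thus $B$ is a UFD. This places $B$ in $\Ceul(\Comp)$, and with nonsingularity in $\Ceuls(\Comp)$.

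The only real obstacle is the exotic case, and within it the two cohomological embeddings for smooth affine complex varieties; rather than reprove them I would cite the standard references on contractible affine varieties and exotic $\Comp^n$ (e.g.\ \cite{Zaid:Exotic_1995}). Everything else is a formal consequence of Lemma \ref{c98vqbeg8vsCb8ecnrfgqw7} and the definitions.
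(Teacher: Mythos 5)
Your treatment of forms of $\kk n$ and of stably polynomial algebras is correct and is essentially the paper's argument: both cases reduce to Lemma \ref{c98vqbeg8vsCb8ecnrfgqw7}. The genuine gap is in the exotic case, specifically in the factoriality claim. The assertion that for every connected smooth complex affine variety $X$ the Picard group embeds into $H^2(X;\Integ)$ is false, and citing ``standard references'' cannot repair it, because it is not a theorem. What the exponential sequence and Cartan's Theorem B give is an isomorphism $\operatorname{Pic}(X^{\mathrm{an}}) \cong H^2(X;\Integ)$ for the \emph{analytic} Picard group; the comparison map $\operatorname{Pic}(X) \to \operatorname{Pic}(X^{\mathrm{an}})$ need not be injective. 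For example, if $X = E \setminus \{p\}$ is a once-punctured elliptic curve, then $X$ is a smooth affine curve with $H^2(X;\Integ)=0$ and every holomorphic line bundle on $X$ is trivial, yet $\operatorname{Pic}(X) = \operatorname{Cl}(X) \cong E(\Comp) \neq 0$. In general the kernel of $\operatorname{Pic}(X) \to H^2(X;\Integ)$ is the (divisible) image of $\operatorname{Pic}^0(\bar X)$ for a smooth compactification $\bar X$, and killing it requires knowing that the irregularity $q(\bar X)$ vanishes. That is where the hypothesis $H_1(X;\Integ)=0$ (which you do have, by contractibility) must enter: $\pi_1(X) \twoheadrightarrow \pi_1(\bar X)$ gives $b_1(\bar X)=0$, hence $q(\bar X)=0$, hence $\operatorname{Pic}^0(\bar X)=0$, and only then does $\operatorname{Pic}(X)$ inject into $H^2(X;\Integ)=0$. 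Your argument as written uses only $H^2(X;\Integ)=0$, which is insufficient.

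A milder version of the same caveat applies to the units: the exponential sequence shows that a unit whose class in $H^1(X;\Integ)$ vanishes is the exponential of a holomorphic function, not that it is constant; constancy again comes from examining zeros and poles along the boundary of a compactification. Here, however, the statement you want (that $B^*/\Comp^*$ embeds into $H^1(X;\Integ)$ for $X$ smooth affine) is a true standard fact and may legitimately be cited. This is in fact exactly what the paper does for both properties: it quotes Fujita \cite[Cor.\ (1.20;1)]{Fujita:TopNonComSurf} for $B^* = \Comp^*$ and Gurjar \cite[Thm 1]{Gurjar_ThesisPublished_1980} for the UFD property; the proofs of those results carry out the compactification argument sketched above. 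So your plan of citing a standard result works for the units but not for factoriality, where the statement you propose to cite is false as formulated; for that step you must either invoke Gurjar's theorem (as the paper does) or rework your Picard argument so that it genuinely uses $H_1(X;\Integ)=0$ and not only $H^2(X;\Integ)=0$.
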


\begin{proof}
Lemma \ref{c98vqbeg8vsCb8ecnrfgqw7} implies that $\Ceuls(\bk)$ contains all stably polynomial algebras over $\bk$ and all forms of $\kk n$ for all $n$.
Let $B$ be an exotic $\Comp^{[n]}$.
By definition, $B$ is an affine $\Comp$-domain and $\Spec B$ is a nonsingular $\Comp$-variety.
We thank M.\ Zaidenberg for pointing out to us that $B^* = \Comp^*$ by \cite[Cor.\ (1.20;1)]{Fujita:TopNonComSurf}.
By \cite[Thm 1]{Gurjar_ThesisPublished_1980}, $B$ is a UFD. So $B$ belongs to $\Ceuls(\Comp)$.
\end{proof}


\begin{lemma} \label {7CxjE2gunQhgfwertk765}
Let $\bk$ be a field of characteristic zero.
If $B \in \Ceul(\bk)$ then $B$ is an affine $\bk$-domain, is absolutely factorial and satisfies $B^* = \bk^*$.
\end{lemma}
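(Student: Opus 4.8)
The plan is to transfer each of the three assertions down the extension $B \hookrightarrow \bar B$, where $\bar B = \ck \otimes_\bk B$ and $\ck$ is the algebraic closure of $\bk$. Fix a $\bk$-basis $(\lambda_i)_{i\in I}$ of $\ck$ with $\lambda_{i_0}=1$; then $\bar B = \bigoplus_i \lambda_i B$ is a free $B$-module with $1$ in a basis, so $B \to \bar B$ is injective and faithfully flat. Since $\bar B$ is a domain, $B$ is a domain. Finite generation descends along the faithfully flat map $\bk \to \ck$: as $\bar B$ is of finite type over $\ck$, $B$ is of finite type over $\bk$ (descent of the property of being of finite type along faithfully flat base change). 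Hence $B$ is an affine $\bk$-domain, and in particular Noetherian.

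Comparing $B$-components in $\bar B = \bigoplus_i \lambda_i B$ gives $\ck \cap B = \bk$ inside $\bar B$; and since $\ck \otimes_\bk \Frac(B)$ is a domain integral over the field $\Frac(B)$, hence a field, we have $\Frac(\bar B) = \ck \otimes_\bk \Frac(B) = \bigoplus_i \lambda_i \Frac(B)$, and the same component comparison gives $\Frac(B) \cap \bar B = B$. Two consequences follow. First, any $u \in B^*$ is a unit of $\bar B$, so $u \in \bar B^* = \ck^*$, whence $u \in \ck \cap B = \bk$; as $\bk^* \subseteq B^*$ is clear, this proves $B^* = \bk^*$. Second, $B$ is integrally closed: an element of $\Frac(B)$ integral over $B$ is integral over the normal domain $\bar B$, hence lies in $\bar B \cap \Frac(B) = B$.

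It remains to prove that $B$ is a UFD, for then, since $\bar B$ is a UFD by hypothesis, $B$ is absolutely factorial. By the previous paragraph $B$ is a Krull domain (Noetherian and normal), and $\bar B$ is a Krull domain with $\operatorname{Cl}(\bar B) = 0$. The faithfully flat inclusion $B \hookrightarrow \bar B$ satisfies going-down, and therefore condition (PDE): every height-$1$ prime of $\bar B$ contracts to a prime of height $\le 1$ in $B$. By the injectivity of the divisor-class-group homomorphism for a faithfully flat extension of Krull domains satisfying (PDE) (Bourbaki, \emph{Commutative Algebra}, Ch.\ VII, or Fossum), $\operatorname{Cl}(B) \hookrightarrow \operatorname{Cl}(\bar B) = 0$, so $\operatorname{Cl}(B) = 0$ and $B$ is a UFD.

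The main obstacle is this last step, the descent of factoriality from $\bar B$ to $B$; the essential input is the injectivity of $\operatorname{Cl}(B) \to \operatorname{Cl}(\bar B)$, and the point that must not be skipped is establishing that $B$ is genuinely a Krull domain (via $\Frac(B) \cap \bar B = B$) before invoking the class-group formalism. The descent of finite generation is standard but should be cited precisely, and the unit computation, though elementary, relies on the same basis decomposition as the normality argument, so it is economical to record $\ck \cap B = \bk$ and $\Frac(B) \cap \bar B = B$ once at the outset.
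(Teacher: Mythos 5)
Your first two paragraphs are sound: the basis decomposition $\bar B=\bigoplus_i \lambda_i B$ does give $\ck\cap B=\bk$ and $\Frac(B)\cap\bar B=B$, and from these the affineness, the equality $B^*=\bk^*$, and the normality of $B$ follow as you say (this is essentially the part of the argument the paper leaves to the reader). The gap is in your last step. There is no theorem asserting that for a faithfully flat extension of Krull domains satisfying (PDE) the class group map is injective; what Bourbaki (Ch.~VII) and Fossum actually give is only that the map $\operatorname{Cl}(B)\to\operatorname{Cl}(\bar B)$ is \emph{well defined} under (PDE), and that flatness implies (PDE). Injectivity genuinely fails, and it fails in exactly the situation at hand: take $\bk=\Reals$ and $B=\Reals[x,y]/(x^2+y^2-1)$. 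Then $B$ is a Dedekind (hence Noetherian, normal, Krull) domain, $\bar B=\Comp\otimes_\Reals B\cong\Comp[t,t^{-1}]$ (with $t=x+iy$) is a UFD, the inclusion $B\hookrightarrow\bar B$ is faithfully flat and satisfies (PDE), yet $B$ is not a UFD: the height-one prime $(x-1,y)$ satisfies $(x-1,y)\bar B=(t-1)\bar B$, and a short computation with the conjugation $t\mapsto t^{-1}$ shows no associate of $t-1$ lies in $B$, so this prime is not principal and $\operatorname{Cl}(B)\neq 0$ while $\operatorname{Cl}(\bar B)=0$. Your final step uses nothing beyond the hypotheses satisfied by this example, so it would "prove" a false statement. (Capitulation in number fields gives other counterexamples: every ideal class of $\Integ[\sqrt{-5}]$ becomes principal in the ring of integers of its Hilbert class field, a faithfully flat extension of Dedekind domains.)

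The tell-tale sign is that your UFD step never invokes the hypothesis $\bar B^*=\ck^*$, yet without it the conclusion is false, as the circle example shows (there $\bar B^*\supsetneq\Comp^*$, since $t$ is a unit). That hypothesis is precisely what makes descent of factoriality work: the kernel of $\operatorname{Cl}(B)\to\operatorname{Cl}(\bar B)$ is controlled by $H^1(\Gal(\ck/\bk),\bar B^*)$, which is trivial when $\bar B^*=\ck^*$ by Hilbert~90, but not in general. This is exactly how the paper argues (Appendix, Lemmas \ref{cccy5hc7cnc8c4cg} and \ref{jcvo3wbdwoe9df0sd0oc}): given a height-one prime $\pgoth$ of $B$, one takes a gcd generator $b$ of its extension to $\bar B$, observes that each $\sigma\in\Gal(\ck/\bk)$ sends $b$ to $\lambda_\sigma b$ with $\lambda_\sigma\in\bar B^*=\ck^*$, uses $H^1(G,\ck^*)=1$ to replace $b$ by an associate lying in $B$, and concludes that $\pgoth=bB$ is principal. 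Your class-group framework can be salvaged, but only by inserting this cocycle/Hilbert~90 argument (i.e., genuine Galois descent exploiting the units hypothesis) in place of the nonexistent injectivity theorem; as written, the final step is a genuine gap.
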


\begin{proof}
Let $B \in \Ceul(\bk)$ and let $\ck$ be the algebraic closure of $\bk$.
Since $\bar B = \ck \otimes_\bk B$ is an affine $\ck$-domain, it follows that $B$ is an affine $\bk$-domain (this is left to the reader).
Since $\bar B$ is a noetherian UFD with $\bar B^* = \ck^*$, it follows that that $B$ is a UFD with $B^*=\bk^*$
(this claim seems to belong to folklore; we provide a proof in the Appendix, see Lemma \ref{jcvo3wbdwoe9df0sd0oc}).
\end{proof}

We now apply the results of Sec.\ \ref{Section:applicationsandquestions} to the class $\Ceul(\bk)$.

\begin{corollary}  \label {d092n3ro9fqp3r0fck}
Let $\bk$ be a field of characteristic zero and $B \in \Ceul(\bk)$.  Let $n = \dim B$.
\begin{enumerata}

\item \label {special-udfhr3yhre3jihefob}  $R = \bk^{[1]}$ for all $R \in \Ascr_{n-1}(B)$.

\item \label {special-0c9vnbgtu6u5j}  If $A \in \Ascr_{n-2}(B)$ and $R \in \Ascr_{n-1}(B)$ satisfy $R \subset A$, then $A = R^{[1]} = \kk2$.

\item \label {8jakdfygcjrC79jdw} Let $A_0 \subset \cdots \subset A_m$ $(m\ge1)$ be a chain in $\Ascr(B)$ satisfying 
$\trdeg(A_i : A_{i-1}) = 1$ for all $i \in \{1, \dots, m\}$.
Then $A_{i-1} \in \Ascr_1^*( A_i )$  for all $i \in \{1, \dots, m\}$, and in particular $\Frac A_m = (\Frac A_0)^{(m)}$.


\item \label {9n230wdCin120}  If $\dim B = 3$ and $\Ascr_2(B) \neq \emptyset$ then $\Frac(B) = \bk^{(3)}$.

\item \label {eicGfKtT7675gJg}  If $\dim B = 3$ and $B$ is not semi-rigid then $B$ is geometrically rational over $\bk$.

\end{enumerata}
\end{corollary}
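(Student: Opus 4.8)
The plan is to observe first that every hypothesis needed to invoke the results of Section \ref{Section:applicationsandquestions} is already packaged in the class $\Ceul(\bk)$, so that each of parts (a)--(e) reduces to citing an earlier statement. Concretely, by Lemma \ref{7CxjE2gunQhgfwertk765} any $B \in \Ceul(\bk)$ is an affine absolutely factorial $\bk$-domain (in particular a UFD) with $B^* = \bk^*$, while by the very definition of $\Ceul(\bk)$ we have $(\ck \otimes_\bk B)^* = \ck^*$, where $\ck$ is the algebraic closure of $\bk$. With these facts recorded once at the start, the five parts follow by assembling previously proved theorems.

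For (a) I would apply Thm \ref{Z0cjh93wijbrfpa98gfIa}\eqref{udfhr3yhre3jihefob} verbatim: its two hypotheses, namely that $B$ be absolutely factorial and that $(\ck \otimes_\bk B)^* = \ck^*$, are exactly the properties just recorded, giving $R = \kk1$ for every $R \in \Ascr_{n-1}(B)$. For (b), Thm \ref{Z0cjh93wijbrfpa98gfIa}\eqref{0c9vnbgtu6u5j} yields $A = R^{[1]}$ whenever $A \in \Ascr_{n-2}(B)$, $R \in \Ascr_{n-1}(B)$ and $R \subset A$; substituting $R = \kk1$ from (a) gives $A = R^{[1]} = (\kk1)^{[1]} = \kk2$. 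Part (c) is a direct instance of Cor.\ \ref{ld0b23or0f9vwo4398hbg}, whose sole hypothesis is that $B$ be a factorial affine $\bk$-domain, which holds here.

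For (d) I would set $n = \dim B = 3$, so that $\Ascr_2(B) = \Ascr_{n-1}(B)$; choosing $R$ in this nonempty set gives $R = \kk1$ by (a). Because $B$ is a UFD, Lemma \ref{9vdyf25fjmg752rd} identifies $g(R)$ with $\Frac(R) \in \Kscr(B)$ and shows $\trdeg_{\Frac R}(\Frac B) = \trdeg_R(B) = 2$; hence $K := \Frac(R) = \bk^{(1)}$ lies in $\Kscr_2(B)$. Applying Cor.\ \ref{92ed8hdh192h0}\eqref{i8chF239edfh} with this $K$, for which the relevant parameter is $\trdeg_K(\Frac B) = 2$, gives $\Frac B = K^{(2)} = \bk^{(3)}$. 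Finally (e) is immediate from Prop.\ \ref{ugHctgJG77856rugXg}: the conditions ``$B$ absolutely factorial'' and ``$\dim B = 3$'' hold, and ``$B$ not semi-rigid'' means by definition $|\Ascr_1(B)| > 1$, so $B$ is geometrically rational over $\bk$.

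I expect no genuine obstacle, since the corollary orchestrates earlier theorems rather than proving anything new; the only point requiring care is (d), where one must pass through the $\Ascr$--$\Kscr$ correspondence for UFDs (Lemma \ref{9vdyf25fjmg752rd}) in order to convert the ring-level conclusion $R = \kk1$ into the field-level input $\bk^{(1)} \in \Kscr_2(B)$ demanded by Cor.\ \ref{92ed8hdh192h0}.
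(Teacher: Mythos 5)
Your proof is correct, and for parts (a), (b), (c) and (e) it coincides with the paper's own proof: the paper likewise records that Lemma \ref{7CxjE2gunQhgfwertk765} gives absolute factoriality and that $(\ck \otimes_\bk B)^* = \ck^*$ holds by definition of $\Ceul(\bk)$, then cites Thm \ref{Z0cjh93wijbrfpa98gfIa} for (a)--(b), Cor.\ \ref{ld0b23or0f9vwo4398hbg} for (c), and Prop.\ \ref{ugHctgJG77856rugXg} for (e). The one place you genuinely diverge is part (d). The paper argues more directly: given $R \in \Ascr_2(B) = \Ascr_{n-1}(B)$, it picks $A \in \Ascr_1(B)$ with $R \subset A$ (such an $A$ exists because $R = A_\Delta$ for some $\Delta$ containing a nonzero derivation, whose kernel contains $R$ strictly and lies in $\Ascr_1(B)$), applies part (b) to conclude $A = \kk2$, and then obtains $\Frac(B) = (\Frac A)^{(1)} = \bk^{(3)}$ from the elementary fact \ref{pc9293ed0wdjo03}(iv). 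Your route instead applies part (a) to $R$ itself, converts the ring-level conclusion $R = \kk1$ into the field-level statement $\bk^{(1)} \in \Kscr_2(B)$ via the UFD correspondence of Lemma \ref{9vdyf25fjmg752rd}, and then invokes Cor.\ \ref{92ed8hdh192h0}\eqref{i8chF239edfh}. This is valid, but it is heavier machinery: Cor.\ \ref{92ed8hdh192h0} rests on Lemma \ref{9dfh923809fjf} and on Thm \ref{9i3oerXfvdf93p04efeJ}, the deep structural result imported from \cite{Daigle:TrivialFML}, whereas the paper's argument for (d) uses nothing beyond parts (a)--(b) and basic kernel theory. What your route buys in exchange is the extra (here unneeded) conclusion $B \subseteq (\bk^{(1)})^{[2]}$ coming from the same corollary.
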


\begin{proof}
Lemma \ref{7CxjE2gunQhgfwertk765} implies that $B$ is absolutely factorial, and $(\ck \otimes_\bk B)^* = \ck^*$ by definition of $\Ceul(\bk)$;
so \eqref{special-udfhr3yhre3jihefob} and \eqref{special-0c9vnbgtu6u5j}  are immediate consequences of Thm \ref{Z0cjh93wijbrfpa98gfIa}.
Part \eqref{8jakdfygcjrC79jdw}  follows from Cor.\ \ref{ld0b23or0f9vwo4398hbg}.
To prove \eqref{9n230wdCin120}, assume that $\dim B = 3$ and that $R \in \Ascr_2(B)$;
then there exists $A \in \Ascr_1(B)$ such that $R \subset A$;
then $A=\kk2$ by \eqref{special-0c9vnbgtu6u5j}, so $\Frac(B) = (\Frac A)^{(1)} = \bk^{(3)}$.
Part \eqref{eicGfKtT7675gJg} follows from Prop.\ \ref{ugHctgJG77856rugXg}.
\end{proof}

We need to introduce another class of $\bk$-algebras, sometimes known under the name of ``special Danielewski surfaces''
(whence the letter ``$\Dan$'' in the notation).

\begin{notation}
Given a field $\bk$ of characteristic zero, we let $\Dan(\bk)$ denote the class of $\bk$-algebras isomorphic to
$ \bk[X,Y,Z] / (XY - \phi(Z)) $ for some nonconstant polynomial in one variable $\phi(Z) \in \bk[Z] \setminus \bk$,
where  $\bk[X,Y,Z] = \kk3$. Note that $\kk2 \in \Dan(\bk)$.
\end{notation}

The following is in fact a special case of a result of \cite{AStrivML}.
It is a very intriguing fact, and it is interesting to state it here in the context of the class $\Ceuls(\bk)$.

\begin{theorem}  \label {oicbf2938bvcoq76pjma}
Let $\bk$ be a field of characteristic zero and $B \in \Ceuls(\bk)$.

Then $B_R \in \Dan( R_R )$ for all $R \in \Ascr_2(B)$.
\end{theorem}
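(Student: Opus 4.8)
The plan is to regard $B_R$ as an algebra over the field $K := R_R = \Frac(R)$ (which has characteristic zero, since $\bk \subseteq R$) and to verify that it satisfies the hypotheses of the classification result of \cite{AStrivML}: that $B_R$ is a smooth affine $K$-surface which is a UFD with $(B_R)^* = K^*$ and $\ML(B_R) = K$. Granting that classification, its conclusion is precisely that $B_R$ is $K$-isomorphic to $K[X,Y,Z]/(XY - \phi(Z))$ for some nonconstant $\phi(Z) \in K[Z]$, i.e.\ $B_R \in \Dan(R_R)$. So the work is entirely in establishing those properties.

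First, the easy ones. Writing $S = R \setminus \{0\}$ and $B = \bk[b_1,\dots,b_m]$, we have $B_R = S^{-1}B = K[b_1,\dots,b_m]$, so $B_R$ is an affine $K$-domain; and since $R \in \Ascr_2(B)$ we get $\dim B_R = \trdeg_K(\Frac B) = \trdeg_R(B) = 2$. By Lemma \ref{7CxjE2gunQhgfwertk765}, $B$ is a UFD, and a localization of a UFD is a UFD, so $B_R$ is a UFD. The Makar--Limanov invariant is immediate from Lemma \ref{9vbfnaow3GgvVpfhbVWYTE}(b), which gives $\ML(B_R) = R_R = K$.

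Next, the two steps that carry the real content. For the units: if $b/s$ (with $b \in B$, $s \in S$) is a unit of $B_R$, then $b$ is a unit of $B_R$, so $bb' = s'$ for some $b' \in B$ and $s' \in S = R\setminus\{0\}$; since $R$ is factorially closed in $B$ (Lemma \ref{0ckj238hqn239hnfpawrnHzb}), this forces $b,b' \in R$, hence $b \in R\setminus\{0\}$, and therefore $b/s \in (\Frac R)^* = K^*$. As $K^* \subseteq (B_R)^*$ trivially, we conclude $(B_R)^* = K^*$. For smoothness---the step that genuinely uses the nonsingularity clause in the definition of $\Ceuls(\bk)$---I would argue as follows: $\bar B = \ck \otimes_\bk B$ is nonsingular, hence regular; the homomorphism $B \to \bar B$ is faithfully flat, so $B$ is regular by descent; its localization $B_R$ is then regular; and since $B_R$ is of finite type over the field $K$, which is perfect (every characteristic-zero field is), regularity of $B_R$ implies that $B_R$ is smooth over $K$.

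With these properties assembled, I would invoke \cite{AStrivML} to obtain $B_R \in \Dan(K) = \Dan(R_R)$, completing the proof. The proof contains no single hard computation; the main point is to confirm that $B_R$ meets the hypotheses of the cited classification verbatim, and among these the delicate one is smoothness, requiring the descent of regularity from $\bar B$ to $B$ and the passage from regular to smooth over the perfect field $K$.
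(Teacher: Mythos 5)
Your overall strategy---reducing the statement to the classification result of \cite{AStrivML}---is the same as the paper's, and most of your individual verifications are correct: the dimension count, the UFD property of $B_R$, the units computation via factorial closedness of $R$ in $B$, the equality $\ML(B_R)=R_R$ from Lemma \ref{9vbfnaow3GgvVpfhbVWYTE}, and the smoothness of $B_R$ over $K$ (descent of regularity from $\bar B$ to $B$, localization, then ``regular of finite type over a perfect field $\Rightarrow$ smooth''). The gap is in the invocation itself: the hypothesis list you attribute to the classification theorem does not match the theorem that \cite{AStrivML} actually provides. The result quoted in the paper, \cite[Thm 4.1]{AStrivML}, reads: if $\Beul$ is a \emph{localization of a ring belonging to the class $\Neul(\bk)$} (where $\Neul(\bk)$ consists of the geometrically integral affine $\bk$-domains that are UFDs and smooth over $\bk$), and $K \subset \Beul$ is a field with $\trdeg_K\Beul = 2$ and $\ML(\Beul)=K$, then $\Beul \in \Dan(K)$. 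There is no hypothesis on units, but there is the requirement of membership in (a localization of) $\Neul$, which includes \emph{geometric integrality}---a condition you never list or verify. This is not a formal consequence of ``smooth $K$-domain'': $\Comp$ is a smooth $\Reals$-domain, yet $\Comp \otimes_\Reals \Comp$ is not a domain. So, as written, your proof invokes a theorem in a form you have guessed rather than checked, and the guess omits a substantive hypothesis.

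The gap can be closed in two ways. The paper's way is the shortest: $B_R$ is already a localization of $B$ itself, and $B \in \Neul(\bk)$ by Lemmas \ref{7CxjE2gunQhgfwertk765} and \ref{Ao9jcvmtrdhjr3kxkjXerbvc5f} (absolute factoriality gives that $\ck \otimes_\bk B$ is a domain, i.e.\ geometric integrality over $\bk$); then \cite[Thm 4.1]{AStrivML} applies using only the two facts $\trdeg(B_R : R_R)=2$ and $\ML(B_R)=R_R$. In particular your units computation and your smoothness-over-$K$ argument, though correct, are unnecessary---the localization form of the theorem was designed precisely so that no geometric conditions over the big field $K$ need to be checked. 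Alternatively, your route can be salvaged by proving $B_R \in \Neul(K)$ and applying \cite[Thm 4.1]{AStrivML} over the base field $K$ with the trivial localization: you already have that $B_R$ is an affine $K$-domain, a UFD, and smooth over $K$, and the missing geometric integrality does hold here, because $B$ is regular, hence normal, so Lemma \ref{lckjvZ0293ee0diCs} shows that $K=R_R$ is algebraically closed in $\Frac(B_R)=\Frac(B)$, and then Lemma \ref{0c9v902j3we0fcwpdjhf2873} shows that $B_R$ is geometrically integral over $K$. Either repair yields the stated conclusion $B_R \in \Dan(R_R)$.
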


To prove the Theorem, we need the notion of smoothness.

\begin{subparag} \label {pc0v2n3o0vG23F}
Following \cite[Def.\ 28.D]{Matsumura}, we say that a ring homomorphism $f : R \to S$ is \textit{smooth} (or that $S$ is smooth over $R$)
if $f$ is formally smooth for the discrete topologies on $R$ and $S$. Explicitly, this means that $f$ is smooth if and only if
for every commutative diagram~(\ref{ocvj3o49dfnqDIAG0weh}-i)
\begin{equation} \label {ocvj3o49dfnqDIAG0weh}
\text{(i)} \quad \xymatrix@R=15pt{ C \ar[r]^-q  &  C/N  \\ R \ar[u]^u \ar[r]_f  &  S \ar[u]_v }
\qquad \qquad 
\text{(ii)} \quad \xymatrix@R=15pt{ C \ar[r]^-q  &  C/N  \\ R \ar[u]^u \ar[r]_f  &  S \ar[u]_v \ar @{-->} [ul]_(.4){v'} }  
\end{equation}
where $C$ is a ring, $N$ is an ideal of $C$ satisfying $N^2=0$ and $q$ is the canonical epimorphism of the quotient ring,
there exists at least one ring homomorphism $v' : S \to C$ that makes diagram (\ref{ocvj3o49dfnqDIAG0weh}-ii) commute. 
We stress that our terminology for smoothness agrees with those of \cite{Matsumura} and \cite{AStrivML}.
We need the following properties of smoothness:
\begin{enumerata}

\item \label {civbwqjhjejm6icgf}  {\it Let $\bk$ be a field, let $\bk'$ and $A$ be $\bk$-algebras and let  $A' = \bk' \otimes_\bk A$.
Then $A$ is smooth over $\bk$ if and only if $A'$ is smooth over $\bk'$.}

\item \label {0Mvnbp30vsmnbs67rQmx9ufhc}
{\it Let $\bk$ be an algebraically closed field and $B$ an affine $\bk$-domain.
Then $B$ is smooth over $\bk$ if and only if $\Spec B$ is a nonsingular algebraic variety over $\bk$.}

\end{enumerata}
\end{subparag}

\begin{proof}[Proof of {\rm (a)} and {\rm (b)}]
Assertion \eqref{civbwqjhjejm6icgf}  is a special case of  \cite[28.O]{Matsumura}.
For assertion \eqref{0Mvnbp30vsmnbs67rQmx9ufhc} one needs to show that $\bk\to B$ is smooth if and only if $B_\mgoth$ is a regular local ring for every
maximal ideal $\mgoth$ of $B$; this follows from the three results in \cite{stacks-project}
identified by the tags 00TN, 00TC and 00TS. 
(Caution:  What we call `smooth' here is called `formally smooth' in \cite{stacks-project}.)
\end{proof}

\begin{sublemma} \label {Ao9jcvmtrdhjr3kxkjXerbvc5f}
Given a field $\bk$ of characteristic zero and a $\bk$-algebra $B$, 
$$
\text{$B$ belongs to $\Ceuls(\bk)$ $\iff$ $B$ belongs to $\Ceul(\bk)$ and $B$ is smooth over $\bk$.}
$$
\end{sublemma}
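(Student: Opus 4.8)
The plan is to observe that both sides of the asserted equivalence presuppose $B \in \Ceul(\bk)$, so there is no loss in working under the standing hypothesis $B \in \Ceul(\bk)$ (if $B \notin \Ceul(\bk)$ then both sides are false). Under that hypothesis the only content left to verify is
$$
B \text{ is smooth over } \bk \iff \Spec(\bar B) \text{ is nonsingular over } \ck ,
$$
where $\bar B = \ck \otimes_\bk B$; indeed, by Def.\ \ref{9ocb23oe9fsdpo0} the clause ``$\Spec(\bar B)$ is a nonsingular variety over $\ck$'' is exactly what distinguishes membership in $\Ceuls(\bk)$ from membership in $\Ceul(\bk)$.

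First I would record that when $B \in \Ceul(\bk)$ the ring $\bar B$ is an affine $\ck$-domain, this being part of the very definition of $\Ceul(\bk)$. This guarantees that the hypotheses of both smoothness facts collected in \ref{pc0v2n3o0vG23F} are in force.

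Then I would simply chain those two facts. Applying \ref{pc0v2n3o0vG23F}\eqref{civbwqjhjejm6icgf} with $\bk' = \ck$ and $A = B$ shows that $B$ is smooth over $\bk$ if and only if $\bar B = \ck \otimes_\bk B$ is smooth over $\ck$. Applying \ref{pc0v2n3o0vG23F}\eqref{0Mvnbp30vsmnbs67rQmx9ufhc} to the algebraically closed field $\ck$ and the affine $\ck$-domain $\bar B$ shows that $\bar B$ is smooth over $\ck$ if and only if $\Spec(\bar B)$ is a nonsingular variety over $\ck$. Composing these two equivalences gives the displayed biconditional, and combining it with the standing hypothesis $B \in \Ceul(\bk)$ yields the Sublemma.

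I expect that there is no substantial obstacle here: the statement is a formal consequence of the stability of smoothness under extension of the base field together with the identification of smoothness with geometric nonsingularity for affine domains over an algebraically closed field. The only point demanding any care is to confirm that the hypothesis ``$\bar B$ is an affine $\ck$-domain'', required to invoke \ref{pc0v2n3o0vG23F}\eqref{0Mvnbp30vsmnbs67rQmx9ufhc}, is genuinely built into the definition of $\Ceul(\bk)$, so that the two cited equivalences may be chained legitimately; once this is noted the argument reduces to a one-line composition of the two results.
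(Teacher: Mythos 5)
Your proof is correct and follows essentially the same route as the paper's: both reduce the statement to the two smoothness facts in \ref{pc0v2n3o0vG23F}, using \eqref{civbwqjhjejm6icgf} to transfer smoothness between $\bk$ and $\ck$, and \eqref{0Mvnbp30vsmnbs67rQmx9ufhc} to identify smoothness of the affine $\ck$-domain $\bar B$ with nonsingularity of $\Spec(\bar B)$. The only (immaterial) difference is organizational: you first factor out the common hypothesis $B \in \Ceul(\bk)$ and then chain the two equivalences, while the paper applies them directly to the full statements.
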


\begin{proof}
Let $\ck$ be the algebraic closure of $\bk$ and $\bar B = \ck \otimes_\bk B$.
By \ref{pc0v2n3o0vG23F}\eqref{0Mvnbp30vsmnbs67rQmx9ufhc}, $B \in \Ceuls(\bk)$ is equivalent to
``$B \in \Ceul(\bk)$ and $\bar B$ is smooth over $\ck$,''
which by \ref{pc0v2n3o0vG23F}\eqref{civbwqjhjejm6icgf} is equivalent to
``$B \in \Ceul(\bk)$ and $B$ is smooth over $\bk$.''
\end{proof}

\begin{proof}[Proof of Thm \ref{oicbf2938bvcoq76pjma}]
Let $B \in \Ceuls(\bk)$.
By Lemmas \ref{7CxjE2gunQhgfwertk765} and \ref{Ao9jcvmtrdhjr3kxkjXerbvc5f},
$B$ is a geometrically integral affine $\bk$-domain, is a UFD, and is smooth over $\bk$; 
so $B$ belongs to the class $\Neul(\bk)$ of $\bk$-domains defined in \cite{AStrivML}.
Thus $\Ceuls(\bk)$ is included in $\Neul(\bk)$.
Now \cite[Thm 4.1]{AStrivML} is the following statement:
\begin{quote}
\it
Suppose that $\Beul$ is a localization of a ring belonging to the class $\Neul(\bk)$.
If $K$ is a field such that $K \subset \Beul$, $\trdeg_K\Beul=2$ and $\ML(\Beul) = K$, then $\Beul \in \Dan( K )$.
\end{quote}
So for any $B \in \Neul(\bk)$ and $R \in \Ascr_2(B)$, we have $B_R \in \Dan( R_R )$
(because $\trdeg( B_R : R_R )=2$ and, by Lemma \ref{9vbfnaow3GgvVpfhbVWYTE}, $\ML( B_R ) = R_R$).
The claim follows.
\end{proof}

%
%
%

We conclude by giving a partial result related to the following open question:
if $\bk$ is a field of characteristic zero and $B$ is a form of $\kk3$ which is not rigid, does it follow that $B = \kk3$?

\begin{proposition} \label {kjcvgdkjsdulqn}
Let $\bk$ be a field of characteristic zero and $B$ a form of $\kk3$.
Suppose that $D_1, D_2 \in \lnd(B) \setminus \{0\}$ satisfy $D_1 \circ D_2 = D_2 \circ D_1$ and $\ker(D_1) \neq \ker(D_2)$.

Then $B = \kk3$ and $\ker(D_1) \cap \ker(D_2) = \bk[v]$ for some variable $v$ of $B$.
\end{proposition}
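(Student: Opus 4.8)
The plan is to first extract, from the commutation of $D_1$ and $D_2$, a rigid chain of subrings and then to promote the resulting generic description of $B$ to a global coordinate description. Since $B$ is a form of $\kk3$, it belongs to $\Ceuls(\bk)$ (Prop.\ \ref{OoO98nf4tgw3TWUZVw02wGgkjsr}), hence by Lemma \ref{7CxjE2gunQhgfwertk765} it is an absolutely factorial affine $\bk$-domain with $\dim B=3$ and $B^*=\bk^*$. Write $A_i=\ker D_i$; as $D_i\neq0$ we have $A_i\in\Ascr_1(B)$ and $\trdeg_\bk A_i=2$. Because $D_1D_2=D_2D_1$, the derivation $D_2$ maps $A_1$ into itself, so $\delta:=D_2|_{A_1}\in\lnd(A_1)$; it is nonzero, since $\delta=0$ would give $A_1\subseteq A_2$ and then $A_1=A_2$ (by algebraic closedness of $A_1$ in $B$, Lemma \ref{0ckj238hqn239hnfpawrnHzb}), contradicting $\ker D_1\neq\ker D_2$. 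Thus $\ker\delta=A_1\cap A_2=:A$ and $\trdeg_A A_1=1$, so $A\in\Ascr_2(B)$.

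Next I would run the $\Ceul$-results of Cor.\ \ref{d092n3ro9fqp3r0fck}: part \eqref{special-udfhr3yhre3jihefob} gives $A=\bk^{[1]}=\bk[v]$ (this is already the ring $\ker D_1\cap\ker D_2$), and part \eqref{special-0c9vnbgtu6u5j}, applied with $\bk[v]\subset A_1$ and with $\bk[v]\subset A_2$, gives $A_1=\bk[v]^{[1]}=\bk[v,t_1]$ and $A_2=\bk[v,t_2]$. Since $\delta$ annihilates $A=\bk[v]$ it is $A$-linear, so $\delta=c(v)\tfrac{\partial}{\partial t_1}$ with $c(v)=D_2(t_1)\in\bk[v]\setminus\{0\}$, and symmetrically $D_1(t_2)=d(v)\in\bk[v]\setminus\{0\}$. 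Then $v,t_1,t_2$ are algebraically independent; set $P=\bk[v,t_1,t_2]\subseteq B$. As $D_1(t_2)=d$ and $D_1^2(t_2)=0$, \ref{pc9293ed0wdjo03}(iii) gives $B_d=(A_1)_d[t_2]=P_d$, and symmetrically $B_c=P_c$, whence $P\subseteq B\subseteq P[1/g_0]$ with $g_0=\gcd(c,d)$. Over $\bk(v)$ the element $d$ is a unit, so $t_2$ is a genuine slice of $D_1$ on $B_A$ and the Slice Theorem (\ref{pc9293ed0wdjo03}(ii)) yields $B_A=\bk(v)[t_1,t_2]$; in particular $\Frac B=\bk^{(3)}$.

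It then remains to show $B=\kk3$ with $v$ a variable, which is exactly $B=A_1^{[1]}=\bk[v,t_1]^{[1]}=\bk[v]^{[2]}$. I would obtain this from the factoriality criterion \ref{934of0vfvo5rje4i}\eqref{qhwgqhopcxoxpcop}, applied to $A_1\subset B$ with $S=\langle d\rangle$ (allowed since $B_d=(A_1)_d[t_2]$). Because $A_1=\ker D_1$ is factorially closed in the UFD $B$ and $A_1^*=B^*$, each prime factor $p=p(v)$ of $d$ (a prime of $A_1$) satisfies conditions (i) and (ii). Everything therefore reduces to verifying condition (iii): that $A_1/pA_1=\ell[t_1]$, where $\ell=\bk[v]/(p)$, is algebraically closed in the domain $B/pB$.

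The verification of (iii) is where I expect the real difficulty, and I would split it by reducing, via faithfully flat base change to $\ck$, to the case $p=v-a$ with $a\in\ck$ a root of $d$, asking that $\ck[t_1]$ be algebraically closed in $\ck^{[3]}/(v-a)$. When $a$ is not also a root of $c$, the second derivation saves the day: $D_2$ descends to a nonzero $\bar D_2\in\lnd(B/pB)$ with $\bar D_2(\bar t_1)=c(a)\in\ck^*$, so $\bar t_1$ is a slice, $B/pB=\ker(\bar D_2)[\bar t_1]$, and $\ck[t_1]$ is algebraically closed there (the residue field being $\ck$, which is algebraically closed in $\Frac\ker(\bar D_2)$). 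The genuinely hard case is $v-a\mid g_0$, where both reduced derivations vanish on $\bar t_1,\bar t_2$ and carry no information; here one must prove that $\bar t_1,\bar t_2$ stay algebraically independent in $\ck^{[3]}/(v-a)$, i.e.\ that $v$ is a coordinate of $\ck^{[3]}$. My proposal for this last step is to exploit the factorial closedness of both $\ck\otimes_\bk A_1$ and $\ck\otimes_\bk A_2$ simultaneously (which is precisely what rules out the ``fake'' non-coordinate configurations) together with Thm \ref{oicbf2938bvcoq76pjma}, by which $B_A\in\Dan(\bk(v))$ is a Danielewski surface that the commutation $D_1D_2=D_2D_1$ forces to be trivial, and, where needed, the two-dimensional results \ref{934of0vfvo5rje4i}\eqref{d;kfjawherj;aekj} and \eqref{NEW-nonexistenceforms} applied to identify $B/pB$ as a form of $\kk2$ over $\ell$.
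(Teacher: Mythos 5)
Your proposal tracks the paper exactly through the first step and then improves on its middle step. The paper's proof also begins by showing $R=\ker D_1\cap\ker D_2\in\Ascr_2(B)$ (via the restriction $d_2=D_2|_{\ker D_1}$) and concluding $R=\bk[v]$ from Cor.\ \ref{d092n3ro9fqp3r0fck}. For the next stage the paper base-changes to $\ck$, localizes at $\ck[v]\setminus\{0\}$, and invokes Maubach's commuting-derivations theorem \cite[Prop.\ 3.2]{Maubach_03} to get $\bar B_{\ck[v]}=\ck(v)^{[2]}$; your alternative — using Cor.\ \ref{d092n3ro9fqp3r0fck}\eqref{special-0c9vnbgtu6u5j} to write $A_1=\bk[v,t_1]$, $A_2=\bk[v,t_2]$, identifying $D_2|_{A_1}=c(v)\partial/\partial t_1$, $D_1(t_2)=d(v)$, and then applying the Slice Theorem over $\bk(v)$ — is a legitimate and arguably more self-contained way to reach $B_A=\bk(v)^{[2]}$, and it even works over $\bk$ rather than $\ck$.

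The gap is the final step, and it is essential, not a detail. From $B_A=\bk(v)^{[2]}$ the paper does \emph{not} re-prove that $v$ is a variable: it base-changes and quotes \ref{coiSvoq73g4or0fv8}\eqref{0iovufaktfudrfo}, i.e.\ \cite[Thm 3]{DK1} (which rests on Kaliman's theorem that a polynomial on $\Comp^3$ whose generic fiber is a plane is a variable — a deep geometric result), and then descends to $\bk$ via \ref{coiSvoq73g4or0fv8}\eqref{p9cvbq390eso}. You instead attempt to prove $B=A_1^{[1]}$ by the Russell--Sathaye criterion \ref{934of0vfvo5rje4i}\eqref{qhwgqhopcxoxpcop}, and your own analysis locates exactly where this breaks: condition (iii) for primes $p\mid\gcd(c,d)$. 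On such a fiber the hypotheses give you nothing: taking $B=\bk[v,t_1,t_2]$, $D_1=(v-a)\partial/\partial t_2$, $D_2=(v-a)\partial/\partial t_1$ shows that both induced derivations on $B/pB$ can vanish identically, so no LND-theoretic or factoriality argument sees that fiber. In particular nothing forces $B/pB$ to be normal, to have trivial Makar-Limanov invariant, or to be a form of $\kk2$ over $\ell$, so \ref{934of0vfvo5rje4i}\eqref{d;kfjawherj;aekj} and \eqref{NEW-nonexistenceforms} cannot be applied; and Thm \ref{oicbf2938bvcoq76pjma} only concerns the localization $B_R$, $R\in\Ascr_2(B)$, which is precisely the part you already understand — it says nothing about the closed fibers over roots of $g_0$. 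What is missing at this point is exactly the content of Kaliman's theorem: ``generic fiber a plane $\Rightarrow$ coordinate'' is not obtainable from the soft arguments available in this paper. The repair is to stop after $B_A=\bk(v)^{[2]}$, base-change to get $\bar B_{\ck[v]}=\ck(v)^{[2]}$ with $\bar B=\ck^{[3]}$, and then cite \ref{coiSvoq73g4or0fv8}\eqref{0iovufaktfudrfo} and \eqref{p9cvbq390eso}, as the paper does.
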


\begin{subparag}  \label {coiSvoq73g4or0fv8}
The proof of the Proposition uses the following known facts.
\begin{enumerata}

\item \label {p9cvbq390eso}
{\it Let $K/\bk$ be an extension of fields of characteristic zero, let $B$ be a $\bk$-algebra and let $f\in B$.
Suppose that $K \otimes_\bk B = K^{[3]}$ and that $f$ is a variable of $K \otimes_\bk B$.
Then $B = \kk3$ and $f$ is a variable of $B$.} (This is \cite[Prop.\ 2.8]{DK1}.)

\item \label {0iovufaktfudrfo}
{\it Let $\bk$ be a field of characteristic zero and $v \in B = \kk3$.
If $S^{-1}B = \bk(v)^{[2]}$ where $S = \bk[v] \setminus \{0\}$, then $v$ is a variable of $B$.} (This is a special case of \cite[Thm 3]{DK1}.)

\end{enumerata}
\end{subparag}

\begin{proof}[Proof of Prop.\ \ref{kjcvgdkjsdulqn}]
Let $R = \ker(D_1) \cap \ker(D_2)$.
The assumptions on $D_1,D_2$ imply that $D_2$ maps $\ker(D_1)$ into itself and that the
restriction $d_2 \in \lnd( \ker D_1 )$ of $D_2$ is not the zero derivation.
Since $R = \ker d_2$, it follows that $\trdeg(B:R)=2$ and hence that $R \in \Ascr_2(B)$.
So  $R = \kk1$ by Cor.\ \ref{d092n3ro9fqp3r0fck}.
Choose $v \in B$ such that $R = \bk[v]$.

Let $\ck$ denote the algebraic closure of $\bk$, $\bar B = \ck \otimes_\bk B = \ck^{[3]}$
and  $\bar D_1, \bar D_2 \in \lnd(\bar B)$ the extensions of $D_1,D_2$.
Then $\ker(\bar D_1) \cap \ker(\bar D_2) = \ck \otimes_\bk R = \ck[v]$.
It follows that the $\ck(v)$-domain $\Beul = \bar B_{ \ck[v] }$ has a pair of commuting derivations $\delta_1, \delta_2 \in \lnd(\Beul)$
satisfying $\ker\delta_1 \cap\ker\delta_2 = \ck(v)$ ($\delta_i$ is obtained by localizing $\bar D_i$).
As is well known, this implies that $\Beul = \ck(v)^{[2]}$ (see for instance \cite[Prop.\ 3.2]{Maubach_03}).
Then \ref{coiSvoq73g4or0fv8}\eqref{0iovufaktfudrfo} implies that $v$ is a variable of $\bar B$.
It then follows from \ref{coiSvoq73g4or0fv8}\eqref{p9cvbq390eso} that $B = \kk3$ and that $v$ is a variable of $B$.
\end{proof}

\section{Appendix}

We provide a proof for Lemma \ref{jcvo3wbdwoe9df0sd0oc}, which seems to belong to folklore.

\newcommand{\pr}{\operatorname{\text{\rm Pr}}}

\begin{lemma}  \label {cccy5hc7cnc8c4cg}
Let $A$ be an algebra over a field $\bk$, 
let $K/\bk$ be an algebraic Galois extension and write $G=\Gal(K/\bk)$ and $A_K = K \otimes_\bk A$.
For each $\theta \in G$, let $\tilde\theta : A_K \to A_K$ be the  $A$-automorphism of $A_K$ given by
$\tilde\theta (\lambda \otimes a ) =\theta(\lambda) \otimes a $ ($\lambda\in K$, $a\in A$).
If $b$ is an element of $A_K$ satisfying
\begin{equation} \label {cijvno2i3JGFdWKJ732e83}
\forall_{ \theta \in G }\ \exists_{\lambda \in K^*}\  \tilde\theta(b) = \lambda b 
\end{equation}
then there exists $\lambda \in K^*$ such that $\lambda b \in A$.
\end{lemma}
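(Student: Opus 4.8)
The plan is to recognize the hypothesis as saying that the assignment $\theta \mapsto \lambda_\theta$, where $\tilde\theta(b) = \lambda_\theta b$, is a $1$-cocycle of $G$ with values in $K^*$, and then to kill it using Hilbert's Theorem 90. First I would dispose of the case $b=0$ (take $\lambda=1$), so from now on assume $b \neq 0$. The crucial preliminary remark is that for each $\theta$ the scalar $\lambda_\theta \in K^*$ with $\tilde\theta(b)=\lambda_\theta b$ is \emph{unique}: since $\lambda \otimes 1$ is a unit of $A_K$ for every $\lambda \in K^*$, an equality $\lambda_\theta b = \lambda_\theta' b$ with $\lambda_\theta \neq \lambda_\theta'$ would make $(\lambda_\theta-\lambda_\theta')\otimes 1$ a unit annihilating $b$, forcing $b=0$. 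Thus $\theta \mapsto \lambda_\theta$ is a well-defined map $G \to K^*$ (no domain hypothesis on $A$ is needed).

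Next I would verify the cocycle identity. Using $\widetilde{\theta\phi} = \tilde\theta \circ \tilde\phi$ together with $\tilde\theta(\mu \otimes 1) = \theta(\mu)\otimes 1$, I compute $\lambda_{\theta\phi} b = \tilde\theta(\tilde\phi(b)) = \tilde\theta(\lambda_\phi b) = \theta(\lambda_\phi)\,\lambda_\theta\, b$, and uniqueness gives $\lambda_{\theta\phi} = \lambda_\theta\,\theta(\lambda_\phi)$ for all $\theta,\phi \in G$. This is precisely the defining condition of a $1$-cocycle of $G$ with coefficients in $K^*$.

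To accommodate a possibly infinite extension $K/\bk$, I would descend to a finite level rather than invoke continuous cohomology. Fix a $\bk$-basis $(a_\alpha)$ of $A$ and write $b = \sum_\alpha e_\alpha \otimes a_\alpha$, a finite sum with $e_\alpha \in K$; let $L$ be a finite Galois subextension of $K/\bk$ containing all the $e_\alpha$ (the $e_\alpha$ generate a finite subextension, whose normal closure inside $K$ is still finite). Since $(1\otimes a_\alpha)$ is a $K$-basis of $A_K$, comparing coordinates in $\tilde\theta(b)=\lambda_\theta b$ gives $\theta(e_\alpha)=\lambda_\theta e_\alpha$ for every $\alpha$; choosing any $\alpha$ with $e_\alpha \neq 0$ shows $\lambda_\theta = \theta(e_\alpha)/e_\alpha \in L^*$ (here normality of $L/\bk$ ensures $\theta(e_\alpha)\in L$) and that $\lambda_\theta$ depends only on $\theta|_L$. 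Hence the cocycle factors through a $1$-cocycle $\Gal(L/\bk) \to L^*$.

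Finally I would apply Hilbert's Theorem 90 in its finite form, $H^1(\Gal(L/\bk), L^*)=1$, to obtain $\mu \in L^*$ with $\lambda_\theta = \theta(\mu)/\mu$ for all $\theta$. Putting $\lambda = \mu^{-1}$ and $c = \lambda b$, a direct check gives $\tilde\theta(c) = \theta(\mu)^{-1}\lambda_\theta b = \mu^{-1}b = c$ for every $\theta \in G$, so $c$ is fixed by all $\tilde\theta$. Writing $c = \sum_\alpha f_\alpha \otimes a_\alpha$ with $f_\alpha \in L$, invariance forces $f_\alpha \in L^{\Gal(L/\bk)} = \bk$, whence $c \in A$, which is exactly the conclusion. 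The only genuinely delicate point is this reduction to a finite Galois subextension, which lets me use the classical finite version of Hilbert 90 and sidestep any topological subtleties; the rest is routine bookkeeping with the cocycle relation and the uniqueness of $\lambda_\theta$.
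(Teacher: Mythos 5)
Your proof is correct, and its skeleton --- uniqueness of the scalar $\lambda_\theta$ (from $b\neq 0$ and the fact that elements of $K^*$ act invertibly on $A_K$), the cocycle identity $\lambda_{\theta\phi}=\lambda_\theta\,\theta(\lambda_\phi)$, reduction to a finite Galois subextension, Hilbert's Theorem 90, and a fixed-point argument --- is the same as the paper's. The genuine difference lies in how the reduction to the finite level is performed. The paper chooses a finite Galois extension $E/\bk$ inside $K$ with $b \in A_E = E\otimes_\bk A$ and proves abstractly that the hypothesis descends to $E$: each $\tau\in\Gal(E/\bk)$ is extended to some $\theta\in G$, and the resulting scalar is shown to lie in $E^*$ by checking it is fixed by $\Gal(K/E)$. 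You instead work in coordinates with respect to a $\bk$-basis $(a_\alpha)$ of $A$, which yields the explicit formula $\lambda_\theta=\theta(e_\alpha)/e_\alpha$ for any nonzero coordinate $e_\alpha$ of $b$. This is a sharper output than the paper's descent, and it is worth noting that it makes your subsequent appeal to Hilbert 90 redundant: the identity $\lambda_\theta=\theta(e_\alpha)/e_\alpha$ is precisely the statement that your cocycle is the coboundary of $\mu=e_\alpha$, so you could take $\lambda=e_\alpha^{-1}$ directly. Indeed, from $\theta(e_\beta)=\lambda_\theta e_\beta$ for all $\beta$ one gets $\theta(e_\beta/e_\alpha)=\lambda_\theta e_\beta/(\lambda_\theta e_\alpha)=e_\beta/e_\alpha$ for every $\theta\in G$, so every coordinate of $e_\alpha^{-1}b$ lies in $K^G=\bk$ and hence $e_\alpha^{-1}b\in A$, with no cohomology at all. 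In other words, your coordinate computation, pushed one step further, proves the lemma using only $K^G=\bk$, whereas the paper's more abstract descent genuinely needs $H^1(\Gal(E/\bk),E^*)=1$. Both arguments are valid; yours, once trimmed of the now-superfluous Hilbert 90 step, is the more elementary of the two.
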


\begin{proof}
We may assume that $b \neq 0$.
Choose a field $E$ satisfying $\bk \subseteq E \subseteq K$ (so $A \subseteq A_E  \subseteq A_{K}$), $b \in A_E$ and $E/\bk$ is finite Galois.
For each $\tau \in \Gal(E/\bk)$, let $\tilde \tau \in \Aut_A( A_E )$ be its extension; we claim that
\begin{equation} \label {dckjvOOieEqowedndcoi}
\forall_{ \tau \in \Gal(E/\bk) }\ \exists_{\lambda \in E^*}\  \tilde\tau(b) = \lambda b .
\end{equation}
Indeed, let $\tau \in \Gal(E/\bk)$. Then $\tau$ extends to $\theta \in G$, which extends to $\tilde\theta \in \Aut_A( A_K )$.
By assumption we have $\tilde\theta(b) = \lambda b$ for some $\lambda \in K^*$.
Since $\tilde\theta(b) = \tilde\tau(b)$, we get $\tilde\tau(b) = \lambda b$.
To prove that $\lambda \in E^*$, it suffices to show that $\omega( \lambda ) = \lambda$ for all $\omega \in \Gal(K/E)$
(because $K/E$ is Galois). Consider $\omega \in \Gal(K/E)$ and the corresponding $\tilde\omega \in \Aut_{A_E}(A_K)$;
then $\tilde\tau(b) = \tilde\omega( \tilde\tau(b) ) = \omega(\lambda) \tilde\omega(b)= \omega(\lambda) b$
because $\tilde\tau(b)$ and $b$ belong to $A_E$ and hence are fixed by $\tilde\omega$.
So $\omega(\lambda) b = \tilde\tau(b) = \lambda b$, where $\omega(\lambda), \lambda \in K^*$ and $b \neq 0$; so $\omega(\lambda) = \lambda$.
This shows that $\lambda \in E^*$ and hence that \eqref{dckjvOOieEqowedndcoi} is true.

So it suffices to prove the special case of the Lemma where $K/\bk$ is a \textit{finite} Galois extension.
Observe that the $\lambda$ in \eqref{cijvno2i3JGFdWKJ732e83} is uniquely determined by $\theta$ (because $b \neq 0$). Thus,
for each $\theta \in G = \Gal(K/\bk)$, there exists a unique $\alpha_\theta \in K^*$ satisfying $\tilde\theta(b) = \alpha_\theta b$.
It follows that
\begin{equation} \label {c89vb3nfvWivfcby3uuawe}
\textit{$\alpha_{ \sigma \circ \tau } = \alpha_{\sigma} \, \sigma( \alpha_{\tau} )$ \quad for all $\sigma,\tau \in G$,}
\end{equation}
i.e.,  that  $\big\{ \alpha_{\theta} \big\}_{ \theta \in G }$ is a $1$-cocycle of $G$ in $K^*$. 
Because $K/\bk$ is finite Galois we have $H^1(G,K^*)=1$ by \cite[Thm~10.1, p.~302]{LangAlgebraText},
so $\big\{ \alpha_{\theta} \big\}_{ \theta \in G }$ is a $1$-coboundary, i.e., 
there exists $\mu \in K^*$ satisfying $\alpha_\theta = \theta(\mu)/\mu$ for all $\theta \in G$.
Then $\tilde\theta( \mu^{-1} b ) = \mu^{-1} b$ for all $\theta \in G$.
As $\setspec{ x \in A_K }{ \forall_{\theta \in G}\,\, \tilde\theta(x)=x } = A$,
it follows that $\mu^{-1} b  \in A$, as desired.
\end{proof}

\begin{lemma} \label {jcvo3wbdwoe9df0sd0oc}
Let $\bk$ be a field and $A$ a $\bk$-algebra.
Suppose that there exists an algebraic Galois extension $K/\bk$ such that $K \otimes_\bk A$ is a noetherian UFD with $(K \otimes_\bk A)^* = K^*$.
Then $A$ is a noetherian UFD with $A^* = \bk^*$.
\end{lemma}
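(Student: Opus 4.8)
The plan is to prove the three assertions---noetherianity, the computation of units, and the UFD property---separately, using faithful flatness of $A \to A_K$ (where $A_K = K \otimes_\bk A$) for the first two and Galois descent via Lemma \ref{cccy5hc7cnc8c4cg} for the third. Throughout I would write $G = \Gal(K/\bk)$, use the semilinear $G$-action $\theta \mapsto \tilde\theta$ on $A_K$, and use that $A_K^G = A$, as recorded in the proof of Lemma \ref{cccy5hc7cnc8c4cg}. Since $K$ is free as a $\bk$-module and $K/\bk$ is algebraic, $A \to A_K$ is injective, integral, and faithfully flat; in particular $A$ is a domain, being a subring of the domain $A_K$.

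For noetherianity I would invoke faithfully flat descent: given an ascending chain of ideals $I_1 \subseteq I_2 \subseteq \cdots$ of $A$, the chain $I_1 A_K \subseteq I_2 A_K \subseteq \cdots$ stabilizes because $A_K$ is noetherian, and faithful flatness gives $I_n A_K \cap A = I_n$, so the original chain stabilizes. For the units, I would fix a $\bk$-basis of $K$ containing $1$ and read off from it that $A \cap K = \bk$ inside $A_K$ (here $K = K \otimes 1$); consequently $A^* \subseteq A_K^* \cap A = K^* \cap A = \bk^*$, and the reverse inclusion is trivial, so $A^* = \bk^*$.

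The substance is the UFD property, for which I would use the criterion that a noetherian domain is a UFD if and only if every height-one prime is principal. Let $\pgoth$ be a height-one prime of $A$, and let $\qgoth_1, \dots, \qgoth_r$ be the (finitely many, as $A_K$ is noetherian) minimal primes of $\pgoth A_K$; by incomparability these are exactly the primes of $A_K$ lying over $\pgoth$, and since $A \to A_K$ is flat (going-down) and integral (incomparability) each has height one in $A_K$. As $A_K$ is a UFD, $\qgoth_i = (q_i)$ for prime elements $q_i$. Now each $\tilde\theta$ is an $A$-automorphism of $A_K$, hence permutes the set $\{\qgoth_1, \dots, \qgoth_r\}$; writing $b = q_1 \cdots q_r$, this means $\tilde\theta(b) = u_\theta b$ with $u_\theta \in A_K^* = K^*$ for every $\theta \in G$. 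Thus $b$ satisfies the hypothesis of Lemma \ref{cccy5hc7cnc8c4cg}, which yields $\lambda \in K^*$ with $p := \lambda b \in A$.

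It then remains to show $\pgoth = pA$, which is the step requiring care. Since $p$ and $b$ are associates in $A_K$ and the $q_i$ are pairwise non-associate primes, $p A_K = \bigcap_i \qgoth_i$, whence $p A_K \cap A = \bigcap_i (\qgoth_i \cap A) = \pgoth$; in particular $p \in \pgoth$, so $pA \subseteq \pgoth$. Conversely, given $x \in \pgoth$ we have $x \in p A_K$, say $x = p y$ with $y \in A_K$; applying $\tilde\theta$ and using $x, p \in A = A_K^G$ together with the fact that $A_K$ is a domain gives $p y = p\,\tilde\theta(y)$ and hence $y = \tilde\theta(y)$ for all $\theta$, so $y \in A_K^G = A$ and $x \in pA$. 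Thus $\pgoth = pA$ is principal, finishing the proof. I expect the main obstacle to be precisely this last descent of a principal generator: building the single $G$-semi-invariant element $b$ out of \emph{all} the primes lying over $\pgoth$ (so that Lemma \ref{cccy5hc7cnc8c4cg} applies), and then verifying that its descended associate $p$ genuinely generates $\pgoth$ rather than merely cutting it out after extension to $A_K$.
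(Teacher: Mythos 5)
Your proof is correct and is essentially the paper's: the same three-way split (faithful flatness for noetherianity, $K \cap A = \bk$ for the units, reduction to height-one primes being principal for factoriality), and the same key mechanism of attaching to $\pgoth$ a $G$-semi-invariant element of $A_K$ and descending it to $A$ via the cocycle Lemma \ref{cccy5hc7cnc8c4cg}. The only differences are cosmetic: the paper's semi-invariant element is a gcd of generators of $\pgoth$ (semi-invariant because it generates the least principal ideal containing $\pgoth A_K$) rather than your product of the prime generators of the primes lying over $\pgoth$, and the paper closes by faithful flatness ($pA_K \cap A = pA$) where you instead descend the cofactor $y$ by a direct fixed-point argument.
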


\begin{proof}
Let $A_K = K \otimes_\bk A$.
Since $A_K^* = K^*$ and (by Lemma \ref{0cv9n2w0dZw0dI28efydldc0}(a)) $K \cap A = \bk$, we have $A^* = \bk^*$.
Since $A_K$ is noetherian and faithfully flat over $A$, $A$ is noetherian.  
Let $\pgoth$ be a height $1$ prime ideal of $A$ and let $a_1, \dots, a_n$ be a generating set for $\pgoth$.
Let $b$ be the gcd of $a_1, \dots, a_n$ in $A_K$.
Then $b A_K$ is the least element of the set of principal ideals $J$ of $A_K$ that satisfy $\pgoth A_K \subseteq J$,
and consequently every $A$-automorphism of $A_K$ must map $b A_K$ to itself.
So for each $\theta \in \Gal(K/\bk)$, there exists $\lambda \in A_K^* = K^*$ satisfying  $\tilde\theta(b) = \lambda b$.
By Lemma \ref{cccy5hc7cnc8c4cg}, there exists $\lambda \in K^*$ such that $\lambda b \in A$; so we might as well assume that $b \in A$.
Since $A_K$ is integral over $A$, there exists a height $1$ prime ideal $\qgoth$ of $A_K$ such that $\qgoth \cap A = \pgoth$.
We have $\qgoth = q A_K$ for some prime element $q$ of $A_K$ and $a_1, \dots, a_n \in \qgoth$, so $q \mid a_i$ in $A_K$ (for each $i$);
so $b \in \qgoth$ and hence $\pgoth \subseteq b A_K \cap A \subseteq \qgoth \cap A = \pgoth$, i.e.,  $b A_K \cap A = \pgoth$.
The principal ideal $I = bA$ of $A$ satisfies $I = I A_K \cap A = b A_K \cap A = \pgoth$, so $\pgoth$ is a principal ideal of $A$.
So $A$ is a UFD.
\end{proof}


\bibliographystyle{alpha}
\newcommand{\etalchar}[1]{$^{#1}$}

\end{document}